\newcommand{\Si}{ {\Sigma} }
\newcommand{\bC}{ {\mathbb{C}} }
\newcommand{\bE}{\mathbb{E}}
\newcommand{\bF}{\mathbb{F}}
\newcommand{\bK}{\mathbb{K}}
\newcommand{\bL}{\mathbb{L}}
\newcommand{\bP}{\mathbb{P}}
\newcommand{\bQ}{\mathbb{Q}}
\newcommand{\bR}{\mathbb{R}}
\newcommand{\bS}{\mathbb{S}}
\newcommand{\bZ}{\mathbb{Z}}
\newcommand{\cB}{\mathcal{B}}
\newcommand{\cC}{\mathcal{C}}
\newcommand{\cD}{\mathcal{D}}
\newcommand{\cE}{\mathcal{E}}
\newcommand{\cF}{\mathcal{F}}
\newcommand{\cI}{\mathcal{I}}
\newcommand{\cL}{\mathcal{L}}
\newcommand{\cM}{\mathcal{M}}
\newcommand{\cO}{\mathcal{O}}
\newcommand{\cQ}{\mathcal{Q}}
\newcommand{\cR}{\mathcal{R}}
\newcommand{\cS}{\mathcal{S}}
\newcommand{\cX}{\mathcal{X}}
\newcommand{\cU}{\mathcal{U}}
\newcommand{\cV}{\mathcal{V}}
\newcommand{\cZ}{\mathcal{Z}}
\newcommand{\age}{\mathrm{age}}
\newcommand{\Aut}{\mathrm{Aut}}
\newcommand{\CR}{ {\mathrm{CR}} }
\newcommand{\Hom}{\mathrm{Hom}}
\newcommand{\End}{\mathrm{End}}
\newcommand{\Spec}{\mathrm{Spec}}
\newcommand{\eff}{ {\mathrm{eff}} }
\newcommand{\ev}{\mathrm{ev}}
\newcommand{\val}{ {\mathrm{val}} }
\newcommand{\vir}{ {\mathrm{vir}} }
\newcommand{\inv}{\mathrm{inv}}
\newcommand{\pt}{\mathrm{pt}}
\newcommand{\Nef}{{\mathrm{Nef}}}
\newcommand{\NE}{{\mathrm{NE}}}
\newcommand{\rank}{\mathrm{rank}}
\newcommand{\Tot}{\mathrm{Tot}}
\renewcommand{\Box}{\mathrm{Box}}
\newcommand{\GL}{\mathrm{GL}}
\newcommand{\bb}{\text{$\mathbf{b}$}}
\newcommand{\one}{\mathbf{1}}
\newcommand{\btau}{\boldsymbol{\tau}}
\newcommand{\bSi}{\mathbf{\Si}}
\newcommand{\bXi}{\mathbf{\Xi}}
\newcommand{\bh}{\text{$\mathbf{h}$}}
\newcommand{\bw}{\text{$\mathbf{w}$}}
\newcommand{\fa}{\mathfrak{a}}
\newcommand{\fg}{\mathfrak{g}}
\newcommand{\fl}{\mathfrak{l}}
\newcommand{\fm}{\mathfrak{m}}
\newcommand{\fn}{\mathfrak{n}}
\newcommand{\fo}{\mathfrak{o}}
\newcommand{\fp}{\mathfrak{p}}
\newcommand{\fr}{\mathfrak{r}}
\newcommand{\fs}{\mathfrak{s}}
\newcommand{\fu}{\mathfrak{u}}
\newcommand{\fb}{\mathfrak{b}}
\newcommand{\fx}{\mathfrak{x}}
\newcommand{\su}{\mathsf{u}}
\newcommand{\sw}{\mathsf{w}}
\newcommand{\sX}{\mathsf{X}} 
\newcommand{\hG}{\hat{G}}
\newcommand{\hcC}{\hat{\cC}}
\newcommand{\tSi}{\widetilde{\Sigma}}
\newcommand{\trho}{\widetilde{\rho}}
\newcommand{\tbL}{\widetilde{\bL}}
\newcommand{\tA}{\widetilde{A}}
\newcommand{\tC}{\widetilde{C}}
\newcommand{\tD}{\widetilde{D}}
\newcommand{\tG}{\widetilde{G}}
\newcommand{\tH}{\widetilde{H}}
\newcommand{\tM}{\widetilde{M}}
\newcommand{\tN}{\widetilde{N}}
\newcommand{\tP}{\widetilde{P}}
\newcommand{\tQ}{\widetilde{Q}}
\newcommand{\tS}{\widetilde{S}}
\newcommand{\tT}{\widetilde{T}}
\newcommand{\tX}{\widetilde{X}}
\newcommand{\tb}{\widetilde{b}}
\renewcommand{\th}{\widetilde{h}}
\newcommand{\tk}{\widetilde{k}}
\newcommand{\tl}{\widetilde{l}}
\newcommand{\tm}{\widetilde{m}}
\newcommand{\tq}{\widetilde{q}}
\newcommand{\tu}{\widetilde{u}}
\newcommand{\tv}{\widetilde{v}}
\newcommand{\tmu}{\widetilde{\mu}}
\newcommand{\tgamma}{\widetilde{\gamma}}
\newcommand{\tsi}{\widetilde{\sigma}}
\newcommand{\tbeta}{\widetilde{\beta}}
\newcommand{\tpsi}{\widetilde{\psi}}
\newcommand{\ttau}{\widetilde{\tau}}
\newcommand{\talpha}{\widetilde{\alpha}}
\newcommand{\tbtau}{\widetilde{\btau}}
\newcommand{\tkappa}{\widetilde{\kappa}}
\newcommand{\tOmega}{\widetilde{\Omega}}
\newcommand{\tlambda}{\widetilde{\lambda}}
\newcommand{\tbh}{\widetilde{\bh}}
\newcommand{\tbw}{\widetilde{\bw}}
\newcommand{\tNef}{\widetilde{\Nef}}
\newcommand{\tNE}{\widetilde{\NE}}
\newcommand{\tcC}{\widetilde{\cC}}
\newcommand{\tcD}{\widetilde{\cD}}
\newcommand{\tcX}{\widetilde{\cX}}
\newcommand{\tbSi}{\widetilde{\bSi}}
\newcommand{\vGa}{\vec{\Gamma}}
\newcommand{\vd}{\vec{d}}
\newcommand{\vf}{\vec{f}}
\newcommand{\vj}{\vec{j}}
\newcommand{\vk}{\vec{k}}
\newcommand{\vs}{\vec{s}}
\newcommand{\vzero}{\vec{0}}
\newcommand{\Mbar}{\overline{\cM}}
\newcommand{\lra}{\longrightarrow}
\newcommand{\inner}[1]{\langle  #1 \rangle}
\newcommand{\floor}[1]{\lfloor  #1 \rfloor}
\newcommand{\ceil}[1]{\lceil  #1 \rceil}
\newtheorem{dummy}{dummy}[section]
\newtheorem{lemma}[dummy]{Lemma}
\newtheorem{theorem}[dummy]{Theorem}
\newtheorem{proposition}[dummy]{Proposition}
\newtheorem{remark}[dummy]{Remark}
\newtheorem{definition}[dummy]{Definition}
\newtheorem{notation}[dummy]{Notation}
\newtheorem{convention}[dummy]{Convention}
\newtheorem{assumption}[dummy]{Assumption}
\newtheorem{observation}[dummy]{Observation}
\begin{document}
\title{Orbifold Open/Closed Correspondence and Mirror Symmetry}

\author{Chiu-Chu Melissa Liu}
\address{Chiu-Chu Melissa Liu, Department of Mathematics, Columbia University, 2990 Broadway, New York, NY 10027}
\email{ccliu@math.columbia.edu}

\author{Song Yu}
\address{Song Yu, Yau Mathematical Sciences Center, Tsinghua University, Haidian District, Beijing 100084, China}
\email{song-yu@tsinghua.edu.cn}

\begin{abstract}
We continue the mathematical development of the open/closed correspondence proposed by Mayr and Lerche-Mayr. Given an open geometry on a toric Calabi-Yau 3-orbifold $\mathcal{X}$ relative to a framed Aganagic-Vafa outer brane $(\mathcal{L},f)$, we construct a toric Calabi-Yau 4-orbifold $\widetilde{\mathcal{X}}$ and identify its genus-zero Gromov-Witten invariants with the disk invariants of $(\mathcal{X},\mathcal{L},f)$, generalizing prior work of the authors in the smooth case. We then upgrade the correspondence to the level of generating functions, and prove that the disk function of $(\mathcal{X},\mathcal{L},f)$ can be recovered from the equivariant $J$-function of $\widetilde{\mathcal{X}}$. We further establish a B-model correspondence that retrieves the B-model disk function of $(\mathcal{X},\mathcal{L},f)$ from the equivariant $I$-function of $\widetilde{\mathcal{X}}$, and show that the correspondences are compatible with mirror symmetry in both the open and closed sectors.
\end{abstract}
\maketitle

\setcounter{tocdepth}{1}
\tableofcontents

\section{Introduction}\label{sect:Intro}

This paper continues the mathematical study of the \emph{open/closed correspondence} initiated in \cite{LY21}. Proposed by Mayr \cite{Mayr01} and Lerche-Mayr \cite{LM01} as a class of string dualities, the correspondence predicts that the genus-zero topological amplitudes of an open string geometry on a Calabi-Yau 3-fold with a prescribed Lagrangian boundary condition should coincide with those of a closed string geometry on a dual Calabi-Yau 4-fold. In mathematical terms, the correspondence conjecturally relates the Gromov-Witten theories of the two geometries. Specifically, the disk invariants of the open 3-fold geometry should correspond to the genus-zero closed Gromov-Witten invariants of the 4-fold geometry, and this could be upgraded to the level of generating functions. Via mirror symmetry and especially that for open strings \cite{AV00}, the correspondence further carries over to the B-model side, predicting that the periods of the two mirror families for the two geometries match up and solve a common system of differential equations.

The preceding paper \cite{LY21} of the authors made a first step at the above proposal by establishing the numerical open/closed correspondence in the smooth case. Starting with the open geometry on a smooth toric Calabi-Yau 3-fold $\cX$ and a Lagrangian submanifold $\cL$ of Aganagic-Vafa type \cite{AV00}, under mild assumptions, we explicitly constructed a smooth toric Calabi-Yau 4-fold $\tcX$ as the dual closed geometry. We further showed that for corresponding curve classes, the disk invariants of $(\cX,\cL)$, which virtually count stable maps from genus-zero Riemann surfaces with a single boundary component to $(\cX,\cL)$, are identified with the genus-zero closed Gromov-Witten invariants of $\tcX$.

In the present paper, we develop the above result and take it to the next levels. First, we extend the numerical correspondence to the more general setting where the 3-fold $\cX$ can be a toric \emph{orbifold}, i.e. smooth toric Deligne-Mumford stack \cite{BCS05, FMN10} with trivial generic stablizer, and the Lagrangian boundary condition $\cL$ can be a suborbifold.  
We assume that $\cX$ has \emph{semi-projective} coarse moduli space and ensure that the 4-orbifold $\tcX$ that we construct satisfies the same property, which has desirable and helpful implications such as the convergence of the quantum product and the applicability of the mirror theorem of \cite{CCIT15}. Based on the numerical correspondence, we further establish a correspondence between the generating function of disk invariants of $(\cX, \cL)$ and the generating function of genus-zero Gromov-Witten invariants of $\tcX$. This also allows us to recover the former generating function from the equivariant \emph{$J$-function} of $\tcX$. In addition, on the B-model side, we prove the analogous result that the B-model disk potential of $(\cX, \cL)$ can be recovered from the equivariant \emph{$I$-function} of $\tcX$, which we further show is compatible with the A-model correspondences under mirror symmetry. Thereby, we substantiate the mathematical theory of the open/closed correspondence, fulfilling various original predictions in \cite{Mayr01, LM01} and getting better prepared for further developments and applications outlined in \cite{LY21}, including correspondences of B-model mirror families and periods as well as compatibilities with wall-crossings.

After the first version of this paper appeared, it has been applied to study the integrality properties of the open invariants of $(\cX, \cL)$ and closed invariants of $\tcX$ \cite{Yu23}, as well as the open Witten-Dijkgraaf-Verlinde-Verlinde equation for $(\cX, \cL)$ and the induced Frobenius structures \cite{YZ23}, in the case $\cX$ is smooth. The open/closed correspondence has also been established for the quintic 3-fold \cite{AL23} and the projective line \cite{Zong25}.

We now provide additional details and discussions for our results.

\subsection{Numerical correspondence for orbifolds}
Let $\cX$ be a toric Calabi-Yau 3-orbifold with semi-projective coarse moduli space $X$, $\cL \subset \cX$ be a Lagrangian brane of Aganagic-Vafa type. Let $f = \frac{\fb}{\fa} \in \bQ$ be an additional parameter called the \emph{framing} of the brane $\cL$, where $\fa \in \bZ_{>0}, \fb \in \bZ$ are coprime integers. The coarse moduli space $L$ of $\cL$ intersects a unique torus-fixed line $l$ in $X$, corresponding to a 2-cone in the fan. The corresponding torus-invariant substack $\fl \subset \cX$ has a cyclic generic stablizer group $\mu_{\fm}$ for some $\fm \in \bZ_{>0}$. The Lagrangian $\cL$ intersects $\fl$ along a stacky circle: $\cL \cap \fl \cong S^1 \times B\mu_{\fm}$.
We assume that $\cL$ is \emph{outer}, i.e. $l \cong \bC^1$.\footnote{We note that all our results can as well be established for the case where $\cL$ is inner, i.e. $l \cong \bP^1$.}

\emph{Open Gromov-Witten invariants} of $(\cX, \cL, f)$ \cite{CP14,FL13,FLT12,KL01} give virtual counts of twisted stable maps from bordered orbifold Riemann surfaces
$$
    u: (\cC, \partial \cC) \to (\cX, \cL).
$$
We focus on \emph{disk invariants}, which concern the case where the domain $\cC$ has arithmetic genus zero and one boundary component. Let
$$
    \beta' = \beta + d[B] \in H_2(X, L; \bZ),
$$
be an effective class, where $\beta \in H_2(X; \bZ)$ is effective, $B$ is the disk in $l$ bounded by the intersection $L \cap l$, and $d \in \bZ_{>0}$, and let $\lambda \in \mu_{\fm}$ be a monodromy profile. Then, the disk invariant
$$
    \inner{\gamma_1, \dots, \gamma_n}^{\cX, (\cL, f)}_{\beta', (d, \lambda)} \in \bQ
$$
virtually counts degree-$\beta'$ maps $u$ with boundary profile $u_*[\partial \cC] = (d, \lambda) \in H_1(\cL; \bZ) \cong \bZ \times \mu_{\fm}$. Here, $\gamma_1, \dots, \gamma_n \in H^2_{\CR}(\cX; \bQ)$ are any second Chen-Ruan orbifold cohomology \cite{CR04} classes of $\cX$ given as insertions to the interior of the domain. We refer to Section \ref{sect:GW} for formal definitions, noting for now that the disk invariants are defined by equivariant localization with respect to a $2$-dimensional subtorus $T'$ of the algebraic $3$-torus of $\cX$ specified by the Calabi-Yau condition, and further depend on a circle action on the pair $(\cX, \cL)$ specified by the framing $f$. Moreover, the definition adopts suitable $T'$-equivariant lifts of the insertions $\gamma_i$.

Our first main result is the explicit construction of a toric Calabi-Yau 4-orbifold $\tcX$ whose genus-zero Gromov-Witten invariants coincide with the disk invariants of $(\cX, \cL, f)$. This establishes the \emph{numerical open/closed correspondence} for orbifolds, generalizing \cite{LY21}. We note in advance that similar to above, the closed invariants are defined by equivariant localization with respect to a $3$-dimensional subtorus $\tT'$ of the algebraic $4$-torus of $\tcX$ specified by the Calabi-Yau condition, and further depend the action of a $1$-dimensional subtorus $T_f$ specified by $f$.

\begin{theorem}[See Theorem \ref{thm:NumericalOuter}]\label{thm:IntroNumerical}
With $\cX, \cL, f$ as above, there is a toric Calabi-Yau 4-orbifold $\tcX$ satisfying that:
\begin{itemize}
    \item The coarse moduli space $\tX$ of $\tcX$ is semi-projective.

    \item There is an inclusion $\iota: \cX \to \tcX$ which induces an inclusion $\iota_*: H_2(X,L; \bZ) \to H_2(\tX; \bZ)$.

    \item Given any effective class $\beta' = \beta + d[B] \in H_2(X,L;\bZ)$, monodromy $\lambda \in \mu_{\fm}$, and insertions $\gamma_1, \dots, \gamma_n \in H^2_{\CR}(\cX; \bQ)$, we have
    $$
        \inner{\gamma_1, \dots, \gamma_n}^{\cX, (\cL, f)}_{\beta', (d, \lambda)} = \inner{\tgamma_1, \dots, \tgamma_n, \tgamma_{\tk}}^{\tcX, T_f}_{\tbeta}
    $$
    where 
    \begin{itemize}
        \item[$\circ$] $\tbeta = \iota_*(\beta') \in H_2(\tX; \bZ)$ and $\inner{\tgamma_1, \dots, \tgamma_n,  \tgamma_{\tk}}^{\tcX, T_f}_{\tbeta}$ is a genus-zero, degree-$\tbeta$ closed Gromov-Witten invariant of $\tcX$;
        \item[$\circ$] each $\tgamma_i \in H^2_{\CR, \tT'}(\tcX; \bQ)$ is a suitable lift of $\gamma_i$ under $H^*_{\CR, \tT'}(\tcX;\bQ) \to H^*_{\CR}(\cX; \bQ)$;
        \item[$\circ$] $\tgamma_{\tk} \in H^4_{\CR, \tT'}(\tcX; \bQ)$ is a fixed class depending only on $d$ and $\lambda$.
    \end{itemize}
\end{itemize}
\end{theorem}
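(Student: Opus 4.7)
The plan is to adapt the equivariant localization strategy of the authors' smooth case \cite{LY21} to the orbifold setting, keeping careful track of stabilizers, twisted sectors, and Chen-Ruan insertions. First I would construct $\tcX$ explicitly as a toric Calabi-Yau 4-orbifold by extending the stacky fan $\bSi$ of $\cX$. The brane $\cL$ with framing $f$ singles out a $2$-cone $\tau \in \bSi$ (whose rays cut out the invariant line $l$ met by $L$) and a preferred lattice vector $v_f$ assembled from $f$ and from the primitive generators of $\tau$, carrying a twist by $\fm$ to record the $\mu_\fm$-stabilizer of $\fl$. The stacky fan $\tbSi$ then lives in $N \oplus \bZ$: every cone of $\bSi$ is lifted into $N \oplus \{0\}$, and one new ray $\trho$ at height $1$ built from $v_f$ is adjoined together with the maximal cones needed to fill in a smooth toric orbifold. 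The data of $\trho$ is arranged so that every maximal cone of $\tbSi$ is simplicial with generators on a common affine hyperplane (the Calabi-Yau condition in dimension $4$), so that the $\mu_\fm$-stabilizer extends coherently across the new cones, and so that the support of $\tbSi$ is convex (ensuring semi-projectivity of $\tX$). The inclusion $\iota$ is then induced by $\bSi \hookrightarrow \tbSi$, and a divisor-sequence diagram chase shows that $H_2(\tX;\bZ) = H_2(X;\bZ) \oplus \bZ \cdot [\tilde{C}]$ for a new torus-invariant rational curve $\tilde{C}$; sending $[B]$ to $[\tilde{C}]$ realizes the required injection $\iota_*$.

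Second I would prove the numerical identity by comparing the equivariant localization formulas on the two moduli spaces. The open invariant $\langle \gamma_1,\ldots,\gamma_n\rangle^{\cX,(\cL,f)}_{\beta',(d,\lambda)}$ is a sum over decorated graphs encoding $T'$-fixed twisted stable maps with one stacky disk component of winding $d$ and monodromy $\lambda$ attached to $\cL \cap \fl \cong S^1 \times B\mu_\fm$, together with interior vertices and edges mapping to $T'$-invariant loci in $\cX$; each graph contributes orbifold Hodge integrals, Euler classes of virtual normal bundles built from $T'$-weights, and a disk factor involving the framing weight from $T_f$. On the closed side, the genus-zero invariant $\langle \tgamma_1,\ldots,\tgamma_n,\tgamma_\lambda\rangle^{\tcX,T_f}_{\tbeta}$ localizes to a sum over decorated trees of twisted stable maps into $\tcX$. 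By construction of $\tbSi$, every open graph is in bijection with a closed graph in which the stacky disk is replaced by a stacky $\bP^1$ running from the interior to the new torus-fixed point on $\trho$. Under the Calabi-Yau specialization of $\tT'$-equivariant parameters and the compatible identification of the framing action $T_f$ on both sides, the edge and interior vertex contributions agree term by term; the interior insertions match since each $\tgamma_i$ restricts to $\gamma_i$ under $H^*_{\CR,\tT'}(\tcX;\bQ) \to H^*_{\CR}(\cX;\bQ)$; and the new capping vertex on the closed side reproduces precisely the open disk factor, with the role of $\lambda$ encoded by pulling back $\tgamma_\lambda$ from the correct twisted sector.

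The main obstacle will be the orbifold upgrade of this disk-to-$\bP^1$ comparison. In the smooth case of \cite{LY21}, the disk edge factor is a simple rational function once the framing specialization is invoked, matching the factor coming from a torus-invariant $\bP^1$ with a prescribed normal bundle. Here one must work on moduli of twisted stable maps and compare (a) a stacky disk in $(\cX,\cL)$ with boundary winding $d$ and monodromy $\lambda$ in $\mu_\fm$ with (b) a stacky $\bP^1$ in $\tcX$ with nontrivial $\mu_\fm$-structure at the new fixed point. The central computation is to verify in closed form that the framing weight of $T_f$ on the open side coincides with the equivariant weight associated to $\trho$ on the closed side, and that the twisted-sector class $\tgamma_\lambda$ is characterized uniquely by $\lambda$ as the equivariant Poincar\'e dual of the corresponding rigidified inertia component. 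Once this match is established sector by sector, the remaining ingredients (Calabi-Yau property, semi-projectivity, cohomology identification, and the general interior-insertion case) follow by the same arguments as in \cite{LY21} transposed to Chen-Ruan cohomology.
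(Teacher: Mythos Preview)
Your overall strategy—localize both sides and match decorated graphs—is correct and is what the paper does, but there are two genuine gaps.

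First, your construction of $\tbSi$ is not quite right and will not in general yield a semi-projective coarse moduli space. The paper adds \emph{two} new rays in $\tN = N\oplus\bZ$, namely $\tb_{R+1}=(-1,-f,1,1)$ and $\tb_{R+2}=(0,0,1,1)$, and then chooses a triangulation of the resulting $3$-dimensional polytope $\tP$ so that the support of $\tSi$ is the cone over $\tP$. This is what guarantees semi-projectivity; in the smooth case of \cite{LY21} the $4$-fold produced there is already only a non-semi-projective open piece of the present $\tcX$. Relatedly, your claim that $H_2(\tX;\bZ)=H_2(X;\bZ)\oplus\bZ[\tilde C]$ is not correct: the partial compactification may add further invariant curves, so one only has an injection $\iota_*:H_2(X,L;\bZ)\hookrightarrow H_2(\tX;\bZ)$.

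Second, and more seriously, your matching argument is incomplete because the bijection between open graphs and closed graphs is only an injection, not a surjection. The semi-projective partial compactification introduces new $\tT'$-fixed points $\fp_{\tsi}$ for $\tsi\in\tSi(4)\setminus\iota(\Sigma(3))$ and new invariant lines, and hence many decorated graphs for $\tcX$ that do not arise from any open graph for $(\cX,\cL)$. The paper devotes a substantial part of the proof (Lemmas~4.3--4.5 and the computations in Appendix~B.2--B.3) to showing that the contributions of all such extra graphs vanish after the weight restriction $\su_4=0$, $\su_2-f\su_1=0$: one counts the power of $\su_4$ in each piece (edges, flags, vertices) and shows it is strictly positive unless the graph has a very special shape, and then for the remaining graphs one counts the power of $\su_2-f\su_1$ using that $\tbw(\delta_2(\tsi),\tsi)$ and $\tbw(\delta_3(\tsi),\tsi)$ restrict to multiples of $\su_2-f\su_1$. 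This vanishing is the main new difficulty beyond \cite{LY21} and is absent from your proposal. Finally, the vanishing argument only works for \emph{generic} $f\in\bZ$; extending to arbitrary $f$ (and showing the disk invariant is even defined for all $f$) requires a separate argument, which the paper carries out by appealing to the well-definedness of the closed invariant via the $B$-model (Lemma~3.7).
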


We note that the dependence of the additional class $\tgamma_{\tk}$ on $d$ and $\lambda$ is given by $\tk = \th(d,\lambda) \in \mu_{\fa\fm}$ where
$$
    \th: \bZ \times \mu_{\fm} \to \mu_{\fa\fm}
$$
is a surjective homomorphism that geometrically records (the inverse of) the twisted sector corresponding to the curve class $\tbeta$ in $\tcX$. In the case $f \in \bZ$, i.e. $\fa=1$, we have $\th(d,\lambda) = \lambda \in \mu_{\fa\fm}$ and $\tk$ depends on $\lambda$ only.

Informally, the $4$-orbifold $\tcX$ is constructed as follows: We first add a new irreducible toric divisor $\cD$ to $\cX$ whose position depends on the position of $\cL$, obtaining a log Calabi-Yau pair $(\cX \sqcup \cD, \cD)$. This is done so that $\cD$ contains a new torus-fixed point which, at the level of coarse moduli, compactifies the line $l \cong \bC^1$ into a $\bP^1$. Then, we consider the toric Calabi-Yau 4-orbifold $\Tot(\cO_{\cX \sqcup \cD}(-\cD))$; in the case $\cX = X$ is smooth and $f \in \bZ$, this is already the smooth $4$-fold constructed in \cite{LY21}. Finally, we take $\tcX$ to be the ``minimal'' toric partial compactification of $\Tot(\cO_{\cX \sqcup \cD}(-\cD))$ with a semi-projective coarse moduli space, in the sense that no additional toric divisors are added. We refer to Section \ref{sect:construction} for the formal construction and specifically Section \ref{sect:examples} for explicit examples.

As noted before, Theorem \ref{thm:IntroNumerical} extends \cite{LY21} in that it applies to the more general orbifold setting and produces a $4$-orbifold $\tcX$ with semi-projective coarse moduli space. In addition, it applies to disk invariants that admit an arbitrary set of degree-$2$ insertions from Chen-Ruan cohomology. Finally, it applies to an arbitrary framing $f \in \bQ$, while \cite{LY21} requires $f$ to be integral and generic.

We note that \cite{LY21} covered a family of examples for the open/closed correspondence obtained by Bousseau-Brini-van Garrel \cite{BBvG20} from constructions and enumerative theories based on \emph{Looijenga pairs}. Such examples are expected to extend to the orbifold setting \cite{BBvG20}, as exemplified in \cite{BBvG20b}. We expect our Theorem \ref{thm:IntroNumerical} to cover this more general family of orbifold examples.

Theorem \ref{thm:IntroNumerical} is proven by a direct and careful comparison of the localization computations of the open and closed Gromov-Witten invariants. One challenge is that, as we take additional steps in partially compactifying the $4$-orbifold into one with semi-projective coarse moduli space, we also introduce new connected components to the torus-fixed loci of the moduli spaces of twisted stable maps. We address this issue by arguing that these additional components have no contribution to the closed invariants.

We remark that the approach adopted in \cite{LY21} bridges the open and closed invariants by the \emph{relative} invariants \cite{Li01, Li02} of the log Calabi-Yau pair $(\cX \sqcup \cD, \cD)$. As an intermediate step, a version of the \emph{log-local principle} of van Garrel-Graber-Ruddat \cite{vGGR19} in the non-compact setting was obtained (see also Conjecture 1.1 of \cite{BBvG20}). Similar results may be obtained in the orbifold setting as well.

\subsection{Correspondence of generating functions}
Our next main result promotes the numerical open/closed correspondence to the level of \emph{generating functions}. For $\tk \in \mu_{\fa\fm}$, we define the following generating function of disk invariants of $(\cX, \cL, f)$, following Fang-Liu-Tseng \cite{FLT12}:
$$
    F^{\cX, (\cL, f)}_{\tk}(\btau_2, \sX):= \sum_{\beta \in E(X)} \sum_{\substack{(d, \lambda) \in \bZ_{>0} \times \mu_{\fm} \\ \tk = \th(d,\lambda)}} \sum_{l \in \bZ_{\ge 0}} \frac{\inner{\btau_2^l}^{\cX, (\cL, f)}_{\beta+d[B], (d, \lambda)}}{l!} \sX^d
$$
which takes value in $\bC$. Here, we take
$$
    \btau_2 = \sum_{a = 1}^{K} \tau_a u_a
$$
where $\{u_1, \dots, u_K\}$ is a suitable basis for $H^2_{\CR}(\cX; \bQ)$ and $\tau_1, \dots, \tau_K$ are complex variables. Moreover, $E(X)$ is the semigroup $\NE(X)\cap H_2(X;\bZ)$ and $\sX$ is an additional variable for the open sector.

On the other hand, we consider the following generating function of genus-zero closed Gromov-Witten invariants of $\cX$:
$$
    \llangle \tgamma_{\tk}\rrangle^{\tcX,T_f} (\tbtau_2) 
     := \sum_{\tbeta \in E(\tX)}  \sum_{l \in \bZ_{\ge 0}}  \frac{1}{l!}  \inner{ \tbtau_2^l ,\tgamma_{\tk}}^{\tcX, T_f}_{\tbeta},
$$
which also takes value in $\bC$. Here, we take
$$
    \tbtau_2 = \sum_{a=1}^{K+1} \ttau_a \tu_a
$$
where $\{\tu_1, \dots, \tu_{K+1}\}$ is a suitable basis for $H^2_{\CR}(\tcX; \bQ)$, with $\tu_a$ an appropriate lift of $u_a$ for $a = 1, \dots, K$, and $\ttau_1, \dots, \ttau_{K+1}$ are complex variables. The class $\tgamma_{\tk}$ is as in Theorem \ref{thm:IntroNumerical}. Moreover, $E(\tX)$ is the semigroup $\NE(\tX)\cap H_2(\tX;\bZ)$.

Using the numerical correspondence (Theorem \ref{thm:IntroNumerical}), we give the following correspondence between the two generating functions.

\begin{theorem}[See Theorem \ref{thm:GenCorr}]\label{thm:IntroGenCorr}
For any $\tk \in \mu_{\fa\fm}$,  the correspondence
$$
    F^{\cX, (\cL, f)}_{\tk}(\btau_2, \sX) = \llangle \tgamma_{\tk}\rrangle^{\tcX,T_f}(\tbtau_2)
$$
holds under the relation $\ttau_a = \tau_a$ for $a = 1, \dots, K$ and $\ttau_{K+1} = \log \sX$.
\end{theorem}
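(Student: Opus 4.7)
The plan is to derive Theorem~\ref{thm:IntroGenCorr} from the numerical correspondence of Theorem~\ref{thm:IntroNumerical} by a purely formal generating-function manipulation. The key mechanism is that the extra variable $\ttau_{K+1}$ on the closed side, paired with the extra basis element $\tu_{K+1}$ coming from the new toric divisor $\cD$ introduced in the construction of $\tcX$, encodes the open-sector variable $\sX$ through the divisor equation.

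Starting from the right-hand side and expanding $\tbtau_2^l$ multinomially in the basis $\{\tu_1,\dots,\tu_{K+1}\}$, multilinearity of the Gromov-Witten correlators gives
\begin{equation*}
\llangle \tgamma_\lambda\rrangle^{\tcX,T_f}(\tbtau_2)
= \sum_{\tbeta \in E(\tX)} \sum_{l_1,\dots,l_{K+1}\ge 0} \prod_{a=1}^{K+1} \frac{\ttau_a^{l_a}}{l_a!} \, \langle \tu_1^{l_1},\dots,\tu_{K+1}^{l_{K+1}},\tgamma_\lambda\rangle^{\tcX,T_f}_{\tbeta},
\end{equation*}
where $\tu_a^{l_a}$ abbreviates $l_a$ insertions of $\tu_a$. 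Since $\tu_{K+1}$ is an untwisted degree-two class (a $\tT'$-equivariant lift of the class Poincar\'e-dual to $\cD$), the orbifold divisor equation of Abramovich-Graber-Vistoli applies; iterating it to strip off all $\tu_{K+1}$ insertions yields
\begin{equation*}
\langle \tu_1^{l_1},\dots,\tu_{K+1}^{l_{K+1}},\tgamma_\lambda\rangle^{\tcX,T_f}_{\tbeta}
= \left(\int_{\tbeta}\tu_{K+1}\right)^{l_{K+1}} \langle \tu_1^{l_1},\dots,\tu_K^{l_K},\tgamma_\lambda\rangle^{\tcX,T_f}_{\tbeta}.
\end{equation*}
Summing the resulting exponential series in $\ttau_{K+1}$ and substituting $\ttau_{K+1}=\log \sX$ produces a factor $\sX^{\int_{\tbeta}\tu_{K+1}}$. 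By the construction of $\tcX$, the class $\tu_{K+1}$ satisfies $\int_{\iota_*(\beta+d[B])}\tu_{K+1}=d$, so applying Theorem~\ref{thm:IntroNumerical} termwise and substituting $\ttau_a=\tau_a$ for $a=1,\dots,K$, the remaining multinomial sum re-assembles exactly into $F^{\cX,(\cL,f)}_\lambda(\btau_2,\sX)$.

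The remaining task, which I expect to be the main technical obstacle, is to show that classes $\tbeta \in E(\tX)$ either with $\int_{\tbeta}\tu_{K+1}=0$ or not lying in the image of $\iota_*$ contribute zero, so that the sum on the closed side restricts precisely to the indices $(\beta,d)$ with $d\ge 1$ appearing on the open side. The expected mechanism is that the insertion $\tgamma_\lambda \in H^4_{\CR,\tT'}(\tcX;\bQ)$ is supported on an inertia component of $\tcX$ attached to the compactification of $\fl$, so any nonzero correlator forces a marked point to map into $\cD$ with monodromy $\lambda$, which in turn forces $\int_{\tbeta}\tu_{K+1}\ge 1$ and $\tbeta$ to come from $\iota_*$. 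Verifying this via the equivariant localization analysis underlying Theorem~\ref{thm:IntroNumerical}, together with confirming the applicability of the divisor equation to the two-pointed genus-zero configurations that remain after stripping out $\tu_{K+1}$ insertions, constitutes the technical work required to complete the proof.
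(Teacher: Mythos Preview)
Your proposal is correct and follows essentially the same approach as the paper: expand $\tbtau_2$, apply the divisor equation to strip off the $\tu_{K+1}$ insertions (the paper does a binomial split $\tbtau_2 = \hat{\btau}_2 + \ttau_{R-2}\tu_{R-2}$ rather than a full multinomial expansion, which is cosmetically different but equivalent), invoke the numerical correspondence, and re-sum the exponential in $\ttau_{K+1}$ to produce $\sX^d$. The vanishing you flag as the remaining technical point is exactly what the paper establishes via localization in Observation~\ref{obs:AModelLVanish} and Lemma~\ref{lem:OtherClassVanish}, so your anticipated mechanism is on target.
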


A notable feature of Theorem \ref{thm:IntroGenCorr} is that the extra closed variable $\ttau_{K+1}$ is identified with (the logarithm) of the open variable $\sX$, which was predicted by the original proposal \cite{Mayr01,LM01}.

The generating function $\llangle \tgamma_{\tk}\rrangle^{\tcX,T_f}$ is related in a standard way to another generating function of genus-zero closed Gromov-Witten invariants of $\tcX$ known as the \emph{$J$-function} \cite{Tseng10, CG07, Givental98}. As an application of Theorem \ref{thm:IntroGenCorr}, we show that the generating function $F^{\cX, (\cL, f)}_{\tk}$ of disk invariants of $(\cX, \cL, f)$ can be recovered from the $\tT'$-equivariant small $J$-function of $\tcX$, denoted $J_{\tcX}^{\tT'}(\tbtau_2, z)$. It is a power series in the inverse of an extra variable $z$ with coefficients valued in the $\tT'$-equivariant Chen-Ruan cohomology of $\tcX$.

\begin{theorem}[See Theorem \ref{thm:JPairing}]\label{thm:IntroJPairing}
For any $\tk \in \mu_{\fa\fm}$, 
$$
    F^{\cX, (\cL, f)}_{\tk} (\btau_2, \sX) = [z^{-2}]  \left(J_{\tcX}^{\tT'}(\tbtau_2, z), \tgamma_{\tk} \right)_{\tcX}^{\tT'} \bigg|_{T_f}
$$
under the relation $\ttau_a = \tau_a$ for $a = 1, \dots, K$ and $\ttau_{K+1} = \log \sX$.
\end{theorem}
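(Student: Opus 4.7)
The plan is to derive this theorem from Theorem \ref{thm:IntroGenCorr} via the standard dictionary between Gromov-Witten generating functions and the small $J$-function. By Theorem \ref{thm:IntroGenCorr}, it suffices to prove the purely closed-theoretic identity
$$
\llangle \tgamma_\lambda \rrangle^{\tcX, T_f}(\tbtau_2) = [z^{-2}] \bigl( J_{\tcX}^{\tT'}(\tbtau_2, z), \tgamma_\lambda \bigr)_{\tcX}^{\tT'} \bigg|_{T_f},
$$
which is independent of the open geometry $(\cX, \cL, f)$. Since the $T_f$-equivariant invariants of $\tcX$ are obtained from the $\tT'$-equivariant ones by restricting the equivariant parameters along $T_f \hookrightarrow \tT'$, it is enough to prove the analogous identity purely in $\tT'$-equivariant cohomology and then restrict at the very end.

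To do this, I would recall that the $\tT'$-equivariant small $J$-function of $\tcX$ (in the conventions of \cite{Tseng10,CG07,Givental98}) takes the form
$$
J_{\tcX}^{\tT'}(\tbtau_2, z) = \one + \tbtau_2/z + \sum_\alpha \phi^\alpha \sum_{(\tbeta, n)} \frac{1}{n!} \inner{\tbtau_2^n, \frac{\phi_\alpha}{z(z - \psi)}}^{\tcX, \tT'}_{\tbeta},
$$
where $\{\phi_\alpha\}$ and $\{\phi^\alpha\}$ are Poincar\'e-dual bases of a suitable localization of $H^*_{\CR, \tT'}(\tcX; \bQ)$ and the sum is over stable pairs. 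Pairing with $\tgamma_\lambda$ and applying the duality identity $\sum_\alpha \phi^\alpha (\phi_\alpha, \tgamma_\lambda)_{\tcX}^{\tT'} = \tgamma_\lambda$ gives
$$
\bigl(J_{\tcX}^{\tT'}, \tgamma_\lambda\bigr)_{\tcX}^{\tT'} = (\one, \tgamma_\lambda)_{\tcX}^{\tT'} + (\tbtau_2, \tgamma_\lambda)_{\tcX}^{\tT'}/z + \sum_{(\tbeta, n)} \frac{1}{n!} \inner{\tbtau_2^n, \frac{\tgamma_\lambda}{z(z - \psi)}}^{\tcX, \tT'}_{\tbeta}.
$$
Expanding $\frac{1}{z(z - \psi)} = \sum_{k \ge 0} \psi^k/z^{k+2}$ as a geometric series and extracting the coefficient of $z^{-2}$ selects exactly the $k = 0$ contribution, yielding
$$
[z^{-2}] \bigl(J_{\tcX}^{\tT'}, \tgamma_\lambda\bigr)_{\tcX}^{\tT'} = \sum_{(\tbeta, n)} \frac{1}{n!} \inner{\tbtau_2^n, \tgamma_\lambda}^{\tcX, \tT'}_{\tbeta},
$$
since the leading terms $(\one, \tgamma_\lambda)$ and $(\tbtau_2, \tgamma_\lambda)/z$ contribute only to orders $z^0$ and $z^{-1}$ respectively. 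The right-hand side is by definition $\llangle \tgamma_\lambda \rrangle^{\tcX, \tT'}(\tbtau_2)$, and restricting the equivariant parameters from $\tT'$ to $T_f$ concludes the argument.

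The principal obstacle, which is relatively mild, is reconciling normalization conventions: one must verify that the specific variant of the small $J$-function used in the paper is normalized so that the $z^{-2}$ coefficient of its pairing with $\tgamma_\lambda$ extracts $\psi^0$ (rather than $\psi^1$ or similar), and one must correctly treat the unstable moduli contributions at $\tbeta = 0$ with $n \le 1$, which are conventionally absorbed into the asymptotic terms $\one$ and $\tbtau_2/z$ of the $J$-function and are matched by the convention that the corresponding unstable GW invariants in $\llangle \tgamma_\lambda \rrangle$ vanish. Once these conventions are fixed, the remainder is a routine application of the standard $J$-function calculus and requires no geometric input beyond Theorem \ref{thm:IntroGenCorr}.
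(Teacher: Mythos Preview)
Your proposal is correct and follows essentially the same approach as the paper: reduce to the closed-theoretic identity $\llangle \tgamma_\lambda \rrangle^{\tcX,\tT'}(\tbtau_2) = [z^{-2}](J_{\tcX}^{\tT'}(\tbtau_2,z), \tgamma_\lambda)_{\tcX}^{\tT'}$ via Theorem~\ref{thm:IntroGenCorr}, establish that identity by expanding the $J$-function and invoking the string equation, and then restrict to $T_f$. The paper packages the closed-theoretic step as a separate lemma (Lemma~\ref{lem:JPairing}) stated for its specific convention $J_{\tcX}^{\tT'}(\tbtau_2,z) = \one + \sum_a \llangle \one, \frac{\tu_a}{z-\bar\psi}\rrangle^{\tcX,\tT'}(\tbtau_2)\,\tu^a$, whereas you work with the string-equation-equivalent form involving $\frac{1}{z(z-\psi)}$; your caveat about reconciling normalizations is exactly the point where these two presentations meet.
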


Here, $\left(-,- \right)_{\tcX}^{\tT'}$ denotes the $\tT'$-equivariant orbifold Poincar\'e pairing of $\tcX$, the notation $\big|_{T_f}$ stands for taking weight restriction to the 1-torus $T_f$, and the notation $[z^{-2}]$ stands for taking the coefficient of $z^{-2}$ in the power series expansion.

\subsection{B-model correspondence and mirror symmetry}
Under the well-known \emph{mirror theorem} \cite{Givental98, CCK15, CCIT15}, the $J$-function of $\tcX$ as the A-model side is identified with the \emph{$I$-function} of $\tcX$ on the B-model side, which is an explicit hypergeometric function defined by the toric data. Let $I_{\tcX}^{\tT'}(\tq, z)$ denote the $\tT'$-equivariant $I$-function of $\tcX$, which depends on variables $\tq = (\tq_1, \dots, \tq_{K+1})$ that are related to $\ttau_1, \dots, \ttau_{K+1}$ under the \emph{closed mirror map} $\tbtau_2 = \tbtau_2(\tq)$.

Similarly, on the open side, the \emph{open mirror theorem} of \cite{FL13, FLT12} identifies the generating function $F^{\cX, (\cL, f)}_{\tk}(\btau_2, \sX)$ of disk invariants of $(\cX, \cL, f)$ with an explicit hypergeometric function $W^{\cX, (\cL, f)}_{\tk}(q, x)$ which we refer to as the \emph{B-model disk function}. It depends on closed-sector variables $q = (q_1, \dots, q_K)$ and an open-sector variable $x$ that are related to $\tau_1, \dots, \tau_K, \sX$ under the closed mirror map $\btau_2 = \btau_2(q)$ and the open mirror map $\sX = \sX(q,x)$. We note that \cite{FL13, FLT12} required that $f \in \bZ$, although their method can be directly generalized to the case $f \in \bQ$.

Now consider the web of relations in Figure \ref{fig:Web}, where the horizontal arrows represent the mirror theorems mentioned above. The vertical arrow on the left represents our Theorem \ref{thm:IntroJPairing} as a version of the open/closed correspondence on the A-model side. On the B-model side, we prove the parallel statement that the B-model disk function can be recovered from the $\tT'$-equivariant $I$-function of $\tcX$, filling in the vertical arrow on the right.

\begin{figure}[h]
$$
    \xymatrix{
        F^{\cX, (\cL, f)}_{\tk}(\btau_2, \sX) \ar@{<->}[r]^{\substack{\text{open mirror} \\ \text{theorem \cite{FL13, FLT12}} \\ \text{(Thm \ref{thm:OpenMirror})}}}  & W^{\cX, (\cL, f)}_{\tk}(q, x) & \text{$(\cX, \cL, f)$ (open)}\\
        J_{\tcX}^{\tT'}(\tbtau_2, z) \ar@{<->}[r]_{\substack{\text{closed mirror}\\ \text{theorem} \\ \text{\cite{Givental98, CCK15, CCIT15}} \\ \text{(Thm \ref{thm:Mirror})}}} 
         \ar[u]^{\text{Thm \ref{thm:IntroJPairing}/\ref{thm:JPairing}}} 
        & I_{\tcX}^{\tT'}(\tq, z).  \ar[u]_{\text{Thm \ref{thm:IntroIPairing}/\ref{thm:IPairing}}} & \text{$\tcX$ (closed)}\\
        \text{A-model} & \text{B-model} &
    }
$$
\caption{Open/closed correspondences and mirror symmetry.}
\label{fig:Web}
\end{figure}

\begin{theorem}[See Theorem \ref{thm:IPairing}]\label{thm:IntroIPairing}
For each $\tk \in \mu_{\fa\fm}$,  
$$
W^{\cX, (\cL, f)}_{\tk} (q, x) = [z^{-2}] \left(I_{\tcX}^{\tT'}(\tq, z), \tgamma_{\tk} \right)_{\tcX}^{\tT'} \bigg|_{T_f}
$$
under the relation $\tq_a = q_a$ for $a = 1, \dots, K$ and $\tq_{K+1} = x$.
\end{theorem}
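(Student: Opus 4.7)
The plan is to close up the commutative square of Figure \ref{fig:Web} by assembling the three sides already available. Starting from the B-model disk function $W^{\cX, \cL, f}_\lambda(q, x)$, I would first apply the open mirror theorem (Theorem \ref{thm:OpenMirror}, following Fang-Liu-Tseng \cite{FL13, FLT12}) to identify it with the A-model disk generating function $F^{\cX, (\cL, f)}_\lambda(\btau_2, \sX)$, under the closed mirror map $\btau_2 = \btau_2(q)$ of $\cX$ and the open mirror map $\sX = \sX(q, x)$. I would then invoke our A-model open/closed correspondence (Theorem \ref{thm:IntroJPairing}) to re-express the result as $[z^{-2}] \bigl( J_{\tcX}^{\tT'}(\tbtau_2, z), \tgamma_\lambda \bigr)_{\tcX}^{\tT'} \big|_{T_f}$, under the relation $\ttau_a = \tau_a$ for $a = 1, \dots, K$ and $\ttau_{K+1} = \log \sX$. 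Finally, the closed mirror theorem (Theorem \ref{thm:Mirror}) for the Calabi-Yau $4$-orbifold $\tcX$ converts the $J$-function evaluated at $\tbtau_2(\tq)$ into the $I$-function $I_{\tcX}^{\tT'}(\tq, z)$, producing the desired identity.

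The main obstacle I expect is a compatibility statement relating the various mirror maps. To match the substitution $\tq_a = q_a$ for $a \le K$ and $\tq_{K+1} = x$ that appears in the target identity with the substitutions dictated by the other three sides of the square, I need to show that the closed mirror map of $\tcX$ restricts under this substitution to
\begin{equation*}
    \ttau_a(\tq) = \tau_a(q) \quad (1 \le a \le K), \qquad \ttau_{K+1}(\tq) = \log \sX(q, x).
\end{equation*}
I would prove this by direct hypergeometric analysis: the $I$-function $I_{\tcX}^{\tT'}$ is determined by the extended stacky fan of $\tcX$, and the mirror map is read off from its $z^{-1}$-coefficient. Since $\tcX$ is built from $\cX$ by adjoining a single new toric divisor $\cD$ of Calabi-Yau type and then a minimal partial compactification introducing no extra toric divisor that contributes to $H^2$, the extended Mori generators of $\tX$ split naturally into $K$ generators inherited from $X$ and one new generator tied to the disk class $[B]$ bounded by $\cL$. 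The first $K$ hypergeometric contributions, after specialization of the equivariant parameter corresponding to $T_f$, should reduce to the closed mirror map for $\cX$; the contribution indexed by the new generator should recover the open mirror map of \cite{FL13, FLT12} once the $T_f$-weight on $\cO_{\cX \sqcup \cD}(-\cD)$ used in the construction of $\tcX$ is reconciled with the framing $f$ on $\cL$. The delicate checks are that (i) the partial compactification does not contaminate the first $K$ factors, and (ii) the new weight data dictated by $\tcX$ is precisely what produces the framed open mirror map.

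Once the compatibility of mirror maps is in place, the four-step diagram chase closes automatically; no further enumerative input beyond Theorems \ref{thm:IntroJPairing}, \ref{thm:OpenMirror}, and \ref{thm:Mirror} is required, and Theorem \ref{thm:IntroIPairing} follows.
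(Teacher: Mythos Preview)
Your diagram-chase strategy is genuinely different from the paper's proof, which is a direct hypergeometric comparison: the paper sets up a bijection between $\bK_{\eff,\tsi_0}\setminus\bL_{\bQ}$ and $\bK_{\eff}(\cX,\cL)$, matches the terms of the $I$-function against those of $W^{\cX,\cL,f}_\lambda$ explicitly (Lemma~\ref{lem:BModelContribute}), and then shows the remaining classes contribute zero after pairing with $\tgamma_\lambda$ and restricting weights (Lemmas~\ref{lem:BModelVanish}, \ref{lem:BModelLVanish}). No mirror theorem, open or closed, is invoked.

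There is a real circularity issue in your proposal as it fits into the paper's logical architecture. You rely on Theorem~\ref{thm:IntroJPairing} (equivalently Theorem~\ref{thm:JPairing}), but that result rests on Theorem~\ref{thm:GenCorr}, which rests on the numerical correspondence Theorem~\ref{thm:NumericalOuter}, whose proof (Section~\ref{sect:NumericalOuter}) invokes Lemma~\ref{lem:ClosedWellDefined} to guarantee the closed invariant has no pole along $\su_4=0$, $\su_2-f\su_1=0$. The paper's proof of Lemma~\ref{lem:ClosedWellDefined} in turn uses Theorem~\ref{thm:IPairing}. So in the paper's dependency graph, the $J$-function side already presupposes the $I$-function identity you are trying to establish. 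The paper does record your route as an ``alternative proof'' in the final remark of Section~\ref{sect:BModel}, but only \emph{after} the direct argument has broken this cycle; without an independent proof of Lemma~\ref{lem:ClosedWellDefined}, your argument is circular.

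A secondary gap: the closed mirror theorem (Theorem~\ref{thm:Mirror}) gives $I_{\tcX}^{\tT'}=e^{\ttau_0/z}J_{\tcX}^{\tT'}$, not $I=J$, so passing from the $J$-pairing to the $I$-pairing produces an extra $e^{-\ttau_0/z}$ factor. One must check that this factor disappears after pairing with $\tgamma_\lambda$ and restricting to $T_f$; the paper handles this in \eqref{eqn:NoTau0} via a vanishing argument analogous to Lemma~\ref{lem:BModelLVanish}. You do not mention this step. Your mirror-map compatibility claim is correct and is exactly Proposition~\ref{prop:MirrorMapCorr}, but that alone is not enough.
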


Our proof is by an explicit comparison of the hypergeometric functions and is independent of mirror symmetry or the A-model correspondence. Again, the extra closed variable $\tq_{K+1}$ is identified with the open variable $x$. In fact, we show that the closed mirror map of $\tcX$ can be identified with the open-closed mirror map of $(\cX, \cL, f)$ as follows:
$$
    \ttau_a(\tq) = \tau_a(q), \quad a = 1, \dots, K,
$$
$$
    \ttau_{K+1}(\tq) = \log \sX(q, x). 
$$
See Proposition \ref{prop:MirrorMapCorr}. As a consequence, we confirm that the diagram in Figure \ref{fig:Web} is ``commutative'' in the sense that our A- and B-model open/closed correspondences are compatible with mirror symmetry. In particular, we obtain an alternative proof of the open mirror theorem \cite{FL13, FLT12} in the more general case where the framing $f \in \bQ$ is not necessarily an integer.

\subsection{Organization of the paper}
We start in Section \ref{sect:construction} with the construction of the closed geometry $\tcX$ from the open geometry $(\cX, \cL, f)$. In Section \ref{sect:GW}, we review the open and closed orbifold Gromov-Witten theories and localization computations of the invariants. We prove Theorem \ref{thm:IntroNumerical}, the numerical open/closed correspondence, in Section \ref{sect:Numerical}. Then, in Section \ref{sect:Generating}, we prove the correspondences Theorems \ref{thm:IntroGenCorr} and \ref{thm:IntroJPairing} at the level of generating functions. Finally, in Section \ref{sect:BModel}, we prove the B-model correspondence Theorem \ref{thm:IntroIPairing} and show that our correspondences are compatible with mirror symmetry.

\subsection{Acknowledgments}
The authors would like to thank Zhengyu Zong for helpful comments. The authors would also like to thank the hospitality and support of the Simons Center for Geometry and Physics  during the program on Integrability, Enumerative Geometry and Quantization, where part of the paper was completed. The program was partially supported by the NSF grant DMS-1564497.


\section{Toric geometry and constructions}\label{sect:construction}
In this section, we define the toric Calabi-Yau 3-orbifold $\cX$ and the framed Aganagic-Vafa brane $(\cL, f)$. Then, we give the construction of the dual toric Calabi-Yau 4-orbifold $\tcX$ and describe its relations to the open geometry of $(\cX, \cL, f)$. In general, we use notations with with tilde ($\tilde{\phantom{a}}$) while discussing $\tcX$ and its closed Gromov-Witten invariants. We work over $\bC$ throughout.

\subsection{Preliminaries on toric orbifolds}\label{sect:PrelimToric}
We start by reviewing the basics of \emph{toric orbifolds}, or \emph{smooth toric Deligne-Mumford stacks} with trivial generic stablizer, and introducing some notations. We refer to \cite{CLS11,Fulton93} for the general theory of toric varieties, and to \cite{BCS05, FMN10} for the general theory of smooth toric Deligne-Mumford stacks.

\subsubsection{Extended stacky fan}
Let $\cZ$ be an $r$-dimensional toric orbifold specified by an \emph{extended stacky fan} $\bXi = (\bZ^r, \Xi, \alpha)$ in the sense of Jiang \cite{Jiang08}, where $\Xi$ is a finite simplicial fan in $\bR^r = \bZ^r \otimes \bR$ and $\alpha:\bZ^R \to \bZ^r$ is a group homomorphism determined by a list of vectors $(b_1, \dots, b_R)$ in $\bZ^r$. The coarse moduli space $Z$ of $\cZ$ is the simplicial toric variety defined by the fan $\Xi$. Since $\cZ$ is an orbifold, the Deligne-Mumford torus acting on $\cZ$ is also the dense algebraic torus acting on $Z$, which is isomorphic to $(\bC^*)^r$.

For each $d = 0, \dots, r$, let $\Xi(d)$ denote the set of $d$-dimensional cones in $\Xi$. In particular, there exists $1 \leq R' \leq R$ such that
$$
   \Xi(1) = \{\bR_{\ge 0}b_1, \dots, \bR_{\ge 0}b_{R'}\}.
$$
If $\alpha':\bZ^{R'} \to \bZ^r$ is the group homomorphism determined by $(b_1, \dots, b_{R'})$, then the triple $(\bZ^r, \Xi, \alpha')$ is the \emph{stacky fan} of $\cZ$ in the sense of Borisov-Chen-Smith \cite{BCS05}.

For each $\sigma \in \Xi(d)$, let $\cV(\sigma) \subseteq \cZ$ denote the codimension-$d$ $(\bC^*)^r$-invariant closed substack of $\cZ$ corresponding to $\sigma$. Let $V(\sigma) \subseteq Z$ denote the codimension-$d$ $(\bC^*)^r$-orbit closure in $Z$ corresponding to $\sigma$, which is the coarse moduli space of $\cV(\sigma)$. Let
$$
    \iota_{\sigma}: \cV(\sigma) \to \cZ, \quad V(\sigma) \to Z,
$$
denote the inclusion maps.\footnote{By an abuse of notation, in this paper, the letter $\iota$ will be used to denote various natural inclusion maps of substacks/subvarieties, fixed loci of moduli spaces, or cones. The precise meaning and usage will be made clear in the context.}

\subsubsection{Stablizers}
Let $\sigma \in \Xi(d)$. We set index sets
$$
    I_\sigma' := \{ i \in \{1, \dots, R'\} : \rho_i \subseteq \sigma \}, \qquad I_\sigma := \{1, \dots, R\} \setminus I_\sigma'.
$$
Note that $|I_\sigma'|=d$. The generic stablizer group of the substack $\cV(\sigma)$, denoted $G_\sigma$, is a finite abelian group and can be identified as
$$
    G_\sigma \cong \left. \left( \bZ^r \cap \sum_{i \in I_\sigma'} \bR b_i \right) \middle/ \sum_{i \in I_\sigma'} \bZ b_i \right. .
$$
We define
$$
    \Box(\sigma) := \left\{ v \in \bZ^r: v = \sum_{i \in I_\sigma'} c_ib_i \text{ for some } 0 \le c_i < 1 \right\},
$$
which gives a set of representatives for $G_\sigma$. Given $v = \sum_{i \in I_\sigma'} c_i(v)b_i \in \Box(\sigma)$, we define
$$
    \age(v) := \sum_{i \in I_\sigma'} c_i(v).
$$
Given cones $\tau \subseteq \sigma$ in $\Xi$, we have natural inclusions $G_\tau \subseteq G_\sigma$, $\Box(\tau) \subseteq \Box(\sigma)$.

\subsubsection{Fixed points, torus-invariant lines, fundamental groups, flags}
For each $\sigma \in \Xi(r)$, let $\fp_\sigma := \cV(\sigma)$ denote the corresponding $(\bC^*)^r$-fixed point in $\cZ$, and $p_\sigma:= V(\sigma)$ denote the corresponding $(\bC^*)^r$-fixed point in $Z$. For each $\tau \in \Xi(r-1)$, let $\fo_\tau \cong \bC^* \times \cB G_\tau$ denote the corresponding $(\bC^*)^r$-orbit in $\cZ$, and $o_\tau \cong \bC^*$ denote the corresponding $(\bC^*)^r$-orbit in $Z$. Let $\fl_\tau:= \cV(\tau)$ denote the corresponding closed $(\bC^*)^r$-invariant line in $\cZ$, which is the closure of $\fo_\tau$, and $l_\tau := V(\tau)$ denote the corresponding closed $(\bC^*)^r$-invariant line in $Z$, which is the closure of $o_\tau$. We set
$$
    \Xi(r-1)_c:= \{\tau \in \Xi(r-1) : l_\tau \text{ is compact} \},
$$
and define
$$
    \cZ^1_c := \bigcup_{\tau \in \Xi(r-1)_c} \fl_{\tau}, \quad Z^1_c:= \bigcup_{\tau \in \Xi(r-1)_c} l_{\tau}.
$$

For each $\tau \in \Xi(r-1)$, let $H_\tau := \pi_1(\fo_\tau)$ be the fundamental group of $\fo_\tau$. The projection $\fo_\tau \to o_\tau$ to the coarse moduli space induces a map
$$
    \pi_\tau: H_\tau = \pi_1(\fo_\tau) \to \pi_1(o_\tau) \cong \bZ
$$
on fundamental groups, which fits into a split short exact sequence
\begin{equation}\label{eqn:LineFundGroup}
    \xymatrix{
        1 \ar[r] & G_\tau \ar[r] & H_\tau \ar[r]^{\pi_\tau} & \bZ \ar[r] & 1.
    }
\end{equation}

Let
$$
    F(\Xi):= \{ (\tau, \sigma) \in \Xi(r-1) \times \Xi(r) : \tau \text{ is a face of } \sigma\}
$$
be the set of \emph{flags} in $\Xi$. Given a flag $(\tau, \sigma) \in F(\Xi)$, we have $\fp_\sigma \subset \fl_\tau$ and $p_\sigma \in l_\tau$. Let
$$
\chi_{(\tau, \sigma)}: G_\sigma \to \bC^*
$$
be the representation of $G_\sigma$ on the tangent line $T_{\fp_\sigma}\fl_\tau$. The image of $\chi_{(\tau, \sigma)}$ is $\mu_{\fr(\tau, \sigma)}$, where
$$
    \fr(\tau, \sigma):= \frac{|G_\sigma|}{|G_\tau|},
$$
and for $r \in \bZ_{>0}$, $\mu_r \subset \bC^*$ is the cyclic group of $r$-th roots of unity. The map $\chi_{(\tau, \sigma)}$ and the inclusion $G_\tau \to G_\sigma$ fit into a short exact sequence
\begin{equation}\label{eqn:FlagStab}
    \xymatrix{
        1 \ar[r] & G_\tau \ar[r] & G_\sigma \ar[r]^{\chi_{(\tau, \sigma)}} & \mu_{\fr(\tau, \sigma)} \ar[r] & 1.
    }
\end{equation}

 Let $\fu_{(\tau, \sigma)}:= \fo_\tau \cup \fp_\sigma$, which is an open substack of $\fl_\tau$. The inclusion $\fo_\tau \to \fu_{(\tau, \sigma)}$ induces a surjective map
\begin{equation}\label{eqn:FlagFundGroup}
    \pi_{(\tau, \sigma)}: H_\tau = \pi_1(\fo_\tau) \to \pi_1(\fu_{(\tau, \sigma)}) \cong G_\sigma
\end{equation}
on fundamental groups which together with \eqref{eqn:LineFundGroup}, \eqref{eqn:FlagStab} fits into the following commutative diagram:
\begin{equation}\label{eqn:FlagMapDiagram}
    \xymatrix{
        1 \ar[r] & G_\tau \ar[r] \ar[d]^{\text{id}} & H_\tau \ar[r]^{\pi_\tau} \ar[d]^{\pi_{(\tau, \sigma)}} & \bZ \ar[r] \ar[d]^{d \mapsto e^{2\pi\sqrt{-1}d/\fr(\tau, \sigma)}} & 1\\
        1 \ar[r] & G_\tau \ar[r] & G_\sigma \ar[r]_{\chi_{(\tau, \sigma)}} & \mu_{\fr(\tau, \sigma)} \ar[r] & 1.
    }
\end{equation}

\subsubsection{Chen-Ruan orbifold cohomology}
Let
$$
    \Box(\cZ) := \bigcup_{\text{cone $\sigma$ in $\Xi$}} \Box(\sigma) = \bigcup_{\text{maximal cone $\sigma$ in $\Xi$}} \Box(\sigma),
$$
which indexes the inertia components of $\bZ$. The inertia stack of $\cZ$ is
$$
    \cI \cZ = \bigsqcup_{j \in \Box(\cZ)} \cZ_j.
$$
In particular, $\cZ_{\vzero} = \cZ$ is the untwisted sector. Let
$$
    \inv^*: \cI \cZ \to \cI \cZ
$$
denote the involution on $\cI \cZ$ which $(z,g)$ with $z \in \cZ, g \in \Aut(z)$ to $(z, g^{-1})$.

As a graded vector space over $\bQ$  (and as the state-space of relevant quantum theory in physics \cite{Za93}), the \emph{Chen-Ruan cohomology group} \cite{CR04} of $\cZ$ is defined as
$$
    H_{\CR}^*(\cZ; \bQ) := \bigoplus_{j \in \Box(\cZ)} H^*(\cZ_j; \bQ)[2\age(j)],
$$
where $[2\age(j)]$ denotes a degree shift by $2\age(j)$. We write $\one_j$ for the unit of $H^*(\cZ_j; \bQ)$, viewed as an element of $H_{\CR}^{2\age(j)}(\cZ; \bQ)$. In addition, for any subtorus $Q \subseteq (\bC^*)^r$, the \emph{$Q$-equivariant} Chen-Ruan cohomology group of $\cZ$ is
$$
    H_{\CR, Q}^*(\cZ; \bQ) := \bigoplus_{j \in \Box(\cZ)} H^*_Q(\cZ_j; \bQ)[2\age(j)],
$$
which is a module over $H^*_{Q}(\pt;\bQ)$. The above definitions can be extended to $\bC$-coefficients.

\subsection{The toric Calabi-Yau 3-orbifold \texorpdfstring{$\cX$}{X}}
Let $N \cong \bZ^3$. Let $\cX$ be a toric Calabi-Yau 3-orbifold defined by an extended stacky fan $\bSi = (N, \Sigma, \alpha)$ and $X$ be the coarse moduli space of $\cX$. Let $T = N \otimes \bC^* \cong (\bC^*)^3$ be the Deligne-Mumford torus acting on $\cX$ (and $X$).

\begin{assumption}\label{assump:OnX} \rm{
We make the following assumptions:
    \begin{itemize}
        \item $X$ is \emph{Calabi-Yau}: The canonical bundle $K_X$ of $X$ is trivial.
        \item $X$ is \emph{semi-projective}: $X$ is projective over its affinization $\Spec(H^0(X, \cO_X))$.
    \end{itemize}
}\end{assumption}

Suppose the homomorphism $\alpha:\bZ^R \to N$ is specified by $b_i = \alpha(e_i)$ for each $i$, where $\{e_1, \dots, e_R\}$ is the standard basis for $\bZ^R$. Let $M:= \Hom(N, \bZ)$, which is canonically identified with the character lattice $\Hom(T, \bC^*)$ of $T$. The Calabi-Yau condition implies the existence of a character $u_3 \in M$ such that, if $R' = |\Sigma(1)|$ is the number of rays, then $\inner{u_3, b_i} = 1$ for all $i = 1, \dots, R'$. That is, $b_1, \dots, b_{R'}$ belong to $N' \times \{1\}$ where $N':= \ker(u_3) \subset N$.

Let $P$ be the cross section of the support $|\Sigma|$ of $\Sigma$ in the hyperplane $N'_{\bR} \times \{1\}$.\footnote{In this paper, for a lattice $L$ and a field $\bF = \bR, \bQ$, or $\bC$, we let $L_{\bF}$ denote $L \otimes_{\bZ} \bF$.} Then $|\Sigma|$ is the cone over $P$, and $\Sigma$ induces a triangulation of $P$. Moreover, the semi-projectivity condition implies that $P$ is convex. We assume that the additional lattice points $b_{R'+1}, \dots, b_R$ are chosen in a way that $(b_1, \dots, b_R)$ is a listing of the points in $P \cap N$. In particular, the homomorphism $\alpha$ is surjective and fits into the following short exact sequence of lattices:
\begin{equation}\label{eqn:XSES}
    \xymatrix{
        0 \ar[r] & \bL \ar[r]^\psi & \bZ^R \ar[r]^\alpha & N \ar[r] & 0,
    }
\end{equation}
where $\bL := \ker(\alpha) \cong \bZ^{R-3}$.

Let $G:= \bL \otimes (\bC^*) \cong (\bC^*)^{R-3}$. Let $\{\epsilon_1, \dots, \epsilon_{R-3}\}$ be a basis for $\bL$, and for each $a = 1, \dots, R-3$, let
$$
    l^{(a)} = (l^{(a)}_1, \dots, l^{(a)}_R) := \psi(\epsilon_a) \in \bZ^R.
$$
The vectors $l^{(a)}$ are known as \emph{charge vectors}, which describe the linear action of $G$ on $\bC^R = \bZ^R \otimes \bC = \Spec(\bC[x_1, \dots, x_R])$ induced by the inclusion $\psi$, as follows:
\begin{equation}\label{eqn:GAction}
    (s_1, \dots, s_{R-3}) \cdot (x_1, \dots, x_R) = \left(\prod_{a=1}^{R-3}s_a^{l_1^{(a)}}x_1, \dots, \prod_{a=1}^{R-3}s_a^{l_R^{(a)}}x_R  \right),
\end{equation}
where $(s_1, \dots, s_{R-3})$ are coordinates on $G$ specified by the basis $\{\epsilon_1, \dots, \epsilon_{R-3}\}$. Under this action, $\cX$ can be described as the quotient stack
\begin{equation}\label{eqn:XQuotient}
    \cX = \left[ ((\bC^{R'} \setminus Z(\Sigma)) \times (\bC^*)^{R-R'})/G \right],
\end{equation}
where $Z(\Sigma)$ is a closed subvariety of $\bC^{R'}$ defined by $\Sigma$. The semi-projectivity condition (Assumption \ref{assump:OnX}) implies that the above is a GIT quotient.

Given a flag $(\tau, \sigma) \in F(\Sigma)$, $G_\tau$ is a cyclic subgroup of $G_\sigma$. We define
$$
    \fm(\tau, \sigma):= |G_\tau|.
$$

\subsection{The framed Aganagic-Vafa brane \texorpdfstring{$(\cL,f)$}{L}}\label{sect:AVBrane}
In this section, we describe the symplectic structure on $\cX$ and define the Aganagic-Vafa brane $\cL$, following Fang-Liu-Tseng \cite{FLT12}. Let $G_{\bR} \cong U(1)^{R-3}$ be the maximal compact subgroup of $G$, which carries a Hamiltonian action on $\bC^R$ induced by \eqref{eqn:GAction}. The moment map $\tmu: \bC^R \to \fg_{\bR}^*$ of the $G_{\bR}$-action, where $\fg_{\bR}^* \cong \bR^{R-3}$ is the dual of the Lie algebra $\fg_{\bR}$ of $G_{\bR}$, can be described as
$$
    \tmu(x_1, \dots, x_R) = \left(\sum_{i = 1}^R l_i^{(1)}|x_i|^2, \dots, \sum_{i = 1}^R l_i^{(R-3)}|x_i|^2  \right).
$$

Applying $\Hom(-, \bZ)$ to \eqref{eqn:XSES}, we obtain a short exact sequence
\begin{equation}\label{eqn:XDualSES}
    \xymatrix{
        0 \ar[r] & M \ar[r]^{\alpha^\vee} & \bZ^R \ar[r]^{\psi^\vee} & \bL^\vee \ar[r] & 0.
    }
\end{equation}
There is a canonical identification $\fg_{\bR}^* \cong \bL_{\bR}^\vee$. Let $\{e_1^\vee, \dots, e_R^\vee\}$ be the basis dual to $\{e_1, \dots, e_R\}$. For each $i = 1, \dots, R$, define
$$
    D_i := \psi^\vee(e_i^\vee) \in \bL^\vee.
$$
For each maximal cone $\sigma \in \Sigma(3)$, define the \emph{extended $\sigma$-nef cone} as
$$
    \tNef(\sigma):= \sum_{i \in I_\sigma} \bR_{\ge 0} D_i.
$$
The \emph{extended nef cone} of $\cX$ is defined to be
$$
    \tNef(\cX):= \bigcap_{\sigma \in \Sigma(3)} \tNef(\sigma),
$$
which is an $(R-3)$-dimensional simplicial cone in $\bL_{\bR}^\vee$.

Let $r = (r_1, \dots, r_{R-3})$ be a point in the interior of $\tNef(\cX)$, which can be viewed as an \emph{extended K\"ahler class} of $\cX$. Then $\cX$ is the symplectic quotient
$$
    [\tmu^{-1}(r)/G_{\bR}],
$$
and the standard K\"ahler form
$$
    \frac{\sqrt{-1}}{2} \sum_{i=1}^R dx_i \wedge d\bar{x}_i
$$
on $\bC^{R}$ descends to a K\"ahler form $\omega_r$ on $\cX$.

An \emph{Aganagic-Vafa brane} \cite{AV00,FLT12} $\cL$ in $\cX$ is a Lagrangian suborbifold of form
$$
    \cL:= \left[ \left. \left\{ (x_1, \dots, x_R) \in \tmu^{-1}(r) : \sum_{i = 1}^R l_i' |x_i|^2 = c', \sum_{i=1}^R l_i''|x_i|^2 = c'', \arg(\prod_{i=1}^R x_i) = c'''       \right\}  \middle/ G_{\bR} \right. \right],
$$
where $c', c'', c''' \in \bR$ are constants and vectors $l' = (l_1', \dots, l_R'), l'' = (l_1'', \dots, l_R'') \in \bZ^R$ satisfy
$$
    \sum_{i=1}^R l_i' = \sum_{i=1}^R l_i'' = 0.
$$
Let $L \subset X$ be the coarse moduli space of $\cL$. The brane $\cL$ intersects a unique $T$-invariant line $\fl_{\tau_0}$ in $\cX$, where $\tau_0 \in \Sigma(2)$. 

\begin{assumption}\label{assump:OnL} \rm{
We assume that $\cL$ is an \emph{outer} brane: $\tau_0 \not \in \Sigma(2)_c$.
}\end{assumption}

The inclusions $\cL \cap \fl_{\tau_0} \to \cL$, $\cL \cap \fl_{\tau_0} \to \fo_{\tau_0}$ are homotopy equivalences. We have
$$
    \pi_1(\cL) = \pi_1(\fo_{\tau_0}) =  H_{\tau_0} \cong \bZ \times G_{\tau_0},
$$
which is abelian and thus also equal to $H_1(\cL;\bZ)$. 

Moreover, $\tau_0$ is contained in a unique $3$-cone $\sigma_0 \in \Sigma(3)$. After a possible permutation of indices, we assume that $I_{\sigma_0}' = \{1,2,3\}$ with $b_1, b_2, b_3$ appearing in $N' \times \{1\}$ in counterclockwise order, and that $I_{\tau_0}' = \{2, 3\}$. Let $\tau_2, \tau_3 \in \Sigma(2)$ be the other two facets of $\sigma_0$, with $I_{\tau_2}' = \{1,3\}$ and $I_{\tau_3}' = \{1,2\}$. See Figure \ref{fig:DistConesOuter}.

\begin{figure}[h]
    \begin{tikzpicture}[scale=0.8]
        \coordinate (01) at (4, -1);
        \coordinate (02) at (0, 3);        
        \coordinate (03) at (0, 0);
                
        \node at (01) {$\bullet$};
        \node at (02) {$\bullet$};
        \node at (03) {$\bullet$};

        \node at (5.4, -1.2) {$b_1 = (\fr, -\fs, 1)$};
        \node at (-1.4, 3.2) {$b_2 = (0, \fm, 1)$};
        \node at (-1.4, -0.2) {$b_3 = (0, 0, 1)$};

        \draw (01) -- (02) -- (03) -- (01);

        \node at (1.1, 0.7) {$\sigma_0$};
        \node at (-0.5, 1.5) {$\tau_0$};
        \node at (1.5, -0.8) {$\tau_2$};
        \node at (2.5, 1.4) {$\tau_3$};
    \end{tikzpicture}    

    \caption{Distinguished rays and cones associated to the Aganagic-Vafa outer brane $\cL$.}
    \label{fig:DistConesOuter}
\end{figure}

Let
$$
    \fm:= \fm(\tau_0, \sigma_0), \quad \fr:= \fr(\tau_0, \sigma_0).
$$
The flag $(\tau_0, \sigma_0)$ determines a basis $\{v_1, v_2, v_3\}$ for $N$ under which
$$
    b_1 = (\fr, -\fs, 1), \quad b_2 = (0, \fm, 1), \quad b_3 = (0,0,1)
$$
for some $\fs \in \{0, 1, \dots, \fr-1\}$. For $i = 1, \dots, R$, let $(m_i, n_i, 1)$ be the coordiante of $b_i$ under the basis $\{v_1, v_2, v_3\}$. Since $\cL$ is outer, we have $m_i \ge 0$ for all $i$. Let $u_1, u_2 \in M$ such that $\{u_1, u_2, u_3\}$ is the basis dual to $\{v_1, v_2, v_3\}$. The corresponding characters $\{\su_1, \su_2, \su_3\}$ of $T$ serve as equivariant parameters:
$$
    H^*_T(\pt;\bZ) = \bZ[\su_1, \su_2, \su_3].
$$
Let $T':= \ker(\su_3)$ be the 2-dimensional \emph{Calabi-Yau subtorus} of $T$ and $T'_{\bR}$ be the maximal compact subgroup of $T'$. Then $\cL$ is preserved under the $T_{\bR}'$-action. We have
$$
    H^*_{T'}(\pt;\bZ) = H^*_{T_{\bR}'}(\pt;\bZ) = \bZ[\su_1, \su_2].
$$
Let
$$
    \cQ_{T'} := \bQ(\su_1, \su_2)
$$
be the fractional field of $H^*_{T'}(\pt;\bQ) = H^*_{T_{\bR}'}(\pt;\bQ)$.

\subsubsection{Framing}
Let $f \in \bQ$ be a rational number called the \emph{framing} on $\cL$. We write
$$
    f = \frac{\fb}{\fa}
$$
where $\fa \in \bZ_{>0}$, $\fb \in \bZ$ are coprime integers. The framing $f$ determines a $1$-dimensional \emph{framing subtorus} $T_f:= \ker(\su_2 - f\su_1)$ of $T'$.
We have 
$$
    H^*_{T_f}(\pt;\bZ) = \bZ[\su]
$$
where $\su$ is a generator of $H^2_{T_f}(\pt;\bZ)\cong \Hom(T_f, \bC^*)\cong \bZ$.  
The injective group homomorphism $T_f \to  T'  $ induces a surjective ring homomorphism
$$
H^*_{T'}(\pt;\bZ)=\bZ[\su_1, \su_2] \lra  H^*_{T_f}(\pt;\bZ)=\bZ[\su], \quad  \su_1\mapsto \fa\su, \quad \su_2\mapsto \fb\su.
$$
Let
$$
    \cQ_{T_f} := \bQ(\su)
$$
be the fractional field of $H^*_{T_f}(\pt;\bQ)=\bQ[\su]$.


\subsection{The corresponding toric Calabi-Yau 4-orbifold \texorpdfstring{$\tcX$}{X}}
The corresponding toric Calabi-Yau 4-orbifold $\tcX$ is specified by the extended stacky fan $\tbSi = (\tSi, \tN, \talpha)$ where:
\begin{itemize}
    \item $\tN := N \oplus \bZ v_4 \cong \bZ^4$ is a 4-dimensional lattice.

    \item Let $\bZ^{R+2} \cong \bZ^R \oplus \bZ^2$ whose standard basis extends $\{e_1, \dots, e_R\}$ by $e_{R+1}, e_{R+2}$. The homomorphism $\talpha: \bZ^{R+2} \to \tN$ maps $e_1, \dots, e_{R+2}$ to $\tb_1, \dots, \tb_{R+2} \in \tN$, where
    $$
        \tb_i = (m_i, n_i, 1, 0) \quad \text{for } i = 1, \dots, R,
    $$
    $$
        \tb_{R+1} = (-\fa, -\fb, 1,1), \quad \tb_{R+2} = (0, 0, 1, 1).
    $$

    \item $\tSi$ has the following description: Let $\tM:= \Hom(\tN, \bZ)$ with basis $\{u_1, u_2, u_3, u_4\}$ dual to $\{v_1, v_2, v_3, v_4\}$. We abuse notation here since $u_1, u_2, u_3 \in \tM$ maps to $u_1, u_2, u_3 \in M$ respectively under the natural projection. Let $\tP$ be the convex hull of $\{\tb_1, \dots, \tb_{R+2}\}$ in $\tN_{\bR}$, which is a 3-dimensional convex polytope contained in the hyperplane $\tN'_{\bR} \times \{1\}$, where $\tN':= \ker(u_3) \subset \tN$. Note that $\tP$ contains $P$ as a facet. Let $\tP_0$ be the convex hull of $P \cup \{\tb_{R+2}\}$. We triangulate $\tP_0$ by taking the cone over the triangulation of $P$ induced by $\Sigma$, and extend this to a triangulation of $\tP$ that includes the simplex
    $$
        \{2, 3, R+1, R+2\}.
    $$
    Finally, let $\tSi$ be the fan which has support $|\tSi|$ equal to the cone over $\tP$ and is induced by the above triangulation of $\tP$.
\end{itemize}
Let $\tX$ be the coarse moduli space of $\tcX$, which is the simplicial toric 4-fold determined by the fan $\tSi$. By construction, $\tX$ is both \emph{Calabi-Yau} and \emph{semi-projective}.

Note that
$$
    \tSi(1) = \{\trho_{1}, \dots, \trho_{R'}, \trho_{R+1}, \trho_{R+2}\} \quad \text{where } \trho_i:= \bR_{\ge 0} \tb_i.
$$
Let $\tSi_0$ be the subfan of $\tSi$ whose support is the cone over $\tP_0$. We have
$$
    \tSi_0(1) = \{\trho_{1}, \dots, \trho_{R'}, \trho_{R+2}\}.
$$
The inclusions of fans
$$
    \Sigma \to \tSi_0 \to \tSi
$$
induce inclusions of toric orbifolds
$$
    \cX \to \cX \times \bC \to \tcX,
$$
and we denote the composition by $\iota: \cX \to \tcX$.

\begin{remark}\rm{
When $\cX = X$ is a smooth toric Calabi-Yau 3-fold, $\cL = L$ is an outer brane, and $f \in \bZ$, which is considered by \cite{LY21}, $\tcX = \tX$ constructed above is a smooth toric Calabi-Yau 4-fold and a semi-projective partial compactification of the 4-fold corresponding to $(X,L,f)$ constructed by \cite{LY21}, which is specified by the fan $\tSi_0$ together with the $4$-cone $\tsi_0$.
}\end{remark}

The lattice map $\talpha:\bZ^{R+2} \to \tN$ fits into the short exact sequence at the second row of the following commutative diagram:
\begin{equation}\label{eqn:tXSES}
    \xymatrix{
        0 \ar[r] & \bL \ar[r]^\psi \ar[d] & \bZ^R \ar[r]^\alpha \ar[d] & N \ar[r] \ar[d] & 0\\
        0 \ar[r] & \tbL \ar[r]^{\tpsi} & \bZ^{R+2} \ar[r]^{\talpha} & \tN \ar[r] & 0,
    }
\end{equation}
where $\tbL := \ker(\talpha) \cong \bZ^{R-2}$. We can identify $\bL$ as a sublattice of $\tbL$ and extend $\{\epsilon_1, \dots, \epsilon_{R-3}\}$ to a basis for $\tbL$ by including an additional vector $\epsilon_{R-2} \in \tbL$, in a way that
$$
    \tl^{(a)} := \tpsi(\epsilon_a) = (l_1^{(a)}, \dots, l_{R}^{(a)}, 0, 0) \quad \text{for } a = 1, \dots, R-3.
$$
Denote $\tl^{(R-2)} := \tpsi(\epsilon_{R-2})$. Then similar to \eqref{eqn:XQuotient}, $\tcX$ can be described as a quotient stack $\tcX = [((\bC^{R'+2} \setminus Z(\tSi)) \times (\bC^*)^{R-R'})/\tG]$ which is also a GIT quotient, where the linear action of $\tG:= \tbL \otimes (\bC^*) = (\bC^*)^{R-2}$ on $\bC^{R+2}$ is specified by $\tl^{(1)}, \dots, \tl^{(R-2)}$.

Let $\tT = \tN \otimes \bC^* \cong (\bC^*)^4$ be the complex algebraic torus acting on $\tcX$ (and $\tX$), which contains $T$ as a subtorus. The character lattice of $\tT$ is canonically identified with $\tM$. We have
$$
    H^*_{\tT}(\pt;\bZ) = \bZ[\su_1, \su_2, \su_3, \su_4],
$$
where $\{\su_1, \su_2, \su_3, \su_4\}$ are characters corresponding to $\{u_1, u_2, u_3, u_4\}$. Let $\tT' := \ker(\su_3)$ be the 3-dimensional \emph{Calabi-Yau subtorus} of $\tT$, which contains $T'$ as a subtorus. We have
$$
    H^*_{\tT'}(\pt;\bZ) = \bZ[\su_1, \su_2, \su_4].
$$
Let
$$
    \cQ_{\tT'} := \bQ(\su_1, \su_2, \su_4)
$$
be the fractional field of $H^*_{\tT'}(\pt;\bQ)$.

\subsection{Comparison of toric geometry and topology}\label{sect:ComparisonOuter}
We draw additional comparisons between the toric geometry and topology of $(\cX,\cL)$ and $\tcX$, mainly regarding the second homology groups, stablizer groups of torus-invariant substacks, and Chen-Ruan orbifold cohomology groups.

\subsubsection{Cones and flags}\label{sect:4ConeOuter}
Recall that we have an inclusion $\iota: X \to \tX$. On the level of cones, we have an injective map 
\begin{equation}\label{eqn:ConeMapIotaOuter}
    \iota: \Sigma(d) \to \tSi(d+1)
\end{equation}
for each $d = 0, 1, 2, 3$, given by
$$
    I_{\iota(\sigma)}' = I_{\sigma}' \sqcup \{R+2\} \quad \text{for all } \sigma \in \Sigma(d).
$$
In particular, $\iota(\Sigma(3))$ gives a set of $4$-cones in $\tSi$. Given any other $4$-cone $\tsi \in \tSi(4) \setminus \iota(\Sigma(3))$, we have that $R+1, R+2 \in I_{\tsi}'$ and the other two indices specify a $2$-cone $\delta_0(\tsi) \in \Sigma(2) \setminus \Sigma(2)_c$. This yields a map
\begin{equation}\label{eqn:MapDelta0}
    \delta_0: \tSi(4) \setminus \iota(\Sigma(3)) \to \Sigma(2) \setminus \Sigma(2)_c.
\end{equation}
Then for any $\tau$ in the image of $\delta_0$, we have $\iota(\tau) \in \tSi(3)_c$. In particular, let $\tsi_0 \in \tSi(4)$ be the $4$-cone with
$$
    I_{\tsi_0}' = \{2, 3, R+1, R+2\}.
$$
Then $\delta_0(\tsi_0) = \tau_0$. For each $\tsi \in \tSi(4) \setminus \iota(\Sigma(3))$, we set
$$
    I_{\delta_0(\tsi)}' = \{i_2(\tsi), i_3(\tsi)\}
$$
such that $b_{i_2(\tsi)}, b_{i_3(\tsi)}$ appear on the boundary of $\Delta$ in $N_{\bR}' \times \{1\}$ in counterclockwise order. In particular, $i_2(\tsi_0) = 2$ and $i_3(\tsi_0) = 3$. We have
$$
    I_{\tsi}' = \{i_2(\tsi), i_3(\tsi), R+1, R+2\}.
$$

There is an induced injective map of flags $\iota: F(\Sigma) \to F(\tSi)$ given by $\iota(\tau, \sigma) = (\iota(\tau), \iota(\sigma))$. Moreover, for any $3$-cone $\sigma \in \Sigma(3)$, we have $(\sigma, \iota(\sigma)) \in F(\tSi)$. Any other flag in $\tSi$ consitutes of a $4$-cone $\tsi \in \tSi(4) \setminus \iota(\Sigma(3))$ and one of its facets. In particular, we have $(\iota(\delta_0(\tsi)), \tsi) \in F(\tSi)$. The other facets $\delta_2(\tsi)$, $\delta_3(\tsi)$, $\delta_4(\tsi) \in \tSi(3)$ are specified by
$$
    I_{\delta_2(\tsi)}' = \{i_3(\tsi), R+1, R+2\}, \quad I_{\delta_3(\tsi)}' = \{i_2(\tsi), R+1, R+2\}, \quad I_{\delta_4(\tsi)}' = \{i_2(\tsi), i_3(\tsi), R+1\}.
$$
This yields maps
$$
    \delta_2, \delta_3, \delta_4: \tSi(4) \setminus \iota(\Sigma(3)) \to \tSi(3).
$$

\subsubsection{Stablizers}\label{sect:stablizer}
For any cone $\sigma$ in $\Sigma$, we have
$$
    G_{\iota(\sigma)} = G_{\sigma}, \quad \Box(\iota(\sigma)) = \Box(\sigma).
$$
Here and in the rest of this subsection, in view of the natural inclusion $N \to \tN$ which also makes $\sigma$ a cone in $\tSi$, it does not make a difference to define $\Box(\sigma)$ with respect to $\Sigma$ or $\tSi$. In particular, if $\sigma \in \Sigma(3)$, we have
$$
    \fr(\sigma, \iota(\sigma)) = 1
$$
for the flag $(\sigma, \iota(\sigma))$; for any flag $(\tau, \sigma) \in F(\Sigma)$, $G_{\iota(\tau)}$ is a cyclic subgroup of $G_{\iota(\sigma)}$ of order $\fm(\tau, \sigma)$, and
$$
    \fr(\iota(\tau,\sigma)) = \fr(\tau, \sigma).
$$

Moreover, a direct computation gives the following charaterization of stablizers of cones in $\tSi(4) \setminus \iota(\Sigma(3))$.

\begin{lemma}\label{lem:ExtraStab}
For any $\tsi \in \tSi(4) \setminus \iota(\Sigma(3))$, $G_{\tsi}$ is a cyclic group of order
$$
    |G_{\tsi}| = \fb(m_{i_3(\tsi)} - m_{i_2(\tsi)}) + \fa(n_{i_2(\tsi)} - n_{i_3(\tsi)}).
$$
If $|G_{\tsi}|>1$, then a generator is given by the element
$$
    \frac{|G_{\tsi}|-1}{|G_{\tsi}|}\tb_{i_2(\tsi)} + \frac{1}{|G_{\tsi}|}\tb_{i_3(\tsi)} + \frac{1}{\fa}\inner{\frac{m_{i_3(\tsi)} - m_{i_2(\tsi)}}{|G_{\tsi}|}}\tb_{R+1} + \inner{1-\frac{1}{\fa}\inner{\frac{m_{i_3(\tsi)} - m_{i_2(\tsi)}}{|G_{\tsi}|}}}\tb_{R+2}
$$
in $\Box(\tsi)$.
\end{lemma}

Elements of age at most 1 are precisely those contained in the cyclic subgroup
$$
    G_{\iota(\delta_0(\tsi))} = G_{\delta_0(\tsi)} = G_{\delta_4(\tsi)} \cong \mu_{\gcd(|m_{i_2(\tsi)} - m_{i_3(\tsi)}|, |n_{i_2(\tsi)} - n_{i_3(\tsi)}|)}.
$$
In addition, we have
$$
    G_{\delta_2(\tsi)} = G_{\delta_3(\tsi)} = \{1\}.
$$
Elements in $G_{\tsi} \setminus G_{\delta_0(\tsi)}$ all have age 2. In the example of $\tsi_0$, $G_{\tsi_0}$ has order $\fa\fm$ and the subgroup above has order $\fm$; when $\fm>1$, this subgroup is generated by $(0,1,1,0)$.

In summary, there is an inclusion
$$
    \Box(\cX) \subseteq \Box(\tcX),
$$
where any $j \in \Box(\tcX) \setminus \Box(\cX)$ is contained in $\Box(\tsi) \setminus \Box(\delta_0(\tsi))$ for some $\tsi \in \tSi(4) \setminus \iota(\Sigma(3))$ and has age $2$.

\subsubsection{Second homology}\label{sect:HomologyOuter}
The inclusion $\iota: X \to \tX$ induces an inclusion on second integral homology, described as follows. The inclusion $\iota: \Sigma(2) \to \tSi(3)$ restricts to an inclusion $\Sigma(2)_c \to \tSi(3)_c$. Then we have
$$
    \iota_*: H_2(X; \bZ) \to H_2(\tX; \bZ), \quad [l_{\tau}] \mapsto [l_{\iota(\tau)}] \quad \text{for all } \tau \in \Sigma(2)_c.
$$
Moreover, the brane $L$ bounds a holomorphic disk $B$ in $l_{\tau_0}$, which we orient by the holomorphic structure of $X$. Then
$$
    H_1(L; \bZ) \cong \bZ \partial [B], \quad H_2(X,L;\bZ) \cong H_2(X; \bZ) \oplus \bZ [B].
$$
The map $\iota_*$ on second homology above can be extended to an inclusion
$$
    \iota_*: H_2(X, L; \bZ) \to H_2(\tX; \bZ)
$$
which maps $[B]$ to $[l_{\iota(\tau_0)}]$. Note that $\iota(\tau_0) \in \tSi(3)_c$.

\subsubsection{Chen-Ruan cohomology ring}
The Chen-Ruan cohomology ring of a smooth toric Deligne-Mumford stack is equal to its orbifold Chow ring. Borisov-Chen-Smith \cite{BCS05} provided an explicit description of the orbifold Chow ring of smooth toric Deligne-Mumford stacks with projective coarse moduli spaces. For any toric Deligne-Mumford stack $\cZ$, Borisov-Horja \cite[Section 3]{BH06} introduced the SR-cohomology ring which is determined by the stacky fan. When the coarse moduli space $Z$ is projective, the SR-cohomology ring of $\cZ$ coincides with the orbifold Chow ring of $\cZ$ by \cite[Theorem 1.1]{BCS05}; when $Z$ is not projective, the SR-cohomology ring and the orbifold Chow ring can be different, even when $\cZ =Z$ is a smooth toric variety. 
Jiang-Tseng \cite{JT08} generalized \cite[Theorem 1.1]{BCS05} to smooth toric Deligne-Mumford stacks with semi-projective coarse moduli spaces; the formula is in terms of the extended stacky fan introduced by Jiang \cite{Jiang08}.  Applying \cite[Theorem 1.1]{JT08} to $\cX$ and $\tcX$, we obtain the following statements. 
\begin{itemize}
\item As graded $\bQ$-algebras, $H^*_{\CR}(\cX; \bQ)$ is generated by $\{\one_j : j \in \Box(\cX)\}$ and the divisor classes
$$
    \cD_i := [\cV(\rho_i)] \in H^2_{\CR}(\cX; \bZ), \quad i = 1, \dots, R',
$$
and $H^*_{\CR}(\tcX; \bQ)$ is generated by $\{\one_j : j \in \Box(\tcX)\}$ and the divisor classes
$$
    \tcD_i := [\cV(\trho_i)] \in H^2_{\CR}(\tcX; \bZ), \quad i = 1, \dots, R', R+1, R+2.
$$
\item The inclusion $\iota: \cX \to \tcX$ induces a homogenous $\bQ$-algebra homomorphism  
$$
    \iota^*: H^*_{\CR}(\tcX; \bQ) \to H^*_{\CR}(\cX; \bQ)
$$
specified by
$$
    \one_j \mapsto \begin{cases}
        \one_j & \text{if } j \in \Box(\cX)\\
        0 &  \text{if } j \in \Box(\tcX) \setminus \Box(\cX),
        \end{cases}
$$
$$
     \tcD_i \mapsto \cD_i \text{ for } i = 1, \dots, R', \quad \tcD_{R+1}, \tcD_{R+2} \mapsto 0.
$$
\end{itemize} 

There is a canonical identification $H^2_{\CR}(\cX;\bQ) \cong \bL^\vee_{\bQ}$ which identifies $\cD_i$ with $D_i$ for each $i = 1, \dots, R'$ and $\one_{j(i)}$ with $D_{i}$ if $b_i \in N$ is a representative of $j(i) \in \Box(\cX)$. On the other hand, applying $\Hom(-, \bZ)$ to \eqref{eqn:tXSES}, we obtain a commutative diagram
\begin{equation}\label{eqn:tXDualSES}
    \xymatrix{
        0 \ar[r] & \tM \ar[r]^{\talpha^\vee} \ar[d] & \bZ^{R+2} \ar[r]^{\tpsi^\vee} \ar[d] & \tbL^\vee \ar[r] \ar[d] & 0\\
        0 \ar[r] & M \ar[r]^{\alpha^\vee} & \bZ^R \ar[r]^{\psi^\vee} & \bL^\vee \ar[r] & 0
    }
\end{equation}
where the second row is \eqref{eqn:XDualSES}. Let $\{e_1^\vee, \dots, e_{R+2}^\vee\}$ be the basis of $\bZ^{R+2}$ dual to $\{e_1, \dots, e_{R+2}\}$, and for each $i = 1, \dots, R+2$ define
\begin{equation}\label{eqn:tDiOuter}
    \tD_i := \tpsi^\vee(e_i^\vee) \in \tbL^\vee.
\end{equation}
For $i = 1, \dots, R$, $\tD_i$ projects to $D_i \in \bL^\vee$. Moreover,
$$
    \tD_{R+1} = -\tD_{R+2} \in \bZ_{\neq 0} \epsilon_{R-2}^\vee,
$$
where $\{\epsilon_1^\vee, \dots, \epsilon_{R-2}^\vee\}$ be the basis of $\tbL^\vee$ dual to $\{\epsilon_1, \dots, \epsilon_{R-2}\}$. There is a canonical identification $H^2_{\CR}(\tcX;\bQ) \cong \tbL^\vee_{\bQ}$ which identifies $\tcD_i$ with $\tD_i$ for each $i = 1, \dots, R',R+1, R+2$ and $\one_{j(i)}$ with $\tD_{i}$ if $\tb_{i} \in \tN$ is a representative of $j(i) \in \Box(\tcX)$. The map $\iota^*: H^2_{\CR}(\tcX; \bQ) \to H^2_{\CR}(\cX; \bQ)$ is canonically identifed with the projection $\tbL^\vee_{\bQ} \to \bL^\vee_{\bQ}$.

Moreover, we define $T'$-equivariant divisor classes
$$
    \cD_i^{T'} := [\cV(\rho_i)] \in H^2_{\CR,T'}(\cX; \bQ), \quad i = 1, \dots, R'
$$
of $\cX$ and $\tT'$-equivariant divisor classes
$$
    \tcD_i^{\tT'} := [\cV(\trho_i)] \in H^2_{\CR,\tT'}(\tcX; \bQ), \quad i = 1, \dots, R', R+1, R+2
$$
of $\tcX$. The non-equivariant limit of each $\cD_i^{T'}$ (resp. $\tcD_i^{\tT'}$) is $\cD_i$ (resp. $\tcD_i$). Moreover, for $i = 1, \dots, R'$, $\tcD_i^{\tT'}$ restricts to $\cD_i^{T'}$ under
\begin{equation}\label{eqn:H2RestrictOuter}
    H^*_{\CR, \tT'}(\tcX; \bQ) \to H^*_{\CR, T'}(\tcX; \bQ) \to H^*_{\CR, T'}(\cX; \bQ)
\end{equation}
where the first map is induced by $T' \subset \tT'$ and the second maps is induced by $\iota: \cX \to \tcX$.

\subsubsection{Convention for equivariant lifts}
The Gromov-Witten invariants considered in this paper may take in suitable equivariant lifts of classes in $H^2_{\CR}(\cX;\bQ)$ and $H^2_{\CR}(\tcX;\bQ)$ as insertions. Here, we specify our convention for choosing the lifts.

\begin{convention}\label{conv:LiftOuter}
\rm{
Given $\gamma \in H^2_{\CR}(\cX; \bQ)$, we choose the unique lifts
$$
    \gamma^{T'} \in H^2_{\CR, T'}(\cX; \bQ), \quad \tgamma \in H^2_{\CR}(\tcX; \bQ), \quad \tgamma^{\tT'} \in H^2_{\CR, \tT'}(\tcX; \bQ)
$$
of $\gamma$ that are consistent with the commutative diagram
$$
\xymatrix{
  H^2_{\CR, \tT'}(\tcX; \bQ) \ar[r] \ar[rd] & H^2_{\CR, T'}(\tcX; \bQ) \ar[r] \ar[d]^{\iota^*} & H^2_{\CR}(\tcX; \bQ) \ar[d]^{\iota^*}\\
  & H^2_{\CR, T'}(\cX; \bQ) \ar[r] & H^2_{\CR}(\cX; \bQ)
} \qquad
\xymatrix{
  \tgamma^{\tT'} \ar@{|->}[rr] \ar@{|->}[rd] &  & \tgamma \ar@{|->}[d]\\
  & \gamma^{T'} \ar@{|->}[r] & \gamma
}
$$
and satisfy the following: If $\gamma \in H^2(\cX;\bQ)$, then
\begin{itemize}
    \item $\iota_{\sigma_0}^*(\gamma^{T'}) = 0$;
    \item $\tgamma$ belongs to the span of $\tD_1, \dots, \tD_{R'}$ in $H^2_{\CR}(\tcX; \bQ)$;
    \item $\iota_{\iota(\sigma_0)}^*(\tgamma^{\tT'}) = \iota_{\tsi_0}^*(\tgamma^{\tT'}) = 0$.
\end{itemize}
If $\gamma = \one_j$ for some $j \in \Box(\cX) \subset \Box(\tcX)$, then all lifts are $\one_j$. 

Given $\tgamma \in H^2_{\CR}(\tcX; \bQ)$, we choose the unique lift
$$
    \tgamma^{\tT'} \in H^2_{\CR, \tT'}(\tcX; \bQ)
$$
of $\tgamma$ that satisfies the following: If $\tgamma \in H^2(\tcX;\bQ)$, then $\iota_{\tsi_0}^*(\tgamma^{\tT'}) = 0$. If $\tgamma = \one_j$ for some $j \in \Box(\cX) \subset \Box(\tcX)$, then $\tgamma^{\tT'} = \one_j$. 
}
\end{convention}



\subsection{Examples of construction}\label{sect:examples}
In this section, we provide three examples for our construction of $\tcX$ from $(\cX, \cL, f)$.

\subsubsection{$\bC^3$}\label{sect:ExampleC3}
Let $\cX = \bC^3$ and $\cL$ be an outer brane. Then, we have
$$
    \tcX = \begin{cases}
        \Tot(\cO_{\bP(1,\fa)}(\fb) \oplus \cO_{\bP(1,\fa)}(-\fb-\fa) \oplus \cO_{\bP(1,\fa)}(-1)) & \text{if } f \in [-1, 0]\\
        \Tot(\cO_{\bP(1,\fa,\fb)}(-\fb-\fa) \oplus \cO_{\bP(1,\fa,\fb)}(-1)) & \text{if } f > 0\\
        \Tot(\cO_{\bP(1,\fa,-\fb-\fa)}(\fb) \oplus \cO_{\bP(1,\fa,-\fb-\fa)}(-1)) & \text{if } f < -1.
    \end{cases}
$$
See also \cite[Section 3]{BBvG20b}. We note that for $f \not \in \{-2, -1, 0, 1\}$, $\tcX$ is not a smooth manifold, even though $\cX$ is. See Figure \ref{fig:ExC3} for an illustration in the case $f = 1$, where the fan of $\cX$ is the cone over the triangle on the left and the position of the brane $\cL$ is indicated by the short boldfaced dash, and the fan of $\tcX$ is the cone over the triangulated 3-dimensional polytope on the right.


\begin{figure}[h]
\begin{center}
    \begin{tikzpicture}[scale=1.5]
        \coordinate (01) at (-4.2, 0.1);
        \coordinate (02) at (-5, 0.9);        
        \coordinate (03) at (-5, 0.1);
                
        \node at (01) {$\bullet$};
        \node at (02) {$\bullet$};
        \node at (03) {$\bullet$};

        \node at (-3.5, -0.3) {$b_1 = (1, 0, 1)$};
        \node at (-5, 1.3) {$b_2 = (0, 1, 1)$};
        \node at (-5.5, -0.3) {$b_3 = (0, 0, 1)$};

        \draw[ultra thick] (-5.1, 0.5) -- (-4.9, 0.5);
        \node[left] at (-5.2, 0.5) {$(\cL, f = 1)$};

        \draw (01) -- (02) -- (03) -- (01);

        \node at (-4.7, -1) {$\cX = \bC^3$};


        \coordinate (1) at (1, 0);
        \coordinate (2) at (0.6, 0.7);        
        \coordinate (3) at (0, 0);
        \coordinate (4) at (-0.6, 0.3);        
        \coordinate (5) at (0, 1);

        \node at (1) {$\bullet$};
        \node at (2) {$\bullet$};
        \node at (3) {$\bullet$};
        \node at (4) {$\bullet$};
        \node at (5) {$\bullet$};

        \node at (2.1, 0) {$\tb_1 = (1, 0, 1, 0)$};
        \node at (1.7, 0.7) {$\tb_2 = (0, 1, 1, 0)$};
        \node at (0.2, -0.4) {$\tb_3 = (0, 0, 1, 0)$};
        \node at (-1.5, 0.6) {$\tb_4 = (-1, -1, 1, 1)$};
        \node at (0, 1.4) {$\tb_5 = (0, 0, 1, 1)$};

        \draw (1) -- (3) -- (4) -- (5) -- (2) -- (1) -- (4);
        \draw (1) -- (5);
        \draw[dashed] (5) -- (3) -- (2) -- (4);

        \node at (0.3, -1) {$\tcX = \Tot(\cO_{\bP^2}(-2) \oplus \cO_{\bP^2}(-1))$};
    \end{tikzpicture}
\end{center}

    \caption{Construction of the 4-fold $\tcX$ corresponding to $\cX = \bC^3$ and $f=1$.}
    \label{fig:ExC3}
\end{figure}

\subsubsection{Local $\bP^2$}
Consider $\cX_- = [\bC^3/\bZ_3]$ and its crepant resolution $\cX_+ = \Tot(K_{\bP^2})$. An outer brane $\cL_-$ in $\cX_-$ corresponds to an outer brane $\cL_+$ in $\cX_+$. We illustrate in Figure \ref{fig:ExKP2} the construction of corresponding $4$-orbifolds $\tcX_\pm$ when the branes $\cL_\pm$ have framing $0$, which agrees with the construction in \cite{LM01}. We note that the triangulations on the ``$+$''-side are refinements of those on the ``$-$''-side. This holds for any framing and corresponds to that that $\cX_+$ (resp. $\tcX_+$) is a crepant (partial) resolution of $\cX_-$ (resp. $\tcX_-$).

In general, starting with a pair $\cX_\pm$ differing by a toric crepant transformation and corresponding framed branes in them, the corresponding 4-orbifolds also differ by a toric crepant transformation. See also the subsequent example.


\begin{figure}[h]
\begin{center}
    \begin{tikzpicture}[scale=1.3]
        \coordinate (01) at (-3.6, -0.4);
        \coordinate (02) at (-5.7, 1);        
        \coordinate (03) at (-5.7, 0.3);
        \coordinate (04) at (-5, 0.3);
                
        \node at (01) {$\bullet$};
        \node at (02) {$\bullet$};
        \node at (03) {$\bullet$};
        \node at (04) {$\bullet$};

        \node at (-3.2, 0) {$(3, -1, 1)$};
        \node at (-5.7, 1.4) {$(0, 1, 1)$};
        \node at (-5.7, -0.1) {$(0, 0, 1)$};

        \draw[ultra thick] (-5.8, 0.65) -- (-5.6, 0.65);
        \node[left] at (-5.8, 0.65) {$(\cL_+, 0)$};

        \draw (01) -- (02) -- (03) -- (01) -- (04) -- (02);
        \draw (04) -- (03);

        \node at (-4.7, -1) {$\cX_+ = \Tot(K_{\bP^2})$};


        \coordinate (1) at (2.2, -0.3);
        \coordinate (2) at (0.8, 0.7);        
        \coordinate (3) at (0, 0);
        \coordinate (4) at (0.8, 0.1);
        \coordinate (5) at (-0.8, 0.9);        
        \coordinate (6) at (0, 1);

        \node at (1) {$\bullet$};
        \node at (2) {$\bullet$};
        \node at (3) {$\bullet$};
        \node at (4) {$\bullet$};
        \node at (5) {$\bullet$};
        \node at (6) {$\bullet$};

        \node at (2.6, 0.1) {$(3, -1, 1, 0)$};
        \node at (1.5, 0.9) {$(0, 1, 1, 0)$};
        \node at (0, -0.4) {$(0, 0, 1, 0)$};
        \node at (-1.5, 0.6) {$(-1, 0, 1, 1)$};
        \node at (0, 1.4) {$(0, 0, 1, 1)$};

        \draw (1) -- (3) -- (5) -- (6) -- (2) -- (1) -- (5);
        \draw (1) -- (6);
        \draw[dashed] (2) -- (4) -- (6) -- (3) -- (2) -- (5);
        \draw[dashed] (1) -- (4) -- (3);

        \node at (0.5, -1) {$\tcX_+$};
    \end{tikzpicture}

    \begin{tikzpicture}[scale=1.3]
        \coordinate (01) at (-3.6, -0.4);
        \coordinate (02) at (-5.7, 1);        
        \coordinate (03) at (-5.7, 0.3);
        \coordinate (04) at (-5, 0.3);
                
        \node at (01) {$\bullet$};
        \node at (02) {$\bullet$};
        \node at (03) {$\bullet$};
        \node at (04) {$\bullet$};

        \node at (-3.2, 0) {$(3, -1, 1)$};
        \node at (-5.7, 1.4) {$(0, 1, 1)$};
        \node at (-5.7, -0.1) {$(0, 0, 1)$};

        \draw[ultra thick] (-5.8, 0.65) -- (-5.6, 0.65);
        \node[left] at (-5.8, 0.65) {$(\cL_-, 0)$};

        \draw (01) -- (02) -- (03) -- (01);

        \node at (-4.7, -1) {$\cX_- = [\bC^3/\bZ_3]$};


        \coordinate (1) at (2.2, -0.3);
        \coordinate (2) at (0.8, 0.7);        
        \coordinate (3) at (0, 0);
        \coordinate (4) at (0.8, 0.1);
        \coordinate (5) at (-0.8, 0.9);        
        \coordinate (6) at (0, 1);

        \node at (1) {$\bullet$};
        \node at (2) {$\bullet$};
        \node at (3) {$\bullet$};
        \node at (4) {$\bullet$};
        \node at (5) {$\bullet$};
        \node at (6) {$\bullet$};

        \node at (2.6, 0.1) {$(3, -1, 1, 0)$};
        \node at (1.5, 0.9) {$(0, 1, 1, 0)$};
        \node at (0, -0.4) {$(0, 0, 1, 0)$};
        \node at (-1.5, 0.6) {$(-1, 0, 1, 1)$};
        \node at (0, 1.4) {$(0, 0, 1, 1)$};

        \draw (1) -- (3) -- (5) -- (6) -- (2) -- (1) -- (5);
        \draw (1) -- (6);
        \draw[dashed] (6) -- (3) -- (2) -- (5);

        \node at (0.5, -1) {$\tcX_-$};
    \end{tikzpicture}
\end{center}    

    \caption{Construction of the 4-folds $\tcX_\pm$ corresponding to $\cX_+ = \Tot(K_{\bP^2})$ and $\cX_- = [\bC^3/\bZ_3]$ and corresponding framed branes.}
    \label{fig:ExKP2}
\end{figure}

\subsubsection{$A_1$-singularities}
Consider $\cX_- = [\bC^2/\bZ_2] \times \bC$ and its crepant resolution $\cX_+$. Let $\cL_-$ be the ineffective outer brane in $\cX_-$, with generic stabilizer group $\mu_2$, which correspond to two effective outer branes $\cL_+^1, \cL_+^2$ in $\cX_+$. We illustrate in Figure \ref{fig:ExA1} the construction of corresponding $4$-orbifolds $\tcX_\pm$ when $\cL_-$, $\cL_+^1, \cL_+^2$ have framings $-1,-1,0$ respectively, noting that the triples $(\cX_+, \cL_+^1, -1)$, $(\cX_+, \cL_+^2, 0)$ give rise to the same $\tcX_+$. We note that for any framing, $\tcX_+$ is a crepant (partial) resolution of $\tcX_-$.

\begin{figure}[h]
\begin{center}
    \begin{tikzpicture}[scale=1.4]
        \coordinate (01) at (-4.3, -0.2);
        \coordinate (02) at (-5, 1.2);        
        \coordinate (03) at (-5, -0.2);
        \coordinate (04) at (-5, 0.5);
                
        \node at (01) {$\bullet$};
        \node at (02) {$\bullet$};
        \node at (03) {$\bullet$};
        \node at (04) {$\bullet$};

        \node at (-3.7, -0.4) {$(1, 0, 1)$};
        \node at (-5.6, 1.4) {$(0, 2, 1)$};
        \node at (-5.6, -0.4) {$(0, 0, 1)$};

        \draw[ultra thick] (-5.1, 0.85) -- (-4.9, 0.85);
        \node[left] at (-5.1, 0.85) {$(\cL_+^2, 0)$};
        \draw[ultra thick] (-5.1, 0.15) -- (-4.9, 0.15);
        \node[left] at (-5.1, 0.15) {$(\cL_+^1, -1)$};

        \draw (01) -- (02) -- (03) -- (01) -- (04);

        \node at (-4.7, -1) {$\cX_+$};


        \coordinate (1) at (1, 0);
        \coordinate (2) at (0.6, 1);        
        \coordinate (3) at (0, 0);
        \coordinate (4) at (0.3, 0.5);
        \coordinate (5) at (-0.6, 0.4);        
        \coordinate (6) at (0, 1);

        \node at (1) {$\bullet$};
        \node at (2) {$\bullet$};
        \node at (3) {$\bullet$};
        \node at (4) {$\bullet$};
        \node at (5) {$\bullet$};
        \node at (6) {$\bullet$};

        \node at (1.8, 0) {$(1, 0, 1, 0)$};
        \node at (1.3, 1.1) {$(0, 2, 1, 0)$};
        \node at (0, -0.4) {$(0, 0, 1, 0)$};
        \node at (-1.5, 0.5) {$(-1, -1, 1, 1)$};
        \node at (-0.2, 1.3) {$(0, 0, 1, 1)$};

        \draw (1) -- (3) -- (5) -- (6) -- (2) -- (1) -- (5);
        \draw (1) -- (6);
        \draw[dashed] (5) -- (2) -- (4) -- (3) -- (6);
        \draw[dashed] (1) -- (4) -- (6);
        \draw[dashed] (4) -- (5);

        \node at (0.5, -1) {$\tcX_+$};
    \end{tikzpicture}

    \begin{tikzpicture}[scale=1.4]
        \coordinate (01) at (-4.3, -0.2);
        \coordinate (02) at (-5, 1.2);        
        \coordinate (03) at (-5, -0.2);
        \coordinate (04) at (-5, 0.5);
                
        \node at (01) {$\bullet$};
        \node at (02) {$\bullet$};
        \node at (03) {$\bullet$};
        \node at (04) {$\bullet$};

        \node at (-3.7, -0.4) {$(1, 0, 1)$};
        \node at (-5.6, 1.4) {$(0, 2, 1)$};
        \node at (-5.6, -0.4) {$(0, 0, 1)$};

        \draw[ultra thick] (-5.1, 0.5) -- (-4.9, 0.5);
        \node[left] at (-5.1, 0.55) {$(\cL_-, -1)$};

        \draw (01) -- (02) -- (03) -- (01);

        \node at (-4.7, -1) {$\cX_- = [\bC^2/\bZ_2] \times \bC$};


        \coordinate (1) at (1, 0);
        \coordinate (2) at (0.6, 1);        
        \coordinate (3) at (0, 0);
        \coordinate (4) at (0.3, 0.5);
        \coordinate (5) at (-0.6, 0.4);        
        \coordinate (6) at (0, 1);

        \node at (1) {$\bullet$};
        \node at (2) {$\bullet$};
        \node at (3) {$\bullet$};
        \node at (4) {$\bullet$};
        \node at (5) {$\bullet$};
        \node at (6) {$\bullet$};

        \node at (1.8, 0) {$(1, 0, 1, 0)$};
        \node at (1.3, 1.1) {$(0, 2, 1, 0)$};
        \node at (0, -0.4) {$(0, 0, 1, 0)$};
        \node at (-1.5, 0.5) {$(-1, -1, 1, 1)$};
        \node at (-0.2, 1.3) {$(0, 0, 1, 1)$};

        \draw (1) -- (3) -- (5) -- (6) -- (2) -- (1) -- (5);
        \draw (1) -- (6);
        \draw[dashed] (5) -- (2) -- (4) -- (3) -- (6);

        \node at (0.5, -1) {$\tcX_-$};
    \end{tikzpicture}
\end{center}

    \caption{Construction of the 4-folds $\tcX_\pm$ corresponding to $\cX_- = [\bC^2/\bZ_2] \times \bC$ and its crepant resolution $\cX_+$.}
    \label{fig:ExA1}
\end{figure}


\section{Orbifold Gromov-Witten invariants and localization}\label{sect:GW}
In this section, we give the definitions of the disk invariants of $(\cX, \cL, f)$ and the closed Gromov-Witten invariants of $\tcX$. Since $\cX$ and $\tcX$ are non-compact, these invariants are defined and computed using torus localization. We summarize the localization computations, following Fang-Liu-Tseng \cite{FLT12} on the open side and  \cite{Liu13} on the closed side, in preparation of proving the open/closed correspondence.

\subsection{Moduli of twisted stable maps to toric orbifolds}\label{sect:Moduli}
We start with some preliminaries of Gromov-Witten theory for toric orbifolds and localization computations. Orbifold Gromov-Witten theory is developed on the symplectic side by Chen-Ruan \cite{CR02} and on the algebraic side by Abramovich-Graber-Vistoli \cite{AGV02, AGV08}. Here, we review the moduli spaces of twisted stable maps to toric orbifolds, induced torus actions on them, and the description of the torus-fixed loci in terms of decorated graphs. We provide additional details, specifically on Hurwitz-Hodge integrals and twisted covers of proper torus-invariant lines, in Appendix \ref{sect:GWPrelim}. Our exposition follows \cite{Liu13}. We restrict our attention to genus zero.

In this section and the next, as in Section \ref{sect:PrelimToric}, let $\cZ$ be an $r$-dimensional toric orbifold specified by an extended stacky fan $\bXi = (\bZ^r, \Xi, \alpha)$, $Z$ be the coarse moduli space of $\cZ$, and $(\bC^*)^r$ be the $r$-dimensional Deligne-Mumford torus of $\cZ$.

Let $n \in \bZ_{\ge 0}$. A genus-zero, $n$-pointed \emph{twisted curve} is a connected, proper, $1$-dimensional Deligne-Mumford stack $\cC$ together with $n$ disjoint closed substacks $\fx_1, \dots, \fx_n$, such that:
\begin{itemize}
    \item $\cC$ is \'etale locally a nodal curve.

    \item Formally locally near a node, $\cC$ is isomorphic to
    $$
        [\Spec(\bC[x,y]/(xy))/\mu_r]
    $$
    for some $r \in \bZ_{>0}$, where $\zeta \in \mu_r$ acts by $\zeta \cdot (x,y) = (\zeta x, \zeta^{-1}y)$.

    \item Each $\fx_i$ is contained in the smooth locus of $\cC$.

    \item Each $\fx_i$ is an \'etale gerbe over $\Spec(\bC)$ with a section.

    \item $\cC$ is a scheme outside $\fx_1, \dots, \fx_n$ and the singular locus.

    \item The coarse moduli space $C$ is a nodal curve of arithmetic genus zero.
\end{itemize}
If $\pi: \cC \to C$ is the projection to the coarse moduli space and $x_i := \pi(\fx_i)$, then $x_1, \dots, x_n$ are distinct smooth points of $C$ and $(C, x_1, \dots, x_n)$ is a genus-zero, $n$-pointed prestable curve.

Let $\beta \in H_2(Z; \bZ)$ be an effective class. A genus-zero, $n$-pointed, degree-$\beta$ \emph{twisted stable map} to $\cZ$ is a representable morphism $u: (\cC, \fx_1, \dots, \fx_n) \to \cZ$, where $(\cC, \fx_1, \dots, \fx_n)$ is a genus-zero, $n$-pointed twisted curve, such that the induced map $\bar{u}:(C, x_1, \dots, x_n) \to Z$ between the coarse moduli spaces is a genus-zero, $n$-pointed, degree-$\beta$ stable map to $Z$.

Let $\Mbar_{0,n}(\cZ, \beta)$ be the moduli space of genus-zero, $n$-pointed, degree-$\beta$ twisted stable maps to $\cZ$, which is a Deligne-Mumford stack. For $i = 1, \dots, n$, there is an evaluation map $\ev_i: \Mbar_{0,n}(\cZ, \beta) \to \cI \cZ$ associated to the $i$-th twisted point $\fx_i$. Given any $\vj = (j_1, \dots, j_n) \in \Box(\cZ)^n$, we define
$$
    \Mbar_{0, \vj}(\cZ, \beta):= \bigcap_{i = 1}^n \ev_i^{-1}(\cZ_{j_i}).
$$
Then $\Mbar_{0, \vj}(\cZ, \beta)$ is a union of connected components of $\Mbar_{0,n}(\cZ, \beta)$, and admits a perfect obstruction theory of virtual dimension
$$
    \int_{\beta} c_1(T\cZ) + r - 3 + n - \sum_{i = 1}^n \age(j_i).
$$
Moreover, we have
$$
    \Mbar_{0,n}(\cZ, \beta) = \bigsqcup_{\vj \in \Box(\cZ)^n} \Mbar_{0, \vj}(\cZ, \beta).
$$

Let $\epsilon: \Mbar_{0,n}(\cZ, \beta) \to \Mbar_{0,n}(Z,\beta)$ be the natural forgetful map. For $i = 1, \dots, n$, define the \emph{descendant class}
$$
    \bar{\psi}_i := \epsilon^*\psi_i \in A^1(\Mbar_{0,n}(\cZ, \beta)).
$$
These classes pull back to descendant classes on $\Mbar_{0,\vj}(\cZ, \beta)$ for each $\vj \in \Box(\cZ)^n$.

The $(\bC^*)^r$-action on $\cZ$ induces a $(\bC^*)^r$-action on $\Mbar_{0,\vj}(\cZ, \beta)$ for any $\vj, \beta$. This makes the virtual tangent bundle of $\Mbar_{0,\vj}(\cZ, \beta)$ and the evaluation maps $\ev_1, \dots, \ev_n$ $(\bC^*)^r$-equivariant. The fixed locus $\Mbar_{0,\vj}(\cZ, \beta)^{(\bC^*)^r}$ is a proper, closed substack.

\subsection{Torus-fixed locus and decorated graphs}\label{sect:DecGraphs}
Components of the $(\bC^*)^r$-fixed loci of the moduli spaces of stable maps to $\cZ$ can be described by \emph{decorated graphs}, defined as follows:

\begin{definition}\rm{
Let $n \in \bZ_{\ge 0}$, $\vj = (j_1, \dots, j_n) \in \Box(\cZ)^n$, and $\beta \in H_2(Z; \bZ)$ be an effective curve class. A genus-zero, $\vj$-twisted, degree-$\beta$ \emph{decorated graph} for $\cZ$ is a tuple $\vGa = (\Gamma, \vf, \vd, \vs, \vk)$, where:
\begin{itemize}
    \item $\Gamma$ is a compact, connected, $1$-dimensional CW complex. Let $V(\Gamma)$ denote the vertex set of $\Gamma$, $E(\Gamma)$ denote the edge set of $\Gamma$, and
    $$
        F(\Gamma) := \{(e, v) \in E(\Gamma) \times V(\Gamma): v \in e \}
    $$
    denote the set of flags.

    \item $\vf: V(\Gamma) \sqcup E(\Gamma) \to \Xi(r) \sqcup \Xi(r-1)_c$ is the \emph{label} map that sends each $v \in V(\Gamma)$ to an $r$-cone $\sigma_v \in \Xi(r)$ and each $e \in E(\Gamma)$ to an $(r-1)$-cone $\tau_e \in \Xi(r-1)_c$ such that for each flag $(e,v) \in F(\Gamma)$, $(\tau_e, \sigma_v)$ is a flag in $F(\Xi)$. We denote $G_v:= G_{\sigma_v}$ for each $v \in V(\Gamma)$. 

    \item $\vd$ is the \emph{degree map} that sends each edge $e \to E(\Gamma)$ to an element $\gamma_e \in H_{\tau_e}$, such that $d_e:= \pi_{\tau_e}(\gamma_e)$ is a positive integer (see \eqref{eqn:LineFundGroup}). 

    \item $\vs: \{1, \dots, n\} \to V(\Gamma)$ is the \emph{marking map}, defined if $n >0$.

    \item $\vk$ is the \emph{twisting map} that sends each flag $(e,v) \in F(\Gamma)$ to some $k_{(e,v)} \in G_v$, and each marking $i \in \{1, \dots, n\}$ to some $k_i \in G_{\vs(i)}$.
\end{itemize}
such that the following conditions are satisfied:
\begin{itemize}
    \item The graph $\Gamma = (V(\Gamma), E(\Gamma))$ is a tree:  
    $$ 
    |E(\Gamma)|-|V(\Gamma)|+1 = 0.
    $$

    \item $\displaystyle{   \sum_{e \in E(\Gamma)} d_e[l_{\tau_e}] = \beta. }$

    \item (Compatibility along an edge) For any edge $e \in E(\Gamma)$, if $v, v' \in V(\Gamma)$ are the two incident vertices, then 
        $$
            \pi_{(\tau_e, \sigma_v)}(\gamma_e) = k_{(e,v)}, \quad \pi_{(\tau_e, \sigma_{v'})}(\gamma_e) = k_{(e,v')}
        $$
        (see \eqref{eqn:FlagFundGroup}).

    \item (Compatibility at a vertex) For any vertex $v \in V(\Gamma)$, the equation
        $$
            \prod_{(e,v) \in F(\Gamma)} k_{(e,v)}^{-1} \prod_{i \in \vs^{-1}(v)}k_i = 1
        $$
        holds in $G_v$.

    \item (Compatibility with $\vj$) For each $i = 1, \dots, n$, the pair $(p_{\sigma_{\vs(i)}}, k_i)$ represents a point in the inertia component $\cX_{j_i}$.
\end{itemize}
}\end{definition}

Let $\Gamma_{0, \vj}(\cZ, \beta)$ be the set of all genus-zero, $\vj$-twisted, degree-$\beta$ decorated graphs for $\cZ$. We set up the following additional notations on a decorated graph $\vGa \in \Gamma_{0, \vj}(\cZ, \beta)$:
\begin{itemize}
    \item For each $v \in V(\Gamma)$, let
    $$
        E_v := \{e \in E(\Gamma) : (e,v) \in F(\Gamma) \}, \quad S_v:= \vs^{-1}(v),
    $$
    and $\val(v):= |E_v|$, $n_v := |S_v|$. Let $\vk_v:= (k_{(e,v)}^{-1},k_i) \in G_v^{E_v \cup S_v}$.

    \item Let
    $$
        V^S(\vGa):= \{v \in V(\Gamma): \val(v) + n_v -2 >0 \}
    $$
    be the set of \emph{stable} vertices of $\Gamma$, and
    \begin{align*}
        V^1(\vGa) &:= \{v \in V(\Gamma): \val(v) = 1, n_v = 0 \},\\
        V^{1,1}(\vGa) &:= \{v \in V(\Gamma): \val(v) = n_v = 1 \},\\
        V^2(\vGa) &:= \{v \in V(\Gamma): \val(v) = 2, n_v = 0 \}
    \end{align*}
    be a partition of the \emph{unstable} vertices. 

    \item Let $\Aut(\vGa)$ be the \emph{automorphism group} of $\vGa$, which consists of all automorphisms of $\Gamma$ that make the maps $\vf, \vd, \vs, \vk$ invariant.

    \item For each $(e,v) \in F(\Gamma)$, let $r_{(e,v)}$ be the order of $k_{(e,v)}$ in $G_v$. For each $v \in V^2(\vGa)$, if $E_v = \{e_1, e_2\}$, let $r_v := r_{(e_1,v)} = r_{(e_2, v)}$.

    \item Let
    \begin{equation}\label{eqn:cGamma}
        c_{\vGa}:= \frac{1}{|\Aut(\vGa)| \cdot \prod_{e \in E(\Gamma)}(d_e|G_e|)} \cdot \prod_{(e,v) \in F(\Gamma)} \frac{|G_v|}{r_{(e,v)}}.
    \end{equation}
\end{itemize}

Given a twisted stable map $u: (\cC, \fx_1, \dots, \fx_n) \to \cZ$ that represents a point in the $(\bC^*)^r$-fixed locus $\Mbar_{0,\vj}(\cZ, \beta)^{(\bC^*)^r}$, we can assign a decorated graph $\vGa = (\Gamma, \vf, \vd, \vs, \vk) \in \Gamma_{0, \vj}(\cZ, \beta)$ as follows. Let $\bar{u}: (C,x_1, \dots, x_n) \to Z$ be the induced stable map between coarse moduli spaces.
\begin{itemize}
    \item The image of $\bar{u}$ lies in $Z^1_c \subset Z$. The vertex set $V(\Gamma)$ is in one-to-one correspondence with the set of connected components in $\bar{u}^{-1}(Z^{(\bC^*)^r})$. For $v \in V(\Gamma)$, let $C_v$ denote the component associated to $v$, and $\cC_v$ be the preimage of $C_v$ under the projection $\cC \to C$. Set $\vf(v) = \sigma_v \in \Xi(r)$ such that the image of $\cC_v$ under $u$ is $\fp_{\sigma_v}$, or equivalently, the image of $C_v$ under $\bar{u}$ is $p_{\sigma_v}$.

    \item The edge set $E(\Gamma)$ is in one-to-one correspondence with the set of irreducible components of $C$ that do not map constantly to $Z$ under $\bar{u}$. For $e \in E(\Gamma)$, let $C_e$ denote the component associated to $e$, and $\cC_e$ be the preimage of $C_e$ under the projection $\cC \to C$. Set $\vf(e) = \tau_e \in \Xi(r-1)$ such that the image of $\cC_e$ under $u$ is $\fl_{\tau_e}$, or equivalently, the image of $C_e$ under $\bar{u}$ is $l_{\tau_e}$. 

    \item The flag set $F(\Gamma)$ consists of all pairs $(e,v)$ such that $\cC_e \cap \cC_v \neq \emptyset$. For $(e,v) \in F(\Gamma)$, let $\fn(e,v) := \cC_e \cap \cC_v$. Set $k_{(e,v)} \in G_v$ to be the image of the generator of the generic stablizer group of $\fn(e,v)$ in $\cC_e$ under $u$.

    \item For an edge $e \in E(\Gamma)$ incident to vertices $v, v' \in V(\Gamma)$, we have $\cC_e \cong \cC_{r_{(e,v)}, r_{(e,v')}}$ (see Section \ref{sect:TwistedCovers}). Let $\gamma_e \in H_{\tau_e}$ be the element defined by $u|_{\cC_e}$ and set $d_e= \pi_{\tau_e}(\gamma_e)$. The compatibility conditions $\pi_{(\tau_e, \sigma_v)}(\gamma_e) = k_{(e,v)}$ and $\pi_{(\tau_e, \sigma_v')}(\gamma_e) = k_{(e,v')}$ are satisfied.

    \item For each marking $i = \{1, \dots, n\}$, set $\vs(i) = v \in V(\Gamma)$ such that $\fx_i \subseteq \cC_v$. Then $\fx_i$ is mapped by $u$ to a point $(\fp_{\sigma_v}, k)$ in $\cZ_{j_i}$, where $k \in G_v$ maps to $j_i$ under $G_v \cong \Box(\sigma_v) \to \Box(\cZ)$. We set $k_i = k$. Then for each $v \in V^S(\vGa)$, $u|_{\cC_v}$ represents a point in $\Mbar_{0, \vk_v}(\cB G_v)$.

\end{itemize}
The above assignment gives a decomposition
$$
    \Mbar_{0,\vj}(\cZ, \beta)^{(\bC^*)^r} = \bigsqcup_{\vGa \in \Gamma_{0,\vj}(\cZ, \beta)} \cF_{\vGa}
$$
into connected components, where $\cF_{\vGa}$ denotes the component corresponding to $\vGa \in \Gamma_{0,\vj}(\cZ, \beta)$. Up to a finite morphism, $\cF_{\vGa}$ can be identified with
$$
    \cM_{\vGa} := \prod_{v \in V^S(\vGa)} \Mbar_{0, \vk_v}(\cB G_v),
$$
and in $A_*(\cM_{\vGa})$, we have\footnote{The coefficient below differs from $c_{\vGa}$ (see \eqref{eqn:cGamma}) by factors associated to unstable vertices of $\Gamma$, and such difference is accounted for in \cite[Section 9.3.3]{Liu13} by the integration conventions \eqref{eqn:UnstableIntegral} at unstable vertices.}
\begin{equation}\label{eqn:FGammaFiniteMorphism}
    [\cF_{\vGa}] = \frac{1}{|\Aut(\vGa)| \cdot \prod_{e \in E(\Gamma)}(d_e|G_e|)} \cdot \prod_{v \in V^S(\vGa), e \in E_v} \frac{|G_v|}{r_{(e,v)}} \cdot \prod_{v \in V^2(\vGa)} \frac{|G_v|}{r_v} \cdot [\cM_{\vGa}].
\end{equation}

Finally, we set
$$
    \Gamma_{0, n}(\cZ, \beta) := \bigsqcup_{\vj \in \Box(\cZ)^n} \Gamma_{0, \vj}(\cZ, \beta)
$$
to be the set of all genus-zero, $n$-pointed, degree-$\beta$ decorated graphs for $\cZ$. We have
$$
    \Mbar_{0,n}(\cZ, \beta)^{(\bC^*)^r} = \bigsqcup_{\vGa \in \Gamma_{0,n}(\cZ, \beta)} \cF_{\vGa}.
$$

\begin{remark}\rm{
For our toric Calabi-Yau $3$-orbifold $\cX$, we will use $T'$-equivariant localization on the moduli spaces of stable maps. Note that the $T'$-fixed points and $T'$-invariant lines of $\cX$ are the same as the $T$-fixed points and $T$-invariant lines. Therefore, the $T'$-fixed loci of the moduli spaces can be identified with the $T$-fixed loci and described by decorated graphs in the same way as above. Similarly, for our toric Calabi-Yau $4$-orbifold $\tcX$, we will use $\tT'$-equivariant localization on the moduli spaces of stable maps and describe the $\tT'$-fixed loci by decorated graphs.
}\end{remark}

\subsection{Disk invariants of \texorpdfstring{$(\cX, \cL, f)$}{X}}
In this section, we give the definition of the disk invariants of $(\cX, \cL, f)$, which are \emph{open Gromov-Witten invariants} \cite{CP14,FL13,FLT12,KL01} that encode twisted stable maps from genus-zero domains with a single boundary component. We follow Fang-Liu-Tseng \cite{FLT12} and refer the reader to there for additional details. 

\subsubsection{Twisted open stable maps and their moduli}
Open stable maps to symplectic orbifolds with Lagrangian boundary conditions are defined by Cho-Poddar \cite{CP14}, generalizing the manifold case defined by Katz-Liu \cite{KL01}; see also \cite{FOOO10,Liu02}. The domain of such a map is a prestable bordered orbifold Riemann surface which allows stacky points at interior nodes and interior marked points. For our target $(\cX, \cL)$, we follow the definition of \cite{FLT12}, which is closely related to the topological vertex \cite{LLLZ09,FL13}. As observed by \cite{FLT12}, the definition of \cite{CP14} assumes that the Lagrangian suborbifold is a smooth manifold. In the more general setting where the Lagrangian contains stacky points, as is the case for our $\cL$, in order to obtain compactness of the moduli when the target orbifold and the Lagrangian are both compact, one needs to allow orbifold structures at boundary nodes and boundary marked points of the domain. However, for $(\cX, \cL)$, since the open Gromov-Witten invariants are defined by $T'$-equivariant localization and $\cL$ does not contain any $T'$-fixed points, there is no need to allow orbifold structures on the boundary of the domain. 

Let $n \in \bZ_{\ge 0}$, $\beta' \in H_2(X,L; \bZ)$, and $(d, \lambda) \in H_1(\cL; \bZ) \cong \bZ \times G_{\tau_0}$. Let
$$
    \Mbar_{n}(\cX, \cL \mid \beta', (d, \lambda)) = \Mbar_{(0,1),n}(\cX, \cL \mid \beta', (d, \lambda))
$$
be the moduli space of maps
$$
    u: ((\cC, \fx_1, \dots, \fx_n), \partial \cC) \to (\cX, \cL)
$$
where
\begin{itemize}
    \item $(\cC, \fx_1, \dots, \fx_n)$ is a prestable bordered orbifold Riemann surface of topological type $(0,1)$ with $n$ interior marked points $\fx_1, \dots, \fx_n$. Here, if $\pi: \cC \to C$ is the projection to the coarse moduli space and $x_i := \pi(\fx_i)$, then $(C, x_1, \dots, x_n)$ is a prestable bordered Riemann surface of topological type $(0,1)$ with $n$ interior marked points. The topological type of $(0,1)$ means that topologically, $C$ is a nodal Riemann surface of arithmetic genus $0$ with a single open disk removed. In particular, $\partial \cC$ is connected and topologically a circle, and contains no orbifold points.

    \item Let $\nu: \hcC \to \cC$ be the normalization map, so that $\hcC$ is a possibly disconnected bordered orbifold Riemann surface with no nodes. Then the map $\nu \circ u: \hcC \to \cX$ is holomorphic.

    \item The automorphism group of $u$ is finite.

    \item Let $\bar{u}: (C, \partial C) \to (X,L)$ be the induced map between coarse moduli spaces. Then $\bar{u}_*[C] = \beta' \in H_2(X,L; \bZ)$.

    \item $u_*[\partial \cC] = (d, \lambda) \in H_1(\cL; \bZ)$, where $d \in \bZ$ is the winding number and $\lambda \in G_{\tau_0}$ is the monodromy.
\end{itemize}
$\Mbar_{n}(\cX, \cL \mid \beta', (d, \lambda))$ is a possibly singular stack with corners, equipped with a virtual tangent bundle which is a virtual real vector bundle. For $i = 1, \dots, n$, there is an evaluation map $\ev_i: \Mbar_{n}(\cX, \cL \mid \beta', (d, \lambda)) \to \cI \cX$ associated to the $i$-th interior marked point $\fx_i$. Given any $\vj = (j_1, \dots, j_n) \in \Box(\cX)^n$, we define
$$
    \Mbar_{\vj}(\cX, \cL \mid \beta', (d, \lambda)) := \bigcap_{i=1}^n \ev_i^{-1}(\cX_{j_i}).
$$
Then $\Mbar_{\vj}(\cX, \cL \mid \beta', (d, \lambda))$ is a union of connected components of $\Mbar_{n}(\cX, \cL \mid \beta', (d, \lambda))$, and the (real) rank of the virtual tangent bundle over $\Mbar_{\vj}(\cX, \cL \mid \beta', (d, \lambda))$ is
$$
    2\sum_{i = 1}^n (1-\age(j_i)).
$$

\subsubsection{Definition of disk invariants}
The action of the compact Calabi-Yau $2$-torus $T_{\bR}'$ on $(\cX, \cL)$ induces a $T_{\bR}'$-action on $\Mbar_{n}(\cX, \cL \mid \beta', (d, \lambda))$ and makes the virtual tangent bundle of $\Mbar_{n}(\cX, \cL \mid \beta', (d, \lambda))$ and the evaluation maps $\ev_1, \dots, \ev_n$ $T_{\bR}'$-equivariant. Let $\cF := \Mbar_{n}(\cX, \cL \mid \beta', (d, \lambda))^{T_{\bR}'}$ be the $T_{\bR}'$-fixed locus. Then each connected component of $\cF$ is a compact orbifold, on which the virtual tangent bundle agrees with the tangent bundle. We have
$$
    [\cF]^\vir = [\cF].
$$

\begin{definition}\rm{
Given $\gamma_1, \dots, \gamma_n \in H^*_{\CR,T'_{\bR}}(\cX; \bQ) = H^*_{\CR,T'}(\cX; \bQ)$, we define
$$
    \inner{\gamma_1, \dots, \gamma_n}_{\beta', (d, \lambda)}^{\cX, \cL} := \int_{[\cF]^\vir} \frac{\iota^* \left(\prod_{i = 1}^n \ev_i^*(\gamma_i)\right)}{e_{T_{\bR}'}(N^\vir)} \in \cQ_{T'},
$$
where $\iota: \cF \to \Mbar_{n}(\cX, \cL \mid \beta', (d, \lambda))$ is the inclusion and $N^\vir$ is the virtual normal bundle of $\cF$ in $\Mbar_{n}(\cX, \cL \mid \beta', (d, \lambda))$.
}
\end{definition}

Suppose that for each $i = 1, \dots, n$, we have $\gamma_i \in H^{2a_i}_{T'_{\bR}}(\cX_{j_i}; \bQ)$ (viewed as a $\bQ$-vector subspace of $H^{2(a_i + \age(j_i))}_{\CR, T'_{\bR}}(\cX; \bQ)$) for some $j_i \in \Box(\cX)$ and $a_i \in \bZ_{\ge 0}$. Set $\vj = (j_1, \dots, j_n)$. Then only the connected components of $\cF$ contained in $\Mbar_{\vj}(\cX, \cL \mid \beta', (d, \lambda))$ contribute to $\inner{\gamma_1, \dots, \gamma_n}_{\beta', (d, \lambda)}^{\cX, \cL}$. Therefore, $\inner{\gamma_1, \dots, \gamma_n}_{\beta', (d, \lambda)}^{\cX, \cL}$ is a homogenous rational function in $\su_1, \su_2$ of degree
$$
    \sum_{i = 1}^n (\age(j_i)-1+a_i).
$$

\begin{definition}\rm{
Let $\gamma_1, \dots, \gamma_n \in H^2_{\CR}(\cX; \bQ)$ and choose equivariant lifts in $H^2_{\CR,T'_{\bR}}(\cX; \bQ) = H^2_{\CR,T'}(\cX; \bQ)$ as in Convention \ref{conv:LiftOuter}. We define the \emph{disk} invariant
$$
    \inner{ \gamma_1, \dots, \gamma_n }^{\cX,(\cL,f)}_{\beta', (d,\lambda)} := \inner{\gamma_1, \dots, \gamma_n}_{\beta', (d, \lambda)}^{\cX, \cL} \big|_{\su_2 - f\su_1 = 0} \in \cQ_{T_f}.
$$
}
\end{definition}

Observe that $\inner{ \gamma_1, \dots, \gamma_n }^{\cX, (\cL,f)}_{\beta', (d,\lambda)}$ is homogeneous of degree $0$ and thus in $\bQ$. We will confirm, as a consequence of the numerical open/closed correspondence, that $\inner{\gamma_1, \dots, \gamma_n}_{\beta', (d, \lambda)}^{\cX, \cL}$ has no pole along $\su_2 - f\su_1 = 0$ and thus $\inner{ \gamma_1, \dots, \gamma_n }^{\cX,(\cL,f)}_{\beta', (d,\lambda)}$ is defined. See Remark \ref{rem:DiskWellDefined}.

We note that $\inner{\gamma_1, \dots, \gamma_n}_{\beta', (d, \lambda)}^{\cX, \cL}$ and $\inner{\gamma_1, \dots, \gamma_n}^{\cX, (\cL,f)}_{\beta', (d,\lambda)}$ are defined up to a sign depending on a choice of orientation on $\Mbar_{n}(\cX, \cL \mid \beta', (d, \lambda))$. Our choice will be specified by the computation result given in Proposition \ref{prop:DiskLocalResultOuter}. See also Remark \ref{rem:DiskSignOuter}.

\subsection{Localization computations of disk invariants}\label{sect:DiskLocalOuter}
In this section, we summarize the localization computations of the disk invariants of $(\cX, \cL, f)$ by Fang-Liu-Tseng \cite{FLT12}. We note that although \cite{FLT12} considered the case $f \in \bZ$, their results directly generalize to the case $f \in \bQ$; see also \cite{FZ19}.

\subsubsection{Tangent $T'$-weights}\label{sect:XTanWtsOuter}
Given any flag $(\tau, \sigma) \in F(\Sigma)$, we define
$$
    \bw(\tau, \sigma):= e_{T'}(T_{\fp_\sigma}\fl_\tau) \in H^2_{T'}(\pt; \bQ) = \bQ\su_1 \oplus \bQ \su_2,
$$
and $w(\tau, \sigma) \in \bQ$ such that
$$
    \bw(\tau, \sigma)|_{\su_2 - f\su_1=0} = w(\tau, \sigma) \su_1.
$$
In particular, for the three flags associated to the cone $\sigma_0$, we have
\begin{equation}\label{eqn:Sigma0WtsOuter}
    \begin{aligned}
        &\bw_0 := \bw(\tau_0, \sigma_0) = \frac{1}{\fr}\su_1, && w_0:= w(\tau_0, \sigma_0) = \frac{1}{\fr},\\
        &\bw_2 := \bw(\tau_2, \sigma_0) = \frac{\fs}{\fr\fm}\su_1 + \frac{1}{\fm}\su_2, && w_2:= w(\tau_2, \sigma_0) = \frac{\fs + \fr f}{\fr\fm},\\
        &\bw_3 := \bw(\tau_3, \sigma_0) = -\frac{\fm + \fs}{\fr\fm}\su_1 - \frac{1}{\fm}\su_2, && w_3:= w(\tau_3, \sigma_0) = -\frac{\fm + \fs + \fr f}{\fr\fm}.
    \end{aligned}
\end{equation}

\subsubsection{The disk factor}\label{sect:DiskFactorOuter}
Let $(d, \lambda) \in H_1(\cL; \bZ) \cong \bZ \times G_{\tau_0}$ such that $d>0$. Define
$$
    h(d, \lambda):= \pi_{(\tau_0, \sigma_0)}(d, \lambda) \in G_{\sigma_0}
$$
(see \eqref{eqn:FlagFundGroup}). As a key ingredient in the localization computation of the disk invariants, the \emph{disk factor} \cite{BC11,Ross14} is defined as 
$$
    D_{d, \lambda} := \inner{\one_{h(d,\lambda)}}_{d[B], (d, \lambda)}^{\cX, \cL} \in \cQ_{T'},
$$
which is homogeneous of degree $\age(h(d,\lambda))-1$. Here, the disk $B$ is defined in Section \ref{sect:HomologyOuter}. 
Let $\epsilon_2, \epsilon_3 \in \bQ \cap [0,1)$ such that $h(d,\lambda)$ acts on $T_{\fp_{\sigma_0}}\fl_{\tau_2}$, $T_{\fp_{\sigma_0}}\fl_{\tau_3}$ by multiplication by $e^{2\pi\sqrt{-1}\epsilon_2}$, $e^{2\pi\sqrt{-1}\epsilon_3}$ respectively. Then
$$
    \inner{dw_0} + \epsilon_2 + \epsilon_3 = \age(h(d,\lambda)).
$$

By choosing an orientation on the moduli space $\Mbar_1(\cX, \cL \mid d[B], (d, \lambda))$, we have the following formula of \cite[Section 3.11]{FLT12}, which is based on \cite{Ross14}:
\begin{equation}\label{eqn:DiskFactor}
\begin{aligned}
    D_{d, \lambda} &= (-1)^{\floor{dw_3-\epsilon_3}+ \ceil{\frac{d}{\fa}}}\left(\frac{\fr\bw_0}{d}\right)^{\age(h(d,\lambda))-1} \cdot \frac{1}{d\fm \cdot \floor{dw_0}!} \cdot \prod_{a = 1}^{\floor{dw_0}+\age(h(d,\lambda))-1} \left(\frac{d\bw_2}{\fr\bw_0} + a - \epsilon_2 \right)\\
        & = (-1)^{\floor{dw_3-\epsilon_3}+ \ceil{\frac{d}{\fa}}}\left(\frac{\su_1}{d}\right)^{\age(h(d,\lambda))-1} \cdot \frac{1}{d\fm \cdot \floor{dw_0}!} \cdot \prod_{a = 1}^{\floor{dw_0}+\age(h(d,\lambda))-1} \left(\frac{d\bw_2}{\su_1} + a - \epsilon_2 \right).
\end{aligned}
\end{equation}

\begin{remark}\label{rem:DiskSignOuter} \rm{
We note that when $f \in \bZ$, i.e. $\fa=1$, the sign convention of formula \eqref{eqn:DiskFactor} above differs from that in \cite{FLT12}, yet agrees with that in \cite{FL13} (and \cite{LY21}) in the smooth case. Our choice of sign (and orientation on the moduli space of open stable maps) ensures that the numerical open/closed correspondence holds without a sign difference. See Theorem \ref{thm:NumericalOuter}.
}\end{remark}

\subsubsection{Localization computations}
Given any $n \in \bZ_{\ge 0}$ and effective class $\beta \in H_2(X; \bZ)$, the $T'$-action on $\cX$ induces a $T'$-action on the moduli space $\Mbar_{0, n+1}(\cX, \beta)$ of stable maps to $\cX$. This makes the virtual tangent bundle and the evaluation maps $\ev_1, \dots, \ev_{n+1}$ $T'$-equivariant. \cite{FLT12} directly relates the $T_{\bR}'$-fixed loci of the moduli spaces of open stable maps to $(\cX, \cL)$ and the $T'$-fixed loci of the moduli spaces of stable maps to $\cX$ and compares their tangent-obstruction theories. As a consequence, via the disk factor, \cite[Proposition 3.3]{FLT12} relates the disk invariants of $(\cX, \cL, f)$ to the Gromov-Witten invariants of $\cX$, as follows:
\begin{theorem}[\cite{FLT12}]\label{thm:FLTDiskLocalOuter}
Let $n \in \bZ_{\ge 0}$, $\beta \in H_2(X; \bZ)$ be an effective class, $(d, \lambda) \in H_1(\cL; \bZ) \cong \bZ \times G_{\tau_0}$ such that $d>0$, and $\gamma_1, \dots, \gamma_n \in H^2_{\CR,T'_{\bR}}(\cX; \bQ) = H^2_{\CR,T'}(\cX; \bQ)$. Set
$$
    \beta' = \beta + d[B].
$$
Then
\begin{equation}\label{eqn:FLTDiskLocalOuter}
\inner{ \gamma_1, \dots, \gamma_n }^{\cX, (\cL,f)}_{\beta', (d,\lambda)} = \fr \fm D_{d, \lambda} \cdot \int_{[\Mbar_{0, n+1}(\cX, \beta)^{T'}]^\vir} \frac{\iota^* \left( \ev_{n+1}^* (\phi_{\sigma_0, h(d, \lambda)^{-1}}) \cdot \prod_{i = 1}^n \ev_i^* (\gamma_i) \right)}{e_{T'}(N^\vir) \cdot (\frac{\su_1}{d} - \bar{\psi}_{n+1})} \bigg|_{\su_2 - f\su_1 = 0},
\end{equation}
where $\iota: \Mbar_{0, n+1}(\cX, \beta)^{T'} \to \Mbar_{0, n+1}(\cX, \beta)$ is the inclusion, $N^\vir$ is the virtual normal bundle of $\Mbar_{0, n+1}(\cX, \beta)^{T'}$, and
$$
    \phi_{\sigma_0,  h(d, \lambda)^{-1}} = \iota_{\sigma_0, *}(\one_{h(d, \lambda)^{-1}}) \in H^*_{\CR, T'}(\cX; \bQ)
$$
is the $T'$-equivariant Poincar\'e dual of the point $(\fp_{\sigma_0},  h(d, \lambda)^{-1})$.
\end{theorem}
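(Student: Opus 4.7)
The proof is a standard virtual localization argument with respect to the compact Calabi-Yau $2$-torus $T'_{\bR}$ acting on $\Mbar_n(\cX, \cL \mid \beta', (d,\lambda))$. The key geometric observation is that $\cL$ contains no $T'$-fixed points, so the boundary circle of any $T'_{\bR}$-fixed stable map is forced into $\cL \cap \fl_{\tau_0}$. This in turn forces the unique component of the domain containing $\partial \cC$ to cover the disk $B \subset \fl_{\tau_0}$ as a $d$-fold twisted disk with monodromy $\lambda$, attached to the rest of the domain at a single node $\fy$ over $\fp_{\sigma_0}$.

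First, I would identify the relevant component of the $T'_{\bR}$-fixed locus with a fiber product, over the evaluation map at $\fy$, of the moduli of twisted disk covers of $B$ on one side and the $T'$-fixed component of $\Mbar_{0,n+1}(\cX, \beta)$ on the other side, where the $(n+1)$-st marked point carries the complementary monodromy $h(d,\lambda)^{-1} \in G_{\sigma_0}$ required by the balancing condition at the node. Next, I would split the virtual tangent-obstruction complex according to the normalization at $\fy$: the disk side, after taking the $T'_{\bR}$-equivariant inverse Euler class, produces exactly the disk factor $D_{d,\lambda}$ of \eqref{eqn:DiskFactor}, following the orbifold boundary computation of \cite{Ross14}; the closed side contributes $1/e_{T'}(N^\vir)$ for the appropriate component of $\Mbar_{0,n+1}(\cX,\beta)^{T'}$; and the smoothing of the node contributes, after the stacky normalizations, the denominator $\su_1/d - \bar\psi_{n+1}$ upon restricting to $\su_2 = f \su_1$.

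The last step is to repackage a sum over fixed components (indexed by monodromies at $\fy$) as a single integral against a Poincar\'e dual. Using
$$
\phi_{\sigma_0, h(d,\lambda)^{-1}} \;=\; \iota_{\sigma_0,*}(\one_{h(d,\lambda)^{-1}})
$$
to pull back the fixed-point condition via $\ev_{n+1}$, one obtains precisely the right-hand side of \eqref{eqn:FLTDiskLocalOuter}, with the overall combinatorial factor $\fr\fm$ accounting for $|G_{\sigma_0}|$ at the gerbe node together with $|G_{\tau_0}| = \fm$ for the stacky boundary stabilizer. The main obstacle I anticipate is the careful tracking of stacky multiplicities at the node and along the unstable disk edge; this is what determines both the exact power of $\fr$ and $\fm$ and the sign convention of Remark \ref{rem:DiskSignOuter}, and any miscount would destroy the clean form of \eqref{eqn:FLTDiskLocalOuter}. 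In particular, consistency of the orientation chosen on $\Mbar_n(\cX, \cL \mid \beta', (d,\lambda))$ with the sign in \eqref{eqn:DiskFactor} must be verified in parallel with the tangent-obstruction comparison at $\fy$.
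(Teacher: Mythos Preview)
The paper does not give its own proof of this theorem; it is stated with attribution to \cite[Proposition 3.3]{FLT12} and used as input for the subsequent decorated-graph rewriting \eqref{eqn:LiuDiskLocalOuter}. Your proposal is a correct outline of the argument in \cite{FLT12}: the $T'_{\bR}$-fixed locus of the open moduli space factors as twisted disk covers of $B$ glued at a node over $\fp_{\sigma_0}$ to the $T'$-fixed locus of $\Mbar_{0,n+1}(\cX,\beta)$, and the virtual normal bundle splits accordingly into the disk factor, the closed normal bundle, and the node-smoothing term. The paper's only addition to the cited statement is the text immediately preceding it, which summarizes this same geometry in one sentence (``\cite{FLT12} directly relates the $T_{\bR}'$-fixed loci of the moduli spaces of open stable maps to $(\cX, \cL)$ and the $T'$-fixed loci of the moduli spaces of stable maps to $\cX$ and compares their tangent-obstruction theories''), so there is nothing further to compare.
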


Recall from Section \ref{sect:DecGraphs} that connected components of $\Mbar_{0, n+1}(\cX, \beta)^{T'}$ are indexed by decorated graphs in $\Gamma_{0,n+1}(\cX, \beta)$. Using the study of the tangent-obstruction theory by \cite[Theorem 137]{Liu13} (which is also used in the proof of \cite[Proposition 3.3]{FLT12}), we can rewrite \eqref{eqn:FLTDiskLocalOuter} in terms of contributions from decorated graphs as
\begin{equation}\label{eqn:LiuDiskLocalOuter}
\begin{aligned}
 \inner{\gamma_1,\ldots, \gamma_n}_{\beta', (d, \lambda)}^{\cX, (\cL, f)} 
= & \fr \fm D_{d, \lambda} \cdot \sum_{\vGa \in \Gamma_{0,n+1}(\cX, \beta)} c_{\vGa} \cdot \prod_{e \in E(\Gamma)} \bh(e) \cdot \prod_{(e,v) \in F(\Gamma)} \bh(e,v) \cdot \prod_{v \in V(\Gamma)} \left( \prod_{i \in S_v} \iota_{\sigma_v}^* (\gamma_i) \right)\\
   & \cdot \prod_{v \in V(\Gamma)} \int_{\Mbar_{0, \vk_v}(\cB G_v)} \frac{\bh(v)}{\left(\frac{\su_1}{d} - \bar{\psi}_{n+1}\right)^{\delta_{v, n+1}} \cdot \prod_{e \in E_v} \left(\bw_{(e,v)} - \frac{\bar{\psi}_{(e,v)}}{r_{(e,v)}}\right)} \bigg|_{\su_2 - f\su_1 = 0},
\end{aligned}
\end{equation}
where:
\begin{itemize}
    \item For each $\vGa \in \Gamma_{0,n+1}(\cX, \beta)$, the coefficient $c_{\vGa}$ is defined in \eqref{eqn:cGamma}. To give the definitions of the other quantities in \eqref{eqn:LiuDiskLocalOuter}, we pick a stable map $u: (\cC, \fx_1, \dots, \fx_{n+1}) \to \cX$ whose associated decorated graph is $\vGa$ (see Section \ref{sect:DecGraphs}). The definitions do not depend on the choice of $u$. 

    \item For each $e \in E(\Gamma)$, define
        $$
            \bh(e):= \frac{e_{T'}(H^1(\cC_e,(u|_{\cC_e})^*T\cX)^m)}{e_{T'}(H^0(\cC_e,(u|_{\cC_e})^*T\cX)^m)},
        $$
        which is explicitly computed in \cite[Lemma 130]{Liu13} in terms of tangent $T'$-weights. Here and throughout the paper, the superscript ``$m$'' represents the \emph{moving part}: Any complex representation $V$ of a torus $(\bC^*)^r$ decomposes into a direct sum of $1$-dimensional representations and can thus be written as $V = V^f \oplus V^m$, where the \emph{fixed part} $V^f$ is the direct sum of all trivial $1$-dimensional representations and the moving part $V^m$ is the direct sum of all non-trivial ones.

    \item For each $(e,v) \in F(\Gamma)$, define
       $$
           \bh(e,v):=  e_{T'}\left((T_{\fp_{\sigma_v}}\cX)^{k_{(e,v)}}\right) = \prod_{\substack{ (\tau, \sigma_v) \in F(\Sigma)\\ k_{(e,v)} \in G_\tau } } \bw(\tau, \sigma_v).
      $$
        Here, $(T_{\fp_{\sigma_v}}\cX)^{k_{(e,v)}}$ is the maximal subspace of $T_{\fp_{\sigma_v}}\cX$ that is invariant under the action of $k_{(e,v)}$.

    \item We set
        $$
            \gamma_{n+1}:= \phi_{\sigma_0,  h(d, \lambda)^{-1}}
        $$
        for convenience.

    \item For each $v \in V(\Gamma)$, the marked points and corresponding descendant classes of $\Mbar_{0, \vk_v}(\cB G_v)$ are indexed by $E_v \cup S_v$. The integral over $\Mbar_{0, \vk_v}(\cB G_v)$ is a Hurwitz-Hodge integral (see Section \ref{sect:HHIntegrals}), and we adopt the integration convention \eqref{eqn:UnstableIntegral} for unstable vertices.

    \item For each stable vertex $v \in V^S(\vGa)$, define
        $$
                    \bh(v):= \frac{ e_{T'}\left( H^1(\cC_v,(u|_{\cC_v})^*T\cX)^m\right)}{e_{T'}\left( H^0(\cC_v,(u|_{\cC_v})^*T\cX)^m \right)},
        $$
        which is explicitly computed in \cite[Lemma 126]{Liu13} in terms of Hurwitz-Hodge classes and tangent $T'$-weights. For each unstable vertex $v \not \in V^S(\vGa)$, define
        $$
            \bh(v):= \begin{cases}
                \bh(e,v)^{-1} & \text{if } v \in V^1(\vGa) \cup V^{1,1}(\vGa), E_v = \{e\},\\
                \bh(e_1,v)^{-1} = \bh(e_2,v)^{-1} & \text{if } v \in V^2(\vGa), E_v = \{e_1, e_2\}.
            \end{cases}
        $$

    \item $\delta_{v, n+1}$ is the indicator function
        $$
            \delta_{v, n+1} := \begin{cases}
                1 & \text{if } n+1 \in S_v\\
                0 & \text{otherwise.}
            \end{cases}
        $$

    \item For each $(e,v) \in F(\Gamma)$, define
        $$
            \bw_{(e,v)}:= e_{T'}(T_{\fn(e,v)}\cC_e) = \frac{\fr(\tau_e, \sigma_v)\bw(\tau_e, \sigma_v)}{r_{(e,v)}d_e},
        $$
        where the $T'$-action on $T_{\fn(e,v)}\cC_e$ is induced from that on $T_{\fp_{\sigma_v}}\fl_{\tau_e}$.
\end{itemize}

To simplify \eqref{eqn:LiuDiskLocalOuter}, we note that for any $\sigma \in \Sigma(3)$, $\sigma \neq \sigma_0$,
$$
    \iota_{\sigma}^* (\gamma_{n+1}) = \iota_{\sigma}^*(\phi_{\sigma_0,  h(d, \lambda)^{-1}}) = 0.
$$
Thus only the decorated graphs in the subset
\begin{equation}\label{eqn:DiskGamma00Outer}
    \Gamma_{0,n+1}^{0,(d, \lambda)}(\cX, \beta) := \{\vGa \in \Gamma_{0,n+1}(\cX, \beta): \vf \circ \vs(n+1) = \sigma_0, \vk(n+1) = h(d,\lambda)^{-1} \}
\end{equation}
can contribute. We simplify \eqref{eqn:LiuDiskLocalOuter} as follows:
\begin{proposition}\label{prop:DiskLocalResultOuter}
Let $n, \beta, (d, \lambda), \beta'$, and $\gamma_1, \dots, \gamma_n$ be as in Theorem \ref{thm:FLTDiskLocalOuter}. Then
\begin{equation}\label{eqn:DiskLocalResultOuter}
\begin{aligned}
    \inner{\gamma_1,\ldots, \gamma_n}_{\beta', (d, \lambda)}^{\cX, (\cL, f)} =& \fr \fm D_{d, \lambda} \cdot \sum_{\vGa \in \Gamma_{0,n+1}^{0,(d, \lambda)}(\cX, \beta)} c_{\vGa} \cdot \prod_{e \in E(\Gamma)} \bh(e) \cdot \prod_{(e,v) \in F(\Gamma)} \bh(e,v) \cdot \prod_{v \in V(\Gamma)} \left( \prod_{i \in S_v} \iota_{\sigma_v}^* (\gamma_i) \right)\\
   & \cdot \prod_{v \in V(\Gamma)} \int_{\Mbar_{0, \vk_v}(\cB G_v)} \frac{\bh(v)}{\left(\frac{\su_1}{d} - \bar{\psi}_{n+1}\right)^{\delta_{v, n+1}} \cdot \prod_{e \in E_v} \left(\bw_{(e,v)} - \frac{\bar{\psi}_{(e,v)}}{r_{(e,v)}}\right)} \bigg|_{\su_2 - f\su_1 = 0}.
\end{aligned}
\end{equation}
\end{proposition}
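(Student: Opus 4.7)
The plan is to apply a direct vanishing argument to the localization formula \eqref{eqn:LiuDiskLocalOuter} and show that contributions from any decorated graph outside of $\Gamma_{0,n+1}^{0,(d, \lambda)}(\cX, \beta)$ are identically zero, so that the sum collapses onto the prescribed subset. The key observation is that the insertion at the $(n{+}1)$-th marking, set by convention to be $\gamma_{n+1} = \phi_{\sigma_0,h(d,\lambda)^{-1}} = \iota_{\sigma_0,*}(\one_{h(d,\lambda)^{-1}})$, is extremely rigid: as a $T'$-equivariant Poincar\'e dual of a single stacky torus-fixed point it is supported on the component $\cX_{h(d,\lambda)^{-1}}$ of the inertia stack and, inside that component, is further localized over the fixed point $\fp_{\sigma_0}$.

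The main step is then to examine, for each $\vGa \in \Gamma_{0,n+1}(\cX,\beta)$, the factor $\iota_{\sigma_v}^*(\gamma_{n+1})$ appearing inside $\prod_{i \in S_v}\iota_{\sigma_v}^*(\gamma_i)$ at the vertex $v := \vs(n+1)$. First, by the compatibility of $\vGa$ with $\vj$, the marking $n+1$ lands on the inertia component $\cX_{k_{n+1}}$, so that $\ev_{n+1}^*(\gamma_{n+1})$ automatically vanishes whenever $k_{n+1} \neq h(d,\lambda)^{-1}$. Second, assuming $k_{n+1} = h(d,\lambda)^{-1}$, the class $\iota_{\sigma_0,*}(\one_{h(d,\lambda)^{-1}})$ restricts to zero at any other $T'$-fixed stacky point of $\cX_{h(d,\lambda)^{-1}}$, by the standard localization computation of the restriction of an equivariant pushforward from a point, which forces $\sigma_v = \sigma_0$. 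These two conditions together recover precisely the defining constraints of $\Gamma_{0,n+1}^{0,(d,\lambda)}(\cX,\beta)$ in \eqref{eqn:DiskGamma00Outer}.

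With these two vanishings in place, every summand in \eqref{eqn:LiuDiskLocalOuter} indexed by $\vGa \notin \Gamma_{0,n+1}^{0,(d,\lambda)}(\cX,\beta)$ contains a vanishing factor and therefore drops out, yielding \eqref{eqn:DiskLocalResultOuter}. No serious obstacle is expected since the statement is essentially a bookkeeping reduction once the support of the insertion $\gamma_{n+1}$ is understood; the only minor subtlety is to verify that the pullback factor $\iota_{\sigma_v}^*(\gamma_{n+1})$ in the graph-sum formula is genuinely the inertia-stack restriction of an equivariant Poincar\'e dual, so that both the twisting constraint $k_{n+1}=h(d,\lambda)^{-1}$ and the cone constraint $\sigma_v = \sigma_0$ act as independent vanishing conditions rather than being conflated.
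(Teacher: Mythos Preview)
Your proposal is correct and matches the paper's own argument essentially verbatim: the paper observes just before stating the proposition that $\iota_{\sigma}^*(\gamma_{n+1}) = \iota_{\sigma}^*(\phi_{\sigma_0, h(d,\lambda)^{-1}}) = 0$ for any $\sigma \neq \sigma_0$, which together with the inertia-component constraint forces exactly the two conditions defining $\Gamma_{0,n+1}^{0,(d,\lambda)}(\cX,\beta)$. Your write-up is somewhat more detailed in separating the twisting condition from the cone condition, but the content is identical.
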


\subsection{Closed invariants of \texorpdfstring{$\tcX$}{X}}\label{sect:ClosedOuter}
In this section, we define closed Gromov-Witten invariants of $\tcX$ and compute them by localization following \cite{Liu13}.


\subsubsection{Definition of closed Gromov-Witten invariants}
Let $n \in \bZ_{\ge 0}$ and $\tbeta \in H_2(\tX; \bZ)$ be an effective class. The action of the Calabi-Yau $3$-torus $\tT'$ on $\tcX$ induces a $\tT'$-action on the moduli space $\Mbar_{0, n}(\tcX, \tbeta)$. This makes the virtual tangent bundle and the evaluation maps $\ev_1, \dots, \ev_{n}$ $\tT'$-equivariant. The $\tT'$-fixed locus $\Mbar_{0, n}(\tcX, \tbeta)^{\tT'}$ can be identified with the $\tT$-fixed locus.

\begin{definition}\rm{
Given $\tgamma_1, \dots, \tgamma_n \in H^*_{\CR, \tT'}(\tcX; \bQ)$, we define
$$
    \inner{\tgamma_1, \dots, \tgamma_n}_{\tbeta}^{\tcX,\tT'}:= \int_{[\Mbar_{0, n}(\tcX, \tbeta)^{\tT'}]^\vir} \frac{\iota^*\left(\prod_{i=1}^{n} \ev_i^*(\tgamma_i)\right)}{e_{\tT'}(N^\vir)} \in \cQ_{\tT'},
$$
where $\iota: \Mbar_{0, n}(\tcX, \tbeta)^{\tT'} \to \Mbar_{0, n}(\tcX, \tbeta)$ is the inclusion and $N^\vir$ is the virtual normal bundle of $\Mbar_{0, n}(\tcX, \tbeta)^{\tT'}$ in $\Mbar_{0, n}(\tcX, \tbeta)$.
}
\end{definition}

Suppose that for each $i = 1, \dots, n$, we have $\tgamma_i \in H^{2a_i}_{\tT'}(\tcX_{j_i}; \bQ)$ (viewed as a $\bQ$-vector subspace of $H^{2(a_i + \age(j_i))}_{\CR, \tT'}(\tcX; \bQ)$) for some $j_i \in \Box(\tcX)$ and $a_i \in \bZ_{\ge 0}$. Then $\inner{\tgamma_1, \dots, \tgamma_n}_{\tbeta}^{\tcX}$ is a homogenous rational function in $\su_1, \su_2, \su_4$ of degree
$$
    -1 + \sum_{i = 1}^n (\age(j_i)-1+a_i).
$$

\begin{definition}\label{def:ClosedOuter}
\rm{
Let $\tgamma_1, \dots, \tgamma_n \in H^2_{\CR}(\tcX; \bQ)$ and choose equivariant lifts in $H^2_{\CR,\tT'}(\tcX; \bQ)$ as in Convention \ref{conv:LiftOuter}. For $\tk \in G_{\tsi_0}$, let $\tgamma_{\tk}$ be the class in $H^4_{\CR, \tT'}(\tcX; \bQ)$ defined by
\begin{equation}\label{eqn:ExtraInsertionOuter}
    \tgamma_{\tk} := \begin{dcases}
        \displaystyle{ \frac{\tcD_2^{\tT'}\tcD_3^{\tT'}\tcD_{R+1}^{\tT'}}{\frac{f}{\fm}\su_1 - \frac{1}{\fm} \su_2 - \su_4}  } & \text{if } \tk = 1,\\  
        \tcD_{R+1}^{\tT'}\one_{\tk^{-1}}  & \text{if } \age(\tk)=1,\\
        \one_{\tk^{-1}}  & \text{if } \age(\tk)=2,
        \end{dcases}
\end{equation}
where recall $\tcD_i^{\tT'} = [\cV(\trho_i)]$ is the $\tT'$-equivariant Poincar\'e dual of the divisor $\cV(\trho_i)$. Then, we define the \emph{closed} Gromov-Witten invariant
$$
  \inner{\tgamma_1,\ldots, \tgamma_n, \tgamma_{\tk}}_{\tbeta}^{\tcX, T_f} := \inner{\tgamma_1, \dots, \tgamma_n, \tgamma_{\tk}}_{\tbeta}^{\tcX,\tT'} \bigg|_{\su_4 = 0, \su_2 - f\su_1 = 0} \in \cQ_{T_f}.
$$
}
\end{definition}

Observe that $\inner{\tgamma_1,\ldots, \tgamma_n, \tgamma_{\tk}}_{\tbeta}^{\tcX,T_f}$ is homogeneous of degree $0$ and thus in $\bQ$.

\begin{lemma}\label{lem:ClosedWellDefined}
Under the setup of Definition \ref{def:ClosedOuter}, $\inner{\tgamma_1, \dots, \tgamma_n, \tgamma_{\tk}}_{\tbeta}^{\tcX,\tT'}$ has no pole along $\su_4 = 0, \su_2 - f\su_1 = 0$. In particular, $\inner{\tgamma_1,\ldots, \tgamma_n, \tgamma_{\tk}}_{\tbeta}^{\tcX, T_f}$ is defined.
\end{lemma}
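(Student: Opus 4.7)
The plan is to analyze $\inner{\tgamma_1,\ldots,\tgamma_n,\tgamma_\lambda}^{\tcX,\tT'}_{\tbeta}$ by $\tT'$-equivariant localization on $\Mbar_{0,n+1}(\tcX,\tbeta)$ and to identify the mechanism that cancels the apparent poles along the codimension-$2$ locus where $\su_4=0$ and $\su_2-f\su_1=0$. By the localization formula (Section \ref{sect:DecGraphs}, applied on the closed side), the invariant is a sum of contributions from decorated graphs in $\Gamma_{0,n+1}(\tcX,\tbeta)$, each a rational function in $\su_1,\su_2,\su_4$; the possible sources of poles on this locus are either (i) the explicit denominator $\tfrac{f}{\fm}\su_1-\tfrac{1}{\fm}\su_2-\su_4$ appearing in $\tgamma_1$ when $\lambda=1$, or (ii) tangent $\tT'$-weights in $e_{\tT'}(N^{\vir})$ that specialize to zero upon restriction to $T_f$.

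The first step is to cut down which decorated graphs can contribute. Since $\iota_{\tsi}^*(\tcD_{R+1}^{\tT'})=0$ whenever $R+1\notin I'_{\tsi}$, only graphs whose $(n+1)$-th marking sits at a vertex $v$ with $\sigma_v\in\tSi(4)\setminus\iota(\Sigma(3))$ can contribute; when $\lambda=1$ the additional factor $\tcD_2^{\tT'}\tcD_3^{\tT'}$ forces $\sigma_v=\tsi_0$, while when $\lambda\neq 1$ the factor $\one_{\lambda^{-1}}$ forces the monodromy $k_{n+1}$ at that marking to equal $\lambda^{-1}$ in $G_{\sigma_v}$.

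The second and key step is a direct computation of tangent $\tT'$-weights at the distinguished fixed points. A short calculation in the fan $\tSi$ shows that at $\fp_{\tsi_0}$ exactly two of the four tangent weights specialize to zero on $T_f$, and that the explicit denominator in $\tgamma_1$ is, up to the normalization $1/\fm$ and a sign, precisely one of these two weights, namely the tangent weight along the direction $\trho_3$. Since $\tcD_i^{\tT'}|_{\fp_{\tsi_0}}$ equals the tangent weight along $\trho_i$ whenever $\trho_i\subset\tsi_0$, the numerator $\tcD_2^{\tT'}\tcD_3^{\tT'}\tcD_{R+1}^{\tT'}$ pulled back to $\fp_{\tsi_0}$ supplies exactly the three tangent weights needed to absorb the explicit denominator as well as the vanishing weights of $e_{\tT'}(N^{\vir})$ coming from the distinguished vertex at $\fp_{\tsi_0}$. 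For $\lambda\neq 1$ an analogous and simpler analysis using $\tcD_{R+1}^{\tT'}\one_{\lambda^{-1}}$ handles the cancellation.

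The main obstacle is the careful bookkeeping of vertex, edge, and flag contributions at the distinguished vertex carrying the $(n+1)$-th marking: one has to follow the decomposition of $e_{\tT'}(N^{\vir})$ exactly as in \eqref{eqn:LiuDiskLocalOuter}, match the vanishing tangent weights against the numerator factors supplied by $\tgamma_\lambda$, and track the unstable-vertex integration conventions. A more conceptual route, which I would prefer for exposition, is to deduce the lemma as an immediate corollary of the numerical open/closed correspondence (Theorem \ref{thm:IntroNumerical}, to be proved in Section \ref{sect:Numerical}): that theorem identifies the closed invariant with the disk invariant $\inner{\gamma_1,\ldots,\gamma_n}^{\cX,(\cL,f)}_{\beta',(d,\lambda)}\in\bQ$, which is manifestly regular along $T_f$, so the restriction in Definition \ref{def:ClosedOuter} is defined.
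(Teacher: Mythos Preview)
Your preferred route (deduce the lemma from Theorem \ref{thm:NumericalOuter}) is circular: the paper's proof of Theorem \ref{thm:NumericalOuter} for arbitrary $f$ explicitly invokes Lemma \ref{lem:ClosedWellDefined} to justify restricting $\tC$ to $\su_4=0,\su_2-f\su_1=0$; only the generic-$f$ case is established independently via Lemmas \ref{lem:NumericalOuterContribute} and \ref{lem:NumericalOuterVanish}. So you cannot appeal to the numerical correspondence here.

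Your direct localization sketch also has a genuine gap. You correctly identify the cancellation at the vertex $\tv_0$ labeled by $\tsi_0$ between $\iota_{\tsi_0}^*(\tgamma_\lambda)$ and the vanishing tangent weights $\tbw(\delta_2(\tsi_0),\tsi_0),\tbw(\delta_3(\tsi_0),\tsi_0)$, but this is not the only source of poles. For a \emph{non-generic} framing $f$, other tangent $\tT'$-weights throughout the graph (at vertices labeled by $\iota(\sigma)$ for $\sigma\in\Sigma(3)$, along edges, in $\tbw_{(e,v)}$ factors) may coincidentally lie in $\bQ(\su_2-f\su_1)+\bQ\su_4$, producing poles in individual graph contributions that have nothing to do with $\tsi_0$. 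The paper's Lemma \ref{lem:NumericalOuterVanish} carries out exactly the balance you describe, but explicitly assumes $f$ generic for this reason (and even then the bookkeeping is more delicate than your sketch suggests; cf.\ Lemma \ref{lem:VertexTsi0Outer}). For arbitrary $f$, such poles in separate graphs would only cancel after summation, and your argument does not address this.

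The paper's actual proof takes a completely different route, via mirror symmetry and the B-model. It expresses $\llangle\tgamma_\lambda\rrangle^{\tcX,\tT'}$ through the $J$-function, applies the mirror theorem (Theorem \ref{thm:Mirror}) to pass to the $I$-function, and then invokes the B-model correspondence (Theorem \ref{thm:IPairing}) to identify the restricted pairing with the explicit hypergeometric function $W^{\cX,\cL,f}_\lambda(q,x)$, which has rational coefficients by definition. Since the closed mirror map is non-equivariant, regularity of the generating function transfers back to the individual invariants. The advantage of this detour is that Theorem \ref{thm:IPairing} is a direct hypergeometric computation, entirely independent of the A-model localization analysis and in particular of any genericity assumption on $f$.
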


We defer the proof, which is based on mirror symmetry and our B-model correspondence, to Section \ref{sect:Mirror}.



\subsubsection{Tangent $\tT'$-weights}
Given any flag $(\ttau, \tsi) \in F(\tSi)$, we define
$$
    \tbw(\ttau, \tsi):= e_{\tT'}(T_{\fp_{\tsi}} \fl_{\ttau}) \in H^2_{\tT'}(\pt; \bQ) = \bQ\su_1 \oplus \bQ \su_2 \oplus \bQ \su_4.
$$

For any $(\tau, \sigma) \in F(\Si)$, we have
\begin{equation}\label{eqn:TanWtU4RestrictOuter}
    \tbw(\iota(\tau), \iota(\sigma)) \big|_{\su_4 = 0} = \bw(\tau, \sigma).
\end{equation}
Moreover, for any $\sigma \in \Sigma(3)$, we have
$$
    \tbw(\sigma, \iota(\sigma)) = \su_4.
$$
Specifically for the cone $\iota(\sigma_0)$, we have
\begin{align*}
    &\tbw(\iota(\tau_0), \iota(\sigma_0)) = \frac{1}{\fr}\su_1, && \tbw(\sigma_0, \iota(\sigma_0)) = \su_4,\\
    &\tbw_2:= \tbw(\iota(\tau_2), \iota(\sigma_0)) = \frac{\fs}{\fr\fm}\su_1 + \frac{1}{\fm}\su_2, && \tbw_3:= \tbw(\iota(\tau_3), \iota(\sigma_0)) = -\frac{\fm + \fs}{\fr\fm}\su_1 - \frac{1}{\fm}\su_2 - \su_4.
\end{align*}

Tangent $\tT'$-weights at a fixed point $\fp_{\tsi}$ for $\tsi \in \tSi(4) \setminus \iota(\Sigma(3))$ are given in Section \ref{sect:TanWtsExtraConesOuter}. Specifically for the cone $\tsi_0$, we have
\begin{align*}
    & \tbw(\iota(\tau_0), \tsi_0) = -\frac{1}{\fa}\su_1, && \tbw(\delta_4(\tsi_0), \tsi_0) = \frac{1}{\fa}\su_1 + \su_4,\\
    & \tbw(\delta_2(\tsi_0), \tsi_0) = -\frac{f}{\fm}\su_1 + \frac{1}{\fm} \su_2, && \tbw(\delta_3(\tsi_0), \tsi_0) = \frac{f}{\fm}\su_1 - \frac{1}{\fm} \su_2 - \su_4,
\end{align*}
where $\delta_2(\tsi_0), \delta_3(\tsi_0), \delta_4(\tsi_0)$ are facets of $\tsi_0$ described by
$$
    I_{\delta_2(\tsi_0)}' = \{3, R+1, R+2\}, \quad I_{\delta_3(\tsi_0)}' = \{2, R+1, R+2\}, \quad I_{\delta_4(\tsi_0)}' = \{2, 3, R+1\}.
$$

\subsubsection{Localization computations}
Given any $n \in \bZ_{\ge 0}$ and effective class $\tbeta \in H_2(\tX; \bZ)$, the $\tT'$-action on $\tcX$ induces a $\tT'$-action on the moduli space $\Mbar_{0, n+1}(\tcX, \tbeta)$ of stable maps to $\tcX$. Recall from Section \ref{sect:DecGraphs} that connected components of $\Mbar_{0, n+1}(\tcX, \tbeta)^{\tT'}$ are indexed by decorated graphs in $\Gamma_{0,n+1}(\tcX, \tbeta)$. Using the study of the tangent-obstruction theory by \cite[Theorem 137]{Liu13}, we have the following computation:

\begin{proposition}\label{prop:ClosedLocalResultOuter}
Let $n \in \bZ_{\ge 0}$, $\tbeta \in H_2(\tX; \bZ)$ be an effective class, $\tk \in G_{\tsi_0}$, and $\tgamma_1, \dots, \tgamma_n \in H^2_{\CR, \tT'}(\tcX; \bQ)$. Then
\begin{equation}\label{eqn:ClosedLocalResultOuter}
\begin{aligned}
   \inner{\tgamma_1,\ldots, \tgamma_n, \tgamma_{\tk}}_{\tbeta}^{\tcX, T_f} = & \sum_{\vGa \in \Gamma_{0,n+1}(\tcX, \tbeta)} c_{\vGa} \cdot \prod_{e \in E(\Gamma)} \tbh(e) \cdot \prod_{(e,v) \in F(\Gamma)} \tbh(e,v) \cdot \prod_{v \in V(\Gamma)} \left( \prod_{i \in S_v} \iota_{\sigma_v}^* (\tgamma_i) \right)\\
   & \cdot \prod_{v \in V(\Gamma)} \int_{\Mbar_{0, \vk_v}(\cB G_v)} \frac{\tbh(v)}{\prod_{e \in E_v} \left(\tbw_{(e,v)} - \frac{\bar{\psi}_{(e,v)}}{r_{(e,v)}}\right)} \bigg|_{\su_4 = 0, \su_2 - f\su_1 = 0}.
\end{aligned}
\end{equation}
\end{proposition}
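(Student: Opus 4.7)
The plan is to prove \eqref{eqn:ClosedLocalResultOuter} by applying $\tT'$-equivariant virtual localization to $\Mbar_{0,n+1}(\tcX,\tbeta)$ with insertions $\tgamma_1,\dots,\tgamma_n,\tgamma_\lambda$, then taking the weight restriction $\su_4=0,\su_2-f\su_1=0$ at the very end. First, I would invoke the virtual localization formula of Graber--Pandharipande to write
$$
\inner{\tgamma_1,\dots,\tgamma_n,\tgamma_\lambda}_{\tbeta}^{\tcX,\tT'}
= \sum_{\vGa\in\Gamma_{0,n+1}(\tcX,\tbeta)} \int_{[\cF_{\vGa}]^{\vir}} \frac{\iota^*\left(\prod_{i=1}^n \ev_i^*(\tgamma_i)\cdot \ev_{n+1}^*(\tgamma_\lambda)\right)}{e_{\tT'}(N^{\vir}_{\vGa})},
$$
using the decomposition of the $\tT'$-fixed locus into decorated-graph components from Section \ref{sect:DecGraphs}. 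Setting $\tgamma_{n+1}:=\tgamma_\lambda$ (consistent with the marking convention on $\vGa$), the pullback of $\tgamma_i$ to the component $\cF_{\vGa}$ factors through the fixed point $\fp_{\sigma_v}$ at the vertex $v=\vs(i)$, producing $\iota_{\sigma_v}^*(\tgamma_i)$.

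Next, I would convert $[\cF_{\vGa}]^{\vir}$ to an integral over $\cM_{\vGa} = \prod_{v\in V^S(\vGa)}\Mbar_{0,\vk_v}(\cB G_v)$. The finite-morphism identification \eqref{eqn:FGammaFiniteMorphism}, combined with the automorphism factor $|\Aut(\vGa)|^{-1}$ and the integration convention \eqref{eqn:UnstableIntegral} that absorbs the contributions of unstable vertices in $V^1(\vGa)\cup V^{1,1}(\vGa)\cup V^2(\vGa)$, produces precisely the combinatorial coefficient $c_{\vGa}$ from \eqref{eqn:cGamma}.

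The core computation is then the decomposition of $e_{\tT'}(N^{\vir}_{\vGa})$ into edge, flag, and vertex contributions, which is exactly the content of \cite[Theorem 137]{Liu13} applied to the toric $4$-orbifold $\tcX$ (rather than $\cX$). The edge contributions $\tbh(e)$ come from the cohomology of $(u|_{\tcC_e})^*T\tcX$ along covers of the invariant lines $\fl_{\ttau_e}$ (computed via \cite[Lemma 130]{Liu13}); the node-smoothing contributions at each flag $(e,v)$ yield the factors $\tbw_{(e,v)}-\bar\psi_{(e,v)}/r_{(e,v)}$ in the denominator together with the automorphism-of-node factors $\tbh(e,v)$; and the vertex contributions $\tbh(v)$ over stable vertices arise from the restriction of $(u|_{\tcC_v})^*T\tcX$ expressed in Hurwitz--Hodge classes via \cite[Lemma 126]{Liu13}. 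Assembling these three pieces over the graph yields the integrand displayed in \eqref{eqn:ClosedLocalResultOuter} before the $T_f$-restriction.

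Finally, I would specialize to $\su_4=0,\su_2-f\su_1=0$. Since the localization identity is an equality of rational functions in $\su_1,\su_2,\su_4$ prior to restriction, the only subtle point is that the resulting specialization is well-defined, i.e.\ the sum has no pole along the restriction locus. The main obstacle is precisely this: in the $\lambda=1$ case, $\tgamma_\lambda$ carries the denominator $\tfrac{f}{\fm}\su_1-\tfrac{1}{\fm}\su_2-\su_4=-\tbw(\delta_3(\tsi_0),\tsi_0)$, and at the vertex with $\sigma_v=\tsi_0$ the remaining edge/flag factors appear to interact nontrivially with this pole. The cancellation is the content of Lemma \ref{lem:ClosedWellDefined}, whose proof I defer to Section \ref{sect:Mirror} where it will be obtained from the B-model correspondence and mirror symmetry. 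With that lemma in hand, the restriction is valid and \eqref{eqn:ClosedLocalResultOuter} follows.
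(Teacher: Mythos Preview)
Your proposal is correct and follows essentially the same approach as the paper, which simply cites \cite[Theorem~137]{Liu13} for the tangent--obstruction analysis and states the formula without further elaboration. Your write-up merely unpacks that citation into its constituent steps (virtual localization, the graph decomposition of Section~\ref{sect:DecGraphs}, the finite-morphism identification \eqref{eqn:FGammaFiniteMorphism}, and the edge/flag/vertex factorization from \cite{Liu13}), and correctly defers the well-definedness of the weight restriction to Lemma~\ref{lem:ClosedWellDefined}, exactly as the paper does.
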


We explain the notations used in \eqref{eqn:ClosedLocalResultOuter} above:
\begin{itemize}
    \item For each $\vGa \in \Gamma_{0,n+1}(\tcX, \tbeta)$, the coefficient $c_{\vGa}$ is defined in \eqref{eqn:cGamma}. To give the definitions of the other quantities in \eqref{eqn:LiuDiskLocalOuter}, we pick a stable map $u: (\cC, \fx_1, \dots, \fx_{n+1}) \to \tcX$ whose associated decorated graph is $\vGa$ (see Section \ref{sect:DecGraphs}). The definitions do not depend on the choice of $u$. 

    \item For each $e \in E(\Gamma)$, define
        $$
            \tbh(e):= \frac{e_{\tT'}(H^1(\cC_e,(u|_{\cC_e})^*T\tcX)^m)}{e_{\tT'}(H^0(\cC_e,(u|_{\cC_e})^*T\tcX)^m)},
        $$
        which is explicitly computed in \cite[Lemma 130]{Liu13} in terms of tangent $\tT'$-weights.

    \item For each $(e,v) \in F(\Gamma)$, define
        $$
            \tbh(e,v):= e_{\tT'}\left((T_{\fp_{\sigma_v}}\tcX)^{k_{(e,v)}}\right) = \prod_{ \substack{ (\ttau, \sigma_v) \in F(\tSi) \\ k_{(e,v)} \in G_{\ttau} }} \tbw(\ttau, \sigma_v).
        $$
        Here, $(T_{\fp_{\sigma_v}}\tcX)^{k_{(e,v)}}$ is the maximal subspace of $T_{\fp_{\sigma_v}}\tcX$ that is invariant under the action of $k_{(e,v)}$.

    \item We set
        $$
            \tgamma_{n+1}:= \tgamma_{\tk}
        $$
        for convenience.

    \item For each stable vertex $v \in V^S(\vGa)$, define
        $$
            \tbh(v):= \frac{e_{\tT'}(H^1(\cC_v,(u|_{\cC_v})^*T\tcX)^m)}{e_{\tT'}(H^0(\cC_v,(u|_{\cC_v})^*T\tcX)^m)},
        $$
        which is explicitly computed in \cite[Lemma 126]{Liu13} in terms of Hurwitz-Hodge classes and tangent $\tT'$-weights. For each unstable vertex $v \not \in V^S(\vGa)$, define
        $$
            \tbh(v):= \begin{cases}
                \tbh(e,v)^{-1} & \text{if } v \in V^1(\vGa) \cup V^{1,1}(\vGa), E_v = \{e\},\\
                \tbh(e_1,v)^{-1} = \tbh(e_2,v)^{-1} & \text{if } v \in V^2(\vGa), E_v = \{e_1, e_2\}.
            \end{cases}
        $$

    \item For each $(e,v) \in F(\Gamma)$, define
        $$
            \tbw_{(e,v)}:= e_{\tT'}(T_{\fn(e,v)}\cC_e) = \frac{\fr(\tau_e, \sigma_v)\tbw(\tau_e, \sigma_v)}{r_{(e,v)}d_e},
        $$
        where the $\tT'$-action on $T_{\fn(e,v)}\cC_e$ is induced from that on $T_{\fp_{\sigma_v}}\fl_{\tau_e}$.
\end{itemize}

We note that for any $\tsi \in \tSi(4)$, $\tsi \neq \tsi_0$, and any $\lambda \in G_{\tau_0}$,
$$
    \iota_{\tsi}^* (\tgamma_{n+1}) = 0.
$$
Thus only the decorated graphs satisfying $\vf \circ \vs(n+1) = \tsi_0$ can contribute.

\section{Numerical open/closed correspondence}\label{sect:Numerical}
In this section, we establish the open/closed correspondence at the numerical level (Theorem \ref{thm:NumericalOuter}). That is, we identify the disk invariants of $(\cX, \cL, f)$ and the closed Gromov-Witten invariants of $\tcX$ in corresponding curve classes.


\subsection{The statement}\label{sect:NumericalOuter}

Recall from Section \ref{sect:ComparisonOuter} the inclusion
$$
    \iota_*: H_2(X, L; \bZ) \to H_2(\tX; \bZ)
$$
which maps $[B]$ to $[l_{\iota(\tau_0)}]$. Moreover, Convention \ref{conv:LiftOuter} specifies equivariant lifts of classes in $H^2_{\CR}(\cX;\bQ)$ taken as insertions in the Gromov-Witten invariants.

\begin{theorem}\label{thm:NumericalOuter}
Let $n \in \bZ_{\ge 0}$, $\beta \in H_2(X;\bZ)$ be an effective class, and $(d, \lambda) \in H_1(\cL;\bZ) \cong \bZ \times G_{\tau_0}$ such that $d>0$. Set
$$
    \beta' = \beta + d[B], \quad \tbeta = \iota_*(\beta'), \quad \tk:= \pi_{(\iota(\tau_0), \tsi_0)}(d, \lambda) \in G_{\tsi_0},
$$
(see \eqref{eqn:FlagFundGroup}). Let $\gamma_1, \dots, \gamma_n \in H^2_{\CR}(\cX;\bQ)$ and, by an abuse of notation, $\gamma_1, \dots, \gamma_n \in H^2_{\CR, T'}(\cX; \bQ)$, $\tgamma_1, \dots, \tgamma_n \in H^2_{\CR, \tT'}(\tcX; \bQ)$ be the equivariant lifts chosen as in Convention \ref{conv:LiftOuter}. Then
$$
    \inner{\gamma_1,\ldots,\gamma_n}_{\beta', (d, \lambda)}^{\cX, (\cL, f)} = \inner{\tgamma_1,\ldots,\tgamma_n, \tgamma_{\tk}}_{\tbeta}^{\tcX, T_f}.
$$
\end{theorem}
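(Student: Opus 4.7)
The plan is to compare the two sides graph by graph via the localization formulas of Propositions~\ref{prop:DiskLocalResultOuter} and~\ref{prop:ClosedLocalResultOuter}. First, I would construct an explicit bijection $\Phi$ between $\Gamma_{0,n+1}^{0,(d,\lambda)}(\cX,\beta)$ and a subset of $\Gamma_{0,n+1}(\tcX,\tbeta)$ consisting of graphs whose 4-cone vertex labels lie in $\iota(\Sigma(3)) \cup \{\tsi_0\}$ and whose 3-cone edge labels lie in $\iota(\Sigma(2)_c) \cup \{\iota(\tau_0)\}$, with the $(n+1)$-th marking placed at a vertex labeled by $\tsi_0$. Given $\vGa$ on the open side with its $(n+1)$-th marking at a $\sigma_0$-vertex $v_0$, the closed graph $\Phi(\vGa)$ is built by relabeling each remaining vertex and edge through $\iota$, attaching a new degree-$d$ edge $e_0$ labeled $\iota(\tau_0)$ from $v_0$ to a new vertex $v_\infty$ labeled $\tsi_0$, and moving the $(n+1)$-th marking to $v_\infty$ with twisting determined by $\lambda$ via \eqref{eqn:LineFundGroup}--\eqref{eqn:FlagFundGroup}. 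A short count shows that the combinatorial prefactors $c_\vGa$ and $c_{\Phi(\vGa)}$ differ exactly by the contributions of the new edge $e_0$, the new vertex $v_\infty$, and the new flag.

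Next, I would match contributions term by term. The factors associated to vertices and edges common to $\vGa$ and $\Phi(\vGa)$ coincide after the restriction $\su_4 = 0,\ \su_2 - f\su_1 = 0$ by the weight identity \eqref{eqn:TanWtU4RestrictOuter}, and the lifts in Convention~\ref{conv:LiftOuter} are chosen precisely so that $\iota_{\sigma_v}^*(\gamma_i) = \iota_{\iota(\sigma_v)}^*(\tgamma_i)$ at every vertex carrying a marking $1, \dots, n$. The crux is then the identity
\[
\fr \fm \, D_{d,\lambda} \cdot \iota_{\sigma_0}^* \! \left( \phi_{\sigma_0, h(d,\lambda)^{-1}} \right) \cdot (\text{disk-side factors at } v_0) \,=\, \tbh(e_0) \cdot \iota_{\tsi_0}^*(\tgamma_\lambda) \cdot (\text{closed-side factors at } v_0, v_\infty)
\]
after the same restriction. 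This reduces to a direct computation: substitute \eqref{eqn:DiskFactor} for $D_{d,\lambda}$, \eqref{eqn:ExtraInsertionOuter} for $\tgamma_\lambda$, and the explicit tangent $\tT'$-weights at $\fp_{\iota(\sigma_0)}$ and $\fp_{\tsi_0}$ from Section~\ref{sect:ClosedOuter}. When $\lambda = 1$, the denominator $\tfrac{f}{\fm}\su_1 - \tfrac{1}{\fm}\su_2 - \su_4$ appearing in $\tgamma_\lambda$ cancels the apparent zero contributed by the tangent weight $\tbw(\delta_3(\tsi_0),\tsi_0)$ under the specialization; when $\lambda \neq 1$, the class $\tcD_{R+1}^{\tT'}\one_{\lambda^{-1}}$ plays the analogous role and encodes the twisted-sector data of $h(d,\lambda)^{-1}$. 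In either case the combinatorial output reproduces $D_{d,\lambda}$ up to the sign convention recorded in Remark~\ref{rem:DiskSignOuter}.

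The main obstacle, flagged in the introduction, is the final step: showing that every closed decorated graph outside the image of $\Phi$ contributes zero. The case $\vf(\vs(n+1)) \neq \tsi_0$ is immediate, since $\iota_{\vf(\vs(n+1))}^*(\tgamma_\lambda) = 0$. The substantive cases are graphs containing a vertex labeled by some extra 4-cone $\tsi \in \tSi(4) \setminus (\iota(\Sigma(3)) \cup \{\tsi_0\})$, or an edge labeled by an additional compact 3-cone of $\tSi$ (necessarily of the form $\iota(\tau)$ with $\tau\in \Sigma(2) \setminus \Sigma(2)_c$ now made compact in $\tcX$, or one containing both rays $\trho_{R+1},\trho_{R+2}$). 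To rule these out, I would exploit two structural features from Section~\ref{sect:ComparisonOuter}: first, that every $j \in \Box(\tsi) \setminus \Box(\delta_0(\tsi))$ has $\age(j) = 2$, which, combined with the degree-$2$ nature of the insertions $\tgamma_i$, forces incompatible dimension counts in the Hurwitz--Hodge integrals at such vertices; and second, that the insertions $\tgamma_i$ prescribed by Convention~\ref{conv:LiftOuter} pull back to zero at vertices labeled by $\iota(\sigma_0)$ or by any extra cone, severely restricting which graphs can contribute nontrivially. A careful case analysis, paralleling the treatment in \cite{LY21} for the smooth setting but now accounting for the twisted-sector contributions at the additional $\age$-$2$ inertia components, closes the argument.
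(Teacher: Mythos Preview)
Your bijection $\Phi$ and the term-by-term matching of contributions for graphs in its image are essentially the same as the paper's construction $\epsilon$ and Lemma~\ref{lem:NumericalOuterContribute}. That part is fine.

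The genuine gap is in your vanishing argument for graphs outside the image of $\Phi$. Your two proposed tools do not do the job. The $\age=2$ observation only tells you that the extra twisted sectors of $\tcX$ do not support degree-$2$ insertions; it says nothing about graphs that touch an extra $4$-cone $\tsi\in\tSi(4)\setminus\iota(\Sigma(3))$ at a vertex carrying \emph{no} marking (e.g.\ a bivalent node where two edges meet), or about edges labeled by the new compact $3$-cones in $\tSi(3)_c\setminus\iota(\Sigma(2)_c)$. Likewise, Convention~\ref{conv:LiftOuter} only forces $\iota^*_{\iota(\sigma_0)}(\tgamma_i)=\iota^*_{\tsi_0}(\tgamma_i)=0$ for untwisted $\gamma_i$; it does not force vanishing at the other extra cones, nor for twisted-sector insertions $\one_j$. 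So neither mechanism excludes, say, a graph with a single $\tsi_0$-vertex carrying the $(n+1)$-th marking and \emph{two} edges labeled $\iota(\tau_0)$ reaching into two separate subtrees in $\iota(\Sigma)$.

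The paper's argument is different in kind: it counts powers of $\su_4$ in each graph contribution (Lemma~\ref{lem:NumericalOuterU4Vanish}), using that every vertex and edge inside $\iota(\Sigma)$ contributes $\su_4^{-1}$ while every flag there contributes $\su_4^{+1}$, and the extra edges in $E_1$ contribute $\su_4^{+1}$. This forces the $\su_4$-power to be $|E_2|-c_0+|E_1|-|V_1'|\ge 0$, with equality only when $\Gamma_1$ is a single vertex and each component of $\Gamma_0$ joins it by exactly one $\iota(\tau_0)$-edge. For the surviving graphs not in $\epsilon(\Gamma_{0,n+1}^{0,(d,\lambda)}(\cX,\beta))$, a second power count in $\su_2-f\su_1$ (Lemma~\ref{lem:NumericalOuterVanish}) kills them for \emph{generic} $f$. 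Finally, the passage from generic to arbitrary $f$ is nontrivial: the paper uses a polynomial-interpolation argument that in turn relies on Lemma~\ref{lem:ClosedWellDefined}, whose proof is deferred to the mirror-symmetry section. You do not address this last step at all.
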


We note that by the description of the map $\pi_{(\iota(\tau_0), \tsi_0)}$ in \eqref{eqn:FlagMapDiagram} and the discussion in Section \ref{sect:stablizer}, $\age(\tk) \le 1$ if and only if $\fa \mid d$, and $\age(\tk)=2$ in the other case.

For the rest of this section, we give an outline of the proof and set up some notations. Recall that the localization computations in Propositions \ref{prop:DiskLocalResultOuter} and \ref{prop:ClosedLocalResultOuter} express 
$\inner{\gamma_1,\dots,\gamma_n}_{\beta', (d, \lambda)}^{\cX, (\cL, f)}$ as a sum of contributions from decorated graphs in $\Gamma_{0, n+1}^{0,(d, \lambda)}(\cX, \beta)$ (see \eqref{eqn:DiskGamma00Outer}) and $\inner{\tgamma_1,\dots,\tgamma_n,
\tgamma_{\tk}}_{\tbeta}^{\tcX, T_f}$ as a sum of contributions from decorated graphs in $\Gamma_{0,n+1}(\tcX, \tbeta)$. There are two main steps in our proof of Theorem \ref{thm:NumericalOuter}. First, we set up a one-to-one correspondence between $\Gamma_{0, n+1}^{0,(d, \lambda)}(\cX, \beta)$ and a subset
$$
   \Gamma_{0, n+1}^{0,\tk}(\tcX, \tbeta)
$$
of decorated graphs in $\Gamma_{0,n+1}(\tcX, \tbeta)$, and directly relate the contributions from corresponding decorated graphs. Second, we show that there is no contribution from any decorated graph in $\Gamma_{0,n+1}(\tcX, \tbeta)$ outside the subset 
$\Gamma_{0, n+1}^{0,\tk}(\tcX, \tbeta)$.

To define $\Gamma_{0, n+1}^{0,\tk}(\tcX, \tbeta)$, we consider a map
$$
    \epsilon: \Gamma_{0, n+1}^{0,(d,\lambda)}(\cX, \beta) \to \Gamma_{0, n+1}(\tcX, \tbeta),
$$
defined as follows: Given $\vGa \in \Gamma_{0, n+1}^{0,(d,\lambda)}(\cX, \beta)$, to obtain $\epsilon(\vGa)$, we first post-compose the label map $\vf$ with the map $\iota$ of cones (see \eqref{eqn:ConeMapIotaOuter}). Then, we replace the $(n+1)$-th marked point in $\vGa$, with marking $v_0 = v_0(\vGa) := \vs(n+1)$, by
\begin{itemize}
    \item a new vertex $\tv_0 = \tv_0(\epsilon(\vGa))$ with label $\vf(\tv_0) = \tsi_0$;

    \item a new edge $e_0 = e_0(\epsilon(\vGa))$ between $v_0$ and $\tv_0$, with label $\vf(e_0) = \iota(\tau_0)$ and degree $\vd(e_0) = (d, \lambda) \in H_{\iota(\tau_0)} = H_{\tau_0} \cong \bZ \times G_{\tau_0}$;

    \item a new $(n+1)$-th marked point with marking $\vs(n+1) = \tv_0$ and twisting $\vk(n+1) = \tk \in G_{\tsi_0}$.

\end{itemize}
See Figure \ref{fig:NewDecGraphOuter}. The twisting at the new flags are $k_{(e_0, v_0)} = h(d, \lambda)$, $k_{(e_0, \tv_0)} = \tk$ and the compatibility at vertices $v_0, \tv_0$ in $\epsilon(\vGa)$ are satisfied. It is straightforward to check that $\epsilon(\vGa)$ indeed belongs to $\Gamma_{0, n+1}(\tcX, \tbeta)$ and that $\epsilon$ is injective. We define $\Gamma_{0, n+1}^{0,\tk}(\tcX, \tbeta)$ to be the image of $\epsilon$.

\begin{figure}[h]
    \begin{tikzpicture}[scale=0.7]
        \coordinate (v0) at (0, 0);
        \coordinate (v1) at (-1.5, 1);        
        \coordinate (v2) at (-1.5, -1);
                
        \node at (v0) {$\bullet$};
        \node at (v1) {$\bullet$};
        \node at (v2) {$\bullet$};

        \draw (-2.5,1) -- (v1) -- (v0) -- (v2) -- (-2.5,-1);
        \draw (v0) -- (0.5, 0);        

        \node at (0, -0.5) {$v_0$};
        \node at (-3, 1) {$\cdots$};
        \node at (-3, -1) {$\cdots$};
        \node at (1.3, 0) {$n+1$};
        \node at (-0.5, -2) {$\vGa$};

        \node at (3.5, 0) {$\Rightarrow$};

        \coordinate (v00) at (8, 0);
        \coordinate (tv0) at (10, 0);
        \coordinate (v11) at (6.5, 1);        
        \coordinate (v22) at (6.5, -1);
                
        \node at (v00) {$\bullet$};
        \node at (tv0) {$\bullet$};
        \node at (v11) {$\bullet$};
        \node at (v22) {$\bullet$};

        \draw (5.5,1) -- (v11) -- (v00) -- (v22) -- (5.5,-1);
        \draw (v00) -- (tv0) -- (10.5, 0);        

        \node at (8, -0.5) {$v_0$};
        \node at (10, -0.5) {$\tv_0$};
        \node at (9, 0.4) {$e_0$};
        \node at (5, 1) {$\cdots$};
        \node at (5, -1) {$\cdots$};
        \node at (11.3, 0) {$n+1$};
        \node at (8.5, -2) {$\epsilon(\vGa)$};
    \end{tikzpicture}

    \caption{Newly added vertex $\tv_0$ and edge $e_0$ in the graph $\epsilon(\vGa)$.}
    \label{fig:NewDecGraphOuter}
\end{figure}

We set up some additional notations. For each $\vGa \in \Gamma_{0, n+1}^{0,(d,\lambda)}(\cX, \beta)$, let
\begin{align*}
    C_{\vGa}:= &\fr \fm D_{d, \lambda} \cdot c_{\vGa} \cdot \prod_{e \in E(\Gamma)} \bh(e) \cdot \prod_{(e,v) \in F(\Gamma)} \bh(e,v) \cdot \prod_{v \in V(\Gamma)} \left( \prod_{i \in S_v} \iota_{\sigma_v}^* (\gamma_i) \right)\\
   & \cdot \prod_{v \in V(\Gamma)} \int_{\Mbar_{0, \vk_v}(\cB G_v)} \frac{\bh(v)}{\left(\frac{\su_1}{d} - \bar{\psi}_{n+1}\right)^{\delta_{v, n+1}} \cdot \prod_{e \in E_v} \left(\bw_{(e,v)} - \frac{\bar{\psi}_{(e,v)}}{r_{(e,v)}}\right)} \in \cQ_{T'}
\end{align*}
denote the contribution of $\vGa$ to $\inner{\gamma_1,\dots,\gamma_n}_{\beta', (d, \lambda)}^{\cX, (\cL, f)}$ as in \eqref{eqn:DiskLocalResultOuter} in Proposition \ref{prop:DiskLocalResultOuter} before the weight restriction $\su_2 - f\su_1 = 0$. Moreover, for an effective class $\tbeta \in H_2(\tX;\bZ)$ and $\vGa \in \Gamma_{0, n+1}(\tcX, \tbeta)$, let
\begin{align*}
    \tC_{\vGa} := & c_{\vGa} \cdot \prod_{e \in E(\Gamma)} \tbh(e) \cdot \prod_{(e,v) \in F(\Gamma)} \tbh(e,v) \cdot \prod_{v \in V(\Gamma)} \left( \prod_{i \in S_v} \iota_{\sigma_v}^* (\tgamma_i) \right)\\
    & \cdot \prod_{v \in V(\Gamma)} \int_{\Mbar_{0, \vk_v}(\cB G_v)} \frac{\tbh(v)}{\prod_{e \in E_v} \left(\tbw_{(e,v)} - \frac{\bar{\psi}_{(e,v)}}{r_{(e,v)}}\right)} \in \cQ_{\tT'}
\end{align*}
denote the contribution of $\vGa$ to $\inner{\tgamma_1,\ldots, \tgamma_n,\tgamma_{\tk}}_{\tbeta}^{\tcX,T_f}$ as in \eqref{eqn:ClosedLocalResultOuter} in Proposition \ref{prop:ClosedLocalResultOuter} before the weight restriction $\su_4 = 0, \su_2 - f\su_1 = 0$.

\subsection{Matching contributions}
As the first step in proving Theorem \ref{thm:NumericalOuter}, we show the following lemma matching the contributions from decorated graphs in $\Gamma_{0, n+1}^{0,(d, \lambda)}(\cX, \beta)$ and $\Gamma_{0, n+1}^{0,\tk}(\tcX, \tbeta) = \epsilon(\Gamma_{0, n+1}^{0,(d, \lambda)}(\cX, \beta))$.

\begin{lemma}\label{lem:NumericalOuterContribute}
For the quantities defined as in Theorem \ref{thm:NumericalOuter}, we have that for each $\vGa \in \Gamma_{0, n+1}^{0,(d,\lambda)}(\cX, \beta)$,
$$
    \tC_{\epsilon(\vGa)} \big|_{\su_4 = 0} = C_{\vGa}.
$$
\end{lemma}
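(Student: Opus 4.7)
The plan is to compare $\tC_{\epsilon(\vGa)}\big|_{\su_4=0}$ with $C_{\vGa}$ piece by piece, using the explicit localization formulas \eqref{eqn:DiskLocalResultOuter} and \eqref{eqn:ClosedLocalResultOuter}. The decorated graph $\epsilon(\vGa)$ differs from $\vGa$ in four respects: (a) every old vertex $v$ has label $\iota(\sigma_v)$ instead of $\sigma_v$, and every old edge $e$ has label $\iota(\tau_e)$ instead of $\tau_e$; (b) at $v_0$ the $(n+1)$-th marked point is replaced by a new flag $(e_0,v_0)$ with twisting $h(d,\lambda)$; (c) there is a new edge $e_0$ of degree $(d,\lambda)$ along $\fl_{\iota(\tau_0)}$; and (d) there is a new vertex $\tv_0$ of label $\tsi_0$ carrying the $(n+1)$-th marked point with twisting $\lambda$ and insertion $\tgamma_\lambda$. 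The overall strategy is to show that the modifications in (a) contribute only $\su_4$-factors that cancel, and that the new contributions from (b)--(d) together reproduce the disk prefactor $\fr\fm D_{d,\lambda}$ and the original descendant factor $\bigl(\frac{\su_1}{d}-\bar{\psi}_{n+1}\bigr)^{-1}$ appearing in $C_{\vGa}$.

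For the modifications in (a), observe that $\fr(\sigma_v,\iota(\sigma_v)) = 1$ for every $\sigma_v \in \Sigma(3)$, so the new normal direction $T_{\fp_{\iota(\sigma_v)}}\fl_{\sigma_v}$ of $\tT'$-weight $\su_4$ is invariant under every twisting $k \in G_{\sigma_v} = G_{\iota(\sigma_v)}$. Hence each flag factor $\tbh(e,v)$ acquires an extra multiplicative contribution from the flag $(\sigma_v,\iota(\sigma_v))$, and each stable vertex factor $\tbh(v)$ acquires, via \cite[Lemma 126]{Liu13}, extra Hurwitz--Hodge terms from the obstruction theory of this normal line bundle pulled back to the constant map $u|_{\cC_v} \equiv \fp_{\sigma_v}$. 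A Riemann--Roch computation on each twisted cover $\cC_e$ shows that the total extra factor on the ``old'' part of the graph telescopes to a quantity regular and nonzero at $\su_4 = 0$; the argument parallels the cancellation used in the smooth case \cite{LY21}. Combined with \eqref{eqn:TanWtU4RestrictOuter}, the restriction $\su_4 = 0$ then reproduces the corresponding $\cX$-localization data exactly.

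For the new contributions (b)--(d), I would evaluate the product
\[
\tbh(e_0)\cdot\tbh(e_0,v_0)\cdot\tbh(e_0,\tv_0)\cdot\tbh(\tv_0)\cdot\iota_{\tsi_0}^*(\tgamma_\lambda)\cdot\Bigl(\tbw_{(e_0,v_0)}-\frac{\bar{\psi}_{(e_0,v_0)}}{r_{(e_0,v_0)}}\Bigr)^{-1}
\]
using the tangent $\tT'$-weights at $\fp_{\iota(\sigma_0)}$ and $\fp_{\tsi_0}$ listed in Section~\ref{sect:ClosedOuter}, the formula \eqref{eqn:ExtraInsertionOuter} for $\tgamma_\lambda$, and the Riemann--Roch evaluation of $\tbh(e_0)$ in \cite[Lemma 130]{Liu13}. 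Since $\tv_0 \in V^{1,1}(\epsilon(\vGa))$, we have $\tbh(\tv_0) = \tbh(e_0,\tv_0)^{-1}$, so that factor cancels outright. After the restriction $\su_4 = 0$, the remainder is compared with the disk factor formula \eqref{eqn:DiskFactor} and shown to equal $\fr\fm D_{d,\lambda}$ times $\iota_{\sigma_0}^*(\phi_{\sigma_0,h(d,\lambda)^{-1}})$ and $\bigl(\frac{\su_1}{d}-\bar{\psi}_{n+1}\bigr)^{-1}$, treating the cases $\lambda = 1$ and $\lambda \neq 1$ separately because \eqref{eqn:ExtraInsertionOuter} is different in each case. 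The combinatorial ratio $c_{\epsilon(\vGa)}/c_{\vGa}$, read directly from \eqref{eqn:cGamma}, supplies the extra $|G|/r$ factors needed for the match.

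The main obstacle is the last step. Concretely, one must verify the sign $(-1)^{\lfloor dw_3 - \epsilon_3\rfloor + d}$ in \eqref{eqn:DiskFactor} against the sign produced by the ratio of $\tT'$-equivariant Euler classes of $H^1$ and $H^0$ of $(u|_{\cC_{e_0}})^*T\tcX$ along the degree-$d$ twisted cover; this match is exactly what fixes the orientation convention on $\Mbar_n(\cX,\cL \mid \beta',(d,\lambda))$ noted in Remark \ref{rem:DiskSignOuter}. The case split in \eqref{eqn:ExtraInsertionOuter} is the other delicate point: for $\lambda = 1$ one uses the factor $\frac{f}{\fm}\su_1 - \frac{1}{\fm}\su_2 - \su_4$ to cancel the tangent weight $\tbw(\delta_3(\tsi_0),\tsi_0)$ at $\fp_{\tsi_0}$ before the restriction $\su_4 = 0$, whereas for $\lambda \neq 1$ the class $\one_{\lambda^{-1}}$ localizes $\iota_{\tsi_0}^*(\tgamma_\lambda)$ to the twisted sector of $G_{\tsi_0}$ corresponding to $\lambda^{-1}$; a direct calculation then verifies the match with $\fr\fm D_{d,\lambda}$ in both cases.
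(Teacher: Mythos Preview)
Your overall plan---compare old-graph contributions under the label change $\sigma\mapsto\iota(\sigma)$, then match the new edge/vertex data against $\fr\fm D_{d,\lambda}$---is exactly the paper's strategy. But the $\su_4$-power bookkeeping in step (a) is wrong, and this propagates to (b)--(d).

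Concretely: each old edge contributes an extra $\su_4^{-1}$ (from $H^0$ of the trivial normal line of weight $\su_4$), each old flag an extra $\su_4^{+1}$, and each old vertex an extra $\su_4^{-1}$. On a tree these do \emph{not} telescope to something regular: the total is $-|E(\Gamma)|+|F(\Gamma)|-|V(\Gamma)|=-1$, so the ``old'' part carries a genuine simple pole at $\su_4=0$. The compensating $\su_4^{+1}$ lives in $\tbh(e_0,v_0)$ on the new side, because $\fr(\sigma_0,\iota(\sigma_0))=1$ forces $\tbw(\sigma_0,\iota(\sigma_0))=\su_4$ to appear there. Hence your (b)--(d) product, taken with the factor $\tbh(e_0,v_0)$ included, \emph{vanishes} at $\su_4=0$ rather than restricting to $\fr\fm D_{d,\lambda}$. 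You cannot restrict the two blocks separately; you must first redistribute the $\su_4$, e.g.\ replace each $\tbh(e)$ by $\su_4\tbh(e)$, each $\tbh(v)$ by $\su_4\tbh(v)$, each $\tbh(e,v)$ by $\tbh(e,v)/\su_4$, and count the leftover power via the tree Euler characteristic---this is what the paper does. Your ``Riemann--Roch on $\cC_e$'' remark is the correct local input (it gives the $-1$ per edge), but it does not by itself produce a regular quantity.

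Two smaller bookkeeping gaps: the unstable-vertex integral at $\tv_0$ (convention \eqref{eqn:UnstableIntegral}) contributes a factor $r_{(e_0,\tv_0)}/\fm$ beyond the cancellation $\tbh(\tv_0)\tbh(e_0,\tv_0)=1$; and the identification of the $(n{+}1)$-th descendant $\bigl(\tfrac{\su_1}{d}-\bar\psi_{n+1}\bigr)^{-1}$ with the flag descendant at $(e_0,v_0)$ introduces a factor $r_{(e_0,v_0)}$ (since $\tbw_{(e_0,v_0)}=\tfrac{\su_1}{r_{(e_0,v_0)}d}$). These $r$-factors are exactly what cancel against $c_{\epsilon(\vGa)}/c_{\vGa}=\dfrac{\fr\fm}{d\,r_{(e_0,v_0)}r_{(e_0,\tv_0)}}$; without tracking them your final match will be off by a rational constant.
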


In particular, by Proposition \ref{prop:DiskLocalResultOuter}, we have
$$
    \inner{\gamma_1,\ldots,\gamma_n}_{\beta', (d, \lambda)}^{\cX, (\cL, f)}= \sum_{\vGa \in \Gamma_{0, n+1}^{0,\tk}(\tcX, \tbeta)} \tC_{\vGa} \big|_{\su_4 = 0, \su_2 - f\su_1 = 0}.
$$

\begin{proof}
Let $\vGa \in \Gamma_{0, n+1}^{0,(d,\lambda)}(\cX, \beta)$ and $\vGa' = \epsilon(\vGa) \in \Gamma_{0, n+1}^{0,\tk}(\tcX, \tbeta)$. The underlying graph $\Gamma = (V(\Gamma), E(\Gamma))$ of $\vGa$ is a subgraph of the underlying graph $\Gamma' = (V(\Gamma'), E(\Gamma'))$ of $\vGa'$. Let $v_0 = v_0(\vGa) \in V(\Gamma)$, $\tv_0 = \tv_0(\vGa') \in V(\Gamma')$, $e_0 = e_0(\vGa') \in E(\Gamma')$. We carry out the comparison between $\tC_{\vGa'} \big|_{\su_4 = 0}$ and $C_{\vGa}$ piece by piece, while supplying computational details in Section \ref{sect:NumericalLocalDetail} in the appendix.
\begin{itemize}
    \item (Coefficients) Note that $\Aut(\vGa) \cong \Aut(\vGa')$. By definition \eqref{eqn:cGamma}, we have
    $$
        c_{\vGa'} = c_{\vGa} \cdot \frac{\frac{|G_{v_0}|}{r_{(e_0, v_0)}} \cdot \frac{|G_{\tv_0}|}{r_{(e_0, \tv_0)}}}{d_{e_0}|G_{e_0}|} = c_{\vGa} \cdot \frac{\fr\fa\fm}{d r_{(e_0, v_0)} r_{(e_0, \tv_0)}}.
    $$

    \item (Edges) For each $e \in E(\Gamma)$, by Lemma \ref{lem:EdgeComparison},  we have
    $$
        \left(\su_4 \tbh(e) \right) \big|_{\su_4 = 0} = \bh(e).  
    $$
    For the edge $e_0$, by the computation of $\tbh(e_0)$ in Lemma \ref{lem:EdgeIotaTau0Outer}, we have 
    $$
  \tbh(e_0) \big|_{\su_4 = 0} = \begin{dcases}
            \displaystyle{     - d \fm \left(\frac{\su_1}{\fa} \right)^{-1} \left( \frac{\su_2-f\su_1}{\fm} \right)^{-1} \cdot D_{d,\lambda} }& \text{if } \tk = 1,\\
            \displaystyle{     - d \fm \left(\frac{\su_1}{\fa} \right)^{-1} \cdot D_{d,\lambda}  }& \text{if } \age(\tk)=1,\\
            \displaystyle{     d \fm \cdot D_{d,\lambda}  }& \text{if } \age(\tk)=2.\\    
        \end{dcases}
    $$

    \item (Flags) For each $(e,v) \in F(\Gamma)$, by Lemma \ref{lem:FlagComparison}, we have
    $$
        \frac{\tbh(e,v)}{\su_4} \bigg|_{\su_4 = 0} = \bh(e,v).
    $$
    For the flag $(e_0, v_0)$, we have
      $$ 
        \tbh(e_0, v_0) = \prod_{ \substack{ \ttau \in \{\iota(\tau_0), \iota(\tau_2), \iota(\tau_3), \sigma_0\} \\ h(d,\lambda) \in G_{\ttau} } } \tbw(\ttau, \iota(\sigma_0)) 
        = \su_4 \cdot \prod_{ \substack{ \ttau \in \{\iota(\tau_0), \iota(\tau_2), \iota(\tau_3)\}\\  h(d,\lambda) \in G_{\ttau} } } \tbw(\ttau, \iota(\sigma_0)).
$$ 
    Therefore,
    \begin{equation}\label{eqn:He0v0PointClass}
        \frac{\tbh(e_0,v_0)}{\su_4} \bigg|_{\su_4 = 0} = \prod_{ \substack{ \tau \in \{\tau_0, \tau_2, \tau_3\} \\ h(d,\lambda) \in G_{\tau} } } \bw(\tau, \sigma_0) = \iota_{\sigma_0}^*(\phi_{\sigma_0, h(d, \lambda)^{-1}}).
    \end{equation}
    The term $\tbh(e_0,\tv_0)$ for the flag $(e_0, \tv_0)$ will be accounted for by the term $\tbh(\tv_0)$ for the unstable vertex $\tv_0 \in V^{1,1}(\vGa')$ below.

    \item (Insertions) For any $i = 1, \dots, n$, with $i \in S_v$ for $v \in V(\Gamma)$, since $\iota^*(\tgamma_i) = \gamma_i$, we have
    $$
        \iota_{\tsi_v}^*(\tgamma_i) \big|_{\su_4} = \iota_{\sigma_v}^*(\gamma_i).
    $$
    The insertion $\iota_{\sigma_0}^*(\gamma_{n+1}) = \iota_{\sigma_0}^*(\phi_{\sigma_0, h(d, \lambda)^{-1}})$ is related by \eqref{eqn:He0v0PointClass} above to $\tbh(e_0, v_0)$. Moreover, we have
    \begin{equation}\label{eqn:ClosedInsertionRestrictOuter}
        \iota_{\tsi_0}^*(\tgamma_{n+1}) = \begin{cases}
                \displaystyle{      -\frac{\su_1}{\fa} \cdot \frac{\su_2-f\su_1}{\fm} } & \text{if } \tk = 1,\\  
                \noalign{\vskip5pt}
            \displaystyle{    -\frac{\su_1}{\fa} \one_{\tk^{-1}} } & \text{if } \age(\tk) =1,\\
            \noalign{\vskip3pt}
            \displaystyle{  \one_{\tk^{-1}} } & \text{if } \age(\tk) =2.
                     \end{cases}
    \end{equation}
  
    \item (Vertices) For any $v \in V(\Gamma) \setminus \{v_0\}$, by Lemmas \ref{lem:FlagComparison} (when $v$ is unstable), \ref{lem:VertexComparison}, and \ref{lem:IntegralComparison}, we have
    $$
        \int_{\Mbar_{0, \vk_v}(\cB G_v)} \frac{\su_4\tbh(v)}{\prod_{e \in E_v} \left(\tbw_{(e,v)} - \frac{\bar{\psi}_{(e,v)}}{r_{(e,v)}}\right)} \bigg|_{\su_4 = 0} = \int_{\Mbar_{0, \vk_v}(\cB G_v)} \frac{\bh(v)}{\prod_{e \in E_v} \left(\bw_{(e,v)} - \frac{\bar{\psi}_{(e,v)}}{r_{(e,v)}}\right)}.
    $$

    For the vertex $v_0$, note that the $(n+1)$-th marked point in $\vGa$ is replaced by the flag $(e_0, v_0)$ in $\vGa'$, and the twisting $k_{n+1}$ in $\vGa$ is identified with $k_{(e_0, v_0)}^{-1} = h(d, \lambda)^{-1}$. Therefore, the integral
    $$
         \int_{\Mbar_{0, \vk_{v_0}}(\cB G_{v_0})} \frac{\su_4\tbh(v_0)}{\prod_{e \in E_{v_0}} \left(\tbw_{(e,v_0)} - \frac{\bar{\psi}_{(e,v_0)}}{r_{(e,v_0)}}\right)} \bigg|_{\su_4 = 0}
    $$
    in $\tC_{\vGa'}$ (after multiplied by $\su_4$ and restricted to $\su_4 = 0$) is identified with
    $$
        r_{(e_0, v_0)} \cdot \int_{\Mbar_{0, \vk_{v_0}}(\cB G_{v_0})} \frac{\bh(v_0)}{\left(\frac{\su_1}{d} - \bar{\psi}_{n+1}\right) \cdot \prod_{e \in E_{v_0}} \left(\bw_{(e,v_0)} - \frac{\bar{\psi}_{(e,v_0)}}{r_{(e,v_0)}}\right)}
    $$
    where the integral is the one in $C_{\vGa}$.

    Finally, for the unstable vertex $\tv_0 \in V^{1,1}(\vGa')$, we have by definition that $\tbh(\tv_0) = \tbh(e_0, \tv_0)^{-1}$. By \eqref{eqn:UnstableIntegral}, we have
    $$
        \int_{\Mbar_{0, \vk_{\tv_0}}(\cB G_{\tv_0})} \frac{\tbh(\tv_0)}{\tbw_{(e_0,\tv_0)} - \frac{\bar{\psi}_{(e_0,\tv_0)}}{r_{(e_0,\tv_0)}}} \bigg|_{\su_4 = 0} = \frac{r_{(e_0, \tv_0)}}{|G_{\tv_0}|} \tbh(\tv_0) \big|_{\su_4 = 0} = \frac{r_{(e_0, \tv_0)}}{\fa\fm}  \cdot \left( \tbh(e_0, \tv_0)\big|_{\su_4= 0} \right)^{-1}.
    $$
\end{itemize}

We now piece together the above comparisons. First, we collect the powers of $\su_4$ appearing in the pieces of $\tC_{\vGa'}$. Each edge, flag, vertex in $\Gamma$ contributes a power of $-1, 1, -1$ respectively. Since the graph $\Gamma$ is a tree, we have
$$
    -|E(\Gamma)| + |F(\Gamma)| - |V(\Gamma)| = -1.
$$
In addition, the extra flag $(e_0, v_0)$ in $\Gamma'$ contributes a power of $1$, while there is no contribution from $e_0$, $(e_0, \tv_0)$, or $\tv_0$. Therefore, the total power of $\su_4$ in $\tC_{\vGa'}$ is $0$. Summarizing the comparisons of the pieces above, we have that when $\tk = 1$,
$$
    \tC_{\vGa'} \big|_{\su_4 = 0} = C_{\vGa} \cdot \frac{\fa}{d r_{(e_0, v_0)} r_{(e_0, \tv_0)}} \cdot (-1) d \fm \left(\frac{\su_1}{\fa} \right)^{-1} \left( \frac{\su_2-f\su_1}{\fm} \right)^{-1} \cdot \left( -\frac{\su_1}{\fa} \cdot \frac{\su_2-f\su_1}{\fm} \right) \cdot r_{(e_0, v_0)} \cdot \frac{r_{(e_0, \tv_0)}}{\fa\fm} = C_{\vGa}.
$$
When $\age(\tk) = 1$, we have
$$
    \tC_{\vGa'} \big|_{\su_4 = 0} = C_{\vGa} \cdot \frac{\fa}{d r_{(e_0, v_0)} r_{(e_0, \tv_0)}} \cdot (-1) d \fm \left(\frac{\su_1}{\fa} \right)^{-1} \cdot \left( -\frac{\su_1}{\fa} \right) \cdot r_{(e_0, v_0)} \cdot \frac{r_{(e_0, \tv_0)}}{\fa\fm} = C_{\vGa}.
$$
When $\age(\tk) = 2$, we have
$$
    \tC_{\vGa'} \big|_{\su_4 = 0} = C_{\vGa} \cdot \frac{\fa}{d r_{(e_0, v_0)} r_{(e_0, \tv_0)}} \cdot d \fm  \cdot r_{(e_0, v_0)} \cdot \frac{r_{(e_0, \tv_0)}}{\fa\fm} = C_{\vGa}.
$$
\end{proof}

\subsection{Vanishing arguments}
In this section, we study the contributions from decorated graphs in $\Gamma_{0,n+1}(\tcX, \tbeta) \setminus \Gamma_{0, n+1}^{0,\tk}(\tcX, \tbeta)$. Recall first that we made the following observation at the end of Section \ref{sect:ClosedOuter}.

\begin{observation}\label{obs:AModelLVanish}
For any effective class $\tbeta \in H_2(\tX;\bZ)$ and $\vGa \in \Gamma_{0,n+1}(\tcX, \tbeta)$, $\tC_{\vGa} = 0$ unless $\vs(n+1) = \tv_0$ and $\vf(\tv_0) = \tsi_0$. In particular, if $\tbeta \in H_2(X; \bZ)$, then $\tC_{\vGa} = 0$ for all $\vGa \in \Gamma_{0,n+1}(\tcX, \tbeta)$.
\end{observation}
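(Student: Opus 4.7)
The plan is to prove the two clauses of the observation separately, both by direct analysis of the toric structure at $\tsi_0$ combined with a pairing argument in lattice cohomology.

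For the ``unless'' clause, I will show that $\iota_\tsi^*(\tgamma_\lambda) = 0$ for every $\tsi \in \tSi(4) \setminus \{\tsi_0\}$; this immediately forces $\tC_\vGa = 0$ via the factor $\iota_{\sigma_{v_{n+1}}}^*(\tgamma_{n+1}) = \iota_{\sigma_{v_{n+1}}}^*(\tgamma_\lambda)$ appearing in \eqref{eqn:ClosedLocalResultOuter}. From definition \eqref{eqn:ExtraInsertionOuter}, a nonzero restriction at $\fp_\tsi$ requires $\{2, 3, R+1\} \subseteq I'_\tsi$: for $\lambda = 1$ this is direct from the numerator $\tcD_2^{\tT'} \tcD_3^{\tT'} \tcD_{R+1}^{\tT'}$, and for $\lambda \neq 1$ it follows because $\iota_\tsi^*(\tcD_{R+1}^{\tT'}) \neq 0$ requires $R+1 \in I'_\tsi$ while $\iota_\tsi^*(\one_{\lambda^{-1}}) \neq 0$ requires $\lambda^{-1} \in \Box(\tsi)$. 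The latter in turn forces $\{2, 3\} \subseteq I'_\tsi$, because any representative of $\lambda^{-1} \in G_{\tau_0}$ in $\tN$ has vanishing $v_1$- and $v_4$-components, and expressing such a representative as a subunit nonnegative combination $\sum_{i \in I'_\tsi} c_i \tb_i$ forces $c_i = 0$ for $i \in \{R+1, R+2\}$ (using $v_4 = 0$) and for $i \leq R$ with $m_i > 0$ (using $v_1 = 0$ and $m_i \geq 0$), leaving only the rays $\tb_2, \tb_3$ on the $v_1 = 0$ axis.

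The geometric crux is then the uniqueness claim: $\tsi_0$ is the only $4$-cone of $\tSi$ with $\{2, 3, R+1\} \subseteq I'_\tsi$. I verify this by showing that the triangle $\{\tb_2, \tb_3, \tb_{R+1}\}$ is a boundary face of the polytope $\tP$. Explicitly, $\tb_2 = (0, \fm, 1, 0)$, $\tb_3 = (0, 0, 1, 0)$, and $\tb_{R+1} = (-1, -f, 1, 1)$ all satisfy $v_1 + v_4 = 0$, whereas every other ray of $\tSi$ strictly satisfies $v_1 + v_4 > 0$: for $i \leq R'$ by outerness ($m_i \geq 0$, with $\tb_2, \tb_3$ the only rays of $\tSi$ on the $v_1 = 0$ axis since other rays are vertices of $P$), and $v_1 + v_4 = 1$ for $\tb_{R+2}$. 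Hence $v_1 + v_4 = 0$ is a supporting hyperplane of $\tP$, the $3$-cone $\delta_4(\tsi_0)$ is non-compact, and $\tsi_0$ is the unique $4$-cone containing it on its interior side.

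For the ``in particular'' clause, fix $\tbeta \in \iota_*(H_2(X;\bZ))$ and $\vGa \in \Gamma_{0,n+1}(\tcX, \tbeta)$ with $\vf(\vs(n+1)) = \tsi_0$ (so the unless clause doesn't already give the vanishing). I will derive a contradiction via a pairing with $\tcD_{R+1}$. From \eqref{eqn:tXSES}, the image $\iota_*(H_2(X;\bZ))$ identifies with the sublattice $\bL \subseteq \tbL$, and $\tD_{R+1} \in \tbL^\vee$ vanishes on $\bL$ (equivalently, the compact lines $l_{\iota(\tau)}$ for $\tau \in \Sigma(2)_c$ satisfy $R+1 \notin I'_{\iota(\tau)}$ and have endpoints $\fp_{\iota(\sigma)}$ with $R+1 \notin I'_{\iota(\sigma)}$, hence are disjoint from $\cV(\trho_{R+1})$), giving $\tbeta \cdot \tcD_{R+1} = 0$. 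On the other hand, $\Gamma$ being connected forces any edge $e$ incident to $\vs(n+1)$ to be labeled by one of the compact facets $\iota(\tau_0), \delta_2(\tsi_0), \delta_3(\tsi_0)$ of $\tsi_0$ ($\delta_4(\tsi_0)$ having been shown non-compact). Pairing the dual sequence \eqref{eqn:tXDualSES} with $u = -u_1 \in \tM$ yields the linear equivalence $\tcD_{R+1} = \sum_{i \leq R} m_i \tcD_i$; expanding $[l_{\tau_e}] \cdot \tcD_{R+1}$ via this identity and using $m_i \geq 0$, one finds that the contribution comes from rays $\tb_i$ with $i \leq R, m_i > 0$ adjacent to but not inside $\tau_e$, which give transversal intersections at endpoints of $l_{\tau_e}$ opposite $\fp_{\tsi_0}$ (for $\iota(\tau_0)$ this is $\tb_1$ with $m_1 = \fr$; for $\delta_{2,3}(\tsi_0)$ this is the ``opposite'' ray $\tb_{j_{2,3}}$ with $j_{2,3} \neq 2, 3$, hence $m_{j_{2,3}} > 0$). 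Each such contribution is strictly positive, and all remaining edges of $\Gamma$ contribute nonnegatively by the same mechanism, so $\tbeta \cdot \tcD_{R+1} > 0$, contradicting $\tbeta \cdot \tcD_{R+1} = 0$.

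The main obstacle is the degenerate case $\tbeta = 0$, where $\Gamma$ is a single vertex at $\tsi_0$ and the edge-pairing argument is vacuous. For $\lambda = 1$, the direct computation $\iota_{\tsi_0}^*(\tgamma_1) = -\su_1 (\su_2 - f\su_1)/\fm$ (recorded in \eqref{eqn:ClosedInsertionRestrictOuter}) shows the insertion contains the factor $\su_2 - f\su_1$, which vanishes under the $T_f$-restriction. For $\lambda \neq 1$, the vertex compatibility $\prod_i k_i = 1$ in $G_{\tsi_0}$ forces $\prod_{i \leq n} k_i = \lambda$; by Convention \ref{conv:LiftOuter} the $k_i$ for untwisted $\gamma_i$ are trivial, so either the moduli is empty (incompatibility) or, when twisted insertions $\one_{j_i}$ with $j_i \in G_{\tau_0}$ match the compatibility, a secondary vanishing follows from the degeneration of tangent weights at $\fp_{\tsi_0}$ under the $T_f$-restriction, namely that two of the four weights $-\tfrac{f}{\fm}\su_1 + \tfrac{1}{\fm}\su_2$ and $\tfrac{f}{\fm}\su_1 - \tfrac{1}{\fm}\su_2 - \su_4$ become zero, producing a cancellation between the insertion $\iota_{\tsi_0}^*(\tgamma_\lambda) = -\su_1 \one_{\lambda^{-1}}$ and the virtual normal bundle contribution $\tbh(v)$.
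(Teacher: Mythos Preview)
Your argument for the first clause is correct and matches the paper's own approach, which simply records $\iota_{\tsi}^*(\tgamma_\lambda) = 0$ for $\tsi \neq \tsi_0$ at the end of Section~\ref{sect:ClosedOuter} and declares the observation; your supporting-hyperplane verification that $\tsi_0$ is the unique $4$-cone with $\{2,3,R+1\} \subseteq I'_{\tsi}$ is a correct elaboration. The pairing argument with $\tcD_{R+1}$ for the ``in particular'' clause when $\tbeta \neq 0$ is also correct and more than the paper spells out, though for the non-negativity of $[l_{\tau_e}] \cdot \tcD_{R+1}$ on \emph{general} edges you should simply cite $\tD_{R+1} \in \tNef(\tcX)$ (Lemma~\ref{lem:DRPlusOne}) rather than gesture at ``the same mechanism''.

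The gap is in the $\tbeta = 0$ case. The observation asserts $\tC_{\vGa} = 0$ as an element of $\cQ_{\tT'}$ \emph{before} any weight restriction, but your single-vertex argument only establishes the vanishing after setting $\su_4 = 0$ and $\su_2 = f\su_1$. In fact the unrestricted vanishing can fail: with $n \geq 2$, all $\gamma_i = \one_{j_i}$ drawn from $\Box(\tau_0) \setminus \{\vzero\}$, and $\lambda = 1$, one has $\iota_{\tsi_0}^*(\tgamma_1) = -\su_1(\su_2 - f\su_1)/\fm \neq 0$ and the vertex integral is nonzero, so $\tC_{\vGa} \neq 0$ in $\cQ_{\tT'}$. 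What the paper actually uses (Section~\ref{sect:GenClosed}) is only the vanishing of the $T_f$-restricted invariant, for which your argument is adequate; you should flag this discrepancy rather than claim the literal statement. Separately, your description of the $\lambda \neq 1$ subcase is off: there is no ``cancellation between the insertion and $\tbh(v)$'', since $\iota_{\tsi_0}^*(\tgamma_\lambda) = -\su_1\one_{\lambda^{-1}}$ carries no factor of $\su_2 - f\su_1$. The vanishing comes entirely from $\tbh(\tv_0)\big|_{\su_4=0}$ via Mumford's relation on the Hurwitz--Hodge bundles, which (with all twistings nontrivial, as is forced here once any untwisted $\gamma_i$ kills $\tC_{\vGa}$ via Convention~\ref{conv:LiftOuter}) yields a factor $(\su_2 - f\su_1)^{n-1}$; this is precisely Lemma~\ref{lem:VertexTsi0Outer}.
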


We note that in the above, there is a special case when $\tbeta = 0$, $n = 2$, and $\vGa$ represents a 3-pointed constant map to $\fp_{\tsi_0}$, but in this case $\iota_{\tsi_0}^* (\tgamma_1) = \iota_{\tsi_0}^* (\tgamma_2) = 0$ and thus $\tC_{\vGa} = 0$.

Now we introduce some notations for an effective class $\tbeta \in H_2(\tX;\bZ) \setminus H_2(X;\bZ)$ and $\vGa \in \Gamma_{0,n+1}(\tcX, \tbeta)$. We partition $V(\Gamma)$ into two subsets
$$
    V_0 := \{v \in V(\Gamma) : \vf(v) \in \iota(\Sigma(3)) \}, \quad  V_1 := \{v \in V(\Gamma) : \vf(v) \in \tSi(4) \setminus \iota(\Sigma(3)) \}
$$
and let $\Gamma_0 = (V_0, E_0)$, $\Gamma_1 = (V_1, E_1)$ be induced subgraphs of $\Gamma$ on them, where
$$
    E_0 := \{e \in E(\Gamma): \vf(e) \in \iota(\Sigma(2)_c) \}, \quad E_1 := \{e \in E(\Gamma): \vf(e) \in \tSi(3)_c \setminus \iota(\Sigma(2)) \}.
$$
The two subgraphs can be disconnected, but the components are connected by edges in
$$
    E_{01} := E(\Gamma) \setminus (E_0 \cup E_1) = \{e \in E(\Gamma): \vf(e) \in \iota(\Sigma(2) \setminus \Sigma(2)_c) \}.
$$
See Figure \ref{fig:SubgraphsOuter}. 
Let
\begin{equation}\label{eqn:c0}
c_0= |V_0|- |E_0| 
\end{equation}
be the number of connected components of the graph $\Gamma_0$. 
Restricting the decorations of $\vGa$ to a component of $\Gamma_0$ yields a decorated graph whose degree is a curve class in $H_2(X;\bZ)$. Since $\tbeta \not \in H_2(X; \bZ)$, the graph $\Gamma_1$ must be non-empty. We further partition $F(\Gamma)$ into two subsets
$$
    F_0 := \{(e,v) \in F(\Gamma): v \in V_0\}, \qquad F_1 := \{(e,v) \in F(\Gamma): v \in V_1\}.
$$
Then
\begin{equation}\label{eqn:F0}
|F_0| = 2|E_0|+|E_{01}|.
\end{equation}  
Moreover, we denote
$$
    V_1' := \{ v \in V_1 \cap V^2(\vGa) : \vf(E_v) = \{\delta_2(\vf(v)), \delta_3(\vf(v))\}\}.
$$

\begin{figure}[h]
    \begin{tikzpicture}[scale=0.5]
        \draw (10,0) ellipse (1.2 and 1.4);
        \draw (10,5.6) ellipse (1.2 and 1.4);

        \draw (2,-0.8) ellipse (2.2 and 1);
        \draw (2,2) ellipse (2.2 and 1);
        \draw (2,6) ellipse (2.2 and 1);

        \node at (10,2.8) {$\bullet$};
      
        \draw (10,4.7) -- (10,2.8) -- (10,0.9);
        \draw (3.8,6) -- (9.3,5.7);
        \draw (3.8,2) -- (9.3,0.5);
        \draw (3.8,-0.8) -- (9.3,-0.5);

        \draw[dashed] (-0.7,-2.5) -- (4.7,-2.5) -- (4.7, 7.7) -- (-0.7,7.7) -- (-0.7,-2.5);
        \draw[dashed] (8.3,-2.5) -- (11.7,-2.5) -- (11.7, 7.7) -- (8.3,7.7) -- (8.3,-2.5);

        \node at (11,2.8) {$\in V_1'$};
        \node at (10, -3.5) {$\Gamma_1 = (V_1, E_1)$};
        \node at (2, -3.5) {$\Gamma_0 = (V_0, E_0)$};
        \node at (6.5, 2.8) {$E_{01}$};
    \end{tikzpicture}

    \caption{Subgraphs $\Gamma_0$, $\Gamma_1$ connected by $E_{01}$, and a vertex in $V_1'$.}
    \label{fig:SubgraphsOuter}
\end{figure}

We now prove the following preparatory lemma.

\begin{lemma}\label{lem:NumericalOuterU4Vanish}
Let $\tbeta \in H_2(\tX;\bZ) \setminus H_2(X;\bZ)$ be an effective class and $\vGa \in \Gamma_{0,n+1}(\tcX, \tbeta)$. Unless
\begin{equation}\label{eqn:U4VanishCriterion}
    |E_1| - |V_1'| = |E_{01}| - c_0 = 0,
\end{equation}
we have
$$
    \tC_{\vGa} \big|_{\su_4 = 0} = 0.
$$
\end{lemma}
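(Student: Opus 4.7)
The plan is to track the total order of vanishing in $\su_4$ contributed by each factor appearing in $\tC_{\vGa}$, and show that this order is at least $1$ unless both claimed equalities hold. Once the edges of $E_0$, flags of $F_0$, and vertices of $V_0$ have been matched against their $\cX$-counterparts via Lemmas \ref{lem:EdgeComparison}, \ref{lem:FlagComparison}, \ref{lem:VertexComparison}, \ref{lem:IntegralComparison} (exactly as in the proof of Lemma \ref{lem:NumericalOuterContribute}), their contributions collectively have net $\su_4$-degree $0$ and a finite well-defined value at $\su_4 = 0$. Hence the total $\su_4$-order of $\tC_{\vGa}$ is carried entirely by the ``new'' pieces: vertices in $V_1$, edges in $E_1$ and $E_2$, and flags in $F_1$.

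Next I would compute the $\su_4$-degree of each new piece. For each vertex $v \in V_1$ labelled by $\tsi \in \tSi(4)\setminus\iota(\Sigma(3))$, the four tangent $\tT'$-weights at $\fp_{\tsi}$ sum to zero (Calabi-Yau condition), and the explicit description at $\tsi_0$ shows that exactly the two weights along $\delta_3(\tsi),\delta_4(\tsi)$ carry $\su_4$ with opposite coefficients, while those along $\iota(\delta_0(\tsi)), \delta_2(\tsi)$ are $\su_4$-free. Expanding the Hurwitz-Hodge integral in $\tbh(v)$ from \cite[Lemma 126]{Liu13} against these weights, together with the flag factors $\tbh(e,v)$ at the two $\su_4$-facets, contributes net $+1$ in $\su_4$ \emph{unless} $v\in V_1'$. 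For an edge $e \in E_1$ the factor $\tbh(e)$ computed in \cite[Lemma 130]{Liu13} over $\fl_{\vf(e)}$ carries a pole of order $1$ in $\su_4$. For $e \in E_2$, the line $\fl_{\iota(\tau)}$ is compact in $\tcX$ although $l_{\tau}$ is non-compact in $X$, so the ``missing'' non-compact direction in $\cX$ is replaced by an $\su_4$-transverse direction of $T\tcX$, producing a further $\su_4$-pole of order $1$ in $\tbh(e)$.

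The subset $V_1'$ plays a distinguished role: a $2$-valent $v$ with $\vf(E_v)=\{\delta_2(\vf(v)),\delta_3(\vf(v))\}$ admits a precise cancellation between its vertex factor and its flag factors (both flags lie along non-$\su_4$ facets and the integration convention \eqref{eqn:UnstableIntegral} absorbs the expected pole), so $v \in V_1'$ shifts the total $\su_4$-order by $-1$ instead of $+1$. The tree relation for $\Gamma$, combined with the observation that each connected component of $\Gamma_0$ is attached to $\Gamma_1$ through at least one edge in $E_2$, then converts the remaining bookkeeping into a single combinatorial identity: the total $\su_4$-order of $\tC_{\vGa}$ equals
\[
(|E_1|-|V_1'|) + (|E_2|-c_0).
\]
Since each summand is nonnegative, $\tC_{\vGa}|_{\su_4=0}$ vanishes unless both summands are zero, which is the claim.

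The main obstacle will be the careful local bookkeeping, particularly two points. First, verifying the exact $\su_4$-exponent of the Hurwitz-Hodge integrand at each $\Mbar_{0,\vk_v}(\cB G_v)$ with $v \in V_1$, including the subtle cancellation at the $V_1'$-vertices, requires a direct expansion in \cite[Lemma 126]{Liu13} together with the CY relation at $\fp_{\tsi}$ and a separate treatment of each unstable class $V^1,V^{1,1},V^2$ via \eqref{eqn:UnstableIntegral}. Second, the combinatorial identity above must be proven carefully: this amounts to analyzing how edges in $E_2$ interface the subgraphs $\Gamma_0$ and $\Gamma_1$ and invoking $|V(\Gamma)|-|E(\Gamma)|=1$ separately on each piece, which is a purely graph-theoretic computation once the per-piece $\su_4$-degrees are pinned down.
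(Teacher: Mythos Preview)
Your overall strategy---track the total $\su_4$-order of $\tC_{\vGa}$ piece by piece and show it equals $(|E_1|-|V_1'|)+(|E_2|-c_0)\ge 0$---is exactly the paper's, and your final nonnegativity argument is correct. However, several of your local $\su_4$-order assignments are wrong, and with your numbers the bookkeeping does \emph{not} reduce to the claimed formula (it gives something like $|V_1|-2|V_1'|-|E_1|-|E_2|$, which no tree identity converts into the right expression).

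The correct accounting is: (i) edges in $E_0$, flags in $F_0$, and vertices in $V_0$ contribute $-1$, $+1$, $-1$ respectively, and since $F_0$ also contains the $V_0$-end flag of every $E_2$-edge one has $|F_0|=2|E_0|+|E_2|$, so these pieces net to $|F_0|-|E_0|-|V_0|=|E_2|-c_0$, not $0$; this is where the second summand actually originates. (ii) An edge $e\in E_1$ contributes $\su_4$-order $+1$, not a pole (Lemma~\ref{lem:ExtraEdge2Outer}: the factor $\bb_3$ puts a single $\su_4$ in the numerator). (iii) An edge $e\in E_2$ contributes $\su_4$-order $0$, not $-1$ (Lemma~\ref{lem:ExtraEdge1Outer}). (iv) Vertices in $V_1\setminus V_1'$, their flag factors in $F_1$, and their integrals all have $\su_4$-order $\ge 0$ (Lemmas~\ref{lem:ExtraFlagOuter}, \ref{lem:ExtraVertexOuter}, \ref{lem:ExtraIntegralOuter}) and contribute nothing; the $-1$ for $v\in V_1'$ comes solely from the $n=n'=2$ case of the vertex integral in Lemma~\ref{lem:ExtraIntegralOuter}, not from any flag cancellation. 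Also, your claim that only the $\delta_3,\delta_4$ facet weights carry $\su_4$ is specific to $\tsi_0$ and false for general $\tsi\in\tSi(4)\setminus\iota(\Sigma(3))$; see the formulas in Section~\ref{sect:TanWtsExtraConesOuter}. With these corrections the total is $\ge (|E_2|-c_0)+(|E_1|-|V_1'|)$ and the lemma follows.
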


Note that condition \eqref{eqn:U4VanishCriterion} is only possible if $\Gamma_1$ consists of a single vertex, denoted by $\tv_0$, and each edge in $E_{01}$ connects a component of $\Gamma_0$ to $\tv_0$. In this case, all edges in $E_{01}$ are labeled by $\iota(\tau_0)$.

\begin{proof}
We consider the power of $\su_4$ in $\tC_{\vGa}$. By Lemmas \ref{lem:EdgeComparison} -- \ref{lem:ExtraVertexOuter}, \ref{lem:ExtraIntegralOuter} -- \ref{lem:ExtraEdge2Outer}, each edge in $E_0$ or vertex in $V_0 \cup V_1'$ contributes a power of $-1$, each flag in $F_0$ or edge in $E_1$ contributes a power of $1$, and there are no other contributions. Since each connected component of $\Gamma_0$ is a tree, we see that the total power of $\su_4$ is
\begin{equation}\label{eqn:U4PowerOuter}
   |F_0| - |E_0| - |V_0| + |E_1| - |V_1'| = (|E_{01}| - c_0) + (|E_1| - |V_1'|),
\end{equation}
where we use \eqref{eqn:c0} and \eqref{eqn:F0}. Since the subgraph $\Gamma_1$ is non-empty, we have $|E_{01}| \ge c_0$. Moreover, since each vertex in $V_1'$ is incident to two distinct edges in $E_1$, we have $|E_1| \ge |V_1'|$. Therefore, the power \eqref{eqn:U4PowerOuter} is always non-negative, and is zero only if $|E_1| - |V_1'| = |E_{01}| - c_0 = 0$. This implies the lemma.
\end{proof}

Lemma \ref{lem:NumericalOuterU4Vanish} implies the following result, which we will use in Section \ref{sect:Generating}.

\begin{lemma}\label{lem:OtherClassVanish}
For any effective class $\tbeta \in H_2(\tX;\bZ) \setminus \iota_*(H_2(X, L; \bZ))$ and $\vGa \in \Gamma_{0,n+1}(\tcX, \tbeta)$, we have
$$
    \tC_{\vGa} \big|_{\su_4 = 0} = 0.
$$
\end{lemma}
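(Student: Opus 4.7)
The plan is to argue by contradiction, combining Observation~\ref{obs:AModelLVanish} with Lemma~\ref{lem:NumericalOuterU4Vanish} and a combinatorial collapse of $\Gamma_1$. Suppose $\tC_{\vGa}|_{\su_4=0}\neq 0$ for some $\vGa\in\Gamma_{0,n+1}(\tcX,\tbeta)$. Then $\tC_{\vGa}\neq 0$ as well, so by Observation~\ref{obs:AModelLVanish} the marking $n+1$ sits at a vertex $\tv_*:=\vs(n+1)\in V_1$ with $\vf(\tv_*)=\tsi_0$, and by Lemma~\ref{lem:NumericalOuterU4Vanish} the conditions $|E_1|=|V_1'|$ and $|E_2|=c_0$ must hold.

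Next, I would analyze $\Gamma_1$. The tree identity $|V(\Gamma)|-|E(\Gamma)|=1$ combined with $|V_0|-|E_0|=c_0$ and $|V_1|-|E_1|=c_1$ (on the respective forests) yields $|E_2|=c_0+c_1-1$, so $|E_2|=c_0$ forces $c_1=1$; thus $\Gamma_1$ is connected, and $|V_1\setminus V_1'|=1$. Since $\tv_*$ carries the marking $n+1$, it lies outside $V^2(\vGa)$ and hence outside $V_1'$, so $\tv_*$ is the unique non-$V_1'$ vertex of $\Gamma_1$. Each $v\in V_1'$ has valence $2$ in $\Gamma$ with both edges labeled by $\delta_2(\vf(v)),\delta_3(\vf(v))$; these $3$-cones contain both rays $\trho_{R+1},\trho_{R+2}$ and therefore lie in $\tSi(3)_c\setminus\iota(\Sigma(2))$, so both edges belong to $E_1$ and $v$ has valence $2$ in $\Gamma_1$ as well. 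Comparing valences in $\Gamma_1$,
$$
2|E_1|=\sum_{v\in V_1}\val_{\Gamma_1}(v)=2|V_1'|+\val_{\Gamma_1}(\tv_*),
$$
forces $\val_{\Gamma_1}(\tv_*)=0$; combined with the connectedness of $\Gamma_1$, this collapses $\Gamma_1$ to the single vertex $\{\tv_*\}$ with $|V_1'|=|E_1|=0$.

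It then follows that every edge of $\Gamma$ incident to $\tv_*$ belongs to $E_2$, connecting $\tv_*$ to some component of $\Gamma_0$. Each such edge is labeled by a facet of $\tsi_0$ lying in $\iota(\Sigma(2))$; the only such facet is $\iota(\delta_0(\tsi_0))=\iota(\tau_0)$. Consequently,
$$
\tbeta=\sum_{e\in E_0}d_e[l_{\iota(\tau_e)}]+\sum_{e\in E_2}d_e[l_{\iota(\tau_0)}]=\iota_*\!\left(\sum_{e\in E_0}d_e[l_{\tau_e}]+\Bigl(\sum_{e\in E_2}d_e\Bigr)[B]\right)\in\iota_*(H_2(X,L;\bZ)),
$$
contradicting the hypothesis on $\tbeta$. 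The hard part will be making the valence-counting collapse of $\Gamma_1$ airtight, which hinges on the rigid label structure carried by $V_1'$ vertices and the uniqueness of $\tv_*$ as the non-$V_1'$ vertex; once this is in hand, the identification of the $E_2$-labels with $\iota(\tau_0)$ and the expression for $\tbeta$ are immediate.
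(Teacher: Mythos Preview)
Your proof is correct and follows essentially the same approach as the paper: both combine Lemma~\ref{lem:NumericalOuterU4Vanish} with the combinatorial collapse of $\Gamma_1$ (stated in the paper as the remark immediately following Lemma~\ref{lem:NumericalOuterU4Vanish}) to force $\tbeta\in\iota_*(H_2(X,L;\bZ))$. In fact your argument is slightly more careful than the paper's terse one-line proof, since you explicitly invoke Observation~\ref{obs:AModelLVanish} to pin down $\vf(\tv_*)=\tsi_0$ before concluding that the $E_2$-edges are labeled by $\iota(\tau_0)$; without this, the single $\Gamma_1$-vertex could a priori carry a different label $\tsi$ and the $E_2$-edges would then be labeled by $\iota(\delta_0(\tsi))\neq\iota(\tau_0)$.
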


\begin{proof}
It suffices to observe that such $\vGa$ cannot satisfy condition \eqref{eqn:U4VanishCriterion}, and thus Lemma \ref{lem:NumericalOuterU4Vanish} applies.
\end{proof}

Now we show the following lemma as the second main ingredient in proving Theorem \ref{thm:NumericalOuter}.

\begin{lemma}\label{lem:NumericalOuterVanish}
Consider the setup of Theorem \ref{thm:NumericalOuter}. Assume that $f \in \bQ$ is \emph{generic} (with respect to $\tbeta$), i.e. avoiding a finite set of rational numbers. Then for any $\vGa \in \Gamma_{0,n+1}(\tcX, \tbeta) \setminus \Gamma_{0, n+1}^{0,\tk}(\tcX, \tbeta)$, we have
$$
    \tC_{\vGa} \big|_{\su_4 = 0, \su_2 - f\su_1 = 0} = 0.
$$
\end{lemma}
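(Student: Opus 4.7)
The plan is to combine Observation~\ref{obs:AModelLVanish} and Lemma~\ref{lem:NumericalOuterU4Vanish} to rigidify any $\vGa$ with nonzero contribution, and then show that the remaining configurations outside $\Gamma_{0,n+1}^{0,\lambda}(\tcX,\tbeta)$ acquire a strictly positive power of $(\su_2-f\su_1)$ after the first restriction $\su_4=0$. Assume $\tC_\vGa|_{\su_4=0,\,\su_2=f\su_1}\neq 0$. Observation~\ref{obs:AModelLVanish} forces $\vf(\vs(n+1))=\tsi_0$; set $\tv_0:=\vs(n+1)$. Lemma~\ref{lem:NumericalOuterU4Vanish}, applied at the weaker specialization $\su_4=0$, forces $\Gamma_1=\{\tv_0\}$ with each edge of $E_2$ joining $\tv_0$ to a distinct component of $\Gamma_0$. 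Among the facets of $\tsi_0$, the three $\delta_2(\tsi_0),\delta_3(\tsi_0),\delta_4(\tsi_0)$ all contain $\trho_{R+1}$, so $\iota(\tau_0)$ is the unique facet lying in $\iota(\Sigma(2))$, and every edge in $E_2$ carries label $\iota(\tau_0)$.

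Let $c_0=|E_2|$ and write $(d_j,\lambda_j)\in H_{\tau_0}$ for the degree of the $j$-th edge in $E_2$. Since $\iota_*(H_2(X;\bZ))$ and $\bZ[l_{\iota(\tau_0)}]$ are linearly independent in $H_2(\tX;\bZ)$, degree additivity matches $\sum_j d_j=d$ and the $\Gamma_0$-subgraph degree to $\iota_*(\beta)$, while compatibility at $\tv_0$ gives $\prod_j\lambda_j=\lambda$. If $c_0=1$ these equations force $(d_1,\lambda_1)=(d,\lambda)$, identifying $\vGa$ with $\epsilon$ applied to the subgraph on $V_0$ and contradicting $\vGa\notin\Gamma_{0,n+1}^{0,\lambda}(\tcX,\tbeta)$. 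Thus $c_0\ge 2$ and $\tv_0$ is stable with valence $c_0$ and a single marking.

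For this stable $\tv_0$, the contribution is a Hurwitz--Hodge integral on $\Mbar_{0,\vk_{\tv_0}}(\cB\mu_\fm)$. The four tangent $\tT'$-weights $w_j=\tbw(\ttau_j,\tsi_0)$ at $\fp_{\tsi_0}$ are
\[
w_1=-\su_1,\quad w_2=\su_1+\su_4,\quad w_3=\tfrac{1}{\fm}(\su_2-f\su_1),\quad w_4=-\tfrac{1}{\fm}(\su_2-f\su_1)-\su_4,
\]
with $\mu_\fm$-characters trivial in directions $\iota(\tau_0),\delta_4(\tsi_0)$ (since $\fr=1$ there) and mutually inverse faithful characters $\chi_3,\chi_4=\chi_3^{-1}$ in directions $\delta_2(\tsi_0),\delta_3(\tsi_0)$ (since $\fr=\fm$ there and $\sum_j w_j=0$ by the Calabi--Yau condition). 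At $\su_4=0$ one has $w_4=-w_3$, and both vanish on $\su_2=f\su_1$. Decomposing $\tbh(\tv_0)=\prod_{j=1}^{4}\bh_j(\tv_0)$ via \cite[Lemma 126]{Liu13}, and noting that the flag denominators $\tbw_{(e_j,\tv_0)}=-\su_1/(r_{(e_j,\tv_0)}d_{e_j})$ are independent of $(\su_2-f\su_1)$, the only sources of $(\su_2-f\su_1)$-dependence in $\tC_\vGa|_{\su_4=0}$ are $\bh_3,\bh_4$, the $c_0$ flag factors $\tbh(e_j,\tv_0)$ (each proportional to $w_3^{2}$ when $\lambda_j=1$ and $(\su_2-f\su_1)$-free otherwise), and the insertion pullback $\iota_{\tsi_0}^*(\tgamma_\lambda)$ (proportional to $w_3$ when $\lambda=1$ and $(\su_2-f\su_1)$-free otherwise). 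A Riemann--Roch calculation on the Hurwitz--Hodge bundles associated to $\chi_3,\chi_3^{-1}$, exploiting the duality $\chi_4=\chi_3^{-1}$ and the fact that $c_0\ge 2$, yields a net power of $(\su_2-f\su_1)$ in $\tC_\vGa|_{\su_4=0}$ that is at least $1$. For generic $f\in\bZ$, i.e.\ avoiding the finite set where the remaining rational factor would develop a compensating pole on $\su_2=f\su_1$, restriction yields $0$.

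The main obstacle is this last Riemann--Roch step: one must verify uniformly across all twisting patterns $(\lambda,\lambda_1,\ldots,\lambda_{c_0})$ with $\prod_j\lambda_j=\lambda$ and all $c_0\ge 2$ that the positive $w_3$-contributions from $\bh_3\bh_4$ and from the flag factors with $\lambda_j=1$ outweigh the negative $w_3$-contributions from inverse-weight denominators arising in trivially twisted directions, and identify the exceptional finite set of excluded $f$.
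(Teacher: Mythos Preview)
Your reduction via Observation~\ref{obs:AModelLVanish} and Lemma~\ref{lem:NumericalOuterU4Vanish} is correct and matches the paper, but the argument breaks down at the sentence ``Thus $c_0\ge 2$ and $\tv_0$ is stable with valence $c_0$ and a single marking.'' You have implicitly assumed that none of the first $n$ marked points can land on $\tv_0$. This is false: by Convention~\ref{conv:LiftOuter}, an insertion $\gamma_i=\one_{j_i}$ with $j_i\in\Box(\tau_0)\setminus\{\vzero\}$ satisfies $\iota_{\tsi_0}^*(\tgamma_i)=\one_{j_i}\neq 0$, so the marking $i$ may sit at $\tv_0$ without killing $\tC_\vGa$. (What Convention~\ref{conv:LiftOuter} does guarantee is that any $i$ with $j_i=\vzero$ must avoid $\tv_0$.) Consequently there are graphs with $|E_2|=1$, $(d_1,\lambda_1)=(d,\lambda)$, and extra twisted markings on $\tv_0$; these are \emph{not} in the image of $\epsilon$ (since in $\epsilon(\vGa)$ the vertex $\tv_0$ always has valence $1$ and $S_{\tv_0}=\{n+1\}$), yet your argument never addresses them. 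The paper's proof covers exactly this by noting that either $|E_2|>1$ \emph{or} $|E_2|=1$ with $S_{\tv_0}\supsetneq\{n+1\}$, and then tracking the twisting data $\vk_{\tv_0}$ (all of whose entries beyond $k_{n+1}$ are automatically nontrivial) through the subsequent case analysis.

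Your closing Riemann--Roch paragraph is also only a plan, and as written it cannot be carried out uniformly because the bookkeeping depends on more than just $(\lambda,\lambda_1,\dots,\lambda_{c_0})$: the extra twisted markings above enter the rank formula~\eqref{eqn:HHBundleRank} and hence the power of $(\su_2-f\su_1)$ coming from $\tbh(\tv_0)$. The paper resolves this by splitting on $\lambda$ and on whether any $k_{(e,\tv_0)}=1$: when $\lambda=1$ one counts the $(\su_2-f\su_1)$-powers directly from the flag factors (Lemma~\ref{lem:ExtraFlagOuter}), the edge factors (Lemma~\ref{lem:EdgeIotaTau0Outer}), and the denominator of $\tbh(\tv_0)$ (Lemma~\ref{lem:ExtraVertexOuter}); when $\lambda\neq 1$ and all $k_{(e,\tv_0)}\neq 1$, the decisive positive power comes from Mumford's relation applied to the dual characters $\chi_{(\delta_2(\tsi_0),\tsi_0)},\chi_{(\delta_3(\tsi_0),\tsi_0)}$ (this is Lemma~\ref{lem:VertexTsi0Outer}, giving power $|E_{\tv_0}|+|S_{\tv_0}|-2\ge 1$). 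You should incorporate the missing marked-point case into your setup and then execute this two-case split rather than deferring it.
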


\begin{proof}
By Lemma \ref{lem:NumericalOuterU4Vanish}, it suffices to consider the case where condition \eqref{eqn:U4VanishCriterion} holds. We again denote the only vertex in $\Gamma_1$ by $\tv_0$. By Observation \ref{obs:AModelLVanish}, it suffices to consider the case $\vs(n+1) = \tv_0$ and $\vf(\tv_0) = \tsi_0$. Note that all edges in $E_{01}$ are incident to $\tv_0$. Since $\vGa \not \in \Gamma_{0, n+1}^{0,\tk}(\tcX, \tbeta)$, either $|E_{01}|>1$, or $|E_{01}|=1$ and $S_{\tv_0}$ contains a marking other than $n+1$. In either case, $\tv_0$ is a stable vertex.

We assume without loss of generality that for each $i = 1, \dots, n$, $\tgamma_i$ is contained in $H^*_{\tT'}(\tcX_{j_i}; \bQ)$ for some $j_i \in \Box(\tcX)$. If $j_i = \vzero$, then the way the lift $\tgamma_i$ is chosen (Convention \ref{conv:LiftOuter}) implies that we must have $\vs(i) \neq \tv_0$ in order for $\tC_{\vGa}$ to be non-zero. Therefore, $S_{\tv_0}$ consists of $n+1$ and a subset of $\{i \in \{1, \dots, n\} \mid j_i \neq \vzero\}$. Recall our notation of the vector $\vk_{\tv_0} = (k_{(e, \tv_0)}^{-1}, k_i) \in G_{\tsi_0}^{E_{01} \cup S_{\tv_0}}$. In particular, $k_{n+1} = \tk$ and $k_i \neq 1$ for any other $i \in S_{\tv_0}$.

Now we consider the power of $\su_2 - f\su_1$ in $\tC_{\vGa} \big|_{\su_4 = 0}$ for a generic choice of $f \in \bQ$. We consider several cases below and show that in each case, the total power is positive. This implies that $\tC_{\vGa} \big|_{\su_4 = 0, \su_2 - f\su_1 = 0} = 0$. 
\begin{itemize}
    \item \textit{Case I: $\tk = 1$.} Note from \eqref{eqn:ClosedInsertionRestrictOuter} that $\iota_{\tsi_0}^*(\tgamma_{n+1})$ contributes a power of $1$ in the case. By Lemmas \ref{lem:GenFNoPower} -- \ref{lem:ExtraVertexOuter}, \ref{lem:ExtraIntegralOuter}, \ref{lem:EdgeIotaTau0Outer}, the only additional powers come from the vertex $\tv_0$, edges in $E_{01}$, and flags consisting of $\tv_0$ and an incident edge. First suppose $\vk_{\tv_0} = (1)^{E_{01} \cup S_{\tv_0}}$, that is, $k_{(e, \tv_0)} = 1$ for all $e \in E_{01}$ and $S_{\tv_0} = \{n+1\}$. Then $\tv_0$ contributes a power of $-2$, each edge $e \in E_{01}$ contributes a power of $-1$, and the corresponding flag $(e, \tv_0)$ contributes a power of $2$. Thus the total power of $\su_2 - f\su_1$ is
    $$
        1 - 2 + |E_{01}| = |E_{01}| - 1.
    $$
    Since $\vGa \not \in \Gamma_{0, n+1}^{0,\tk}(\tcX, \tbeta)$, we have $|E_{01}| > 1$, which makes the above power strictly positive. 

    On the other hand, suppose $\vk_{\tv_0} \neq (1)^{E_{01} \cup S_{\tv_0}}$, that is, not all $k_{(e, \tv_0)}$ are $1$, or $S_{\tv_0} \neq \{n+1\}$. Then $\tv_0$ has no contribution, any edge $e \in E_{01}$ with $k_{(e, \tv_0)} = 1$ contributes a power of $-1$, and the corresponding flag $(e, \tv_0)$ contributes a power of $2$. Thus the total power of $\su_2 - f\su_1$ is
    $$
        1 + |\{e \in E_{01} : k_{(e, \tv_0)} = 1\}|>0.
    $$
    
    \item \textit{Case II: $\tk \neq 1$.} In this case, there is no contribution from $\iota_{\tsi_0}^*(\tgamma_{n+1})$. Moreover, $k_i \neq 1$ for all $i \in S_{\tv_0}$. Similar to above, there is no contribution from the denominator of $\tbh(\tv_0)$, and the total contribution from edges in $E_{01}$ and the associated flags is
    $$
        |\{e \in E_{01} : k_{(e, \tv_0)} = 1\}|.
    $$
    This is positive if there exists some $e \in E_{01}$ such that $k_{(e, \tv_0)} = 1$. In the remaining case where $k_{(e, \tv_0)} \neq 1$ for all $e \in E_{01}$, Lemma \ref{lem:VertexTsi0Outer} implies that $\su_2 - f\su_1$ is a power of the numerator of $\tbh(\tv_0)$, which makes the total power positive.
\end{itemize}
\end{proof}

\subsection{Completing the proof}

\begin{proof}[Proof of Theorem \ref{thm:NumericalOuter}]
For generic $f \in \bQ$ (with respect to $\beta'$ and $\tbeta$), Theorem \ref{thm:NumericalOuter} directly follows from Lemmas \ref{lem:NumericalOuterContribute} and \ref{lem:NumericalOuterVanish}. We now prove the theorem for any arbitrary $f$. Let
$$
    C = C(\su_1, \su_2) : = \inner{\gamma_1, \dots, \gamma_n}_{\beta', (d, \lambda)}^{\cX, \cL} \in \bQ(\su_1, \su_2),
$$
which for generic $f \in \bQ$ restricts to $\inner{ \gamma_1, \dots, \gamma_n }^{\cX,(\cL,f)}_{\beta', (d,\lambda)}$ under the weight restriction $\su_2 - f\su_1 = 0$. 
On the other hand, we consider the dependence of the toric Calabi-Yau 4-orbifold $\tcX$ and its closed invariants on $f \in \bQ$. Fixing $\cX$, $\cL$, $(d,\lambda)$, $\beta' = \beta + d[B]$, and $\gamma_1, \dots, \gamma_n$ as in Theorem \ref{thm:NumericalOuter}, we let $\tcX_f$ denote the 4-orbifold that we construct for $(\cX, \cL, f)$ and $\tbeta_f \in H_2(\tcX_f;\bZ)$ be the image of $\beta'$. Let
$$
    \tC = \tC(f, \su_1, \su_2, \su_4) : = \inner{\tgamma_1, \dots, \tgamma_n, \tgamma_{\tk}}_{\tbeta_f}^{\tcX_f,\tT'} \in \bQ(f, \su_1, \su_2, \su_4).
$$
By Lemma \ref{lem:ClosedWellDefined}, $\tC$ has no pole along $\su_4 = 0, \su_2 - f\su_1 = 0$, and thus restricts to
$$
    N = N(f) := \inner{\tgamma_1,\ldots,\tgamma_n, \tgamma_{\tk}}_{\tbeta_f}^{\tcX_f, T_f} \in \bQ(f).
$$
In fact, by Lemma \ref{lem:NumericalOuterU4Vanish}, only decorated graphs satisfying condition \eqref{eqn:U4VanishCriterion} contribute to $\tC$ and $N$. For such graphs, the dependence on $f$ only comes from edges labeled by $\iota(\tau_0)$, which is \emph{polynomial} by Lemma \ref{lem:EdgeIotaTau0Outer}. Thus $N \in \bQ[f]$.

We now consider the difference $D: = C(\su_1, \su_2) - N(f)$, as well as the smooth affine hypersurface $H = \{\su_2 - f\su_1 = 0\} \subset \bC^3 = \Spec(\bC[f, \su_1, \su_2])$. By Lemmas \ref{lem:NumericalOuterContribute} and \ref{lem:NumericalOuterVanish}, $D$ does not have a pole along $H$ and thus restricts to a rational function on $H$. Moreover, $D|_H$ vanishes on an infinite collection of divisors $\{\{f = 0\} \subset H : f \in \bQ \text{ generic}\}$ in $H$. Therefore, $D|_H$ is constantly zero, which implies that the weight restriction
$$
    C \big|_{\su_2 - f\su_1 = 0} = D \big|_{\su_2 - f\su_1 = 0} + N
$$
is defined for all $f \in \bQ$ and identified with $N = \inner{\tgamma_1,\ldots,\tgamma_n, \tgamma_{\tk}}_{\tbeta_f}^{\tcX_f, T_f}$. 
\end{proof}

\begin{remark}\label{rem:DiskWellDefined}
\rm{
The above proof confirms that the disk invariant $\inner{ \gamma_1, \dots, \gamma_n }^{\cX,(\cL,f)}_{\beta', (d,\lambda)}$ is defined for any arbitrary framing $f \in \bQ$.
}
\end{remark}

\section{Generating functions}\label{sect:Generating}
In this section, we use the numerical open/closed correspondence (Theorem \ref{thm:NumericalOuter}) to obtain a correspondence at the level of the generating functions (Theorem \ref{thm:GenCorr}). With this, we further show that the generating function of disk invariants of $(\cX, \cL, f)$ can be recovered from the $\tT'$-equivariant $J$-function of $\tcX$ (Theorem \ref{thm:JPairing}).

\subsection{Definitions}
We choose a basis $\{ u_1,\ldots, u_{R-3}\}$ of  $H^2_{\CR}(\cX;\bQ)\cong \bQ^{R-3}$ such that $u_1,\ldots, u_{R'-3}$  is 
a basis of $H^2(\cX;\bQ)\cong \bQ^{R'-3}$; we further assume that $u_1,\ldots, u_{R-3} $ satisfy the conditions
specified in Section \ref{sec:I}.  For $a = 1, \dots, R-3$, let $\tu_a \in H^2_{\CR}(\tcX; \bQ)$ be the lift of $u_a$ chosen as in Convention \ref{conv:LiftOuter}, and let
$\tu_{R-2} :=\tD_{R+1}$.  Then $\{ \tu_1,\dots, \tu_{R-2}\}$ is a basis of $H^2_{\CR}(\tcX;\bQ)\cong \bQ^{R-2}$, 
$\{ \tu_1,\ldots, \tu_{R'-3}, \tu_{R-2} \}$ is a basis of $H^2(\tcX;\bQ)\cong \bQ^{R'-2}$.

\subsubsection{Generating functions of closed invariants of $\tcX$} \label{sect:GenClosed}
Let
$$
    \tbtau_2 = \sum_{a=1}^{R-2} \ttau_a  \tu_a = \tbtau_2' + \tbtau_2''
$$
where  $\ttau_1,\dots, \ttau_{R-2}$ are complex variables and
$$
\tbtau_2' = \sum_{a=1}^{R'-3} \ttau_a \tu_a + \ttau_{R-2} \tu_{R-2}  \in H^2(\tcX;\bC),\quad  \tbtau_2'' =\sum_{a=R'-2}^{R-3}\ttau_a \tu_a. 
$$
We choose $\tT'$-equivariant lifts of $\tu_1, \dots, \tu_{R-2}$ as in Convention \ref{conv:LiftOuter}. Let 
$$
\cR_{\tT'}^{\bC} : = H_{\tT'}^*(\pt;\bC)=\bC[\su_1,\su_2, \su_4],
$$
and let  $\cQ_{\tT'}^{\bC} = \bC(\su_1,\su_2, \su_4) = \cQ_{\tT'}\otimes_{\bQ}\bC$ be the fractional field of $\cR_{\tT'}^{\bC}$. Define
$$
\tQ_a = e^{\ttau_a}, \quad a=1,\ldots, R'-3, R-2.
$$
Given any commutative ring $\bS$, define
$$
\bS \llbracket \tQ, \tbtau_2''\rrbracket : = \bS \llbracket \tQ_1,\ldots,\tQ_{R'-3}, \tQ_{R-2}, \ttau_{R'-2},\ldots,\ttau_{R-3}\rrbracket. 
$$
Let $\NE(\tX) \subset H_2(\tX;\bR)$ be the Mori cone generated by effective curve classes in the coarse moduli space $\tX$ of $\tcX$.
Let $E(\tX)$ denote the semigroup $\NE(\tX)\cap H_2(\tX;\bZ)$.
\begin{definition} \label{def:Correlator} \rm{
Given $a_1,\ldots, a_n\in \bZ_{\geq 0}$,
$\tgamma_1,\ldots,\tgamma_n\in H^*_{\CR,\tT'}(\cX;\bC)\otimes_{\cR^\bC_{\tT'}} \cQ^\bC_{\tT'}$, we define
the following  generating function of genus zero $\tT'$-equivariant closed Gromov-Witten invariants of $\tcX$: 
\begin{eqnarray*}
\llangle \gamma_1\bar{\psi}^{a_1},  \cdots, \gamma_n\bar{\psi}^{a_n} \rrangle^{\tcX,\tT'} (\tbtau_2)
:=  \sum_{\tbeta\in E(\tX)} \sum_{l=0}^\infty \frac{1}{l!}\langle
\tgamma_1\bar{\psi}^{a_1}, \cdots, \tgamma_n\bar{\psi}^{a_n}, \tbtau_2^l \rangle^{\tcX,\tT'}_{\tbeta}.
\end{eqnarray*}
In particular, 
\begin{eqnarray*} 
    \llangle \tgamma_1,\dots, \tgamma_n \rrangle^{\tcX, \tT'}(\tbtau_2) & :=  &
     \sum_{\tbeta\in E(\tX)} \sum_{l \in \bZ_{\geq 0}} \frac{1}{l!}\langle
\tgamma_1 , \cdots, \tgamma_n , \tbtau_2^l \rangle^{\cX,\tT'}_{\tbeta} \\ 
&=&   \sum_{\tbeta\in E(\tX)} e^{\int_{\tbeta} \tbtau'_2}  \sum_{l \in \bZ_{\geq 0}} \frac{1}{l!}\langle
\tgamma_1 , \cdots, \tgamma_n , (\tbtau''_2)^l \rangle^{\cX,\tT'}_{\tbeta}   
\in \cQ_{\tT'}^{\bC} \llbracket \tQ,\tbtau_2'' \rrbracket
\end{eqnarray*}
where the second equality follows from the divisor equation. 
}
\end{definition} 
\begin{definition}\rm{
For any $\tk \in G_{\tsi_0} \cong \mu_{\fa\fm}$, we define
$$
    \llangle \tgamma_{\tk}\rrangle^{\tcX,T_f} (\tbtau_2) 
     := \sum_{\tbeta \in E(\tX)}  \sum_{l \in \bZ_{\ge 0}}  \frac{1}{l!}  \inner{ \tbtau_2^l ,\tgamma_{\tk}}^{\tcX, T_f}_{\tbeta}
     = \sum_{\tbeta\in E(\tX)}  e^{\int_{\tbeta}\tbtau_2'} \sum_{l \in \bZ_{\ge 0}}  \frac{1}{l!}  \inner{(\tbtau''_2)^l, \tgamma_{\tk}}^{\tcX, T_f}_{\tbeta} 
     \in \bC \llbracket \tQ, \tbtau_2''\rrbracket.
$$
}
\end{definition}
By Observation \ref{obs:AModelLVanish} and Lemma \ref{lem:OtherClassVanish}, the closed invariant $\inner{\tbtau_2^l, \tgamma_{\tk}}^{\tcX, T_f}_{\tbeta}$ 
vanishes for any $\tbeta \in E(\tX)$ that is not of the form $\iota_*(\beta+d[B])$ for some $\beta \in E(X)$ and $d \in \bZ_{>0}$, where $E(X)$ is the semigroup $\NE(X)\cap H_2(X;\bZ)$. (Here again in the case $\tbeta = 0$ and $l = 2$, since $\iota_{\tsi_0}^*(\tbtau_2) = 0$ by our choice of equivariant lifts in Convention \ref{conv:LiftOuter}, we have $\inner{\tbtau_2, \tbtau_2, \tgamma_{\tk}}^{\tcX, \tT'}_{0} = 0$.) Thus we have
$$
  \llangle \tgamma_{\tk}\rrangle^{\tcX,T_f} (\tbtau_2)     = \sum_{\beta \in E(X)} \sum_{d\in \bZ_{>0}}
   \sum_{l \in \bZ_{\ge 0}} \frac{1}{l !} \inner{\tbtau_2^l, \tgamma_{\tk}}^{\tcX, T_f}_{\iota_*(\beta+d[B])}.
$$

\subsubsection{Generating functions of disk invariants of $(\cX,\cL,f)$} \label{sect:GenOpen} 
Now we give the definition of the generating function of disk invariants of $(\cX, \cL, f)$ following \cite[Section 3.13]{FLT12}. Let
$$
    \btau_2 = \sum_{a = 1}^{R-3} \tau_a u_a
$$
where  $\tau_1,\ldots, \tau_{R-3}$ are complex variables.
We choose $T'$-equivariant lifts of $u_1, \dots, u_{R-3}$ and thus of $\btau_2$ as in Convention \ref{conv:LiftOuter}. Let $\sX$ be an additional variable. For $(d, \lambda) \in H_1(\cL; \bZ) \cong \bZ \times G_{\tau_0}$ with $d>0$, we write
$$
    \th(d, \lambda):= \pi_{(\iota(\tau_0), \tsi_0)}(d, \lambda) \in G_{\tsi_0} \cong \mu_{\fa\fm}
$$
(see \eqref{eqn:FlagFundGroup}). Note from \eqref{eqn:FlagMapDiagram} that the map $\th = \pi_{(\iota(\tau_0), \tsi_0)}$ is surjective. 

\begin{definition}\label{def:FOuter}\rm{
For any $\tk \in \mu_{\fa\fm}$, define
$$
    F^{\cX, (\cL, f)}_{\tk}(\btau_2, \sX):= \sum_{\beta\in E(X)} \sum_{\substack{(d, \lambda) \in \bZ_{>0} \times G_{\tau_0} \\ \tk = \th(d,\lambda)}} \sum_{l \in \bZ_{\ge 0}}  
    \frac{\inner{ \btau_2^l }^{\cX, (\cL, f)}_{\beta+d[B], (d, \lambda)}}{l!} \sX^d,
$$
which takes value in $\bC$.
}
\end{definition}

\begin{remark}\label{rem:AmodelGrouping}
\rm{
In \cite{FLT12}, the generating functions of disk invariants are formed by grouping classes $(\beta, d, \lambda)$ according to $\lambda$. This is consistent with Definition \ref{def:FOuter} in the case $f \in \bZ$, i.e. $\fa=1$, where $\th(d,\lambda) = \lambda$ for all $d, \lambda$.
}
\end{remark}

\subsection{Open/closed correspondence for generating functions}\label{sect:GenCorr}
We now identify the generating function of genus-zero closed Gromov-Witten invariants of $\tcX$ with that of the disk invariants of
$(\cX, \cL,f)$. 
\begin{theorem}\label{thm:GenCorr}
For any $\tk \in \mu_{\fa\fm}$,  the correspondence
$$
    F^{\cX, (\cL, f)}_{\tk} (\btau_2, \sX) = \llangle \tgamma_{\tk}\rrangle^{\tcX,T_f}(\tbtau_2)
$$
holds under the relation $\ttau_a = \tau_a$ for $a = 1, \dots, R-3$ and $\ttau_{R-2} = \log \sX$.
\end{theorem}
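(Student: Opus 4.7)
The plan is to reduce the closed-side generating function $\llangle \tgamma_\lambda\rrangle^{\tcX, T_f}(\tbtau_2)$ to $F^{\cX, (\cL, f)}_\lambda(\btau_2, \sX)$ via three maneuvers: restrict the $\tbeta$-sum using vanishing, extract the factor $\sX^d$ via the divisor equation applied to the distinguished direction $\tcD_{R+1}$, and finally apply the numerical correspondence termwise. First, I would invoke Observation \ref{obs:AModelLVanish} together with Lemma \ref{lem:OtherClassVanish}: the former forces $\vf \circ \vs(n+1) = \tsi_0$ and hence vanishing when $\tbeta \in H_2(X;\bZ)$, while the latter kills all $\tbeta \notin \iota_*(H_2(X, L; \bZ))$. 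Consequently, only curve classes of the form $\tbeta = \iota_*(\beta + d[B])$ with $\beta \in E(X)$ and $d \in \bZ_{>0}$ contribute, collapsing the $\tbeta$-sum to one indexed by $(\beta, d) \in E(X) \times \bZ_{>0}$.

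Next, under the identification $\ttau_{R-2} = \log \sX$, I would decompose $\tbtau_2 = \bar{\tbtau}_2 + (\log \sX)\,\tcD_{R+1}$, where $\bar{\tbtau}_2 := \sum_{a=1}^{R-3} \tau_a \tu_a$. Since $\tcD_{R+1} \in H^2(\tcX;\bQ)$ is an untwisted divisor, expanding $\tbtau_2^l$ multilinearly inside each correlator and iteratively applying the $\tT'$-equivariant divisor equation (valid because $\tbeta \neq 0$) gives
\begin{equation*}
\sum_{l \geq 0} \frac{1}{l!} \inner{\tbtau_2^l, \tgamma_\lambda}^{\tcX, T_f}_{\tbeta} \;=\; \sX^{\int_{\tbeta}\tcD_{R+1}} \sum_{l \geq 0} \frac{1}{l!} \inner{\bar{\tbtau}_2^l, \tgamma_\lambda}^{\tcX, T_f}_{\tbeta}.
\end{equation*}
For $\tbeta = \iota_*(\beta + d[B])$, I would verify $\int_{\tbeta}\tcD_{R+1} = d$ as follows: since $\iota^*\tcD_{R+1} = 0$ by the formula in Section \ref{sect:ComparisonOuter}, the projection formula gives $\int_{\iota_*\beta}\tcD_{R+1} = 0$, reducing the task to showing $\int_{[l_{\iota(\tau_0)}]}\tcD_{R+1} = 1$. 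This follows from equivariant localization on the toric line $l_{\iota(\tau_0)}$ using the tangent weights $\tbw(\iota(\tau_0), \iota(\sigma_0)) = \tfrac{1}{\fr}\su_1$ and $\tbw(\iota(\tau_0), \tsi_0) = -\su_1$: by the toric convention, $\iota_{\iota(\sigma_0)}^* \tcD_{R+1}^{\tT'} = 0$ (as $\trho_{R+1} \notin \iota(\sigma_0)$) while $\iota_{\tsi_0}^* \tcD_{R+1}^{\tT'} = -\su_1$, so only $\fp_{\tsi_0}$ contributes and the ratio is $1$.

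Finally, since $\iota^*\tu_a = u_a$ for every $a = 1, \ldots, R-3$ by Convention \ref{conv:LiftOuter}, the class $\bar{\tbtau}_2$ is precisely the $\tT'$-equivariant lift of $\btau_2 = \sum_a \tau_a u_a$ under $\ttau_a = \tau_a$. Applying Theorem \ref{thm:NumericalOuter} termwise then gives
\begin{equation*}
\inner{\bar{\tbtau}_2^l, \tgamma_\lambda}^{\tcX, T_f}_{\iota_*(\beta+d[B])} \;=\; \inner{\btau_2^l}^{\cX, (\cL, f)}_{\beta+d[B], (d, \lambda)},
\end{equation*}
and summing over $(\beta, d)$ with the $\sX^d$ prefactor reproduces $F^{\cX, (\cL, f)}_\lambda(\btau_2, \sX)$. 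The main obstacle is the intersection $\int_{[l_{\iota(\tau_0)}]}\tcD_{R+1} = 1$, where one must reconcile the stacky localization formula with the coarse-moduli nature of the Gromov-Witten curve class $\tbeta \in H_2(\tX;\bZ)$; a parallel point is that the equivariant divisor equation must be confirmed to pass cleanly through the $T_f$-weight restriction, which is routine but deserves explicit articulation.
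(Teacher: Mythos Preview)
Your proposal is correct and follows essentially the same approach as the paper: both restrict the $\tbeta$-sum to classes $\iota_*(\beta+d[B])$ via Observation \ref{obs:AModelLVanish} and Lemma \ref{lem:OtherClassVanish}, apply the divisor equation for $\tu_{R-2}=\tcD_{R+1}$ to extract the factor $\sX^d$ (equivalently $e^{d\ttau_{R-2}}$), and then invoke Theorem \ref{thm:NumericalOuter} termwise. The only cosmetic difference is that the paper states $\inner{\tD_{R+1},\iota_*(\beta+d[B])}=d$ directly, whereas you sketch a localization verification; your flagged concern about stacky versus coarse normalizations resolves because $|G_{\iota(\tau_0)}|=|G_{\tsi_0}|=\fm$, so the toric intersection formula gives $\tcD_{R+1}\cdot [l_{\iota(\tau_0)}]=\fm/\fm=1$.
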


\begin{proof}
Fix an effective class $\beta \in E(X)$ and $(d, \lambda) \in H_1(\cL; \bZ)$ with $d>0$. We denote $ \hat{\btau}_2 := \sum_{a = 1}^{R-3} \ttau_a \tu_a = \tbtau_2 - \ttau_{R-2}\tu_{R-2}$, which is the lift of $\btau_2$ chosen as in Convention \ref{conv:LiftOuter} under the identification $\ttau_a = \tau_a$ for $a = 1, \dots, R-3$. We have 
\begin{align*}
\sum_{l \in \bZ_{\ge 0}} \frac{1}{l!} \langle \tbtau_2^l, \tgamma_{\tk} \rangle^{\tcX, T_f}_{\iota_*(\beta+d[B])}  
& = \sum_{l \in \bZ_{\ge 0}}  \frac{1}{l!}  \inner{(\hat{\btau}_2+\ttau_{R-2}\tu_{R-2})^l, \tgamma_{\tk} }^{\tcX, T_f}_{\iota_*(\beta+d[B])} \\
    &= \sum_{l \in \bZ_{\ge 0}} \sum_{k = 0}^l \frac{(d\ttau_{R-2})^k \inner{ (\hat{\btau}_2 )^{l-k}, \tgamma_{\tk}}^{\tcX, T_f}_{\iota_*(\beta+d[B])} }{k!(l-k)!} \\
    &= \sum_{l \in \bZ_{\ge 0}} \sum_{k = 0}^l \frac{(d\ttau_{R-2})^k \inner{\btau_2^{l-k} }^{\cX, (\cL, f)}_{\beta+d[B], (d, \lambda)} }{k!(l-k)!} \\
    &= \sum_{k,m \in \bZ_{\ge 0}} \frac{(d\ttau_{R-2})^k}{k!} \cdot \frac{ \inner{\btau_2^m}^{\cX, (\cL, f)}_{\beta+d[B], (d, \lambda)}}{m!}  \\
    &= \sum_{m \in \bZ_{\ge 0}} \frac{1}{m!} \inner{\btau_2^m}^{\cX, (\cL, f)}_{\beta+d[B], (d, \lambda)} \exp(d\ttau_{R-2})
\end{align*}
where the second equality follows from the divisor equation applied with $\inner{\fa\tD_{R+1}, \iota_*(\beta+d[B])} = d$, and the third equality follows from the numerical correspondence (Theorem \ref{thm:NumericalOuter}). The final line is then identified with
$$
    \sum_{m \in \bZ_{\ge 0}}  \frac{1}{m!} \inner{\btau_2^m}^{\cX, (\cL, f)}_{\beta+d[B], (d, \lambda)} \sX^d
$$
under $\ttau_{R-2} = \log \sX$, or equivalently $\tQ_{R-2}=\sX$. 
\end{proof}

\subsection{Equivariant small quantum cohomology of \texorpdfstring{$\tcX$}{X}}
As an application of Theorem \ref{thm:GenCorr}, we show in Section \ref{sect:JPairing} that $F^{\cX, (\cL, f)}_{\tk}$ can be recovered from the $\tT'$-equivariant small $J$-function of $\tcX$. In preparation, we review the $\tT'$-equivariant small quantum cohomology of $\tcX$ in this section. We refer to \cite[Section 2]{CIJ18} for the equivariant quantum cohomology of a general smooth Deligne-Mumford stack that has a semi-projective coarse moduli space and admits a torus action, as well as \cite{Givental96, Pandharipande98, Iritani09} for additional details.

\subsubsection{Quantum product}  

Let $(-,-)_{\tcX}^{\tT'}$ denote the $\tT'$-equivariant orbifold Poincar\'e pairing of $\tcX$, defined as
$$
    (\tu,\tv)_{\tcX}^{\tT'} := \int_{\cI \tcX} \tu \cup \inv^*(\tv), \quad \tu, \tv \in H^*_{\CR, \tT'}(\tcX; \bQ).
$$
Here, the integral is defined by $\tT'$-equivariant localization \cite{AB84} on $\cI \tcX$ and takes value in $\cQ_{\tT'}$. 

The $\tT'$-equivariant small \emph{quantum product} of $\tcX$ at $\tbtau_2$ is an associative, commutative product 
$\star_{\tbtau_2}$ on
$$
    H^*_{\CR, \tT'}(\tcX; \bC) \otimes_{\cR^{\bC}_{\tT'}} \cR^{\bC}_{\tT'}\llbracket \tQ,  \tbtau_2'' \rrbracket
$$
defined by 
$$
    a \star_{\tbtau_2} b := \sum_{\tbeta \in E(\tX)} \sum_{n \in \bZ_{\ge 0}} \frac{1}{n!} \inv^* \ev_{3,*} \left(\ev_1^*(a)\ev_2^*(b) \prod_{i=4}^{n+3} \ev_i^*(\tbtau_2) \cap [\Mbar_{0,n+3}(\tcX, \tbeta)]^\vir \right).
$$
Note that the semi-projectivity of $\tX$ ensures that $\ev_3: \Mbar_{0,n+3}(\tcX, \tbeta) \to \cI \tcX$ is proper. Equivalently, $\star_{\tbtau_2}$ is defined by
$$
    (a \star_{\tbtau_2} b, c)_{\tcX}^{\tT'} =  \llangle a, b, c\rrangle^{\tcX,\tT'} (\tbtau_2)
$$
where the right hand side is defined in Definition  \ref{def:Correlator}. 

\subsubsection{Quantum differential equations}

Let $A=\dim_{\bQ} H^*_{\CR}(\tX;\bQ)$. Then 
$$
H^*_{\CR,\tT'}(\tcX;\bC)\otimes_{\cR^{\bC}_{\tT'}} \cR^{\bC}_{\tT'} \llbracket \tQ, \tbtau_2''\rrbracket
$$
is a free module of rank $A$ over the ring $\cR_{\tT'}^{\bC}\llbracket \tQ, \tbtau_2''\rrbracket$, so it defines a vector
bundle $E$  of rank $A$ over the formal scheme $\hat{H}:= \Spec \cR_{\tT'}^{\bC} \llbracket \tQ, \tbtau_2''\rrbracket$.  The $\tT'$-equivariant small \emph{quantum connection} is a family
of flat connections $\{ \nabla^z: z\in \bC^*\}$ on $E$ defined by 
$$
    \nabla^z_{\frac{\partial}{\partial \ttau_a} } s = \frac{\partial s}{\partial \ttau_a}  - \frac{1}{z} \tu_a\star_{\tbtau_2}s, \quad a=1,\ldots, R-2.
$$
Flat sections of $\nabla^z$ are solutions of $\tT'$-equivariant small \emph{quantum differential equations}
\begin{equation} \label{eqn:QDE}
\frac{\partial s}{\partial \ttau_a} =   \frac{1}{z}\tu_a\star_{\tbtau_2} s, \quad a=1,\ldots, R-2,
\end{equation} 
which can be rewritten as 
\begin{equation} \label{eqn:QDE2}
\begin{aligned}
\tQ_a \frac{\partial s}{\partial \tQ_a} = &  \frac{1}{z}\tu_a\star_{\tbtau_2} s, \quad a=1,\ldots,  R'-3, R-2; \\
\frac{\partial s}{\partial \ttau_a} = &  \frac{1}{z}\tu_a\star_{\tbtau_2} s, \quad a=R'-2,\ldots, R-3. 
\end{aligned} 
\end{equation} 

\subsubsection{The fundamental solution} 
The $\tT'$-equivariant small quantum differential equations \eqref{eqn:QDE2}  are defined over the ring $\cR_{\tT'}^{\bC} \llbracket \tQ, \tbtau_2''\rrbracket$, and
have regular singularities along $\tQ_a=0$. We now describe a fundamental solution  to \eqref{eqn:QDE2} which is defined over the larger ring 
$$
\cR_{\tT'}^{\bC}[\tbtau_2']\llbracket z^{-1}\rrbracket \llbracket \tQ, \tbtau_2''\rrbracket 
:= \cR_{\tT'}^{\bC}[\ttau_1,\dots, \ttau_{R'-3}, \ttau_{R-2}] \llbracket z^{-1}\rrbracket \llbracket \tQ_1,\dots, \tQ_{R'-3}, \tQ_{R-2}, \ttau_{R'-2},\dots, \ttau_{R-3} \rrbracket
$$
(see \cite[Section 3.2]{CIJ18}). The \emph{$S$-operator}
$$
S(\tbtau_2,z) \in \End(H^*_{\CR,\tT'}(\cX;\bC))\otimes_{\cR_{\tT'}^\bC}   \cR_{\tT'}^{\bC}[\tbtau_2']\llbracket z^{-1}\rrbracket \llbracket \tQ, \tbtau_2''\rrbracket 
$$
is defined as follows. For any $\tu,\tv\in H^*_{\CR,\tT'}(\cX;\bC)$, 
$$
\left(\tu, S(\tbtau_2,z \right) \tv )^{\tT'}_{\tcX} =(\tu,\tv)^{\tT'}_{\tcX} +\llangle \tu,\frac{\tv}{z-\bar{\psi}}\rrangle^{\tcX,\tT'}(\tbtau_2)
$$
where 
$$
\llangle \tu,\frac{\tv}{z-\bar{\psi}}\rrangle^{\tcX,\tT'} =\sum_{k=0}^\infty z^{-k-1} \llangle \tu, \tv \bar{\psi}^k \rrangle^{\tcX,\tT'} (\tbtau_2).
$$
More explicitly,  we complete $\{\tu_1, \dots, \tu_{R-2}\}$ into a homogeneous basis $\{\tu_1, \dots, \tu_{A}\}$ of $H^*_{\CR, \tT'}(\tcX; \bQ)$ over $\cQ_{\tT'}$, and
let $\{\tu^1, \dots, \tu^A\}$ be the basis dual to $\{\tu_1, \dots, \tu_{A}\}$ under the $\tT'$-equivariant Poincar\'{e} pairing  $(-,-)_{\tcX}^{\tT'}$. Then
$$
    S(\tbtau_2, z) \tv  =  \tv +  \sum_{a=1}^A \llangle \tu_a, \frac{\tv}{z - \bar{\psi}}\rrangle_{\tbeta}^{\tcX,\tT'}(\tbtau_2)  \tu^a. 
$$

As observed by \cite[Proposition 2.4]{CIJ18}, the following is a straightforward equivariant generalization of results in \cite{Givental96, Pandharipande98, Iritani09}:
\begin{proposition}[\cite{Givental96, Pandharipande98, Iritani09, CIJ18}]\label{prop:QDESolution}
For any $\tv \in H^*_{\CR, \tT'}(\tcX; \bC)$, $S(\tbtau_2, z) \tv$ is a solution to the $\tT'$-equivariant quantum differential equation \eqref{eqn:QDE}: 
$$
\frac{\partial}{\partial \ttau_a} \cS(\tbtau_2,z) \tv =  \frac{1}{z} \tu_a \star_{\tbtau_2} S(\tbtau_2, z) \tv,  \qquad a = 1, \dots, R-2.
$$
\end{proposition}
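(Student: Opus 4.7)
The plan is to follow the classical strategy of Givental and Pandharipande, adapted to the orbifold/equivariant setting as in \cite{Iritani09, CIJ18}. First, I would reformulate the statement by pairing both sides of \eqref{eqn:QDE} with an arbitrary test class $\tu \in H^*_{\CR,\tT'}(\tcX;\bC)$ under the $\tT'$-equivariant orbifold Poincar\'e pairing. Using the defining property of the quantum product $\star_{\tbtau_2}$, namely $(\tw,\tu_a\star_{\tbtau_2}\tv')_{\tcX}^{\tT'} = \llangle \tw, \tu_a, \tv'\rrangle^{\tcX,\tT'}(\tbtau_2)$, the desired identity becomes
$$
    \frac{\partial}{\partial \ttau_a}\bigl(\tu, S(\tbtau_2, z)\tv\bigr)_{\tcX}^{\tT'}
    = \frac{1}{z}\, \llangle \tu, \tu_a,  S(\tbtau_2, z)\tv\rrangle^{\tcX,\tT'}(\tbtau_2),
$$
where the right-hand side is made sense of by expanding $S(\tbtau_2,z)\tv$ in the dual basis $\{\tu^b\}$ and plugging each term into a three-point correlator.

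Second, I would compute the left-hand side directly from the definition of $S$. Differentiating each descendant correlator $\llangle \tu, \tv\bar\psi^k\rrangle^{\tcX,\tT'}(\tbtau_2)$ with respect to $\ttau_a$ inserts an extra copy of $\tu_a$ at a new marked point. Assembling the $z^{-k-1}$ series gives
$$
    \frac{\partial}{\partial \ttau_a}\bigl(\tu, S(\tbtau_2, z)\tv\bigr)_{\tcX}^{\tT'}
    = \sum_{k \ge 0} z^{-k-1}\, \llangle \tu, \tv\bar\psi^k, \tu_a \rrangle^{\tcX,\tT'}(\tbtau_2).
$$

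Third, the geometric heart of the argument is the genus-zero topological recursion relation (TRR) on $\Mbar_{0,n+3}(\tcX,\tbeta)$: the descendant class $\bar\psi$ at the marked point carrying $\tv$ pulls back from a sum of boundary divisors on $\Mbar_{0,3}$ in which the two marked points carrying $\tu$ and $\tu_a$ lie on the opposite component from the one carrying $\tv$. Summing over splittings of the curve class and the $\tbtau_2$-insertions, this yields
$$
    \llangle \tu, \tv\bar\psi^{k+1}, \tu_a \rrangle^{\tcX,\tT'}(\tbtau_2)
    = \sum_b \llangle \tu, \tu_a, \tu^b \rrangle^{\tcX,\tT'}(\tbtau_2)\, \llangle \tu_b, \tv\bar\psi^k \rrangle^{\tcX,\tT'}(\tbtau_2).
$$
Iterating this identity inside the series $\sum_{k \ge 0} z^{-k-1} \llangle \tu, \tv\bar\psi^k, \tu_a\rrangle^{\tcX,\tT'}$ telescopes the descendant series into $\frac{1}{z}\sum_b \llangle \tu, \tu_a, \tu^b\rrangle^{\tcX,\tT'}\bigl(\tu_b, S(\tbtau_2,z)\tv\bigr)_{\tcX}^{\tT'}$, which is precisely the right-hand side after summing back over the dual basis.

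The main technical issue will not be the recursion itself, which is standard, but justifying that all manipulations take place in the correct ring $\cR_{\tT'}^{\bC}[\tbtau_2']\llbracket z^{-1}\rrbracket\llbracket \tQ, \tbtau_2''\rrbracket$. Here the semi-projectivity of $\tX$ plays a crucial role: it ensures properness of the evaluation map $\ev_3$ entering the definition of $\star_{\tbtau_2}$, guarantees finiteness of the Novikov-degree coefficients, and controls the equivariant localization residues valued in $\cQ_{\tT'}^\bC$. Granting this, the TRR for genus-zero moduli of twisted stable maps, available from the Abramovich-Graber-Vistoli framework by pullback of the boundary relation on $\Mbar_{0,n+3}$, delivers the conclusion exactly as in the manifold case treated in \cite{Givental96, Pandharipande98, Iritani09}.
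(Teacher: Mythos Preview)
Your proof sketch is correct and follows the classical TRR-based argument from \cite{Givental96, Pandharipande98}, adapted to the orbifold/equivariant setting as in \cite{Iritani09, CIJ18}. The paper does not supply its own proof of this proposition: it is stated as a citation of the references, with the remark that it is a straightforward equivariant generalization of known results, so your sketch is in fact more detailed than what the paper provides.
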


Note that in the large radius limit $\tQ\to 0$, we have
$$
\lim_{\tQ\to 0} S(\tbtau_2,z)  \tv = e^{\tbtau_2/z}\tv,
$$
which is a solution to the following classical limit of the quantum differential equations
$$
\frac{\partial s}{\partial \ttau_a} = \frac{1}{z} \tu_a s, \qquad a=1,\ldots, R-2. 
$$


\subsection{Equivariant small \texorpdfstring{$J$}{J}-function of \texorpdfstring{$\tcX$}{X} and disk potential of \texorpdfstring{$(\cX,\cL,f)$}{X}}\label{sect:JPairing}
The $\tT'$-equivariant small $J$-function \cite{Tseng10, CG07, Givental98} of $\tcX$
$$
J_{\tcX}^{\tT'}(\btau_2,z)  
$$
is characterized by 
$$
(J_{\tcX}^{\tT'}(\btau_2,z),\tv)^{\tT'}_{\tcX}  = (\one,S(\btau_2,z)\tv )^{\tT'}_{\tcX} 
$$
for any $\tv\in H_{\tT'}^*(\tcX;\bC)$; in other words, it is defined by 
$$
    J_{\tcX}^{\tT'}(\tbtau_2, z) :=  \one + \sum_{a=1}^A \llangle \one,  \frac{\tu_a}{z - \bar{\psi}} \rrangle^{\tcX,\tT'}(\tbtau_2) \tu^a,
$$
which takes value in $H^*_{\CR, \tT'}(\tcX; \bC)$. 

\begin{notation}\label{not:ZCoefficient}
\rm{
Let $\bS$ be a commutative ring,
and let
$$
h(z) = \sum_{n=0}^\infty h_n z^{-n} \in \bS\llbracket z^{-1} \rrbracket 
$$
where $h_n\in \bS$.  For any $n\in \bZ_{\geq 0}$, define
$$
[z^{-n}] h(z) = h_n.
$$
}\end{notation}

The following lemma follows immediately from the definition and the string equation. 
\begin{lemma}\label{lem:JPairing}
For any $\tv\in H^*_{\CR,\tT'}(\tcX;\bC)$, 
$$
(J_{\tcX}^{\tT'}(\tbtau_2,z), \tv)^{\tT'}_{\tcX} = (1,\tv)^{\tT'}_{\tcX} + z^{-1} (\tbtau_2, \tv)^{\tT'}_{\tcX}  + \sum_{k=0}^\infty z^{-2-k} \llangle \tv \bar{\psi}^k\rrangle^{\tcX,\tT'}(\tbtau_2). 
$$
In particular, 
\begin{equation}\label{eqn:primary} 
[z^{-2}] (J_{\tcX}^{\tT'}(\tbtau_2,z), \tv)^{\tT'}_{\tcX}  = \llangle \tv\rrangle^{\tcX,\tT'}(\tbtau_2).
\end{equation} 
\end{lemma}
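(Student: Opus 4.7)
The plan is to unpack the definition of $J_{\tcX}^{\tT'}(\tbtau_2, z)$ and simplify the resulting pairing termwise using the string equation. First, I would substitute the definition
\[
J_{\tcX}^{\tT'}(\tbtau_2, z) = \one + \sum_{a=1}^A \llangle \one, \tfrac{\tu_a}{z - \bar{\psi}} \rrangle^{\tcX,\tT'}(\tbtau_2)\, \tu^a
\]
and pair with $\tv$ using $(-,-)^{\tT'}_{\tcX}$. Since $\{\tu_a\}$ and $\{\tu^a\}$ are dual bases and $\sum_a (\tu^a, \tv)^{\tT'}_{\tcX}\, \tu_a = \tv$, the $\cQ_{\tT'}^{\bC}$-linearity of the correlator in its second argument collapses the sum to
\[
(J_{\tcX}^{\tT'}(\tbtau_2, z), \tv)^{\tT'}_{\tcX} = (\one, \tv)^{\tT'}_{\tcX} + \llangle \one, \tfrac{\tv}{z - \bar{\psi}} \rrangle^{\tcX,\tT'}(\tbtau_2).
\]

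Next, I would expand the geometric series $\frac{1}{z - \bar{\psi}} = \sum_{k \ge 0} z^{-k-1} \bar{\psi}^k$ to obtain
\[
(J_{\tcX}^{\tT'}(\tbtau_2, z), \tv)^{\tT'}_{\tcX} = (\one, \tv)^{\tT'}_{\tcX} + \sum_{k=0}^\infty z^{-k-1} \llangle \one, \tv \bar{\psi}^k \rrangle^{\tcX,\tT'}(\tbtau_2),
\]
and apply the string equation to each correlator with a $\one$-insertion. For $k \ge 1$, the class $\tv \bar{\psi}^k$ is the only insertion carrying a descendant while all powers of $\tbtau_2$ are primary, so the string equation yields
\[
\llangle \one, \tv \bar{\psi}^k \rrangle^{\tcX,\tT'}(\tbtau_2) = \llangle \tv \bar{\psi}^{k-1} \rrangle^{\tcX,\tT'}(\tbtau_2).
\]
For $k = 0$, all contributions vanish by the string equation except the exceptional degree-zero three-point term at $l=1$, namely $\langle \one, \tv, \tbtau_2 \rangle^{\tcX,\tT'}_{0,3,0} = (\tv, \tbtau_2)^{\tT'}_{\tcX}$. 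Reindexing the resulting sum then gives the claimed identity, and the ``In particular'' statement follows by extracting the coefficient of $z^{-2}$.

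The argument is essentially formal; the only step requiring care is the careful accounting of the low-$l$ and low-$\tbeta$ contributions to $\llangle \one, \tv \rrangle^{\tcX,\tT'}(\tbtau_2)$, where one must isolate the exceptional unstable cases from those where the string equation applies and verify that precisely the three-point degree-zero invariant survives to produce the $z^{-1}(\tbtau_2, \tv)^{\tT'}_{\tcX}$ term. Semi-projectivity of $\tX$ guarantees that all integrals are well-defined in $\cQ_{\tT'}^{\bC}\llbracket \tQ, \tbtau_2'' \rrbracket$ via $\tT'$-equivariant localization on proper fixed loci, so no analytic subtleties arise.
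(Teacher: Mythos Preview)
Your proposal is correct and follows exactly the approach the paper indicates: the paper states only that the lemma ``follows immediately from the definition and the string equation,'' and your argument---collapsing the dual-basis sum via linearity, expanding $1/(z-\bar\psi)$, and applying the string equation (with care for the exceptional three-point degree-zero term that produces $z^{-1}(\tbtau_2,\tv)^{\tT'}_{\tcX}$)---is precisely that. No differences to report.
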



Under the open/closed correspondence for generating functions (Theorem \ref{thm:GenCorr}), \eqref{eqn:primary}  immediately implies that the generating function $F^{\cX, (\cL, f)}(\btau_2, \sX)$ of disk invariants of $(\cX, \cL, f)$ can be retrieved from the $\tT'$-equivariant small $J$-function of $\tcX$.

\begin{theorem}\label{thm:JPairing}
For any $\tk \in \mu_{\fa\fm}$, 
$$
    F^{\cX, (\cL, f)}_{\tk} (\btau_2, \sX) = [z^{-2}]  \left(J_{\tcX}^{\tT'}(\tbtau_2, z), \tgamma_{\tk} \right)_{\tcX}^{\tT'} \bigg|_{\su_4 = 0, \su_2 - f\su_1 = 0}
$$
under the relation $\ttau_a = \tau_a$ for $a = 1, \dots, R-3$ and $\ttau_{R-2} = \log \sX$.
\end{theorem}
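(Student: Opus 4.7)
The plan is to combine the two previously established results in a direct manner: Lemma \ref{lem:JPairing} reduces extraction of $[z^{-2}]$ of the $J$-pairing to the one-point correlator $\llangle \tgamma_\lambda \rrangle^{\tcX,\tT'}(\tbtau_2)$, and Theorem \ref{thm:GenCorr} identifies (the $T_f$-restriction of) this correlator with the disk generating function $F^{\cX,(\cL,f)}_\lambda(\btau_2,\sX)$ under the prescribed change of variables $\ttau_a=\tau_a$, $\ttau_{R-2}=\log \sX$. So the proof is essentially a concatenation of these two identities, with the non-trivial content carried by the work already done in Sections \ref{sect:Numerical} and \ref{sect:GenCorr}.

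Concretely, I would proceed as follows. First, specialize Lemma \ref{lem:JPairing} to $\tv=\tgamma_\lambda$, yielding the identity
$$
[z^{-2}] \bigl(J_{\tcX}^{\tT'}(\tbtau_2,z),\, \tgamma_\lambda\bigr)_{\tcX}^{\tT'} \;=\; \llangle \tgamma_\lambda \rrangle^{\tcX,\tT'}(\tbtau_2)
$$
as formal series in $\cQ_{\tT'}^{\bC}\llbracket \tQ, \tbtau_2'' \rrbracket$. Second, I restrict this equality to the framing subtorus $T_f$ by setting $\su_4=0$ and $\su_2 - f\su_1 = 0$; the validity of the restriction on the correlator side is ensured term-by-term by Lemma \ref{lem:ClosedWellDefined} (applied to each $\inner{\tbtau_2^l,\tgamma_\lambda}^{\tcX,\tT'}_{\tbeta}$), so the right-hand side becomes $\llangle \tgamma_\lambda \rrangle^{\tcX,T_f}(\tbtau_2)$ in $\bC\llbracket \tQ, \tbtau_2''\rrbracket$. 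Since the operation $[z^{-2}]$ simply reads off a coefficient and commutes with weight restriction whenever the latter is defined, the left-hand side becomes $[z^{-2}] (J_{\tcX}^{\tT'}(\tbtau_2,z), \tgamma_\lambda)_{\tcX}^{\tT'}\big|_{\su_4=0,\,\su_2-f\su_1=0}$.

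Finally, applying Theorem \ref{thm:GenCorr} under the stated identification of variables converts $\llangle \tgamma_\lambda \rrangle^{\tcX, T_f}(\tbtau_2)$ into $F^{\cX,(\cL,f)}_\lambda(\btau_2,\sX)$, completing the chain of equalities. There is no serious obstacle, since both Lemma \ref{lem:JPairing} and Theorem \ref{thm:GenCorr} are already in hand; the only point that warrants care is the well-definedness of the $T_f$-restriction, which is exactly the content of Lemma \ref{lem:ClosedWellDefined}. In summary, the proof is a two-line chain of the form
$$
[z^{-2}](J_{\tcX}^{\tT'}(\tbtau_2,z),\tgamma_\lambda)_{\tcX}^{\tT'}\Big|_{T_f}
\;\stackrel{\text{Lem.\,\ref{lem:JPairing}}}{=}\; \llangle \tgamma_\lambda\rrangle^{\tcX,T_f}(\tbtau_2)
\;\stackrel{\text{Thm.\,\ref{thm:GenCorr}}}{=}\; F^{\cX,(\cL,f)}_\lambda(\btau_2,\sX),
$$
with the variable identifications $\ttau_a=\tau_a$ ($a=1,\dots,R-3$), $\ttau_{R-2}=\log\sX$ applied in the last step.
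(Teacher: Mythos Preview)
Your proposal is correct and follows essentially the same approach as the paper: the paper states that Theorem \ref{thm:JPairing} follows immediately from \eqref{eqn:primary} in Lemma \ref{lem:JPairing} combined with Theorem \ref{thm:GenCorr}, which is precisely the two-step chain you wrote out. Your explicit mention of Lemma \ref{lem:ClosedWellDefined} for the well-definedness of the weight restriction is a reasonable elaboration of what the paper leaves implicit.
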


\color{black}


\section{B-model correspondence and mirror symmetry}\label{sect:BModel}
In this section, we develop the open/closed correspondence on the B-model side. We first establish the counterpart of Theorem \ref{thm:JPairing} that the B-model disk function of $(\cX, \cL, f)$ \cite{FL13, FLT12} can be recovered from the $\tT'$-equivariant $I$-function of $\tcX$ (Theorem \ref{thm:IPairing}). Furthermore, we show that the A- and B-model correspondences are compatible with the closed mirror symmetry for $\tcX$ and the open mirror symmetry of $(\cX, \cL, f)$.


\subsection{Extended Nef cones and Mori cones}

Recall that we defined the extended nef cone of $\cX$ in Section \ref{sect:AVBrane}. We now define the extended Mori cone of $\cX$. For each maximal cone $\sigma \in \Sigma(3)$, define
$$
   \bK_{\sigma}^{\vee} := \sum_{i \in I_{\sigma}} \bZ D_i,
$$
which is a sublattice of $\bL^\vee$ of finite index. Let
$$
    \bK_{\sigma} := \{ \beta \in \bL_{\bQ} : \inner{D, \beta} \in \bZ \text{ for all } D \in \bK_{\sigma}^\vee\}
$$
be the dual lattice of $\bK_{\sigma}^{\vee}$ viewed as an overlattice of $\bL$ in $\bL_{\bQ}$, where $\inner{-,-}$ denotes the pairing between $\bL_{\bQ}^\vee$ and $\bL_{\bQ}$. The map
$$
    v: \bK_{\sigma} \to N, \quad \beta \mapsto \sum_{i = 1}^R \ceil{\inner{D_i, \beta}} b_i
$$
induces a bijection $\bK_{\sigma} / \bL \to \Box(\sigma)$, which we also refer to as $v$ by abusive notation. Let
$$
    \bK(\cX) := \bigcup_{\sigma \in \Sigma(3)} \bK_{\sigma}.
$$

For each $\sigma \in \Sigma(3)$, define the \emph{extended $\sigma$-Mori cone} as
$$
    \tNE(\sigma) := \{ \beta \in \bL_{\bR} : \inner{D, \beta} \ge 0 \text{ for all } D \in \tNef(\sigma) \},
$$
which is the dual cone of $\tNef(\sigma)$. Let
$$
    \bK_{\eff, \sigma} := \bK_{\sigma} \cap \tNE(\sigma).
$$

The \emph{extended Mori cone} of $\cX$ is defined to be
$$
    \tNE(\cX) := \bigcup_{\tsi \in \tSi(4)} \tNE(\tsi).
$$
Moreover, define
$$
    \bK_{\eff}(\tcX) := \bK \cap \tNE(\cX) = \bigcup_{\sigma \in \Sigma(3)} \bK_{\eff, \sigma}.
$$

We now give an analog of the above definitions for $\tcX$. Let $\tD_i \in \tbL^\vee$, $i = 1, \dots, R+2$ be defined as in \eqref{eqn:tDiOuter}. For each maximal cone $\tsi \in \tSi(4)$, define the \emph{extended $\tsi$-nef cone} as
$$
    \tNef(\tsi):= \sum_{i \in I_{\tsi}} \bR_{\ge 0} \tD_i.
$$
The \emph{extended nef cone} of $\tcX$ is defined to be
$$
    \tNef(\tcX):= \bigcap_{\tsi \in \tSi(4)} \tNef(\tsi),
$$
which is an $(R-2)$-dimensional simplicial cone in $\tbL_{\bR}^\vee$. Under the projection $\tbL^\vee \to \bL^\vee$, the image of $\tNef(\tcX)$ is $\tNef(\cX)$.

For each $\tsi \in \tSi(4)$, define
$$
   \bK_{\tsi}^{\vee} := \sum_{i \in I_{\tsi}} \bZ \tD_i,
$$
which is a sublattice of $\tbL^\vee$ of finite index. Let
$$
    \bK_{\tsi} := \{ \tbeta \in \tbL_{\bQ} : \inner{\tD, \tbeta} \in \bZ \text{ for all } \tD \in \bK_{\tsi}^\vee\}
$$
be the dual lattice of $\bK_{\tsi}^{\vee}$ viewed as an overlattice of $\tbL$ in $\tbL_{\bQ}$, where $\inner{-,-}$ denotes the pairing between $\tbL_{\bQ}^\vee$ and $\tbL_{\bQ}$. The map
$$
    \tv: \bK_{\tsi} \to \tN, \quad \tbeta \mapsto \sum_{i = 1}^{R+2} \ceil{\inner{\tD_i, \tbeta}} \tb_i
$$
induces a bijection $\bK_{\tsi} / \tbL \to \Box(\tsi)$, which we also refer to as $\tv$ by abusive notation. Let
$$
    \bK(\tcX) := \bigcup_{\tsi \in \tSi(4)} \bK_{\tsi}.
$$

For each $\tsi \in \tSi(4)$, define the \emph{extended $\tsi$-Mori cone} as
$$
    \tNE(\tsi) := \{ \tbeta \in \tbL_{\bR} : \inner{\tD, \tbeta} \ge 0 \text{ for all } \tD \in \tNef(\tsi) \},
$$
which is the dual cone of $\tNef(\tsi)$. Let
$$
    \bK_{\eff, \tsi} := \bK_{\tsi} \cap \tNE(\tsi).
$$

The \emph{extended Mori cone} of $\tcX$ is defined to be
$$
    \tNE(\tcX) := \bigcup_{\tsi \in \tSi(4)} \tNE(\tsi).
$$
Moreover, define
$$
    \bK_{\eff}(\tcX) := \bK(\tcX) \cap \tNE(\tcX) =\bigcup_{\tsi \in \tSi(4)} \bK_{\eff, \tsi}.
$$

We now make a few observations to be used later.

\begin{observation}\label{obs:ClassInL}
Given $\tbeta \in \tbL_{\bQ}$ that is contained in $\bL_{\bQ}$, we have
$$
    \inner{\tD, \tbeta} = \inner{D, \tbeta}
$$
for any $\tD \in \tbL^\vee$ projecting to $D \in \bL^\vee$. Moreover, the following hold:
\begin{itemize}
    \item $\inner{\tD_i, \tbeta} = \inner{D_i, \tbeta}$, $i = 1, \dots, R$.

    \item $\inner{\tD_{R+1}, \tbeta} = \inner{\tD_{R+2}, \tbeta} = 0$.

    \item If $\tbeta \in \bK_{\eff}(\tcX)$, then $\tv(\tbeta) \in N \subset \tN$ and agrees with $v(\tbeta)$. In particular, $\age(\tv(\tbeta)) \le 1$.
\end{itemize}
\end{observation}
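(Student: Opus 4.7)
All three claims reduce to the duality between the inclusion $\bL \hookrightarrow \tbL$ and the surjection $\tbL^\vee \twoheadrightarrow \bL^\vee$ appearing in the commutative diagram \eqref{eqn:tXDualSES}. I would begin by observing that this diagram exhibits the projection $\tbL^\vee \to \bL^\vee$ as the $\bZ$-dual of the inclusion $\bL \to \tbL$ (this is immediate from the snake lemma applied to the vertical maps in \eqref{eqn:tXSES}, using that $\psi$ is injective). Consequently, for any $\tD \in \tbL^\vee$ projecting to $D \in \bL^\vee$ and any $\tbeta \in \bL_{\bQ} \subset \tbL_{\bQ}$, one has $\inner{\tD, \tbeta}_{\tbL} = \inner{D, \tbeta}_{\bL}$. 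This establishes the first assertion.

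For the bulleted statements, first apply the general identity to $\tD = \tD_i = \tpsi^\vee(e_i^\vee)$ for $i = 1,\dots,R$. The commutativity of the right-hand square of \eqref{eqn:tXDualSES} gives that $\tD_i$ projects to $\psi^\vee(e_i^\vee) = D_i \in \bL^\vee$, and hence $\inner{\tD_i, \tbeta} = \inner{D_i, \tbeta}$. For $i = R+1, R+2$, recall from the excerpt that $\tD_{R+1} = -\tD_{R+2}$ lies in $\bZ_{\neq 0}\, \epsilon_{R-2}^\vee$, which annihilates the sublattice $\bL \subset \tbL$ spanned by $\epsilon_1,\dots,\epsilon_{R-3}$. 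Hence $\inner{\tD_{R+1}, \tbeta} = \inner{\tD_{R+2}, \tbeta} = 0$ for any $\tbeta \in \bL_{\bQ}$.

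For the third bullet, substitute the previous vanishings into the defining formula
\[
    \tv(\tbeta) = \sum_{i=1}^{R+2} \ceil{\inner{\tD_i, \tbeta}}\, \tb_i = \sum_{i=1}^{R} \ceil{\inner{D_i, \tbeta}}\, \tb_i,
\]
since the $i = R+1, R+2$ contributions vanish. Because $\tb_i = (m_i, n_i, 1, 0)$ for $i = 1,\dots, R$ lies in the image of the inclusion $N \hookrightarrow \tN$ and maps to $b_i$, the right-hand side lies in $N \subset \tN$ and equals the image of $\sum_{i=1}^R \ceil{\inner{D_i, \tbeta}} b_i$. To conclude this equals $v(\tbeta)$, I still need $\tbeta \in \bK(\cX)$, which is verified by choosing some $\tsi \in \tSi(4)$ with $\tbeta \in \bK_{\tsi}$ and exhibiting a $\sigma \in \Sigma(3)$ with $\bK_\sigma^\vee$ sitting inside the image of $\bK_{\tsi}^\vee$ under the projection $\tbL^\vee \to \bL^\vee$ (for $\tsi = \iota(\sigma)$ this is immediate since $I_{\iota(\sigma)} = I_\sigma \cup \{R+1\}$; for $\tsi \in \tSi(4) \setminus \iota(\Sigma(3))$ a similar bookkeeping using $\delta_0(\tsi)$ works).

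The argument is essentially formal once the duality between $\bL \hookrightarrow \tbL$ and $\tbL^\vee \twoheadrightarrow \bL^\vee$ is in hand. The only mildly delicate point is the last verification that $\tbeta \in \bK(\cX)$, which amounts to cataloguing the two types of maximal cones of $\tSi$ and matching their index sets against those in $\Sigma(3)$; this is bookkeeping rather than a genuine obstacle.
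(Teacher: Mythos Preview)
The paper does not give a proof of this statement: it is recorded as an ``Observation'' and left without argument, since it is essentially formal bookkeeping. Your proposal correctly supplies the details the paper omits, and follows exactly the line of reasoning the observation is meant to evoke: the pairing identity is the duality between $\bL \hookrightarrow \tbL$ and $\tbL^\vee \twoheadrightarrow \bL^\vee$, the first two bullets are immediate specializations, and the third follows by substituting into the defining formula for $\tv$. Your final check that $\tbeta \in \bK(\cX)$ is also correct; for $\tsi \in \tSi(4)\setminus\iota(\Sigma(3))$ the relevant $\sigma\in\Sigma(3)$ is the unique maximal cone containing the boundary 2-cone $\delta_0(\tsi)$, so that $I'_\sigma \supseteq \{i_2(\tsi),i_3(\tsi)\}$ and hence $I_\sigma \subseteq I_{\tsi}$.
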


\begin{lemma}\label{lem:DRPlusOne}
We have
$$
    \tD_{R+1} \in \tNef(\tcX).
$$
\end{lemma}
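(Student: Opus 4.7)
The plan is to verify that $\tD_{R+1} \in \tNef(\tsi)$ for every maximal 4-cone $\tsi \in \tSi(4)$, splitting into two cases according to whether $\tsi \in \iota(\Sigma(3))$. For $\tsi = \iota(\sigma)$ with $\sigma \in \Sigma(3)$, the description $I_{\iota(\sigma)}' = I_{\sigma}' \sqcup \{R+2\}$ from Section~\ref{sect:4ConeOuter} gives $R+1 \in I_{\iota(\sigma)}$, so $\tD_{R+1}$ is itself one of the generators of $\tNef(\iota(\sigma))$ with coefficient $1$, and this case is immediate.

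For $\tsi \in \tSi(4) \setminus \iota(\Sigma(3))$ with $I_{\tsi}' = \{i_2, i_3, R+1, R+2\}$, I will produce an explicit non-negative expression for $\tD_{R+1}$ in the generators of $\tNef(\tsi)$. Since $\tsi$ is simplicial, $\{\tb_{i_2}, \tb_{i_3}, \tb_{R+1}, \tb_{R+2}\}$ is a $\bQ$-basis of $\tN_{\bQ}$, so there exists a unique $u \in \tM_{\bQ}$ with
\[
\inner{u, \tb_{i_2}} = \inner{u, \tb_{i_3}} = \inner{u, \tb_{R+2}} = 0, \qquad \inner{u, \tb_{R+1}} = 1.
\]
Plugging this $u$ into the linear relation $\sum_{i=1}^{R+2} \inner{u, \tb_i}\, \tD_i = 0$ coming from the top row of \eqref{eqn:tXDualSES} gives
\[
\tD_{R+1} \;=\; -\sum_{i \in I_{\tsi}} \inner{u, \tb_i}\, \tD_i,
\]
so the task reduces to showing $\inner{u, \tb_i} \leq 0$ for every $i \in I_{\tsi} \cap \{1, \dots, R\}$.

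I would derive this from the geometric claim that the codimension-$1$ hyperplane $\{u = 0\} \subset \tN_{\bR}$, spanned by $\tb_{i_2}, \tb_{i_3}, \tb_{R+2}$, is a supporting hyperplane of $\tP_0 = \mathrm{conv}(P \cup \{\tb_{R+2}\})$ with $\tP_0 \subset \{u \leq 0\}$; since $\tb_i \in P \subset \tP_0$ for each $i = 1, \dots, R$, the desired inequality follows at once. The argument for the claim has two steps. First, the 2-face $F = \{\tb_{i_2}, \tb_{i_3}, \tb_{R+2}\}$ lies on $\partial \tP_0$: since $\delta_0(\tsi) \in \Sigma(2) \setminus \Sigma(2)_c$ is a non-compact 2-cone, the edge $\{\rho_{i_2}, \rho_{i_3}\}$ is on $\partial P$, so $F = \iota(\delta_0(\tsi))$ is a ``side face'' of the cone-over-$P$ triangulation of $\tP_0$ and hence lies on $\partial \tP_0$, making the plane of $F$ a supporting hyperplane of $\tP_0$. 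Second, $\tb_{R+1}$ must sit strictly on the outward side of this hyperplane relative to $\tP_0$, because the 3-simplex associated to $\tsi$ in the triangulation of $\tP$ has $F$ as a 2-face shared with $\iota(\sigma)$ (the unique $\sigma \in \Sigma(3)$ with $\delta_0(\tsi)$ as a facet), and its interior lies in $\tP \setminus \tP_0$, placing the fourth vertex $\tb_{R+1}$ on the opposite side of $F$ from $\tb_j$ (the remaining vertex of $\iota(\sigma)$). Combined with the normalization $\inner{u, \tb_{R+1}} = 1 > 0$, this pins down the outward side as $\{u > 0\}$ and hence $\tP_0 \subset \{u \leq 0\}$.

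The main obstacle will be making this visibility/adjacency step precise. For the distinguished cone $\tsi_0$ it is transparent: in the chosen basis $b_2 = (0, \fm, 1)$, $b_3 = (0, 0, 1)$ the hyperplane $\{u = 0\}$ reduces to $\{v_1 = 0\}$, and the outer-brane condition $m_i \geq 0$ together with $\tb_{R+1} = (-1, -f, 1, 1)$ immediately forces $\tP_0 \subset \{v_1 \geq 0\}$ with $\tb_{R+1}$ strictly on the $\{v_1 < 0\}$ side. For general $\tsi \in \tSi(4) \setminus \iota(\Sigma(3))$ one must carefully invoke the construction of $\tP$'s triangulation as an extension of $\tP_0$'s by adjoining simplices of the form $\{F, \tb_{R+1}\}$, checking that every such $\tsi$ indeed decomposes into a ``visible side-face plus apex $\tb_{R+1}$'' of $\tP_0$, which is precisely where the specific geometry of the construction in Section~\ref{sect:construction} (and the position of $\tb_{R+1}$ outside $P$) enters.
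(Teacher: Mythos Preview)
Your proof is correct and takes a genuinely different route from the paper's. For $\tsi \in \tSi(4)\setminus\iota(\Sigma(3))$ the paper uses the single global relation $\tD_{R+1}=\sum_{i=1}^R m_i\tD_i$ (coming from the functional $u_1$) and then asserts $\{i:m_i>0\}\subseteq I_{\tsi}$, i.e.\ $m_{i_2}=m_{i_3}=0$. You instead pick a cone-dependent $u\in\tM_\bQ$ vanishing on $\tb_{i_2},\tb_{i_3},\tb_{R+2}$ and reduce the nonnegativity of the coefficients to the fact that the side face $F=\iota(\delta_0(\tsi))$ is a facet of $\tP_0$ visible from $\tb_{R+1}$.

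Your approach is in fact the more robust one: the paper's intermediate claim $\{i:m_i>0\}\subseteq I_{\tsi}$ can fail. For instance, take $\cX$ the resolved conifold with $b_1=(1,0,1)$, $b_2=(0,1,1)$, $b_3=(0,0,1)$, $b_4=(1,1,1)$, triangulated along the diagonal $\{b_1,b_2\}$, and $f=-1$; then $\tsi'=\{2,4,R+1,R+2\}\in\tSi(4)\setminus\iota(\Sigma(3))$ has $I_{\tsi'}=\{1,3\}$ while $m_4=1>0$. (The lemma still holds there because in this particular example $\tD_3=\tD_4$ in $\tbL^\vee$, so $\tD_{R+1}=\tD_1+\tD_4=\tD_1+\tD_3$; but that equality is an accident, not what the paper's argument invokes.) Your supporting-hyperplane argument handles all cases uniformly: the adjacency step you flag---that the simplex of $\tsi$ and the adjacent simplex $\iota(\sigma)$ share $F$ and hence lie on opposite sides of its affine span, forcing $\tb_{R+1}$ to the outward side of $\tP_0$---is exactly the right way to make the visibility precise, and it goes through for every $\tsi\in\tSi(4)\setminus\iota(\Sigma(3))$ since any triangulation of $\tP$ extending that of $\tP_0$ by the single new vertex $\tb_{R+1}$ is a placing triangulation over the visible facets.
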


\begin{proof}
For any $\tsi \in \iota(\Sigma(3))$, we have $R+1 \in I_{\tsi}$ and thus $\tD_{R+1} \in \tNef(\tsi)$. Now we consider cones in $\tSi(4) \setminus \iota(\Sigma(3))$. Note that the exactness of the first row of \eqref{eqn:tXDualSES} implies that
\begin{equation}\label{eqn:DRPlusOneLinComb}
    \fa\tD_{R+1}= \sum_{i = 1}^{R} m_i \tD_i = \sum_{\substack{i \in \{1, \dots, R\} \\ m_i >0}} m_i \tD_i.
\end{equation}
Moreover, for any $\tsi \in \tSi(4) \setminus \iota(\Sigma(3))$, we have
$$
    I_{\tsi} \supseteq \{i \in \{1, \dots, R\} : m_i >0\}.
$$
This implies that $\tD_{R+1} \in \tNef(\tsi)$.
\end{proof}

\subsection{Equivariant \texorpdfstring{$I$}{I}-function of \texorpdfstring{$\tcX$}{X} and B-model correspondence} \label{sec:I} 

We choose elements
$$
    H_1, \dots, H_{R-3} \in \bL^\vee \cap \tNef(\cX), \quad \tH_1, \dots, \tH_{R-2} \in \tbL^\vee \cap \tNef(\tcX)
$$
that satisfy the following conditions:
\begin{itemize}
    \item $\{H_1, \dots, H_{R-3}\}$ is a $\bQ$-basis for $\bL_{\bQ}^\vee$. $\{\tH_1, \dots, \tH_{R-2}\}$ is a $\bQ$-basis for $\tbL_{\bQ}^\vee$.

    \item The images of $H_1, \dots, H_{R'}$ under the \emph{Kirwan map}
    $$
        \kappa: \bL^\vee \cong H^2_G(\bC^R; \bZ) \to H^2(\cX; \bZ)
    $$
    form a $\bQ$-basis for $H^2(\cX; \bQ)$. The images of $\{\tH_1, \dots, \tH_{R'}, \tH_{R-2}\}$ under
    $$
        \tkappa: \tbL^\vee \cong H^2_{\tG}(\bC^{R+2}; \bZ) \to H^2(\cX; \bZ)
    $$
    is a $\bQ$-basis for $H^2(\tcX; \bQ)$.

    \item For each $a = 1, \dots, R'-3$, $\tkappa(\tH_a)$ is the lift of $\kappa(H_a)$ chosen as in Convention \ref{conv:LiftOuter}. In particular, $\tH_a$ projects to $H_a$ under $\tbL^\vee \to \tbL$.

    \item For each $a = R'-2, \dots, R-3$, $H_a = D_{3+a}$ and $\tH_a = \tD_{3+a}$.

    \item $\tH_{R-2} = \fa\tD_{R+1}$ (see Lemma \ref{lem:DRPlusOne}).
\end{itemize}
Recall from Section \ref{sect:GenCorr} that we used bases $\{u_1, \dots, u_{R-3}\}$ for $H^2_{\CR}(\cX; \bQ)$, $\{\tu_1, \dots, \tu_{R-2}\}$ for $H^2_{\CR}(\tcX; \bQ)$ when defining generating functions of Gromov-Witten invariants. We now fix the choices
\begin{align*}
    u_a &= \kappa(H_a), && a = 1, \dots, R'-3,\\
    \tu_a &= \tkappa(\tH_a), && a = 1, \dots, R'-3, R-2,\\
    u_a & = \tu_a = \one_{j(3+a)}, && a = R'-2, \dots, R-3
\end{align*}
where $j(3+a) \in \Box(\cX) \subseteq \Box(\tcX)$ is represented by $b_{3+a}$ (or $\tb_{3+a}$). As before, for $a = 1, \dots, R-2$, we choose $T'$-equivariant lifts of $u_a$ and $\tT'$-equivariant lifts of $\tu_a$ as in Convention \ref{conv:LiftOuter}.

Let $q = (q_1, \dots, q_{R-3})$, $\tq = (\tq_1, \dots, \tq_{R-2})$ be formal variables. For each $\beta \in \bK(\cX)$, $\tbeta \in \bK(\tcX)$, we set
$$
    q^\beta := q_1^{\inner{H_1, \beta}} \cdots q_{R-3}^{\inner{H_{R-3}, \beta}}, \quad 
    \tq^{\tbeta} := \tq_1^{\inner{\tH_1, \tbeta}} \cdots \tq_{R-2}^{\inner{\tH_{R-2}, \tbeta}}.
$$

\subsubsection{B-model disk function of $(\cX,\cL,f)$}
Recall from \eqref{eqn:Sigma0WtsOuter} that
$$
        w_0 = \frac{1}{\fr}, \quad w_2 = \frac{\fs + \fr f}{\fr\fm}, \quad w_3 = -\frac{\fm + \fs + \fr f}{\fr\fm}.
$$
Following \cite{FLT12}, we set
$$
    \bK_{\eff}(\cX, \cL) := \{ (\beta, d) \in  \bK_{\eff, \sigma_0} \times \bZ : \inner{D_1, \beta} + dw_0 \in \bZ_{\ge 0}, d \neq 0 \}.
$$

For each $(\beta, d) \in \bK_{\eff}(\cX, \cL)$, there exists a unique $\lambda(\beta, d) \in G_{\tau_0}$ such that
$$
    v(\beta) = h(d, \lambda(\beta,d)).
$$
We set
$$
    \tk(\beta, d) := \th(d, \lambda(\beta,d)).
$$

Let $x$ be a formal variable. The \emph{B-model disk function} of $(\cX, \cL, f)$ \cite{FL13, FLT12} is defined as follows.

\begin{definition}\label{def:WOuter}\rm{
For any $\tk \in \mu_{\fa\fm}$, define
$$
    W^{\cX, (\cL, f)}_{\tk}(q, x) = \sum_{\substack{(\beta, d) \in \bK_{\eff}(\cX, \cL)\\ \tk = \tk(\beta,d)}} q^\beta x^d \frac{(-1)^{\floor{dw_3-\epsilon_3}+ \ceil{\frac{d}{\fa}}}}{\fm d (\inner{D_1, \beta} + dw_0)! \prod_{i = 4}^R \inner{D_i, \beta}!} \cdot \frac{\prod_{m = 1}^\infty (-\inner{D_3, \beta} - dw_3 - m)}{\prod_{m = 0}^\infty (\inner{D_2, \beta} + dw_2 - m)},
$$
which takes value in $\bC$.
}\end{definition}



\begin{remark}\label{rem:WSign} \rm{
We note that when $f \in \bZ$, i.e. $\fa=1$, the sign convention of $W^{\cX, (\cL, f)}(q, x)$ above differs from that in \cite{FLT12}, yet agrees with that in \cite{FL13} in the smooth case. This is to be consistent with our sign convention of the disk invariants. See Remark \ref{rem:DiskSignOuter}.
}\end{remark}

\begin{remark}\label{rem:BmodelGrouping}
\rm{
Similar to Remark \ref{rem:AmodelGrouping}, we note that in \cite{FLT12}, the B-model disk functions are formed by grouping classes $(\beta, d)$ according to $\lambda(\beta, d)$. This is consistent with Definition \ref{def:WOuter} in the case $f \in \bZ$, i.e. $\fa=1$, where $\th(d,\lambda) = \lambda$ for all $d, \lambda$.
}
\end{remark}

\subsubsection{Equivariant $I$-function of $\tcX$}
Let $\tq_0$ be a formal variable. The $\tT'$-equivariant \emph{$I$-function} of $\tcX$ is defined as
\begin{align*}
    I_{\tcX}^{\tT'}&(\tq_0, \tq, z) :=  e^{\frac{1}{z}\left(\tq_0 + \sum_{a \in \{1, \dots, R'-3, R-2\}}\tu_a \log \tq_a \right)} \\
    & \cdot \sum_{\tbeta \in \bK_{\eff}(\tcX)} \tq^{\tbeta} \prod_{i \in \{1, \dots, R', R+1, R+2\}} \frac{\prod_{m = \ceil{\inner{\tD_i, \tbeta}}}^\infty (\tcD_i^{\tT'} + (\inner{\tD_i, \tbeta} - m)z)}{\prod_{m = 0}^\infty (\tcD_i^{\tT'} + (\inner{\tD_i, \tbeta} - m)z)}
    \cdot \prod_{i = R'+1}^R \frac{\prod_{m = \ceil{\inner{\tD_i, \tbeta}}}^\infty (\inner{\tD_i, \tbeta} - m)z}{\prod_{m = 0}^\infty (\inner{\tD_i, \tbeta} - m)z} \one_{\tv(\tbeta)}\\
    & = e^{\frac{1}{z}\left(\tq_0 + \sum_{a \in \{1, \dots, R'-3, R-2\}}\tu_a \log \tq_a \right)} \\
    & \cdot \sum_{\tbeta \in \bK_{\eff}(\tcX)} \tq^{\tbeta} \prod_{i \in \{1, \dots, R', R+1, R+2\}} \frac{\prod_{m = \ceil{\inner{\tD_i, \tbeta}}}^\infty (\frac{\tcD_i^{\tT'}}{z} + \inner{\tD_i, \tbeta} - m)}{\prod_{m = 0}^\infty (\frac{\tcD_i^{\tT'}}{z} + \inner{\tD_i, \tbeta} - m)}
    \cdot \prod_{i = R'+1}^R \frac{\prod_{m = \ceil{\inner{\tD_i, \tbeta}}}^\infty (\inner{\tD_i, \tbeta} - m)}{\prod_{m = 0}^\infty (\inner{\tD_i, \tbeta} - m)} \frac{\one_{\tv(\tbeta)}}{z^{\age(\tv(\tbeta))}},
\end{align*}
which takes value in $H^*_{\CR, \tT'}(\tcX; \bQ)$. Here the second equality follows from that $\tcX$ is Calabi-Yau, and thus $\sum_{i=1}^{R+2} \tD_i = 0$. We set
$$
    I_{\tcX}^{\tT'}(\tq, z) := I_{\tcX}^{\tT'}(0, \tq, z).
$$

\subsubsection{B-model correspondence}
We now establish the following B-model version of the open/closed correspondence, which says that the disk function $W^{\cX, (\cL, f)}(q, x)$ can be retrieved from the $\tT'$-equivariant $I$-function of $\tcX$.

\begin{theorem}\label{thm:IPairing}
For any $\tk \in \mu_{\fa\fm}$,  
$$
W^{\cX, (\cL, f)}_{\tk}(q, x) = [z^{-2}] \left(I_{\tcX}^{\tT'}(\tq, z), \tgamma_{\tk} \right)_{\tcX}^{\tT'} \bigg|_{\su_4 = 0, \su_2 - f\su_1 = 0}
$$
under the relation $\tq_a = q_a$ for $a = 1, \dots, R-3$ and $\tq_{R-2} = x$.
\end{theorem}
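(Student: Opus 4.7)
The plan is to perform a direct hypergeometric comparison of the two sides. First I would compute the pairing $\bigl(I_{\tcX}^{\tT'}(\tq, z), \tgamma_\lambda\bigr)_{\tcX}^{\tT'}$ by equivariant localization on $\cI\tcX$. Writing $\inv^*(\tgamma_\lambda)$ as a class supported on the $\tcX_\lambda$ sector (for $\lambda \neq 1$, concentrated at the image of $\fp_{\tsi_0}$ in $\tcX_\lambda$; for $\lambda = 1$, a class on $\tcX$ concentrated at $\fp_{\tsi_0}$ since the three divisors $\tcD_2^{\tT'}, \tcD_3^{\tT'}, \tcD_{R+1}^{\tT'}$ correspond to rays in $I'_{\tsi_0}$), the pairing isolates the component of $I_{\tcX}^{\tT'}$ in the sector labeled by $\lambda$ and evaluates it (together with the remaining factors of $\tgamma_\lambda$) at $\fp_{\tsi_0}$ via Atiyah--Bott. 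For $\lambda = 1$, the denominator $\tfrac{f}{\fm}\su_1 - \tfrac{1}{\fm}\su_2 - \su_4$ is precisely the tangent weight $\tbw(\delta_3(\tsi_0), \tsi_0)$ at $\fp_{\tsi_0}$, so after the restriction $\su_4 = 0$, $\su_2 = f\su_1$ the answer remains finite; for $\lambda \neq 1$ the age-two class $\one_{\lambda^{-1}}$ plays the analogous role of absorbing the extra tangent weight.

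Next I would determine which $\tbeta \in \bK_{\eff}(\tcX)$ contribute. The condition $\tv(\tbeta) = \lambda \in G_{\tsi_0}$ forces $\tbeta \in \bK_{\tsi_0}$ and a congruence modulo $\tbL$. Setting $d := \inner{\tD_{R+1}, \tbeta}$ and letting $\beta \in \bL_{\bQ}$ be determined by $\inner{H_a, \beta} = \inner{\tH_a, \tbeta}$ for $a = 1, \dots, R-3$, I would establish a bijection between these $\tbeta$ and pairs $(\beta, d) \in \bK_{\eff}(\cX, \cL)$ with $v(\beta) = h(d, \lambda)$. Under the identification $\tq_a = q_a$, $\tq_{R-2} = x$, and $\tH_{R-2} = \tD_{R+1}$, the monomial $\tq^{\tbeta}$ becomes $q^\beta x^d$, matching the indexing of $W^{\cX, \cL, f}_\lambda$ in Definition \ref{def:WOuter}.

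The remaining step is to match the hypergeometric coefficients. For fixed $\tbeta$ corresponding to $(\beta, d)$, I would compute $\iota_{\tsi_0}^*(\tcD_i^{\tT'})$ using the tangent $\tT'$-weights at $\fp_{\tsi_0}$ recorded in Section \ref{sect:ClosedOuter} and express $\inner{\tD_i, \tbeta}$ in terms of $\inner{D_i, \beta}$ and appropriate shifts of $d$ (with $\inner{\tD_i, \tbeta} = \inner{D_i, \beta}$ for $i = 4, \dots, R$ by Observation \ref{obs:ClassInL} and its refinements). After restricting to $\su_4 = 0, \su_2 = f\su_1$ and extracting $[z^{-2}]$, the products indexed by $i \in \{2, 3, R+1, R+2\}$ in the $I$-function cancel with the numerator of $\tgamma_\lambda$ and with three of the four tangent weights at $\fp_{\tsi_0}$; the product for $i = 1$ collapses to $1/(\inner{D_1, \beta} + dw_0)!$ using the Calabi-Yau condition $\sum_i \tD_i = 0$ together with the weight specialization; and the products for $i = 4, \dots, R$ yield $\prod_{i=4}^R \inner{D_i, \beta}!$. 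The $z$-coefficient extraction is governed by the age shift $z^{-\age(\tv(\tbeta))}$ combined with the prefactor $e^{(\cdots)/z}$, which for an $\age$-two class conspires with the two powers of $z$ in the tangent-weight denominator to pick out precisely the $z^{-2}$ coefficient as the primary (degree-zero) part.

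The main obstacle I expect will be the sign and ceiling/floor bookkeeping: the ratio $\prod_{m = \ceil{\inner{\tD_3, \tbeta}}}^\infty\!\!/\prod_{m=0}^\infty$ at $i = 3$, after cancellation against $\tcD_3^{\tT'}$ and the tangent weight, produces a finite product of negative shifts that must be reconciled with the factor $(-1)^{\floor{\inner{D_3, \beta} + dw_3}+d}$ and the infinite product $\prod_{m=1}^\infty(-\inner{D_3, \beta} - dw_3 - m)$ in $W^{\cX, \cL, f}_\lambda$. This will require an identity of the form $\prod_{m = \ceil{a}}^\infty (a - m) = (-1)^{\ceil{a}} (\text{finite}) \cdot \prod_{m=1}^\infty(-a - m + \{a\})$ carefully applied at the fractional shifts $w_2, w_3$ and validated against the constraint $v(\beta) = h(d, \lambda)$ (which controls the fractional parts of $\inner{\tD_i, \tbeta}$). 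The secondary technical point is to verify that the contributions from $\tsi \neq \tsi_0$ in the localization vanish after the weight restriction, which should follow from the vanishing of $\iota_{\tsi}^*(\tcD_{R+1}^{\tT'})$ (or of $\one_{\lambda^{-1}}$'s support) for such $\tsi$.
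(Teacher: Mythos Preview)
Your strategy is essentially the paper's: localize the Poincar\'e pairing at $\fp_{\tsi_0}$ (since $\iota_{\tsi}^*(\tgamma_\lambda)=0$ for $\tsi\neq\tsi_0$), set up a bijection between the contributing $\tbeta$ and $\bK_{\eff}(\cX,\cL)$ via $d=\inner{\tD_{R+1},\tbeta}$, and match the hypergeometric factors term by term. The paper does exactly this in Lemma~\ref{lem:BModelContribute}, and the sign/floor bookkeeping you flag is handled there by computing the $i\in\{2,3\}$ and $i\in\{R+1,R+2\}$ products separately after $\su_4=0$.

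There is one genuine gap. Your bijection targets $\bK_{\eff}(\cX,\cL)$, which by definition requires $d\neq 0$. But classes $\tbeta\in\bK_{\eff,\tsi_0}\cap\bL_{\bQ}$ (equivalently $d=\inner{\tD_{R+1},\tbeta}=0$) do satisfy $\tv(\tbeta)\in\Box(\tau_0)$ and can pair nontrivially with $\tgamma_\lambda$; they are simply missed by your bijection. Their $[z^{-2}]$ contribution does \emph{not} vanish after $\su_4=0$ alone: the paper computes it explicitly (Lemma~\ref{lem:BModelLVanish}) and finds it is a nonzero multiple of $\su_2-f\su_1$, so the second weight restriction is essential. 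You need this separate vanishing argument; nothing in your outline covers it.

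Two smaller points. First, the condition ``$\tv(\tbeta)\in\Box(\tau_0)$ forces $\tbeta\in\bK_{\tsi_0}$'' is correct, but $\bK_{\tsi_0}\neq\bK_{\eff,\tsi_0}$: a class $\tbeta\in\bK_{\eff}(\tcX)\setminus\bK_{\eff,\tsi_0}$ with $\tv(\tbeta)\in\Box(\tau_0)$ requires the divisor-factor argument of Lemma~\ref{lem:BModelVanish} (some $\tcD_i^{\tT'}$ with $i\in I_{\tsi_0}$ appears in $I_{\tbeta}$ and restricts to zero at $\tsi_0$); your ``secondary technical point'' is about the pairing localization over fixed points, which is a different issue. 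Second, your account of the exponential prefactor is muddled: by Convention~\ref{conv:LiftOuter} one has $\iota_{\tsi_0}^*(\tu_a)=0$ for all $a$, so the prefactor restricts to $1$ at $\fp_{\tsi_0}$ and plays no role in the $z^{-2}$ extraction.
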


Note that Theorem \ref{thm:IPairing} is the B-model analog of Theorem \ref{thm:JPairing}. See Section \ref{sect:Mirror} below for additional discussions in the context of mirror symmetry.

We will prove Theorem \ref{thm:IPairing} in two steps. First, in Lemma \ref{lem:BModelContribute}, we identify $W^{\cX, (\cL, f)}_{\tk}(q, x)$ with the
pairing of $\tgamma_{\tk}$ with the terms in $[z^{-2}] I_{\tcX}^{\tT'}(\tq, z)$ corresponding to classes in
$$
    \bK_{\eff, \tsi_0} \setminus \bL_{\bQ}.
$$
Second, in Lemmas \ref{lem:BModelVanish} and \ref{lem:BModelLVanish}, we show that terms corresponding to classes outside $\bK_{\eff, \tsi_0} \setminus \bL_{\bQ}$ do not contribute after pairing with $\tgamma_{\tk}$ and taking weight restrictions.

\subsection{Proof of Theorem \ref{thm:IPairing}}

Recall that 
$\iota_{\tsi}^*(\tgamma_{\tk}) = 0$ for any $\tsi \in \tSi(4)$, $\tsi \neq \tsi_0$. For each $\tbeta$, we set
$$
    I_{\tbeta}(\tq, z) := \tq^{\tbeta} \prod_{i \in \{1, \dots, R', R+1, R+2\}} \frac{\prod_{m = \ceil{\inner{\tD_i, \tbeta}}}^\infty (\frac{\tcD_i^{\tT'}}{z} + \inner{\tD_i, \tbeta} - m)}{\prod_{m = 0}^\infty (\frac{\tcD_i^{\tT'}}{z} + \inner{\tD_i, \tbeta} - m)}
    \cdot \prod_{i = R'+1}^R \frac{\prod_{m = \ceil{\inner{\tD_i, \tbeta}}}^\infty (\inner{\tD_i, \tbeta} - m)}{\prod_{m = 0}^\infty (\inner{\tD_i, \tbeta} - m)} \frac{\one_{\tv(\tbeta)}}{z^{\age(\tv(\tbeta))}}.
$$
That is,
$$
    I_{\tcX}^{\tT'}(\tq, z) :=  e^{\frac{1}{z}\left(\sum_{a \in \{1, \dots, R'-3, R-2\}}\tu_a \log \tq_a \right)} \cdot \sum_{\tbeta \in \bK_{\eff}(\tcX)} I_{\tbeta}(\tq, z).
$$

\begin{lemma}\label{lem:BModelContribute}
For any $\tk \in \mu_{\fa\fm}$, 
$$
    W^{\cX, (\cL, f)}_{\tk}(q, x) = [z^{-2}] \left(\sum_{\tbeta \in \bK_{\eff, \tsi_0} \setminus \bL_{\bQ}} I_{\tbeta}(\tq, z), \tgamma_{\tk} \right)_{\tcX}^{\tT'} \bigg|_{\su_4 = 0, \su_2 - f\su_1 = 0}
$$
under the relation $\tq_a = q_a$ for $a = 1, \dots, R-3$ and $\tq_{R-2} = x$.
\end{lemma}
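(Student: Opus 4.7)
The plan is to reduce the pairing $(I_{\tbeta}(\tq, z), \tgamma_\lambda)_{\tcX}^{\tT'}$ for $\tbeta \in \bK_{\eff, \tsi_0} \setminus \bL_{\bQ}$ to $\tT'$-localization at the fixed point $\fp_{\tsi_0}$, then match the resulting expression term-by-term with the disk function $W^{\cX, \cL, f}_\lambda(q, x)$ in Definition \ref{def:WOuter}. The reduction to $\fp_{\tsi_0}$ is immediate from the support of $\tgamma_\lambda$, which vanishes when restricted to any $\tsi \in \tSi(4) \setminus \{\tsi_0\}$. The first substantive step is to set up a bijection between classes $\tbeta \in \bK_{\eff, \tsi_0} \setminus \bL_{\bQ}$ (having twisted sector $\tv(\tbeta)$ compatible with $\lambda$, meaning $\tv(\tbeta) = 0$ for $\lambda = 1$ and $\tv(\tbeta) = \lambda$ for $\lambda \neq 1$) and pairs $(\beta, d) \in \bK_{\eff}(\cX, \cL)$ satisfying $v(\beta) = h(d, \lambda)$. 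Under this bijection, $d = \inner{\tD_{R+1}, \tbeta} > 0$ records the open-sector winding, and $\beta \in \bL_{\bQ}$ is the $\bL_{\bQ}$-part of $\tbeta$ (recovered since $\tD_{R+1}$ generates $\tbL^\vee/\bL^\vee$ up to torsion); effectiveness and compatibility with $\Box(\tsi_0)$ should correspond directly to the defining conditions of $\bK_{\eff}(\cX,\cL)$ and the monodromy constraint.

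Next, I would explicitly simplify $\iota_{\tsi_0, \tv(\tbeta)}^* I_\tbeta(\tq, z)$ using the restrictions $\iota_{\tsi_0}^*(\tcD_i^{\tT'})$ computed from the tangent $\tT'$-weights in Section \ref{sect:ClosedOuter}: the classes $\tcD_i^{\tT'}$ for $i \in I_{\tsi_0} = \{1, 4, \dots, R\}$ restrict to zero, while those for $i \in \{2, 3, R+1, R+2\}$ restrict to explicit $\tT'$-weights. The hypergeometric factors then split into two families: for $i \in I_{\tsi_0}$, the factor becomes a rational combination of $(\inner{\tD_i, \tbeta} - m)z$ that produces the factorials $(\inner{D_1, \beta} + dw_0)!$ and $\prod_{i=4}^{R} \inner{D_i, \beta}!$ in the denominator of $W^{\cX,\cL,f}_\lambda$; for $i \in \{2,3,R+1, R+2\}$, the factor keeps an explicit $\tcD_i^{\tT'}/z$ dependence, which after taking the Poincar\'e pairing with $\tgamma_\lambda$ (containing either $\tcD_2^{\tT'}\tcD_3^{\tT'}\tcD_{R+1}^{\tT'}$ for $\lambda=1$, or $\tcD_{R+1}^{\tT'}$ for $\lambda \neq 1$) contributes the product $\frac{\prod_{m=1}^\infty(-\inner{D_3,\beta} - dw_3 - m)}{\prod_{m=0}^\infty(\inner{D_2, \beta} + dw_2 - m)}$.

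After this simplification, the coefficient $[z^{-2}]$ should isolate exactly the correct term, because the $z$-powers coming from the denominator of the pairing (through $e_{\tT'}$ at $\fp_{\tsi_0}$), from the hypergeometric factors, and from $\tgamma_\lambda$ itself should combine to give a total $z$-shift such that precisely one negative-index factor survives at order $z^{-2}$. For $\lambda = 1$, the potential $\su_4$ poles from the denominator $\frac{f}{\fm}\su_1 - \frac{1}{\fm}\su_2 - \su_4$ in $\tgamma_1$ should be canceled by a matching factor in $\iota_{\tsi_0}^*(I_\tbeta)$ coming from the $i = R+2$ term, making the restriction $\su_4 = 0$ well-defined and producing the same open $\tT'$-weight data $w_0, w_2, w_3$ from \eqref{eqn:Sigma0WtsOuter} that appear in $W^{\cX,\cL,f}_\lambda$. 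The final step is to verify the overall prefactor $\frac{(-1)^{\floor{\inner{D_3,\beta}+dw_3}+d}}{\fm d}$ matches, with the sign arising from the product of negative-index terms and the $\fm d$ from the order of $G_{\tsi_0}$ combined with the $\inner{\tD_{R+1}, \tbeta}$ factor.

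The main obstacle will be the combinatorial bookkeeping: tracking all $z$-powers, signs, and ceiling/floor conventions as the two groups of hypergeometric factors are rewritten. In particular, the sign $(-1)^{\floor{\inner{D_3,\beta} + dw_3} + d}$ must emerge from the careful reduction of $\prod_{m = \ceil{\inner{\tD_3, \tbeta}}}^{-1}(\inner{\tD_3, \tbeta} - m)$-type terms (when $\inner{\tD_3, \tbeta} < 0$) together with the sign from $\tcD_{R+1}^{\tT'}$-restriction, and checking this agrees with the disk factor sign convention in \eqref{eqn:DiskFactor} (cf. Remarks \ref{rem:DiskSignOuter} and \ref{rem:WSign}) will require care. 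I expect the identification of $\ceil{\inner{\tD_i, \tbeta}}$ in the $I$-function with the integer parts appearing in $\epsilon_2, \epsilon_3$ from \eqref{eqn:Epsilon23Outer} to be the most delicate book-keeping.
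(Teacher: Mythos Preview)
Your outline follows essentially the same route as the paper's proof: set up the bijection $\bK_{\eff,\tsi_0}\setminus\bL_{\bQ}\leftrightarrow\bK_{\eff}(\cX,\cL)$ via $d=\inner{\tD_{R+1},\tbeta}$, localize the pairing at $\fp_{\tsi_0}$, split the hypergeometric factors into those with $i\in I_{\tsi_0}=\{1,4,\dots,R\}$ (producing the factorials) and $i\in\{2,3,R+1,R+2\}$ (producing the remaining ratio and signs), and extract the $z^{-2}$ coefficient. Two small corrections to your description: the twisted-sector compatibility should read that $\lambda$ corresponds to $-\tv(\tbeta)$ rather than $\tv(\tbeta)$, consistent with the factor $\one_{\lambda^{-1}}$ in $\tgamma_\lambda$ and the involution in the Poincar\'e pairing; and the denominator $\tfrac{f}{\fm}\su_1-\tfrac{1}{\fm}\su_2-\su_4$ in $\tgamma_1$ is exactly $\iota_{\tsi_0}^*(\tcD_3^{\tT'})$, so it cancels against the $\tcD_3^{\tT'}$ in the numerator of $\tgamma_1$ upon restriction to $\fp_{\tsi_0}$---it is not an $\su_4$-pole and does not require the $i=R+2$ factor to cancel it.
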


See Notation \ref{not:ZCoefficient} for the notation $[z^{-2}]$.

\begin{proof}
We first set up a one-to-one correspondence between $\bK_{\eff, \tsi_0} \setminus \bL_{\bQ}$ and $\bK_{\eff}(\cX, \cL)$. Let
\begin{equation}\label{eqn:lZero}
    l^{(0)} := \left(w_0, w_2, w_3, 0, \dots, 0, \frac{1}{\fa}, -\frac{1}{\fa}\right) \in \bQ^{R+2},
\end{equation}
which satisfies $\talpha(l^{(0)}) = 0$ (see \eqref{eqn:tXSES}) and does not belong to the span of $\{\tl^{(1)}, \dots, \tl^{(R-3)}\}$. We write
$$
    \tl^{(R-2)} = \fa\tl^{(R-2)}_{R+1} l^{(0)} + c_1 \tl^{(1)} + \cdots + c_{R-3} \tl^{(R-3)},
$$
where $\tl^{(R-2)}_{R+1}$ is the $(R+1)$-th component of $\tl^{(R-2)}$ and $c_1, \dots, c_{R-3} \in \bQ$. Then, we define a map
\begin{equation}\label{eqn:EffClassCorr}
    \tbL_{\bQ} \to \bL_{\bQ} \times \bQ,
\end{equation}
$$
    \tbeta = (\beta_1, \dots, \beta_{R-2}) \mapsto (\beta, d) = ((\beta_1 + \beta_{R-2}c_1, \dots, \beta_{R-3} + \beta_{R-2}c_{R-3}), \beta_{R-2}\fa\tl^{(R-2)}_{R+1}).
$$
Note that
\begin{equation}\label{eqn:dAsPairing}
    d = \inner{\fa\tD_{R+1}, \tbeta}.
\end{equation}
If $\tbeta \in \bK_{\eff, \tsi_0} \setminus \bL_{\bQ}$, we have $d \neq 0$, and $\inner{\tD_i, \tbeta} \in \bZ_{\ge 0}$ for all $i \in I_{\tsi_0} = \{1, 4, \dots, R\}$. Then \eqref{eqn:DRPlusOneLinComb} gives
$$
    d = \inner{\fa\tD_{R+1}, \tbeta} = \inner{m_1\tD_1 + \sum_{i=4}^R m_i \tD_i, \tbeta} \in \bZ_{\ge 0}.
$$
This verifies that the image of any class in $\bK_{\eff, \tsi_0} \setminus \bL_{\bQ}$ satisfies $d \in \bZ_{\neq 0}$. Now denote $l^{(0)} = (l^{(0)}_1, \dots, l^{(0)}_{R+2})$. It is straightforward to check that for any $i = 1, \dots, R$,
\begin{equation}\label{eqn:PairingCorr}
    \inner{\tD_i, \tbeta} = \inner{D_i, \beta} + dl^{(0)}_i.
\end{equation}
Therefore, \eqref{eqn:EffClassCorr} induces a bijection $\bK_{\eff, \tsi_0} \setminus \bL_{\bQ} \to \bK_{\eff}(\cX, \cL)$.  Moreover, from the definitions, we verify that for corresponding classes $\tbeta$ and $(\beta, d)$,
\begin{equation}\label{eqn:CompatibleTwist}
    -\tv(\tbeta) = \tk(\beta, d).
\end{equation}

From now on, we fix corresponding classes $\tbeta$ and $(\beta, d)$ under the above bijection such that both sides of \eqref{eqn:CompatibleTwist} are $\tk$. Otherwise $I_{\tbeta}(\tq, z)$ pairs to zero with $\tgamma_{\tk}$ and $(\beta, d)$ does not contribute to $W^{\cX, (\cL, f)}_{\tk}(q, x)$.
Note that for each $a = 1, \dots, R-3$, $H_a$ can be written as a linear combination of $D_4, \dots, D_R$ and $\tH_a$ can be written as a linear combination of $\tD_4, \dots, \tD_R$ with the same coefficients (Convention \ref{conv:LiftOuter}). Then \eqref{eqn:PairingCorr} and $l^{(0)}_4 = \cdots = l^{(0)}_R = 0$ imply that
$$
    \inner{\tH_a, \tbeta} = \inner{H_a, \beta}.
$$
This combined with \eqref{eqn:dAsPairing} gives that
$$
    \tq^{\tbeta} = q^\beta x^d
$$
under $\tq_a = q_a$ for $a = 1, \dots, R-3$ and $\tq_{R-2} = x$.

Now, we compute that
\begin{align*}
    \iota_{\tsi_0}^*(I_{\tbeta}(\tq, z)) &= \frac{\tq^{\tbeta}}{\prod_{i \in I_{\tsi_0}}\inner{\tD_i, \tbeta}!} \cdot
    \prod_{i \in \{2, 3, R+1, R+2\}} \frac{\prod_{m = \ceil{\inner{\tD_i, \tbeta}}}^\infty (\frac{\iota_{\tsi_0}^*(\tcD_i^{\tT'})}{z} + \inner{\tD_i, \tbeta} - m)}{\prod_{m = 0}^\infty (\frac{\iota_{\tsi_0}^*(\tcD_i^{\tT'})}{z} + \inner{\tD_i, \tbeta} - m)} \frac{\one_{\tv(\tbeta)}}{z^{\age(\tv(\tbeta))}}\\
    &= \frac{\tq^{\tbeta}}{(\inner{D_1, \beta} + dw_0)! \prod_{i = 4}^R \inner{D_i, \beta}!} \cdot
    \prod_{i \in \{2, 3, R+1, R+2\}} \frac{\prod_{m = \ceil{\inner{\tD_i, \tbeta}}}^\infty (\frac{\iota_{\tsi_0}^*(\tcD_i^{\tT'})}{z} + \inner{\tD_i, \tbeta} - m)}{\prod_{m = 0}^\infty (\frac{\iota_{\tsi_0}^*(\tcD_i^{\tT'})}{z} + \inner{\tD_i, \tbeta} - m)} \frac{\one_{\tv(\tbeta)}}{z^{\age(\tv(\tbeta))}}
\end{align*}
where the second equality follows from \eqref{eqn:PairingCorr}. Recall that
$$
    \iota_{\tsi_0}^*(\tcD_2^{\tT'}) = -\frac{f}{\fm}\su_1 + \frac{1}{\fm} \su_2, \quad \iota_{\tsi_0}^*(\tcD_3^{\tT'}) =  \frac{f}{\fm}\su_1 - \frac{1}{\fm} \su_2 - \su_4, \quad \iota_{\tsi_0}^*(\tcD_{R+1}^{\tT'}) = -\frac{1}{\fa}\su_1, \quad \iota_{\tsi_0}^*(\tcD_{R+2}^{\tT'}) = \frac{1}{\fa}\su_1 + \su_4.
$$
For $i = R+1, R+2$, since $\inner{\tD_{R+1}, \tbeta} = -\inner{\tD_{R+2}, \tbeta} = \frac{d}{\fa} > 0$, we have
\begin{align*}
    & \frac{\prod_{m = \ceil{\inner{\tD_{R+1}, \tbeta}}}^\infty (\frac{\iota_{\tsi_0}^*(\tcD_{R+1}^{\tT'})}{z} + \inner{\tD_{R+1}, \tbeta} - m)}{\prod_{m = 0}^\infty (\frac{\iota_{\tsi_0}^*(\tcD_{R+1}^{\tT'})}{z} + \inner{\tD_{R+1}, \tbeta} - m)} \cdot \frac{\prod_{m = \ceil{\inner{\tD_{R+2}, \tbeta}}}^\infty (\frac{\iota_{\tsi_0}^*(\tcD_{R+2}^{\tT'})}{z} + \inner{\tD_{R+2}, \tbeta} - m)}{\prod_{m = 0}^\infty (\frac{\iota_{\tsi_0}^*(\tcD_{R+2}^{\tT'})}{z} + \inner{\tD_{R+2}, \tbeta} - m)} \bigg|_{\su_4 = 0}\\
    & = \frac{\prod_{m = 1}^{\floor{\frac{d}{\fa}}} (\frac{\su_1}{\fa z} - \frac{d}{\fa} + m)}{\prod_{m = 0}^{\ceil{\frac{d}{\fa}}-1} (\frac{-\su_1}{\fa z} + \frac{d}{\fa} -m)}\\ 
    &  = \begin{cases}
        \displaystyle{(-1)^{\frac{d}{\fa}-1}\frac{\su_1}{\fa z} \left(-\frac{\su_1}{\fa z}+\frac{d}{\fa}\right)^{-1} }& \text{if } \fa \mid d,\\
        \displaystyle{ (-1)^{\ceil{\frac{d}{\fa}}-1} \left(-\frac{\su_1}{\fa z}+\frac{d}{\fa}\right)^{-1} }& \text{if } \fa \nmid d.
    \end{cases}
\end{align*}
For $i = 2,3$, if $\tk=1$ in which case $\inner{\tD_2, \tbeta}, \inner{\tD_3, \tbeta} \in \bZ$, we have
\begin{align*}
    & \frac{\prod_{m = \ceil{\inner{\tD_2, \tbeta}}}^\infty (\frac{\iota_{\tsi_0}^*(\tcD_2^{\tT'})}{z} + \inner{\tD_2, \tbeta} - m)}{\prod_{m = 0}^\infty (\frac{\iota_{\tsi_0}^*(\tcD_2^{\tT'})}{z} + \inner{\tD_2, \tbeta} - m)} \cdot \frac{\prod_{m = \ceil{\inner{\tD_3, \tbeta}}}^\infty (\frac{\iota_{\tsi_0}^*(\tcD_3^{\tT'})}{z} + \inner{\tD_3, \tbeta} - m)}{\prod_{m = 0}^\infty (\frac{\iota_{\tsi_0}^*(\tcD_3^{\tT'})}{z} + \inner{\tD_3, \tbeta} - m)} \bigg|_{\su_4 = 0}\\
    & = (-1)^{\inner{\tD_3, \tbeta}-1} \frac{f\su_2 - \su_1}{\fm z} \cdot \frac{\prod_{m = 1}^\infty (\frac{\su_2-f\su_1}{\fm z} - \inner{\tD_3, \tbeta} - m)}{\prod_{m = 0}^\infty (\frac{\su_2-f\su_1}{\fm z} + \inner{\tD_2, \tbeta} - m)}\\
    & = (-1)^{\inner{D_3, \beta} + dw_3-1}  \frac{f\su_2 - \su_1}{\fm z} \cdot \frac{\prod_{m = 1}^\infty (\frac{\su_2-f\su_1}{\fm z} - \inner{D_3, \beta} - dw_3 - m)}{\prod_{m = 0}^\infty (\frac{\su_2-f\su_1}{\fm z} + \inner{D_2, \beta} + dw_2 - m)},
\end{align*}
where the last equality follows from \eqref{eqn:PairingCorr}. If otherwise $\tk \neq 1$, we have
\begin{align*}
    & \frac{\prod_{m = \ceil{\inner{\tD_2, \tbeta}}}^\infty (\frac{\iota_{\tsi_0}^*(\tcD_2^{\tT'})}{z} + \inner{\tD_2, \tbeta} - m)}{\prod_{m = 0}^\infty (\frac{\iota_{\tsi_0}^*(\tcD_2^{\tT'})}{z} + \inner{\tD_2, \tbeta} - m)} \cdot \frac{\prod_{m = \ceil{\inner{\tD_3, \tbeta}}}^\infty (\frac{\iota_{\tsi_0}^*(\tcD_3^{\tT'})}{z} + \inner{\tD_3, \tbeta} - m)}{\prod_{m = 0}^\infty (\frac{\iota_{\tsi_0}^*(\tcD_3^{\tT'})}{z} + \inner{\tD_3, \tbeta} - m)} \bigg|_{\su_4 = 0}\\
    & = (-1)^{\floor{\inner{D_3, \beta} + dw_3}-1} \frac{\prod_{m = 1}^\infty (\frac{\su_2-f\su_1}{\fm z} - \inner{D_3, \beta} - dw_3 - m)}{\prod_{m = 0}^\infty (\frac{\su_2-f\su_1}{\fm z} + \inner{D_2, \beta} + dw_2 - m)}.
\end{align*}

Summarizing the above computations, we see that under $\tq_a = q_a$ for $a = 1, \dots, R-3$ and $\tq_{R-2} = x$, the coefficient of  $z^{-2}$ in $\iota_{\tsi_0}^*(I_{\tbeta}(\tq, z)) \big|_{\su_4 = 0}$ is as follows: If $\tk=1$, it is
$$
    q^{\beta}x^d \frac{\su_1 (f\su_1 - \su_2)}{\fm} \cdot \frac{(-1)^{\inner{D_3, \beta} + dw_3+ \frac{d}{\fa}}}{d(\inner{D_1, \beta} + dw_0)! \prod_{i = 4}^R \inner{D_i, \beta}!} \cdot \frac{\prod_{m = 1}^\infty (\frac{\su_2-f\su_1}{\fm z} - \inner{D_3, \beta} - dw_3 - m)}{\prod_{m = 0}^\infty (\frac{\su_2-f\su_1}{\fm z} + \inner{D_2, \beta} + dw_2 - m)}.
$$
If $\age(\tk)=1$, it is
$$
    q^{\beta}x^d \su_1 \one_{\tk^{-1}} \frac{(-1)^{\floor{\inner{D_3, \beta} + dw_3}+ \ceil{\frac{d}{\fa}}}}{d(\inner{D_1, \beta} + dw_0)! \prod_{i = 4}^R \inner{D_i, \beta}!} \cdot \frac{\prod_{m = 1}^\infty (\frac{\su_2-f\su_1}{\fm z} - \inner{D_3, \beta} - dw_3 - m)}{\prod_{m = 0}^\infty (\frac{\su_2-f\su_1}{\fm z} + \inner{D_2, \beta} + dw_2 - m)}.
$$
If $\age(\tk)=2$, it is
$$
    q^{\beta}x^d \fa\one_{\tk^{-1}} \frac{(-1)^{\floor{\inner{D_3, \beta} + dw_3}+ \ceil{\frac{d}{\fa}}}}{d(\inner{D_1, \beta} + dw_0)! \prod_{i = 4}^R \inner{D_i, \beta}!} \cdot \frac{\prod_{m = 1}^\infty (\frac{\su_2-f\su_1}{\fm z} - \inner{D_3, \beta} - dw_3 - m)}{\prod_{m = 0}^\infty (\frac{\su_2-f\su_1}{\fm z} + \inner{D_2, \beta} + dw_2 - m)}.
$$
Therefore,
$$
[z^{-2}]    \left(I_{\tbeta}(\tq, z), \tgamma_{\tk} \right)_{\tcX}^{\tT'} \bigg|_{\su_4 = 0, \su_2 - f\su_1 = 0}
$$
yields the term in $W^{\cX, (\cL, f)}_{\tk}(q, x)$ that corresponds to $(\beta, d)$. The lemma thus follows as we sum over all classes $\tbeta \in \bK_{\eff, \tsi_0} \setminus \bL_{\bQ}$ with $-\tv(\tbeta) = \tk$.
\end{proof}

\begin{lemma}\label{lem:BModelVanish}
For any $\tbeta \in \bK_{\eff}(\tcX) \setminus \bK_{\eff, \tsi_0}$ and any $\tk \in G_{\tsi_0}$,  
\begin{equation}\label{eqn:BModelVanish}
    \left(I_{\tbeta}(\tq, z), \tgamma_{\tk}\right)_{\tcX}^{\tT'} = 0.
\end{equation}
\end{lemma}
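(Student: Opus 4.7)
The plan is to reduce the pairing $(I_\tbeta, \tgamma_\lambda)^{\tT'}_{\tcX}$ to a single $\tT'$-equivariant localization contribution at the torus-fixed point $\fp_{\tsi_0}$ and then to show that the restriction of $I_\tbeta$ there vanishes whenever $\tbeta$ fails to be effective for $\tsi_0$.

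The first step is to observe that $\tgamma_\lambda$ is supported at the unique $\tT'$-fixed point $\fp_{\tsi_0}$, i.e. $\iota_{\tsi}^*(\tgamma_\lambda) = 0$ for every $\tsi \in \tSi(4)$ with $\tsi \neq \tsi_0$. This was already noted and used in the proof of Lemma \ref{lem:NumericalOuterVanish}, and follows from inspecting \eqref{eqn:ExtraInsertionOuter}: the factor $\tcD_{R+1}^{\tT'}$ kills any fixed point $\fp_{\tsi}$ for which $\trho_{R+1}$ is not a ray of $\tsi$, and among the remaining candidate $4$-cones (which have the form $\{i_2, i_3, R+1, R+2\}$ with $\{i_2, i_3\} \in \Sigma(2)\setminus \Sigma(2)_c$), only $\tsi_0$ has $\lambda \in \Box(\tsi)$ when $\lambda \neq 1$ and has both $\trho_2$ and $\trho_3$ as rays when $\lambda = 1$. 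Consequently, $\tT'$-equivariant localization on $\cI\tcX$ collapses the pairing to
$$
    (I_\tbeta, \tgamma_\lambda)^{\tT'}_{\tcX} = \frac{\iota_{\tsi_0}^*(I_\tbeta) \cdot \iota_{\tsi_0}^*(\inv^*\tgamma_\lambda)}{e_{\tT'}\bigl(T_{(\fp_{\tsi_0},\lambda)}\tcX_\lambda\bigr)}.
$$

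Next, since $I_\tbeta$ is supported in the inertia component $\tcX_{\tv(\tbeta)}$, the numerator vanishes by support unless $\tv(\tbeta) = \lambda$. I would then argue that $\tv(\tbeta) = \lambda \in \Box(\tsi_0)$ forces $\tbeta \in \bK_{\tsi_0}$: the map $\tv$ is invariant under translation by $\tbL$, and a matching of fractional parts of the quantities $\inner{\tD_i, \tbeta}$ shows that any two classes in $\bK(\tcX)$ with the same $\tv$-value differ by an element of $\tbL$; combined with the bijection $\bK_{\tsi_0}/\tbL \to \Box(\tsi_0)$, this yields $\tbeta \in \bK_{\tsi_0}$. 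I expect this step to require the most careful bookkeeping of the lattice decomposition.

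Finally, assuming $\tbeta \in \bK_{\tsi_0} \setminus \bK_{\eff,\tsi_0}$, the condition $\tbeta \notin \tNE(\tsi_0)$ together with $\tNef(\tsi_0) = \sum_{i \in I_{\tsi_0}} \bR_{\geq 0}\tD_i$ produces some index $i_0 \in I_{\tsi_0}$ with $\inner{\tD_{i_0}, \tbeta} \in \bZ_{<0}$. After standard cancellation, the factor of $I_\tbeta$ indexed by $i_0$ takes the form $x(x-1)\cdots(x + \inner{\tD_{i_0}, \tbeta} + 1)$ with $x = \tcD_{i_0}^{\tT'}/z$ (or $x = 0$ if $i_0 > R'$). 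But $\trho_{i_0}$ is not a ray of $\tsi_0$, so $\iota_{\tsi_0}^*(\tcD_{i_0}^{\tT'}) = 0$ and the leading factor $x$ vanishes after restriction to $\fp_{\tsi_0}$. Hence $\iota_{\tsi_0}^*(I_\tbeta) = 0$ and the pairing is zero, which completes the plan.
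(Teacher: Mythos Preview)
Your proposal is correct and follows the same three-step skeleton as the paper's proof: localize the pairing at $\fp_{\tsi_0}$ (using that $\iota_{\tsi}^*\tgamma_\lambda = 0$ for $\tsi\neq\tsi_0$), eliminate the sector-mismatch case, and then exhibit a vanishing divisor factor $\tcD_{i_0}^{\tT'}$ with $i_0\in I_{\tsi_0}$.

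The only substantive divergence is in how you handle the middle step. You argue abstractly that $\tv(\tbeta)\in\Box(\tsi_0)$ forces $\tbeta\in\bK_{\tsi_0}$ via a ``matching of fractional parts'' principle; this is valid, since for simplicial cones $\Box(\tsi)\cap\Box(\tsi_0)=\Box(\tsi\cap\tsi_0)$, so the unique box representation of $\tv(\tbeta)$ within $\tsi$ already has $c_i(\tbeta)=0$ for all $i\in I_{\tsi_0}$. The paper instead performs a direct case split on the cone $\tsi$ with $\tbeta\in\bK_{\eff,\tsi}$: when $\tsi=\iota(\sigma_0)$ it invokes the explicit relation \eqref{eqn:DRPlusOneLinComb} to deduce $\inner{\tD_1,\tbeta}\in\bZ_{<0}$ and hence the factor $\tcD_1^{\tT'}$ appears; when $\tsi\neq\iota(\sigma_0),\tsi_0$ and $\tv(\tbeta)\neq\vzero$ it observes that $\tv(\tbeta)$ cannot lie in $\Box(\tau_0)$, so the sectors already fail to match; and when $\tv(\tbeta)=\vzero$ it uses $\tbeta\in\tbL$ to reduce to the same divisor-factor argument. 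Your packaging is cleaner and more uniform; the paper's case analysis is more concrete and sidesteps precisely the lattice bookkeeping you flagged as the delicate point.
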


\begin{proof}
By definition, there exists $\tsi \in \tSi(4)$, $\tsi \neq \tsi_0$ such that $\tbeta \in \bK_{\eff, \tsi}$. We first consider the case $\tsi = \iota(\sigma_0)$. In this case, $\inner{\tD_i, \tbeta} \in \bZ_{\ge 0}$ for any $i \in I_{\iota(\sigma_0)} = \{4, \dots, R, R+1\}$. By \eqref{eqn:DRPlusOneLinComb}, we have
$$
    \inner{\tD_1, \tbeta} = \inner{\fa\tD_{R+1} - \sum_{i = 4}^R m_i\tD_i, \tbeta} \in \bZ.
$$
We cannot have $\inner{\tD_1, \tbeta} \in \bZ_{\ge 0}$ as that would imply $\tbeta \in \bK_{\eff, \tsi_0}$. Thus $\inner{\tD_1, \tbeta} \in \bZ_{<0}$. This implies that $I_{\tbeta}(\tq, z)$ contains $\tcD_1^{\tT'}$ as a factor. Then \eqref{eqn:BModelVanish} holds since $\iota_{\tsi_0}^*(\tcD_1^{\tT'}) = 0$.

Now we consider the case $\tsi \neq \iota(\sigma_0)$. In this case, if $\tv(\tbeta) \neq \vzero$, then $\tv(\tbeta)$ is a non-trivial element in $\Box(\tsi)$ and thus cannot represent any element of $\Box(\tsi_0)$. However, $\tgamma_{\tk}$ belongs to a sector corresponding to an element in $\Box(\tsi_0)$. Thus \eqref{eqn:BModelVanish} holds. Now suppose on the other hand that $\tv(\tbeta) = \vzero$, which means that $\tbeta \in \tbL$ and $\inner{\tD_i, \tbeta} \in \bZ$ for all $i = 1, \dots, R+2$. Since $\tbeta \not \in \bK_{\eff, \tsi_0}$, we have in particular that $\tbeta \not \in \tNE(\tsi_0)$, i.e. there exists $i \in I_{\tsi_0} = \{1, 4, \dots, R\}$ such that $\inner{\tD_i, \tbeta} < 0$. This implies that $I_{\tbeta}(\tq, z)$ contains $\tcD_i^{\tT'}$ as a factor. Then \eqref{eqn:BModelVanish} holds since $\iota_{\tsi_0}^*(\tcD_i^{\tT'}) = 0$.
\end{proof}

\begin{lemma}\label{lem:BModelLVanish}
For any $\tbeta \in \bK_{\eff}(\tcX) \cap \bL_{\bQ}$ and any $\tk \in G_{\tsi_0}$, we have
\begin{equation}\label{eqn:BModelLVanish}
[z^{-2}]    \left(I_{\tbeta}(\tq, z), \tgamma_{\tk} \right)_{\tcX}^{\tT'} \bigg|_{\su_4 = 0, \su_2 - f\su_1 = 0} = 0.
\end{equation}
\end{lemma}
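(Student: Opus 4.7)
My plan is to compute the pairing $(I_{\tbeta}, \tgamma_\lambda)^{\tT'}_{\tcX}$ via $\tT'$-equivariant localization on $\cI \tcX$ and then analyze the $z^{-2}$ coefficient at the restriction $\su_4 = 0, \su_2 - f\su_1 = 0$. The key structural input is Observation \ref{obs:ClassInL}: for $\tbeta \in \bL_{\bQ}$ the pairings $\inner{\tD_{R+1}, \tbeta}$ and $\inner{\tD_{R+2}, \tbeta}$ both vanish, so the factors of $I_{\tbeta}$ indexed by $i = R+1, R+2$ reduce to $1$, and $I_{\tbeta}$ depends only on the divisor classes $\tcD_1^{\tT'}, \ldots, \tcD_{R'}^{\tT'}$ and on $\one_{v(\beta)}$, where $\beta \in \bL_{\bQ}$ is the image of $\tbeta$ and $v(\beta) \in \Box(\cX) \subseteq \Box(\tcX)$. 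Morally, such $\tbeta$ correspond under the bijection of Lemma \ref{lem:BModelContribute} to the excluded ``$d = 0$'' disk classes, so the B-model pairing must be shown to vanish.

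The analysis splits naturally on $\lambda$. For $\lambda = 1$, the class $\tgamma_1$ lies in the untwisted sector, and because $\tcD_2^{\tT'}\tcD_3^{\tT'}\tcD_{R+1}^{\tT'}$ is supported on the 1-dimensional stratum $\cV(\delta_4(\tsi_0))$ whose unique $\tT$-fixed point is $\fp_{\tsi_0}$, the only contribution to the pairing comes from $(\fp_{\tsi_0}, \vzero)$; non-vanishing of the sector pairing forces $v(\beta) = \vzero$, and a direct computation using the tangent weights at $\fp_{\tsi_0}$ reduces the contribution to $\iota_{\tsi_0}^*(I_{\tbeta})/(\eta_3(\su_1+\su_4))$, where $\eta_3 = \iota_{\tsi_0}^*(\tcD_3^{\tT'}) = \tfrac{f}{\fm}\su_1 - \tfrac{1}{\fm}\su_2 - \su_4$. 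For $\lambda \neq 1$, $\tgamma_\lambda = \tcD_{R+1}^{\tT'}\one_{\lambda^{-1}}$ forces $v(\beta) = \lambda$ and concentrates contributions at $(\fp_{\tsi}, \lambda)$ for 4-cones $\tsi$ satisfying $R+1 \in I_{\tsi}'$ and $\lambda \in G_{\delta_0(\tsi)}$; this case admits a parallel fixed-point analysis.

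At each relevant fixed point, only the restrictions $\iota_{\tsi}^*(\tcD_i^{\tT'})$ for $i \in I_{\tsi}' \cap \{1, \ldots, R'\}$ are non-vanishing, and under the substitution $\su_4 = 0, \su_2 - f\su_1 = 0$ these further collapse (at $\tsi_0$, both $\eta_2$ and $\eta_3$ vanish; at other $\tsi \in \tSi(4) \setminus \iota(\Sigma(3))$ the analogous weights vanish by the linear algebra of the fan encoding the brane position). The $z^{-2}$ coefficient of $\iota_{\tsi}^*(I_{\tbeta})$ is a polynomial of degree at most $2 - \age(v(\beta))$ in these vanishing weights, which combined with the remaining localization factor produces an expression evaluating to $0$ at the restriction.

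The main obstacle will be showing that no $\eta_2^2/\eta_3$-type singularity survives at $\fp_{\tsi_0}$ in the $\lambda = 1$ case: expanding $F_2(\eta_2, z)F_3(\eta_3, z)$ in powers of $z^{-1}$, the $\eta_2^2$-coefficient of $[z^{-2}]$ equals the coefficient of $(\eta_2/z)^2$ in $F_2$ times the constant term of $F_3$ at $\eta_3 = 0$, and this must be shown to vanish for every allowed $\tbeta$. I expect to rule out the problematic configurations by combining the integrality and non-negativity conditions implied by $\tbeta \in \bK_{\eff, \tsi}$, the Calabi-Yau relation $\sum_{i=1}^R \inner{D_i, \beta} = 0$, and the outer-brane constraint $m_i \ge 0$ (which forces the polytope $P$ and its extended lattice points to lie in the half-space $m \ge 0$); together these either cause some factor $F_i|_{\fp_{\tsi_0}}$ to vanish outright (when $\inner{D_i, \beta} < 0$ for some $i$ whose divisor does not pass through $\fp_{\tsi_0}$) or ensure that $F_2$ and $F_3$ cannot simultaneously contribute a non-zero $(\eta_2/z)^2$-coefficient and a non-zero constant term. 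The analogous combinatorial check is needed at each fixed point in the $\lambda \neq 1$ case.
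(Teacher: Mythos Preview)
Your localization setup is correct, but you have misidentified the main obstacle. The ``$\eta_2^2/\eta_3$-type singularity'' is not something that needs to be ruled out: it is exactly what occurs, and it resolves automatically. The identity you are missing is
\[
\iota_{\tsi_0}^*(\tcD_2^{\tT'}) + \iota_{\tsi_0}^*(\tcD_3^{\tT'}) = \eta_2 + \eta_3 = -\su_4,
\]
so that at $\su_4 = 0$ one has $\eta_3 = -\eta_2 = -\tfrac{1}{\fm}(\su_2 - f\su_1)$. Hence $\eta_2^2/\eta_3\big|_{\su_4=0} = -\eta_2$, and more generally any monomial $\eta_2^a\eta_3^b$ with $a+b \geq 1$, once divided by the single $\eta_3$ coming from $\tgamma_1$, restricts at $\su_4=0$ to a multiple of $(\su_2 - f\su_1)^{a+b-1}\cdot(\su_2-f\su_1) = (\su_2-f\su_1)^{a+b}$ up to sign and harmless factors, and so vanishes at the full restriction. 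Your planned combinatorial argument to force the $\eta_2^2$-coefficient to vanish cannot succeed in general: if some $\tbeta \in \bL$ has $\inner{D_2, \tbeta} \le -2$ and $\inner{D_i, \tbeta} \in \bZ_{\geq 0}$ for all $i \neq 2$, then $F_2$ contributes an honest $(\tcD_2^{\tT'}/z)^2$-term while every other $F_i$ has nonzero constant term at $\fp_{\tsi_0}$, so no factor ``vanishes outright'' and the $\eta_2^2$-coefficient is nonzero.

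The paper's argument exploits the relation $\eta_2 + \eta_3 = -\su_4$ directly and is much shorter. Since $\inner{\tD_{R+1}, \tbeta} = \inner{\tD_{R+2}, \tbeta} = 0$, the class $[z^{-2}]I_{\tbeta}$ is (up to the scalar $\tq^{\tbeta}$) a $\bQ$-linear combination of monomials $\tcD_i^{\tT'}\tcD_{i'}^{\tT'}$ (if $\tv(\tbeta) = \vzero$) or $\tcD_i^{\tT'}\one_{\tv(\tbeta)}$ (if $\tv(\tbeta) \neq \vzero$) with $i, i' \in \{1,\dots,R'\}$. One then computes the pairing of each such monomial with $\tgamma_\lambda$: only $i, i' \in \{2,3\}$ survive at $\fp_{\tsi_0}$, and each such pairing evaluates at $\su_4 = 0$ to $\pm\tfrac{\su_2 - f\su_1}{\fm^2\su_1}$, which vanishes at $\su_2 = f\su_1$. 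No case analysis on $\tbeta$ is required, and the same mechanism (with $\tbw(\delta_2(\tsi),\tsi) + \tbw(\delta_3(\tsi),\tsi) = -\su_4$) handles any additional fixed points in the $\lambda \neq 1$ case.
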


\begin{proof}
Note from Observation \ref{obs:ClassInL} that $\inner{\tD_{R+1}, \tbeta} = \inner{\tD_{R+2}, \tbeta} = 0$ and $\age(\tv(\tbeta)) \le 1$, which implies that $[z^{-2}] I_{\tbeta}(\tq, z)$ is either $0$ or has form
$$
    c \tq^{\tbeta} \cdot \begin{cases}
        \tcD_{i}^{\tT'} \tcD_{i'}^{\tT'} & \text{if } \tv(\tbeta) = \vzero,\\
        \tcD_{i}^{\tT'} \one_{\tv(\tbeta)} & \text{if } \tv(\tbeta) \neq \vzero
    \end{cases}
$$
for some $i, i' \in \{1, \dots, R'\}$ and $c \in \bQ$, $c \neq 0$. The pullback of the above term to $\tsi_0$ is zero unless $i, i' \in \{2,3\}$ and $\tv(\tbeta) \in \Box(\tau_0)$.
Moreover, \eqref{eqn:BModelLVanish} is zero unless $\tk$ is contained in $G_{\tau_0}$ and corresponds to $-\tv(\beta)\in \Box(\tau_0)$. In the remainder of this proof we assume that $i, i' \in \{2,3\}$ and $\tv(\beta) \in \Box(\tau_0)$ and $\tk \in G_{\tau_0}$ is the element corresponding to $-\tv(\beta)$. 

If $\tv(\tbeta) = \vzero$, then
$$
    \left(\tcD_{i}^{\tT'} \tcD_{i'}^{\tT'} , \tgamma_{\tk} \right)_{\tcX}^{\tT'} \bigg|_{\su_4 = 0} = \left(\tcD_{i}^{\tT'} \tcD_{i'}^{\tT'} , \frac{\tcD_2^{\tT'}\tcD_3^{\tT'}\tcD_{R+1}^{\tT'}}{\frac{f}{\fm}\su_1 - \frac{1}{\fm} \su_2 - \su_4}\right)_{\tcX}^{\tT'} \bigg|_{\su_4 = 0} =  \frac{\iota_{\tsi_0}^*(\tcD_i^{\tT'}\tcD_{i'}^{\tT'})}{\fa\fm\left(\frac{f}{\fm}\su_1 - \frac{1}{\fm} \su_2 - \su_4 \right)\iota_{\tsi_0}^*(\tcD_{R+2}^{\tT'})} \bigg|_{\su_4 = 0} = \pm \frac{\su_2 - f\su_1}{\fa\fm^2 \su_1},
$$
which further restricts to $0$ under $\su_2 -f\su_1 = 0$. If on the other hand $\tv(\tbeta) \neq \vzero$ then
$$
    \left(\tcD_{i}^{\tT'} \one_{\tv(\tbeta)}, \tgamma_{\tk} \right)_{\tcX}^{\tT'} \bigg|_{\su_4 = 0} = \left(\tcD_{i}^{\tT'} \one_{\tv(\tbeta)}, \tcD_{R+1}^{\tT'}\one_{\lambda^{-1}}  \right)_{\tcX}^{\tT'} \bigg|_{\su_4 = 0} =  \frac{ \iota_{\tsi_0}^*(\tcD_i^{\tT'})}{\fa\fm\iota_{\tsi_0}^*(\tcD_{R+2}^{\tT'})} \bigg|_{\su_4 = 0} = \pm  \frac{\su_2 - f\su_1}{\fa\fm^2 \su_1},
$$
which again restricts to $0$ under $\su_2 -f\su_1 = 0$. Therefore \eqref{eqn:BModelLVanish} holds.
\end{proof}

\begin{proof}[Proof of Theorem \ref{thm:IPairing}]
By our choice of $\tT'$-equivariant lifts in Convention \ref{conv:LiftOuter}, we have $\iota_{\tsi_0}^*(\tu_a) = 0$ for all $a = 1, \dots, R-2$, and thus
$$
    \iota_{\tsi_0}^* \left( e^{\frac{1}{z}\left(\sum_{a \in \{1, \dots, R'-3, R-2\}}\tu_a \log \tq_a \right)} \right) = 1.
$$
Therefore, 
$$
    \left(I_{\tcX}^{\tT'}(\tq, z), \tgamma_{\tk} \right)_{\tcX}^{\tT'} = \left(\sum_{\tbeta \in \bK_{\eff}(\tcX)} I_{\tbeta}(\tq, z), \tgamma_{\tk} \right)_{\tcX}^{\tT'}.
$$
We can then conclude by Lemmas \ref{lem:BModelContribute}, \ref{lem:BModelVanish}, and \ref{lem:BModelLVanish}.
\end{proof}

\subsection{Toric mirror symmetry}\label{sect:Mirror}
In this section and the next, we situate our A- and B-model open/closed correspondences in the context of mirror symmetry. Recall the web of relations in Figure \ref{fig:Web}, where the vertical arrows in Figure \ref{fig:Web} are our open/closed correspondences.

For the bottom arrow, the \emph{mirror theorem} of \cite{Givental98, CCK15, CCIT15} relates the $\tT'$-equivariant $J$- and $I$-functions of $\tcX$ in the following way.

\begin{theorem}[\cite{Givental98, CCK15, CCIT15}] \label{thm:Mirror}
We have
$$
    e^{\frac{1}{z}\ttau_0(\tq_0, \tq)}J_{\tcX}^{\tT'}(\tbtau_2(\tq), z) = I_{\tcX}^{\tT'}(\tq_0, \tq, z),
$$
where the $\tT'$-equivariant \emph{closed mirror map} $\ttau_0 = \ttau_0(\tq_0, \tq)$, $\tbtau_2 = \tbtau_2(\tq)$ is determined by the first-order term in the expansion of the $I$-function in powers of $z^{-1}$:
$$
    I_{\tcX}^{\tT'}(\tq_0, \tq, z) = 1 + z^{-1}\left(\ttau_0(\tq_0, \tq) + \tbtau_2(\tq) \right) + o(z^{-1})
$$
where terms in $o(z^{-1})$ involves $z^{-k}$ for some $k \ge 2$.
\end{theorem}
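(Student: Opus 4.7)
The proof will be essentially a direct application of the toric mirror theorem of Coates--Corti--Iritani--Tseng \cite{CCIT15}, in the $\tT'$-equivariant setting and with respect to the extended stacky fan framework of Jiang \cite{Jiang08}. The plan is to verify that our setup matches the hypotheses and that our definition of the $I$-function coincides with theirs, and then to read off the closed mirror map from the asymptotic expansion of $I_{\tcX}^{\tT'}$.

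First, I would verify the geometric hypotheses. By construction in Section \ref{sect:construction}, $\tcX$ is a smooth toric Deligne-Mumford stack with trivial generic stabilizer and semi-projective coarse moduli space $\tX$. The $\tT'$-action is the restriction of the Deligne-Mumford torus action, so $\tcX^{\tT'}$ consists of isolated fixed points indexed by $\tSi(4)$, and $\tT'$-equivariant localization applies to $\Mbar_{0,n}(\tcX,\tbeta)$. The semi-projectivity ensures that the non-equivariant small quantum product and the $J$-function are well-defined as formal power series in $\tQ, \tbtau_2''$, and that the evaluation maps in the definition of the quantum product are proper. These are precisely the hypotheses under which the equivariant mirror theorem of \cite{CCIT15} (see also \cite{CCK15}) applies.

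Second, I would match the $I$-function. The formula written in Section \ref{sec:I} is the extended $I$-function associated to the extended stacky fan $\tbSi = (\tN, \tSi, \talpha)$: the product over $i \in \{1,\dots, R', R+1, R+2\}$ runs over the rays $\tSi(1)$ and produces the equivariant hypergeometric factors involving the divisor classes $\tcD_i^{\tT'}$; the product over $i \in \{R'+1,\dots,R\}$ runs over the additional lattice points $\tb_i$ and produces the Jiang-type factors that generate the twisted sectors $\one_{\tv(\tbeta)}$; and the sum over $\tbeta \in \bK_{\eff}(\tcX)$ together with the bijection $\bK_{\tsi}/\tbL \to \Box(\tsi)$ reproduces the Chen-Ruan-valued formula. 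This matches verbatim the formula of \cite[Theorem~31]{CCIT15} applied to the extended stacky fan $\tbSi$, so their theorem states that $I_{\tcX}^{\tT'}(\tq_0, \tq, z)$ lies on Givental's $\tT'$-equivariant Lagrangian cone of $\tcX$.

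Third, I would extract the mirror map. Any family of points on the Lagrangian cone is uniquely represented in the form $e^{\tq_0/z} J_{\tcX}^{\tT'}(\tbtau_2, z)$ for a unique pair $(\tq_0, \tbtau_2)$ (this is the characterization of the $J$-function as the canonical slice through the cone, see e.g. Proposition \ref{prop:QDESolution} and its proof in \cite{Givental96, Iritani09}). Expanding
\[
I_{\tcX}^{\tT'}(\tq_0, \tq, z) \;=\; 1 + z^{-1}\bigl(\ttau_0(\tq_0, \tq) + \tbtau_2(\tq)\bigr) + o(z^{-1}),
\]
the coefficient of $z^{-1}$ identifies $\ttau_0(\tq_0,\tq)$ (in $H^0$) and $\tbtau_2(\tq)$ (in $H^2_{\CR, \tT'}$), and then the identity $e^{\ttau_0/z} J_{\tcX}^{\tT'}(\tbtau_2(\tq), z) = I_{\tcX}^{\tT'}(\tq_0, \tq, z)$ follows. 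The main technical point to be careful about is the semi-projective rather than projective setting: while \cite{CCIT15} stated the result in the projective case, the extension to semi-projective toric DM stacks with an appropriate torus action making the fixed locus compact is standard and treated in \cite{CCIT15, CCK15} and \cite{CIJ18}; this is the step I expect to require the most careful citation tracking rather than new argument.
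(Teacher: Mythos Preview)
Your proposal is appropriate, but note that the paper does not prove this theorem at all: it is stated with the attribution \cite{Givental98, CCK15, CCIT15} and is invoked as a known result, with the subsequent text only unpacking the explicit form of the mirror map. Your sketch of verifying the hypotheses and matching the $I$-function to the CCIT formula is exactly the implicit reasoning behind that citation, and goes somewhat beyond what the paper records.
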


We now give an explicit description of the closed mirror map for $\tcX$, starting with the following definitions.
\begin{itemize}
    \item For $i = 1, \dots, R', R+1, R+2$, let
        $$
            \tOmega_i := \{\tbeta \in \bK_{\eff}(\tcX) : \tv(\tbeta) = \vzero, \inner{\tD_i, \tbeta}<0, \inner{\tD_{i'}, \tbeta} \ge 0 \text{ for } i' \in \{1, \dots, R+2\} \setminus \{i\} \}, 
        $$
        and
        $$
            \tA_i(\tq) := \sum_{\tbeta \in \tOmega_i} \tq^{\tbeta} \frac{(-1)^{-\inner{\tD_i, \tbeta}-1}(-\inner{\tD_i, \tbeta}-1)!}{\prod_{i' \in \{1, \dots, R+2\} \setminus \{i\}} \inner{\tD_{i'}, \tbeta}!}.
        $$

    \item For $i = R'+1, \dots, R$, let
        $$
            \tOmega_i := \{\tbeta \in \bK_{\eff}(\tcX): \tv(\tbeta) = \tb_i, \inner{\tD_{i'}, \tbeta} \not \in \bZ_{<0} \text{ for } i' = 1, \dots, R+2\},
        $$
        and
        $$
            \tA_i(\tq) := \sum_{\tbeta \in \tOmega_i} \tq^{\tbeta} \prod_{i' = R'+1}^R \frac{\prod_{m = \ceil{\inner{\tD_{i'}, \tbeta}}}^\infty (\inner{\tD_{i'}, \tbeta} - m)}{\prod_{m = 0}^\infty (\inner{\tD_{i'}, \tbeta} - m)}.
        $$

    \item For $i = 1, \dots, R+2$, we write
        $$
            \tD_i = \sum_{a = 1}^{R-2} \tm_i^{(a)}\tH_a
        $$    
        for $\tm_i^{(a)} \in \bQ$. Moreover, set
        $$
            \tlambda_i: = \iota_{\tsi_0}^*(\tcD_i^{\tT'}),
        $$
        which is zero unless $i \in \{2, 3, R+1, R+2\}$. Since $\iota_{\tsi_0}^*(\tu_a) = 0$ for all $a$, we have
        $$
            \tcD_i^{\tT'} = \sum_{a = 1}^{R-2} \tm_i^{(a)}\tu_a + \tlambda_i.
        $$

    \item For $a = 1, \dots, R'-3, R-2$, let
        $$
            \tS_a(\tq) := \sum_{i \in \{1, \dots, R', R+1, R+2\}} \tm_i^{(a)} \tA_i(\tq).
        $$
\end{itemize}
With the above definitions, the $I$-function of $\tcX$ can be written as
$$
    I_{\tcX}^{\tT'}(\tq_0, \tq, z) = 1 + z^{-1}\left(\tq_0 + \sum_{i=1}^{R+2} \tlambda_i \tA_i(\tq) + \sum_{a \in \{1, \dots, R'-3, R-2\}} \left( \log \tq_a + \tS_a(\tq) \right) \tu_a  + \sum_{i = R'+1}^{R} \tA_i(\tq) \one_{\tb_i} \right) + o(z^{-1}).
$$
As stated in Theorem \ref{thm:Mirror}, the closed mirror map for $\tcX$ is thus given by
\begin{align*}
    \ttau_0(\tq_0, \tq) & = \tq_0 + \sum_{i=1}^{R+2} \tlambda_i \tA_i(\tq), \\
    \ttau_a(\tq) & = \begin{cases}
            \log \tq_a + \tS_a(\tq), & a \in \{1, \dots, R'-3, R-2\},\\
            \tA_{a+3}(\tq), & a = R'-2, \dots, R-3.
        \end{cases} 
\end{align*}

We now make two observations that will simplify the closed mirror map above.

\begin{lemma}\label{lem:tmVanish}
We have
\begin{enumerate}
    \item $(\tm_{R+1}^{(1)}, \dots, \tm_{R+1}^{(R-2)}) = - (\tm_{R+2}^{(1)}, \dots, \tm_{R+2}^{(R-2)}) = (0, \dots, 0, 1)$.

    \item $(\tm_{1}^{(R-2)}, \dots, \tm_{R+2}^{(R-2)}) = l^{(0)} = (w_0, w_2, w_3, 0, \dots, 0, \frac{1}{\fa}, -\frac{1}{\fa})$ (see \eqref{eqn:lZero}).
\end{enumerate}
\end{lemma}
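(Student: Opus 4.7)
For part (1), the dualized Euler-type relations for $\tcX$ give
\[
\sum_{i=1}^{R+2} \inner{\tb_i, u} \tD_i = 0 \quad \text{in } \tbL^\vee_\bQ
\]
for each $u \in \tM$, which follows from the exactness of the top row of \eqref{eqn:tXDualSES}. Applying this to $u = u_4$, where $\inner{\tb_i, u_4}$ vanishes for $i \le R$ and equals $1$ for $i = R+1, R+2$, gives $\tD_{R+1} + \tD_{R+2} = 0$. Together with the defining equality $\tH_{R-2} = \tD_{R+1}$, this immediately yields both asserted equalities in part (1).

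For part (2), I plan to characterize each coefficient $\tm_i^{(R-2)}$ via a dual pairing. Since $\{\tH_1, \dots, \tH_{R-2}\}$ is a basis of $\tbL^\vee_\bQ$, there is a unique element $\xi \in \tbL_\bQ$ with $\inner{\tH_a, \xi} = \delta_{a, R-2}$ for all $a$, and then $\tm_i^{(R-2)} = \inner{\tD_i, \xi}$. Using the inclusion $\tbL \subseteq \bZ^{R+2}$ from \eqref{eqn:tXSES} together with $\tD_i = \tpsi^\vee(e_i^\vee)$, the pairing $\inner{\tD_i, \xi}$ reduces to the $i$-th component of $\xi$ regarded as an element of $\bQ^{R+2}$. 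It therefore suffices to show $\xi = l^{(0)}$, which is equivalent to the $R-2$ equations $\inner{\tH_a, l^{(0)}} = \delta_{a, R-2}$.

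The cases $a = R'-2, \dots, R-2$ follow at once from the definitions $\tH_a = \tD_{3+a}$ and $\tH_{R-2} = \tD_{R+1}$ together with the fact that $l^{(0)}$ has entry $1$ in position $R+1$ and zero entries in positions $4, \dots, R$. For the substantial range $a = 1, \dots, R'-3$, Convention~\ref{conv:LiftOuter} implies that $\tu_a$ lies in the span of $\tcD_1, \dots, \tcD_{R'}$, so $\tH_a$ admits a representative $\tH_a = \sum_{i=1}^{R'} r_{a,i} \tD_i$, and hence
\[
\inner{\tH_a, l^{(0)}} = r_{a,1} w_0 + r_{a,2} w_2 + r_{a,3} w_3.
\]
To show that this combination vanishes, I will write the equivariant lift in the form $\tu_a^{\tT'} = \sum_{i=1}^{R'} r_{a,i} \tcD_i^{\tT'} + c_a$ for some $c_a \in H^2_{\tT'}(\pt;\bQ)$, solve for $c_a$ from the condition $\iota_{\tsi_0}^*(\tu_a^{\tT'}) = 0$ via the weights $\tlambda_2, \tlambda_3$, and then impose $\iota_{\iota(\sigma_0)}^*(\tu_a^{\tT'}) = 0$ via the tangent weights $\tbw_2, \tbw_3$ at $\fp_{\iota(\sigma_0)}$. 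Direct calculation shows that $\tbw_2 - \tlambda_2 = w_2 \su_1$ and $\tbw_3 - \tlambda_3 = w_3 \su_1$, and the resulting consistency condition reduces exactly to $(r_{a,1} w_0 + r_{a,2} w_2 + r_{a,3} w_3) \su_1 = 0$.

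The principal obstacle is the weight-bookkeeping step: at both $\fp_{\iota(\sigma_0)}$ and $\fp_{\tsi_0}$ one must correctly identify which tangent direction contributes which weight, and verify that the $\su_2$ and $\su_4$ coefficients of the consistency relation cancel automatically so that the $\su_1$ coefficient alone yields the desired identity. Once this simplification is in place, the proof concludes.
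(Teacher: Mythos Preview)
Your proof is correct. Part (1) matches the paper's one-line argument. For part (2) you take a genuinely different route from the paper.

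The paper argues as follows: it cites from the proof of Lemma~\ref{lem:BModelContribute} that $\{\tH_1,\dots,\tH_{R-3}\}$ is a $\bQ$-basis for the span of $\{\tD_4,\dots,\tD_R\}$ (a consequence of Convention~\ref{conv:LiftOuter}), which immediately gives $\tm_4^{(R-2)}=\cdots=\tm_R^{(R-2)}=0$; then it uses that the vector $(\tm_i^{(R-2)})_i$ lies in $\ker\talpha=\tbL_\bQ$ together with part (1) to pin down the remaining three entries, since $\tb_1,\tb_2,\tb_3$ are linearly independent and $l^{(0)}$ is the unique element of $\tbL_\bQ$ with the prescribed entries in positions $4,\dots,R+2$.

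Your approach instead dualizes: you identify $(\tm_i^{(R-2)})_i$ with $\tpsi(\xi)$ for the dual basis vector $\xi=\tH_{R-2}^\vee$, and verify $\xi=l^{(0)}$ by checking $\langle\tH_a,l^{(0)}\rangle=\delta_{a,R-2}$ directly. For the substantive range $a=1,\dots,R'-3$ you do not invoke the span-of-$\{\tD_4,\dots,\tD_R\}$ fact at all; instead you extract $r_{a,1}w_0+r_{a,2}w_2+r_{a,3}w_3=0$ from the two equivariant vanishing conditions $\iota_{\tsi_0}^*(\tu_a^{\tT'})=\iota_{\iota(\sigma_0)}^*(\tu_a^{\tT'})=0$ of Convention~\ref{conv:LiftOuter}, via the weight identities $\tbw_2-\tlambda_2=w_2\su_1$ and $\tbw_3-\tlambda_3=w_3\su_1$ (both of which check out using the explicit weights listed in the paper). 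Your ``principal obstacle'' dissolves exactly as you anticipate: the $\su_2$ and $\su_4$ contributions cancel between the two fixed points, leaving only the $\su_1$-coefficient. This argument is more computational but more self-contained, since it derives what it needs from the Convention rather than citing a consequence established elsewhere. In effect your computation also re-proves that the span of $\{\tH_1,\dots,\tH_{R-3}\}$ equals $l^{(0)\perp}=\mathrm{span}\{\tD_4,\dots,\tD_R\}$, which is the key input to the paper's shorter argument.
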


\begin{proof}
(1) follows from our choice that $\tH_{R-2} = \fa\tD_{R+1} = -\fa\tD_{R+2}$. For (2), as we observed in the proof of Lemma \ref{lem:BModelContribute}, $\{\tH_1, \dots, \tH_{R-3}\}$ gives a $\bQ$-basis for the span of $\{\tD_4, \dots, \tD_R\}$, which implies that $\tm_4^{(R-2)} = \dots = \tm_{R}^{(R-2)} = 0$. Then (2) follows from that $\talpha(\tm_{1}^{(R-2)}, \dots, \tm_{R+2}^{(R-2)}) = 0$ (see \eqref{eqn:tXSES}) and $\tm_{R+1}^{(R-2)} = \frac{1}{\fa}$.
\end{proof}

\begin{lemma}\label{lem:RPlusVanish}
We have
$$
    \tOmega_{R+1} = \tOmega_{R+2} = \emptyset, \quad \tA_{R+1}(\tq) = \tA_{R+2}(\tq) = 0.
$$
\end{lemma}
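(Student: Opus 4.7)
The plan is to show that both index sets $\tOmega_{R+1}$ and $\tOmega_{R+2}$ are empty; the vanishing of $\tA_{R+1}(\tq)$ and $\tA_{R+2}(\tq)$ is then immediate from the definitions, since these are sums over empty index sets.

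For any $\tbeta$ appearing in either $\tOmega_{R+1}$ or $\tOmega_{R+2}$, the condition $\tv(\tbeta) = \vzero$ forces $\tbeta \in \tbL$, so every pairing $\inner{\tD_i, \tbeta}$ lies in $\bZ$. I will use two algebraic identities on these pairings. First, Lemma \ref{lem:tmVanish}(1) (or directly the construction of the extended stacky fan) gives $\tD_{R+1} + \tD_{R+2} = 0$ in $\tbL^\vee$, so $\inner{\tD_{R+1}, \tbeta} + \inner{\tD_{R+2}, \tbeta} = 0$. Second, the Calabi-Yau condition for $\tcX$ provides $u_3 \in \tM$ with $\inner{u_3, \tb_i} = 1$ for $i = 1, \dots, R', R+1, R+2$ (and trivially $\inner{u_3, \tb_i} = 1$ for $i = R'+1, \dots, R$), so $\talpha^\vee(u_3) = \sum_{i=1}^{R+2} e_i^\vee$; applying $\tpsi^\vee$ and using exactness of \eqref{eqn:tXDualSES} yields $\sum_{i=1}^{R+2}\tD_i = 0$, and hence $\sum_{i=1}^{R+2}\inner{\tD_i, \tbeta} = 0$. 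Subtracting the first identity from the second gives the key relation
\begin{equation*}
    \sum_{i=1}^R \inner{\tD_i, \tbeta} = 0.
\end{equation*}

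For $\tOmega_{R+1}$, the quickest route is to invoke Lemma \ref{lem:DRPlusOne}: since $\tD_{R+1} \in \tNef(\tcX) \subseteq \tNef(\tsi)$ for every $\tsi \in \tSi(4)$, any effective class $\tbeta \in \bK_\eff(\tcX)$ automatically satisfies $\inner{\tD_{R+1}, \tbeta} \ge 0$, which directly contradicts the defining condition $\inner{\tD_{R+1}, \tbeta} < 0$ of $\tOmega_{R+1}$. Thus $\tOmega_{R+1} = \emptyset$ without any further work.

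For $\tOmega_{R+2}$, the argument is slightly more delicate and is where the Calabi-Yau identity above does the real work. The defining inequalities are $\inner{\tD_i, \tbeta} \ge 0$ for $i = 1, \dots, R, R+1$ and $\inner{\tD_{R+2}, \tbeta} < 0$; combined with $\tD_{R+1} = -\tD_{R+2}$ these force $\inner{\tD_{R+1}, \tbeta} > 0$. On the other hand, the key relation $\sum_{i=1}^R \inner{\tD_i, \tbeta} = 0$ together with $\inner{\tD_i, \tbeta} \ge 0$ for $i = 1, \dots, R$ forces $\inner{\tD_i, \tbeta} = 0$ for all $i = 1, \dots, R$. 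Plugging this into the expansion $\tD_{R+1} = \sum_{i=1}^R m_i \tD_i$ from \eqref{eqn:DRPlusOneLinComb} yields $\inner{\tD_{R+1}, \tbeta} = 0$, contradicting the strict positivity just derived. Hence $\tOmega_{R+2} = \emptyset$ as well. The only point requiring care is keeping track of the fact that $\tbeta$ is a priori only in $\tbL_\bQ$, but the condition $\tv(\tbeta) = \vzero$ immediately upgrades this to $\tbeta \in \tbL$ so the pairings are integers and the sign manipulations above are unambiguous.
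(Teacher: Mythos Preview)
Your proof is correct and rests on the same core identity as the paper's, namely $\sum_{i=1}^{R}\tD_i=0$ (equivalently $\sum_{i=1}^{R}\inner{\tD_i,\tbeta}=0$). The paper's argument is slightly more uniform: from this identity together with $\inner{\tD_i,\tbeta}\ge 0$ for $i=1,\dots,R$ (which holds for elements of both $\tOmega_{R+1}$ and $\tOmega_{R+2}$), and the fact that $\{\tD_1,\dots,\tD_R\}$ contains a $\bQ$-basis of $\tbL_\bQ^\vee$, one concludes $\tbeta=0$, contradicting the strict negativity at index $R+1$ or $R+2$ in one stroke. You instead handle the two cases asymmetrically, invoking Lemma~\ref{lem:DRPlusOne} for $\tOmega_{R+1}$ and the linear-combination formula \eqref{eqn:DRPlusOneLinComb} for $\tOmega_{R+2}$; both detours are valid, just slightly longer than the paper's direct route via $\tbeta=0$.
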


\begin{proof}
Note that $\tD_1 + \cdots + \tD_R = 0$ and $\{\tD_1, \dots, \tD_R\}$ contains a $\bQ$-basis for $\tbL_{\bQ}^\vee$. Then, any $\tbeta \in \tbL_{\bQ}$ that satisfies $\inner{\tD_i, \tbeta} \ge 0$ for all $i = 1, \dots, R$ must be zero. This implies that $\tOmega_{R+1} = \tOmega_{R+2} = \emptyset$.
\end{proof}

With Lemmas \ref{lem:tmVanish} and \ref{lem:RPlusVanish}, we simplify the mirror map as follows:
\begin{equation}\label{eqn:ClosedMirrorMap}
\begin{aligned}
    \ttau_0(\tq_0, \tq) & = \tq_0 + \tlambda_2 \tA_2(\tq) + \tlambda_3 \tA_3(\tq)\\
        & = \tq_0 + \left( -\frac{f}{\fm}\su_1 + \frac{1}{\fm} \su_2 \right) \tA_2(\tq) + \left( \frac{f}{\fm}\su_1 - \frac{1}{\fm} \su_2 - \su_4 \right) \tA_3(\tq),\\
    \ttau_a(\tq) & = \log \tq_a + \tS_a(\tq) = \log \tq_a + \sum_{i =1}^{R'} \tm_i^{(a)} \tA_i(\tq), \quad a = 1, \dots, R'-3,\\
    \ttau_a(\tq) & = \tA_{a+3}(\tq), \quad a = R'-2, \dots, R-3,\\
    \ttau_{R-2}(\tq) &= \log \tq_{R-2} + w_0 \tA_1(\tq) + w_2 \tA_2(\tq) + w_3 \tA_3(\tq).
\end{aligned}
\end{equation}

Now we prove Lemma \ref{lem:ClosedWellDefined} that the closed Gromov-Witten invariants under consideration are defined.

\begin{proof}[Proof of Lemma \ref{lem:ClosedWellDefined}]
By Lemma \ref{lem:JPairing} and Theorem \ref{thm:Mirror}, we have
$$
    \llangle \tgamma_{\lambda} \rrangle^{\tcX,\tT'}(\tbtau_2) = [z^{-2}] (J_{\tcX}^{\tT'}(\tbtau_2,z), \tgamma_{\lambda})^{\tT'}_{\tcX} = [z^{-2}] \left(e^{-\frac{1}{z}\ttau_0(0, \tq)}I_{\tcX}^{\tT'}(\tq, z), \tgamma_{\tk} \right)_{\tcX}^{\tT'}
$$
under the mirror map \eqref{eqn:ClosedMirrorMap}. Note that
$$
    \ttau_0(0, \tq) \big|_{\su_4 = 0} =  \frac{\su_2 - f\su_1}{\fm}(-\tA_2(\tq) + \tA_3(\tq)).
$$
By a similar vanishing argument as in the proof of Lemma \ref{lem:BModelLVanish}, we have
\begin{equation}\label{eqn:NoTau0}
   [z^{-2}] \left(e^{-\frac{1}{z}\ttau_0(0, \tq)}I_{\tcX}^{\tT'}(\tq, z), \tgamma_{\tk} \right)_{\tcX}^{\tT'} \bigg|_{\su_4 = 0, \su_2 - f\su_1 = 0} 
   = [z^{-2}]\left(I_{\tcX}^{\tT'}(\tq, z)
, \tgamma_{\tk} \right)_{\tcX}^{\tT'} \bigg|_{\su_4 = 0, \su_2 - f\su_1 = 0},
\end{equation}
which is identified by Theorem \ref{thm:IPairing} with $W^{\cX, (\cL, f)}_{\tk}(q, x)$ under the relation $q_a = \tq_a$ for $a = 1, \dots, R-3$ and $x = \tq_{R-2}$. By definition (Definition \ref{def:WOuter}), $W^{\cX, (\cL, f)}_{\tk}(q, x)$ is a power series in $q, x$ with $\bQ$-coefficients. Therefore, considering $[z^{-2}] \left(e^{-\frac{1}{z}\ttau_0(0, \tq)}I_{\tcX}^{\tT'}(\tq, z), \tgamma_{\tk} \right)_{\tcX}^{\tT'}$ as a power series in $\tq$ with coefficients in $\cQ_{\tT'}$, we see that none of such coefficients have a pole along $\su_4 = 0, \su_2 - f\su_1 = 0$. Since the mirror map $\tbtau_2 = \tbtau_2(\tq)$ \eqref{eqn:ClosedMirrorMap} is non-equivariant, if we consider the generating function $\llangle \tgamma_{\lambda} \rrangle^{\tcX,\tT'}(\tbtau_2)$ as a power series in $\tQ,\tbtau_2''$ with coefficients in $\cQ_{\tT'}^{\bC}$ (Definition \ref{def:Correlator}), then none of such coefficients have a pole along $\su_4 = 0, \su_2 - f\su_1 = 0$ either. The lemma thus follows.
\end{proof}

\subsection{Open mirror symmetry and compatibility}
The mirror theorem of \cite{Givental98, CCK15, CCIT15} also applies to the Calabi-Yau $3$-orbifold $\cX$, which relates the ($T'$-equivariant) $J$- and $I$-functions of $\cX$ via the closed mirror map $\btau_2 = \btau_2(q)$. As conjectured by \cite{AV00} and proven by \cite{FL13, FLT12}, such mirror symmetry can be extended to the open sector in the sense that the A- and B-model disk functions can be identified via the closed mirror map and an additional \emph{open} mirror map $\sX = \sX(q,x)$. This gives the top arrow in Figure \ref{fig:Web}.

\begin{theorem}[\cite{FL13, FLT12}]\label{thm:OpenMirror}
For all $\tk \in \mu_{\fa\fm}$, 
$$
F^{\cX, (\cL, f)}_{\tk}(\btau_2(q), \sX(q,x)) = W^{\cX, (\cL, f)}_{\tk}(q, x) 
$$ 
under the open-closed mirror map $\btau_2 = \btau_2(q)$, $\sX = \sX(q,x)$, given in \eqref{eqn:OpenMirrorMap} below.
\end{theorem}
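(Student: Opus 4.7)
The strategy is to obtain Theorem \ref{thm:OpenMirror} as a formal consequence of the commutativity of the diagram in Figure \ref{fig:Web}. All three of the other sides of the square have already been addressed: the bottom arrow is the closed mirror theorem (Theorem \ref{thm:Mirror}) for $\tcX$, the left arrow is the A-model open/closed correspondence (Theorem \ref{thm:JPairing}), and the right arrow is the B-model open/closed correspondence (Theorem \ref{thm:IPairing}). The open mirror theorem is therefore forced, provided we can check that (i) the closed mirror map for $\tcX$, when restricted appropriately, reproduces the expected open-closed mirror map for $(\cX,\cL,f)$, and (ii) the factor $e^{\ttau_0(0,\tq)/z}$ appearing in the closed mirror theorem contributes trivially under the relevant pairing and weight restriction.

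Concretely, starting from Theorem \ref{thm:IPairing}, writing $\tq_a = q_a$ for $a = 1,\dots,R-3$ and $\tq_{R-2} = x$, and then substituting the closed mirror theorem $I_{\tcX}^{\tT'}(\tq,z) = e^{\ttau_0(0,\tq)/z} J_{\tcX}^{\tT'}(\tbtau_2(\tq),z)$, we obtain
\begin{equation*}
W^{\cX,\cL,f}_\lambda(q,x) \;=\; [z^{-2}]\!\left(e^{\ttau_0(0,\tq)/z} J_{\tcX}^{\tT'}(\tbtau_2(\tq),z),\,\tgamma_\lambda\right)^{\tT'}_{\tcX}\!\bigg|_{\su_4=0,\,\su_2-f\su_1=0}.
\end{equation*}
By the computation already carried out in \eqref{eqn:NoTau0} (reusing the vanishing arguments of Lemma \ref{lem:BModelLVanish}), the exponential factor may be dropped after the weight restriction, so the right-hand side equals $[z^{-2}](J_{\tcX}^{\tT'}(\tbtau_2(\tq),z),\tgamma_\lambda)^{\tT'}_{\tcX}|_{\ldots}$. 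An application of Theorem \ref{thm:JPairing} then identifies this with $F^{\cX,\cL,f}_\lambda(\btau_2,\sX)$, provided the variables $(\btau_2,\sX)$ entering the A-model disk potential are matched to $(\tbtau_2(\tq))$ via the specified dictionary $\ttau_a = \tau_a$ for $a = 1,\dots,R-3$ and $\ttau_{R-2} = \log \sX$.

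This reduces the theorem to the identification of mirror maps, Proposition \ref{prop:MirrorMapCorr}: the components $\ttau_a(\tq)$, $a=1,\dots,R-3$, of the closed mirror map \eqref{eqn:ClosedMirrorMap} for $\tcX$ should coincide with the components $\tau_a(q)$ of the closed mirror map for $\cX$, while $\ttau_{R-2}(\tq)$ should produce the logarithm of the open mirror map $\sX(q,x)$. At the level of the hypergeometric series $\tA_i(\tq)$ appearing in \eqref{eqn:ClosedMirrorMap}, this amounts to the combinatorial observation that the summation set $\tOmega_i$ restricts, upon imposing $\tq_{R-2} = x$, to classes $\tbeta \in \bK_{\eff}(\tcX) \cap \bL_{\bQ}$ together with classes contributing only to the open variable. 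Under the bijection $\bK_{\eff}(\tcX) \cap \bL_{\bQ} \cong \bK_{\eff}(\cX)$ (and the analogous bijection $\bK_{\eff,\tsi_0} \setminus \bL_{\bQ} \cong \bK_{\eff}(\cX,\cL)$ used in the proof of Lemma \ref{lem:BModelContribute}), the pairings $\inner{\tD_i,\tbeta}$ match $\inner{D_i,\beta} + dl^{(0)}_i$, which via the coefficients $(w_0,w_2,w_3)$ in $\ttau_{R-2}$ produces exactly the $\log\sX(q,x)$ expansion of \cite{FL13,FLT12}.

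The main obstacle, and really the only substantive step beyond bookkeeping, is item (i) above: verifying that the explicit hypergeometric formula \eqref{eqn:ClosedMirrorMap} for the closed mirror map of $\tcX$ truly decomposes as (closed map of $\cX$)$\,\oplus\,$(open map of $(\cX,\cL,f)$) under the identification of variables. This will be handled by defining
\begin{equation*}
\sX(q,x) \;:=\; \exp\!\bigl(\log x + w_0 A_1(q) + w_2 A_2(q) + w_3 A_3(q)\bigr),
\end{equation*}
with $A_i(q)$ the counterparts of $\tA_i(\tq)$ for $\cX$, and then comparing term-by-term using the bijection above together with Observation \ref{obs:ClassInL} and Lemma \ref{lem:RPlusVanish}, which together ensure that the contributions $\tA_{R+1} = \tA_{R+2} = 0$ do not obstruct the identification. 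Once Proposition \ref{prop:MirrorMapCorr} is in hand, Theorem \ref{thm:OpenMirror} follows.
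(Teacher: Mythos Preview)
Your overall strategy is correct and matches the paper's proof exactly: chain together Theorem \ref{thm:JPairing}, Theorem \ref{thm:Mirror}, equation \eqref{eqn:NoTau0}, and Theorem \ref{thm:IPairing}, then invoke Proposition \ref{prop:MirrorMapCorr} to identify the mirror maps. The paper runs the chain starting from $F^{\cX,\cL,f}_\lambda$ rather than $W^{\cX,\cL,f}_\lambda$, but this is immaterial.

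However, your sketch of Proposition \ref{prop:MirrorMapCorr} contains a misconception. You suggest that $\tOmega_i$ decomposes into classes in $\bL_{\bQ}$ ``together with classes contributing only to the open variable,'' and you invoke the bijection $\bK_{\eff,\tsi_0}\setminus\bL_{\bQ}\cong\bK_{\eff}(\cX,\cL)$ and the formula $\inner{\tD_i,\tbeta}=\inner{D_i,\beta}+dl^{(0)}_i$. None of this is relevant here. For $i=1,\dots,R$, every class $\tbeta\in\tOmega_i$ satisfies $\inner{\tD_{R+1},\tbeta}\ge 0$ and $\inner{\tD_{R+2},\tbeta}\ge 0$; since $\tD_{R+1}=-\tD_{R+2}$, both pairings vanish and $\tbeta\in\bL$. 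Thus $\tOmega_i=\Omega_i$ exactly, $\tA_i(\tq)=A_i(q)$ involves no $\tq_{R-2}$ at all, and the identification $\ttau_{R-2}(\tq)=\log\sX(q,x)$ holds simply because $\log\tq_{R-2}=\log x$ while the correction $w_0\tA_1+w_2\tA_2+w_3\tA_3$ equals $w_0A_1+w_2A_2+w_3A_3$, a series in $q$ alone. The open variable enters only through the leading $\log x$ term, not through any hypergeometric contribution. Once you correct this, the argument goes through as you outlined, using Observation \ref{obs:ClassInL} and Lemma \ref{lem:tmVanish} (the latter to match the coefficients $\tm_i^{(a)}=m_i^{(a)}$).
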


We describe the open-closed mirror map of $(\cX, \cL, f)$, again starting with some definitions.
\begin{itemize}
    \item For $i = 1, \dots, R'$, let
        $$
            \Omega_i := \{\beta \in \bK_{\eff}(\cX) : v(\beta) = \vzero, \inner{D_i, \beta}<0, \inner{D_{i'}, \beta} \ge 0 \text{ for } i' \in \{1, \dots, R\} \setminus \{i\} \}, 
        $$
        and
        $$
            A_i(q) := \sum_{\beta \in \Omega_i} q^{\beta} \frac{(-1)^{-\inner{D_i, \beta}-1}(-\inner{D_i, \beta}-1)!}{\prod_{i' \in \{1, \dots, R\} \setminus \{i\}} \inner{D_{i'}, \beta}!}.
        $$

    \item For $i = R'+1, \dots, R$, let
        $$
            \Omega_i := \{\beta \in \bK_{\eff}(\cX): v(\beta) = b_i, \inner{D_{i'}, \beta} \not \in \bZ_{<0} \text{ for } i' = 1, \dots, R\},
        $$
        and
        $$
            A_i(q) := \sum_{\beta \in \Omega_i} q^{\beta} \prod_{i' = R'+1}^R \frac{\prod_{m = \ceil{\inner{D_{i'}, \beta}}}^\infty (\inner{D_{i'}, \beta} - m)}{\prod_{m = 0}^\infty (\inner{D_{i'}, \beta} - m)}.
        $$

    \item For $i = 1, \dots, R$, we write
        $$
            D_i = \sum_{a = 1}^{R-3} m_i^{(a)}H_a
        $$    
        for $m_i^{(a)} \in \bQ$.

    \item For $a = 1, \dots, R'-3$, let
        $$
            S_a(q) := \sum_{i =1}^{R'} m_i^{(a)} A_i(q).
        $$
\end{itemize}
Then, the open-closed mirror map of $\cX$ is given by
\begin{equation}\label{eqn:OpenMirrorMap}
    \tau_a(q)  = \begin{cases}
            \log q_a + S_a(q), & a \in \{1, \dots, R'-3\},\\
            A_{a+3}(q), & a = R'-2, \dots, R-3,
        \end{cases}
\end{equation}
$$
    \log \sX = \log x + w_0A_1(q) + w_2A_2(q) + w_3A_3(q).
$$

For the remainder of this section, we show that our A- and B-model open/closed correspondences can be used to provide an alternative proof of the open-closed mirror symmetry of $(\cX, \cL, f)$ \cite{FL13, FLT12} (Theorem \ref{thm:OpenMirror}) in the more general case where the framing $f \in \bQ$ is not necessarily an integer. In other words, we show that the diagram in Figure \ref{fig:Web} is ``commutative'' by showing that the top arrow can be recovered from the other three arrows. We first identify the open-closed mirror map of $(\cX, \cL, f)$ with the closed mirror map of $\tcX$.

\begin{proposition}\label{prop:MirrorMapCorr}
Under the relation $\tq_a = q_a$ for $a = 1, \dots, R-3$ and $\tq_{R-2} = x$, we have
$$
    \ttau_a(\tq) = \tau_a(q), \quad a = 1, \dots, R-3,
$$
$$
    \ttau_{R-2}(\tq) = \log \sX(q, x). 
$$
\end{proposition}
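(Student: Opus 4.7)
The plan is to compare the closed mirror map \eqref{eqn:ClosedMirrorMap} of $\tcX$ with the open-closed mirror map \eqref{eqn:OpenMirrorMap} of $(\cX,\cL,f)$ component by component. Identifying the $a$-th components (under $\tq_a = q_a$ and $\tq_{R-2} = x$) reduces the proposition to two assertions: (i) $\tm_i^{(a)} = m_i^{(a)}$ for $i = 1, \dots, R$ and $a = 1, \dots, R-3$; and (ii) $\tA_i(\tq) = A_i(q)$ for $i = 1, \dots, R$. The coefficients $w_0, w_2, w_3$ appearing in the $(R-2)$-component are by definition the same on both sides.

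Assertion (i) would follow from applying the surjection $\tbL^\vee \to \bL^\vee$ of \eqref{eqn:tXDualSES} to the expansion $\tD_i = \sum_{a=1}^{R-2} \tm_i^{(a)} \tH_a$. By the choice of basis in Section \ref{sec:I}, $\tH_a$ projects to $H_a$ for $a = 1, \dots, R-3$, while $\tH_{R-2} = \tD_{R+1}$ projects to $0$ (as it lies in the image of the first column of \eqref{eqn:tXDualSES}). Since $\tD_i$ projects to $D_i$ for $i \le R$, the projected expansion $D_i = \sum_{a=1}^{R-3} \tm_i^{(a)} H_a$ is forced to match $D_i = \sum_{a=1}^{R-3} m_i^{(a)} H_a$ by uniqueness of basis expansion.

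The heart of the proof is assertion (ii), which I would establish by producing a bijection $\tOmega_i \leftrightarrow \Omega_i$ realized by the inclusion $\bL \hookrightarrow \tbL$. The key step is showing that every $\tbeta \in \tOmega_i$ automatically lies in $\bL_{\bQ}$: the relations $\tD_{R+1} = -\tD_{R+2}$ combined with the defining conditions of $\tOmega_i$ force $\inner{\tD_{R+1}, \tbeta} = 0$ (immediate from non-negativity for $i \le R'$, and for $i > R'$ a short ceiling-function calculation shows that $\tv(\tbeta) = \tb_i \in N$ forbids $\inner{\tD_{R+1}, \tbeta}$ from being a non-zero integer or a non-integer). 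Writing $\tbeta = \beta + c\, \epsilon_{R-2}$ then forces $c = 0$. Once $\tbeta \in \bL_{\bQ}$, Observation \ref{obs:ClassInL} gives $\inner{\tD_j, \tbeta} = \inner{D_j, \beta}$ for $j \le R$, while $\inner{\tD_{R+1}, \tbeta}! = \inner{\tD_{R+2}, \tbeta}! = 1$ makes the extra factorial factors trivial, and $\inner{\tH_{R-2}, \tbeta} = 0$ yields $\tq^{\tbeta} = q^{\beta}$ under the variable identification. A term-by-term comparison then yields $\tA_i(\tq) = A_i(q)$.

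The main obstacle will be verifying the effectiveness match $\bL_{\bQ} \cap \bK_{\eff}(\tcX) = \bK_{\eff}(\cX)$, which is what makes the bijection $\tOmega_i \leftrightarrow \Omega_i$ exhaustive. This requires a case analysis on $\tsi \in \tSi(4)$: for $\tsi = \iota(\sigma)$ the identification $I_{\iota(\sigma)} = I_\sigma \cup \{R+1\}$ from Section \ref{sect:4ConeOuter} makes the check immediate, whereas for $\tsi \in \tSi(4) \setminus \iota(\Sigma(3))$ one needs to use that the non-compact $2$-cone $\delta_0(\tsi) \in \Sigma(2) \setminus \Sigma(2)_c$ is a face of a unique $3$-cone of $\Sigma$, which supplies the required $\sigma \in \Sigma(3)$ witnessing effectiveness of $\beta$ on the $\cX$-side.
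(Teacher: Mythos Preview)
Your proposal is correct and follows the same route as the paper's proof: reduce to $\tm_i^{(a)} = m_i^{(a)}$ via the projection $\tbL^\vee \to \bL^\vee$, and to $\tA_i = A_i$ by showing $\tOmega_i \subset \bL_\bQ$ from $\tD_{R+1} = -\tD_{R+2}$ and then invoking Observation~\ref{obs:ClassInL}. You supply more detail than the paper on the ceiling argument for $i>R'$ and on the effectiveness match $\bL_\bQ \cap \bK_{\eff}(\tcX) = \bK_{\eff}(\cX)$, both of which the paper subsumes under ``straightforward''; the parenthetical that $\tD_{R+1}$ lies in the image of the first column of \eqref{eqn:tXDualSES} is a slip (it projects to $0$ because $e_{R+1}^\vee$ does under $\bZ^{R+2}\to\bZ^R$), but the conclusion is correct and is all you need.
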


\begin{proof}
By construction, under the projection $\tbL^\vee \to \bL^\vee$, $\tD_i$ projects to $D_i$ for $i = 1, \dots, R$, $\tD_{R+1}$ and $\tD_{R+2}$ projects to $0$, and $\tH_a$ projects to $H_a$ for $a = 1, \dots, R-3$. By Lemma \ref{lem:tmVanish}, we have
\begin{equation}\label{eqn:mMatches}
    \tm_i^{(a)} = m_i^{(a)}, \quad i = 1, \dots, R, a = 1, \dots, R-3.
\end{equation}

Now observe that if $\tbeta \in \tbL$ satisfies that both $\inner{\tD_{R+1}, \tbeta}$ and $\inner{\tD_{R+2}, \tbeta}$ are non-negative (or both are non-positive), then both are in fact zero and $\tbeta \in \bL$. Thus, by Observation \ref{obs:ClassInL}, it is straightforward to verify that $\tq^{\tbeta}$ does not involve $\tq_{R-2}$ and equals $q^{\tbeta}$, and that for any $i = 1, \dots, R$,
\begin{equation}\label{eqn:OmegaAMatches}
    \tOmega_i = \Omega_i, \quad \tA_i(\tq) = A_i(q),
\end{equation}
under the relation $\tq_a = q_a$, $a = 1, \dots, R-3$. This combined with \eqref{eqn:mMatches} gives
$$
    \tS_a(\tq) = S_a(q)
$$
for $a = 1, \dots, R-3$. We then conclude by the descriptions \eqref{eqn:ClosedMirrorMap}, \eqref{eqn:OpenMirrorMap} of the mirror maps.
\end{proof}

Now we give our alternative proof of Theorem \ref{thm:OpenMirror}.

\begin{proof}[Proof of Theorem \ref{thm:OpenMirror} via Theorems \ref{thm:JPairing} and \ref{thm:IPairing}]
We have the following:
\begin{align*}
    & F^{\cX, (\cL, f)}_{\tk}(\btau_2, \sX) &&\\
    & = [z^{-2}] \left(J_{\tcX}^{\tT'}(\tbtau_2, z), \tgamma_{\tk} \right)_{\tcX}^{\tT'} \bigg|_{\su_4 = 0, \su_2 - f\su_1 = 0} && (\text{by Theorem \ref{thm:JPairing} under $\tau_a = \ttau_a$, $\log \sX = \ttau_{R-2}$})\\
    & = [z^{-2}]\left(e^{-\frac{1}{z}\ttau_0(0, \tq)}I_{\tcX}^{\tT'}(\tq, z)
, \tgamma_{\tk} \right)_{\tcX}^{\tT'} \bigg|_{\su_4 = 0, \su_2 - f\su_1 = 0} && (\text{by Theorem \ref{thm:Mirror} under closed mirror map \eqref{eqn:ClosedMirrorMap}})\\
    & = [z^{-2}]\left(I_{\tcX}^{\tT'}(\tq, z)
, \tgamma_{\tk} \right)_{\tcX}^{\tT'} \bigg|_{\su_4 = 0, \su_2 - f\su_1 = 0} && (\text{by \eqref{eqn:NoTau0}})\\
    & = W^{\cX, (\cL, f)}_{\tk}(q, x) && (\text{by Theorem \ref{thm:IPairing} under $q_a = \tq_a$, $x = \tq_{R-2}$}).
\end{align*}
Moreover, Proposition \ref{prop:MirrorMapCorr} implies that the dependence of $\btau_2$ and $\sX$ on $q$ and $x$ through the above chain of correspondences agrees with the open-closed mirror map \eqref{eqn:OpenMirrorMap} of $(\cX, \cL, f)$.
\end{proof}

\begin{remark}\rm{
As a consequence of the ``commutativity'' of the diagram in Figure \ref{fig:Web}, we may deduce any one of Theorems \ref{thm:JPairing}, \ref{thm:IPairing}, and \ref{thm:OpenMirror} (i.e. the left, right, and top arrows) from the other two. This yields alternative proofs of both Theorems \ref{thm:JPairing} and \ref{thm:IPairing}.
}
\end{remark}








\appendix

\section{Preliminaries of orbifold Gromov-Witten theory}\label{sect:GWPrelim}
In this section, we collect additional premliminaries of orbifold Gromov-Witten theory, mainly to supplement Section \ref{sect:GW}.

\subsection{Hurwitz-Hodge integrals}\label{sect:HHIntegrals}
In this section, we briefly review \emph{Hurwitz-Hodge integrals}, which are intersection numbers on moduli spaces of twisted stable maps to the classifying stack of a finite group. We restrict our attention to the genus zero case and the case where the finite group is abelian, which we will need in our localization computations. We fix a finite abelian group $G$ in this section. The classifying stack $\cB G = [\pt / G]$ is a smooth Deligne-Mumford stack, and
$$
    \cI \cB G = \bigsqcup_{k \in G} (\cB G)_k.
$$

Let $n \in \bZ_{\ge 0}$ and $\vk = (k_1, \dots, k_n) \in G^n$. The definitions in Section \ref{sect:Moduli} can be applied to define the moduli space $\Mbar_{0,\vk}(\cB G):= \Mbar_{0,\vk}(\cB G, 0)$ of genus-zero, $\vk$-twisted stable maps to $\cB G$. We assume that $n \ge 3$ and 
$$
    k_1 \cdots  k_n = 1 \in G.
$$
Otherwise, $\Mbar_{0,\vk}(\cB G)$ is empty.


Let $\pi: \cU \to \Mbar_{0,\vk}(\cB G)$ be the universal curve, $u: \cU \to \cB G$ be the universal map, and $\epsilon: \Mbar_{0,\vk}(\cB G) \to \Mbar_{0,n}$ be the natural forgetful map. Let $\rho: G \to \GL(V)$ be an irreducible representation of $G$, where $V$ is a $1$-dimensional vector space over $\bC$. Then $\cE_\rho:= [V/G]$ is a line bundle over $\cB G$, and
$$
    \pi_* u^* \cE_\rho = \begin{cases}
        \cO_{\Mbar_{0,\vk}(\cB G)} & \text{if $\rho$ is the trivial representation} \\
        0 & \text{otherwise.} 
    \end{cases}
$$
The $\rho$-twisted \emph{Hurwitz-Hodge bundle} $\bE_\rho$ is the dual of the vector bundle $R^1\pi_*f^* \cE_\rho \to \Mbar_{0, \vk}(\cB G)$. If $\rho = 1$ is the trivial representation, then $\bE_1$ is the pullback of the Hodge bundle over $\Mbar_{0, n}$ under the map $\epsilon$, and thus $\bE_1 = 0$. If $\rho$ is non-trivial, then we have
\begin{equation}\label{eqn:HHBundleRank}
    \rank(\bE_\rho) = -1 + \sum_{i = 1}^n c_i,
\end{equation}
where for each $i = 1, \dots, n$, $c_i \in [0,1) \cap \bQ$ is such that the eigenvalue of $\rho(k_i) \in \GL(V)$ is $\exp(2\pi\sqrt{-1}c_i)$.

We define the following classes on $\Mbar_{0,\vk}(\cB G)$:
\begin{itemize}
    \item \emph{Hodge classes:} Given an irreducible representation $\rho$ of $G$, and $j = 0, \dots, \rank(\bE_\rho)$, let
    $$
        \lambda_j^\rho:= c_j(\bE_\rho) \in A^j(\Mbar_{0,\vk}(\cB G)).
    $$

    \item \emph{Descendant classes:} For $i = 1, \dots, n$, let
    $$
        \bar{\psi}_i := \epsilon^* \psi_i \in A^1(\Mbar_{0,\vk}(\cB G)).
    $$
\end{itemize}
Moreover, we define
$$
    \Lambda_\rho^\vee(\bw) = \sum_{j = 0}^{\rank(\bE_\rho)} (-1)^j \lambda_j^\rho \bw^{\rank(\bE_\rho)-j},
$$
where $\bw$ is a formal variable. We will use the following version of Mumford's relation \cite[Proposition]{BGP08}:
\begin{lemma}[\cite{BGP08}]\label{lem:Mumford}
Let $G$ be a finite abelian group, $n \in \bZ_{\ge 3}$, and $\vk = (k_1, \dots, k_n) \in G^n$ such that $k_1 \cdots k_n = 1 \in G$. Let $\rho$ be an irreducible representation of $G$, $\rho^{\vee}$ denote its dual representation, and $\bw$ be a formal variable. Then 
$$
    \Lambda_\rho^\vee(\bw) \Lambda_{\rho^\vee}^\vee(-\bw) = (-1)^{\rank(\bE_{\rho^\vee})}\bw^{\rank(\bE_\rho) + \rank(\bE_{\rho^{\vee}})},
$$
where $\bE_\rho, \bE_{\rho^\vee}$ are Hurwitz-Hodge bundles over $\Mbar_{0,\vk}(\cB G)$ defined above.
\end{lemma}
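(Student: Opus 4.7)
The plan is to reduce the identity to a Mumford-style Grothendieck-Riemann-Roch computation on the universal curve, following the argument of Bryan-Graber-Pandharipande \cite{BGP08}. First, I would dispose of the degenerate case. If $\rho$ is the trivial representation, then $\rho^\vee$ is too, and the remark preceding \eqref{eqn:HHBundleRank} gives $\bE_\rho = \bE_{\rho^\vee} = 0$, so both $\Lambda_\rho^\vee(\bw)$ and $\Lambda_{\rho^\vee}^\vee(-\bw)$ equal the empty product $1$ while the right-hand side reduces to $(-1)^0 \bw^0 = 1$. Hence one may assume $\rho$ is non-trivial, so that $\pi_* u^* \cE_\rho = 0 = \pi_* u^* \cE_{\rho^\vee}$ and both Hurwitz-Hodge bundles are nonzero.

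Next I would verify the degree on both sides. For each $i$, let $c_i \in [0,1)$ be the age of $\rho(k_i)$, so the age of $\rho^\vee(k_i)$ is $1 - c_i$ if $c_i > 0$ and $0$ otherwise. Letting $n^\ast = \#\{i : \rho(k_i) \neq 1\}$, the rank formula \eqref{eqn:HHBundleRank} gives
\[
\rank(\bE_\rho) + \rank(\bE_{\rho^\vee}) = -2 + \sum_{i=1}^n \bigl(c_i + (\text{age of }\rho^\vee(k_i))\bigr) = n^\ast - 2.
\]
Hence both sides of the claimed identity are polynomials in $\bw$ of the same degree $n^\ast - 2$, and it suffices to show that $\Lambda_\rho^\vee(\bw)\Lambda_{\rho^\vee}^\vee(-\bw)$ is a pure monomial in $\bw$ of this degree with leading coefficient $(-1)^{\rank(\bE_{\rho^\vee})}$.

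The key input is relative Serre duality on the universal curve $\pi: \cU \to \Mbar_{0,\vk}(\cB G)$. Since $\rho$ is non-trivial, the natural duality pairing furnishes isomorphisms
\[
\bE_\rho \;=\; (R^1\pi_*\, u^*\cE_\rho)^\vee \;\cong\; \pi_*\!\bigl(u^*\cE_{\rho^\vee} \otimes \omega_{\cU/\Mbar}\bigr),
\]
together with the symmetric statement for $\bE_{\rho^\vee}$. Applying Grothendieck-Riemann-Roch to $R\pi_* u^*\cE_\rho$ and to $R\pi_*(u^*\cE_\rho \otimes \omega_{\cU/\Mbar})$ expresses $\mathrm{ch}(\bE_\rho)$ and $\mathrm{ch}(\bE_{\rho^\vee})$ in terms of pushforwards of tautological classes on the universal orbifold curve, with the orbifold structure at the twisted marked points and nodes contributing the age corrections that are baked into the rank formula. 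Exponentiating and forming $c(\bE_\rho)(\bw)\,c(\bE_{\rho^\vee})(-\bw)$, the Serre-duality pairing forces all intermediate terms in $\bw$ to cancel, leaving only the top degree term; the overall sign $(-1)^{\rank(\bE_{\rho^\vee})}$ comes from the substitution $\bw \mapsto -\bw$ applied to the monic polynomial $\Lambda_{\rho^\vee}^\vee$ of degree $\rank(\bE_{\rho^\vee})$.

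The main obstacle I expect is the bookkeeping of the nodal and twisted-marked-point contributions in the GRR computation on the orbifold universal curve — these are exactly the terms that must cancel pairwise between $\mathrm{ch}(\bE_\rho)$ and $\mathrm{ch}(\bE_{\rho^\vee})$, and the cancellation relies on the Bernoulli-polynomial identities and the stacky structure encoded by $\vk$. Since this is precisely what is carried out in \cite{BGP08}, the cleanest version of the proof would invoke their result; alternatively, one can deduce the claim directly from the classical Mumford relation $c(\bE)c(\bE^\vee) = 1$ on $\Mbar_{0,n}$ by pulling back along $\epsilon: \Mbar_{0,\vk}(\cB G) \to \Mbar_{0,n}$ and decomposing the Hodge bundle into isotypic Hurwitz-Hodge components for the $G$-action.
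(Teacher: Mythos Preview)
The paper does not give its own proof of this lemma: it is stated as a quotation of \cite[Proposition]{BGP08} and used as a black box. So there is nothing in the paper to compare your argument against; your sketch is essentially an outline of the Bryan--Graber--Pandharipande argument that the paper is citing.

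One small correction to your final ``alternative'' route: pulling back the classical Mumford relation along $\epsilon: \Mbar_{0,\vk}(\cB G) \to \Mbar_{0,n}$ does nothing, since the Hodge bundle on $\Mbar_{0,n}$ is zero in genus zero. What actually works is to apply the classical Mumford relation to the Hodge bundle of the family of \emph{$G$-covers} $\tcC \to \Mbar_{0,\vk}(\cB G)$ (these covers have positive genus), and then decompose that Hodge bundle into $G$-isotypic pieces to extract $\bE_\rho$ and $\bE_{\rho^\vee}$. This is the argument in \cite{BGP08}, and it is consistent with the Serre-duality/GRR picture you describe in your main sketch.
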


\emph{Hurwitz-Hodge integrals} are integrals of form
$$
    \int_{\Mbar_{0,\vk}(\cB G)} \bar{\psi}_1^{a_1} \cdots \bar{\psi}_n^{a_n}\lambda_{j_1}^{\rho_1} \cdots \lambda_{j_m}^{\rho_m},
$$
where $a_1, \dots, a_n \in \bZ_{\ge 0}$, $\rho_1, \dots, \rho_m$ are (not necessarily distinct) irreducible representations of $G$, and each $j_i \in \{0, \dots, \rank(\bE_{\rho_i})\}$. Zhou \cite{Zhou07} gave an algorithm for computing these integrals, as follows: By Tseng's orbifold Riemann-Roch theorem \cite{Tseng10}, the Hurwitz-Hodge integrals can be reconstructed from descendant integrals, i.e. integrals of form
$$
    \int_{\Mbar_{0,\vk}(\cB G)} \bar{\psi}_1^{a_1} \cdots \bar{\psi}_n^{a_n},
$$
where $a_1, \dots, a_n \in \bZ_{\ge 0}$. By Jarvis-Kimura \cite[Proposition 3.4]{JK02}, we have
$$
    \int_{\Mbar_{0,\vk}(\cB G)} \bar{\psi}_1^{a_1} \cdots \bar{\psi}_n^{a_n} = \frac{1}{|G|} \int_{\Mbar_{0,n}} \psi_1^{a_1} \cdots \psi_n^{a_n} = \begin{cases}
        \frac{(n-3)!}{a_1! \cdots a_n!} & \text{if } a_1 + \cdots + a_n = n-3\\
        0 & \text{otherwise.}
    \end{cases}
$$
As a consequence, it is straightforward to derive the following identity (see e.g. \cite[Lemmas 61 and 123]{Liu13} for some special cases):
\begin{lemma}\label{lem:DescendantIntegral}
Let $G$ be a finite abelian group, $n \in \bZ_{\ge 3}$, and $\vk = (k_1, \dots, k_n) \in G^n$ such that $k_1 \cdots k_n = 1 \in G$. Let $S \subseteq \{1, \dots, n\}$, and for each $i \in S$, let $\bw_i$ be a formal variable. Then
$$
    \int_{\Mbar_{0, \vk}(\cB G)} \prod_{i \in S} \frac{1}{\bw_i - \bar{\psi}_i} = \frac{1}{|G| \cdot \prod_{i \in S} \bw_i} \left( \sum_{i \in S} \frac{1}{\bw_i} \right)^{n-3}.
$$
\end{lemma}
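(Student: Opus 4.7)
The strategy is a direct expansion: write each factor as a geometric series in $\bar{\psi}_i$, integrate term by term, and recognize the resulting sum as a multinomial expansion.

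First, I would expand
\[
    \prod_{i \in S} \frac{1}{\bw_i - \bar{\psi}_i} = \prod_{i \in S} \sum_{a_i \ge 0} \frac{\bar{\psi}_i^{a_i}}{\bw_i^{a_i+1}} = \sum_{(a_i)_{i \in S}} \prod_{i \in S} \frac{\bar{\psi}_i^{a_i}}{\bw_i^{a_i+1}},
\]
where each $a_i$ ranges over $\bZ_{\ge 0}$. Integrating against $[\Mbar_{0,\vk}(\cB G)]$ term by term (and setting $a_i := 0$ for $i \in \{1,\dots,n\} \setminus S$), I would then apply the Jarvis-Kimura identity displayed in the paragraph preceding the lemma:
\[
    \int_{\Mbar_{0,\vk}(\cB G)} \bar{\psi}_1^{a_1}\cdots \bar{\psi}_n^{a_n} = \begin{cases} \dfrac{(n-3)!}{|G| \cdot a_1! \cdots a_n!} & \text{if } a_1 + \cdots + a_n = n-3,\\ 0 & \text{otherwise.} \end{cases}
\]
The dimension constraint eliminates all tuples except those with $\sum_{i \in S} a_i = n-3$, so after factoring out one copy of $\bw_i^{-1}$ for each $i \in S$, what remains is
\[
    \frac{1}{|G| \prod_{i \in S} \bw_i} \sum_{\substack{(a_i)_{i \in S} \\ \sum a_i = n-3}} \frac{(n-3)!}{\prod_{i \in S} a_i!} \prod_{i \in S} \bw_i^{-a_i}.
\]
The sum is precisely the multinomial expansion of $\bigl(\sum_{i \in S} \bw_i^{-1}\bigr)^{n-3}$, which yields the claimed closed form.

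There is no real obstacle here beyond bookkeeping. The only subtlety to flag is the trivial-case boundary: if $n = 3$ and $S = \emptyset$ both sides evaluate to $1/|G|$ (the orbifold volume of a point), and if $n > 3$ with $S = \emptyset$ both sides vanish, so the formula remains consistent with the convention $0^0 = 1$ used implicitly in the multinomial step. In the intended applications (localization contributions at stable vertices) $S$ is always non-empty, so this edge case will not arise in practice.
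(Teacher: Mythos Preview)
Your proof is correct and is exactly the ``straightforward'' derivation the paper has in mind: the paper does not spell out a proof but simply states the lemma as a direct consequence of the Jarvis--Kimura descendant formula displayed immediately before it. Your geometric-series expansion followed by the multinomial identification is precisely that computation, and your handling of the $S = \emptyset$ edge case is a sensible remark even though the paper does not comment on it.
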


In our localization computations, we adopt the following integration conventions for
\begin{itemize}
    \item $\Mbar_{0, (1)}(\cB G)$, viewed as a $(-2)$-dimensional space, and
    \item $\Mbar_{0, (k, k^{-1})}(\cB G)$, viewed as a $(-1)$-dimensional space, where $k \in G$.
\end{itemize}
Let $\bw_1, \bw_2$ be formal variables. We set
\begin{equation}\label{eqn:UnstableIntegral}
    \begin{aligned}
         &\int_{\Mbar_{0, (1)}(\cB G)} \frac{1}{\bw_1 - \bar{\psi}_1} = \frac{\bw_1}{|G|}, \quad \int_{\Mbar_{0, (k, k^{-1})}(\cB G)} \frac{1}{\bw_1 - \bar{\psi}_1} = \frac{1}{|G|}, \\
        & \int_{\Mbar_{0, (k, k^{-1})}(\cB G)} \frac{1}{(\bw_1 - \bar{\psi}_1)(\bw_2 - \bar{\psi}_2)} = \frac{1}{|G| \cdot (\bw_1 + \bw_2)}.
    \end{aligned}
\end{equation}
Note that this is consistent with Lemma \ref{lem:DescendantIntegral}.

\subsection{Twisted covers of proper torus-invariant lines}\label{sect:TwistedCovers}
In this section, we characterize non-constant representable morphisms from irreducible twisted curves to proper torus-invariant lines in toric orbifolds. Such maps are studied in detail by Johnson \cite{Jo09}.

The domain of such a map has form $\cC_{r_1, r_2}$ for some $r_1, r_2 \in \bZ_{>0}$, which is the $1$-dimensional toric orbifold defined by the stacky fan 
$$
    \bSi_{r_1, r_2} = (\bZ, \Sigma, \alpha_{r_1,r_2}).
$$
Here, $\Sigma$ is the complete fan in $\bR$ and $\alpha_{r_1,r_2}: \bZ^2 \to \bZ$ is determined by $(r_1, -r_2)$. The coarse moduli space of $\cC_{r_1, r_2}$ is isomorphic to $\bP^1$. There are two $(\bC^*)$-fixed points $\fp_1, \fp_2$ of $\cC_{r_1, r_2}$, with generic stablizer groups $\mu_{r_1}, \mu_{r_2}$ respectively. Let $p_1, p_2 \in \bP^1$ be their images in the coarse moduli space, which are the two $(\bC^*)$-fixed points.

Now let $\cZ$ be an $r$-dimensional toric orbifold specified by an extended stacky fan $\bXi = (\bZ^r, \Xi, \alpha)$ as in Section \ref{sect:PrelimToric}, with Deligne-Mumford torus $(\bC^*)^r$. Let $\tau \in \Xi(r-1)_c$ and $\sigma_1, \sigma_2 \in \Xi(r)$ be the two $r$-dimensional cones that contain $\tau$. Let
\begin{equation}\label{eqn:TwistedCover}
    u: \cC_{r_1, r_2} \to \fl_{\tau}
\end{equation}
be a representable morphism such that if $\bar{u}: C \to l_{\tau}$ is the induced map between coarse moduli spaces, then $\bar{u}(p_1) = p_{\sigma_1}$ and $\bar{u}(p_2) = p_{\sigma_2}$. Consider the restrictions to the open orbit
$$
    u|_{\bC^*}: \bC^* \to \fo_\tau, \quad \bar{u}|_{\bC^*}: \bC^* \to o_\tau \cong \bC^*.
$$
Let $\gamma$ be the image of the generator of $\pi_1(\bC^*) \cong \bZ$ under the map $(u|_{\bC^*}): \pi_1(\bC^*) \to \pi_1(\fo_\tau) = H_\tau$. Then $d = \pi_\tau(\gamma)>0$ is the image of the generator of $\pi_1(\bC^*)$ under the map $(\bar{u}|_{\bC^*})_*: \pi_1(\bC^*) \to \pi_1(o_\tau) \cong \bZ$, and is the degree of the map $\bar{u}|_{\bC^*}$. The map $u$ is in fact uniquely determined by the element $\gamma \in H_\tau$ up to automorphisms, and we say that $\gamma$ is the \emph{degree} of $u$.

Let $k_1 \in G_{\sigma_1}$ (resp. $k_2 \in G_{\sigma_2}$) be the image of the generator of the stablizer group $\mu_{r_1}$ of $\fp_1$ (resp. $\mu_{r_2}$ of $\fp_2$) under $u$. The representability of $u$ implies that $k_1$ (resp. $k_2$) has order $r_1$ (resp. $r_2$). Moreover, $k_1$ and $k_2$ are determined by $\gamma$ as
$$
    \pi_{(\tau, \sigma_1)}(\gamma) = k_1, \quad \pi_{(\tau, \sigma_2)}(\gamma) = k_2
$$
(see \eqref{eqn:FlagFundGroup}).

\section{Details of localization computations and comparisons}\label{sect:NumericalLocalDetail}
In this section, we supply the details of localization computations and comparisons needed in proving the numerical open/closed correspondence (Theorem \ref{thm:NumericalOuter}) in Section \ref{sect:Numerical}.

\subsection{Contributions from maps to \texorpdfstring{$\cX$}{X}}
In this section, we consider maps from an irreducible twisted curve to a $T'$-fixed point (resp. proper $T'$-invariant line) in $\cX$, which can also be viewed as maps to the corresponding $\tT'$-fixed point (resp. proper $\tT'$-invariant line) in $\tcX$ via the inclusion $\cX \to \tcX$. We explicitly compare their contributions to the localization computations of the disk invariants (Proposition \ref{prop:DiskLocalResultOuter}) and closed invariants (Proposition \ref{prop:ClosedLocalResultOuter}). 


\begin{lemma}[Edges] \label{lem:EdgeComparison}
Let $\tau \in \Sigma(2)_c$, and 
$$
    u: \cC = \cC_{r_1, r_2} \to \fl_{\tau} \subset \cX \subset \tcX
$$
be a morphism as in \eqref{eqn:TwistedCover} in Section \ref{sect:TwistedCovers}. Let
$$
    \bh: = \frac{e_{T'}(H^1(\cC,u^*T\cX)^m)}{e_{T'}(H^0(\cC,u^*T\cX)^m)}, \quad 
    \tbh:= \frac{e_{\tT'}(H^1(\cC,u^*T\tcX)^m)}{e_{\tT'}(H^0(\cC,u^*T\tcX)^m)}.
$$
Then
$$
    \left(\su_4 \tbh \right) \big|_{\su_4 = 0} = \bh.
$$
\end{lemma}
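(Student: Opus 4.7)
The plan is to exploit the canonical splitting of $T\tcX|_\cX$ coming from the factorization $\cX \to \cX \times \bC \to \tcX$ of the inclusion $\iota$ described in Section~\ref{sect:construction}. The first map is the zero-section of the trivial line bundle $\cX \times \bC \to \cX$, whose fiber carries the $\tT'$-character $\su_4$ (dual to $v_4$), and the second is an open immersion coming from $\tSi_0 \subseteq \tSi$. This produces a canonical decomposition $T\tcX|_\cX \cong T\cX \oplus \cL$ of $\tT'$-equivariant bundles, where $\cL$ is the trivial line bundle with $\tT'$-weight $\su_4$. Pulling back by $u$ gives $u^*T\tcX \cong u^*T\cX \oplus u^*\cL$ as $\tT'$-equivariant bundles on $\cC$.

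Next, since $\cC = \cC_{r_1,r_2}$ has arithmetic genus zero and $u^*\cL$ is trivial with $\tT'$-weight $\su_4$, I get $H^0(\cC, u^*\cL) \cong \bC_{\su_4}$ (which is $\tT'$-moving and contributes a factor of $\su_4$ to $e_{\tT'}(H^0(u^*T\tcX)^m)$) and $H^1(\cC, u^*\cL) = 0$. Therefore
$$\tbh \;=\; \frac{1}{\su_4}\cdot \frac{e_{\tT'}\bigl(H^1(\cC, u^*T\cX)^m\bigr)}{e_{\tT'}\bigl(H^0(\cC, u^*T\cX)^m\bigr)}.$$
To finish, I would invoke the explicit formula for edge contributions from \cite[Lemma 130]{Liu13}, which expresses both $\bh$ and the ``$T\cX$-part'' of $\tbh$ as the same rational product of linear factors in the tangent weights at the two endpoints $\fp_{\sigma_1}, \fp_{\sigma_2}$ (resp.\ $\fp_{\iota(\sigma_1)}, \fp_{\iota(\sigma_2)}$), together with $d_e = d$ and the twistings $r_1, r_2$. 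The weights entering these formulas are $\bw(\tau', \sigma_k)$ for $\bh$ and $\tbw(\iota(\tau'), \iota(\sigma_k))$ for $\tbh$, which are related by \eqref{eqn:TanWtU4RestrictOuter}: $\tbw(\iota(\tau'), \iota(\sigma_k))|_{\su_4=0} = \bw(\tau', \sigma_k)$. Multiplying $\tbh$ by $\su_4$ cancels the $\cL$-contribution, and then setting $\su_4 = 0$ makes the resulting expression agree termwise with $\bh$.

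The main conceptual subtlety is that the $\tT'$-moving part of $H^i(u^*T\cX)$ can strictly contain the $T'$-moving part: a weight that vanishes in $T'$ may still carry a nonzero multiple of $\su_4$ in $\tT'$. A purely abstract argument would therefore have to verify that such extra $\tT'$-moving contributions appear in canceling pairs between $H^0$ and $H^1$. This is bypassed by working directly with the rational formula from \cite[Lemma 130]{Liu13}, where the comparison reduces to the termwise restriction of tangent weights via \eqref{eqn:TanWtU4RestrictOuter}.
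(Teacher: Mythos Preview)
Your proposal is correct and follows essentially the same approach as the paper: split $T\tcX|_{\fl_\tau} \cong T\cX|_{\fl_\tau} \oplus \cO_{\su_4}$, isolate the $\frac{1}{\su_4}$ factor from the trivial summand, and then appeal to the explicit edge formula of \cite[Lemma~130]{Liu13} together with \eqref{eqn:TanWtU4RestrictOuter} to match the remaining factors. Your extra paragraph on the $T'$-versus-$\tT'$ moving-part subtlety is a useful observation that the paper leaves implicit, and you correctly resolve it by working at the level of the rational formula rather than the abstract moving/fixed decomposition.
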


\begin{proof}
Over $\fl_{\tau} = \fl_{\iota(\tau)}$, we have the following relation:
$$
    T\tcX\big|_{\fl_{\iota(\tau)}} \cong \cO \oplus T\cX \big|_{\fl_{\tau}},
$$
which implies that
$$
   \tbh = \frac{1}{e_{\tT'}(H^0(\cC,u^*\cO))} \cdot \frac{e_{\tT'}(H^1(\cC,u^*(T\cX \big|_{\fl_{\tau}}))^m)}{e_{\tT'}(H^0(\cC,u^*(T\cX \big|_{\fl_{\tau}}))^m)} = \frac{1}{\su_4} \cdot \frac{e_{\tT'}(H^1(\cC,u^*(T\cX \big|_{\fl_{\tau}}))^m)}{e_{\tT'}(H^0(\cC,u^*(T\cX \big|_{\fl_{\tau}}))^m)}.
$$
By \eqref{eqn:TanWtU4RestrictOuter}, we may obtain $T'$-weights on $T\cX \big|_{\fl_{\tau}}$ by applying the restriction $\su_4 = 0$ to $\tT'$-weights. The lemma thus directly follows from the explicit computations of $\bh$ and $\tbh$ in \cite[Lemma 130]{Liu13}.
\end{proof}

\begin{lemma}[Flags] \label{lem:FlagComparison}
Let $\sigma \in \Sigma(3)$ and $k \in G_\sigma = G_{\iota(\sigma)}$. Let
$$
    \bh: = e_{T'}\left((T_{\fp_{\sigma}}\cX)^{k}\right), \quad \tbh:= e_{\tT'}\left((T_{\fp_{\iota(\sigma)}}\tcX)^{k}\right).
$$
Then 
$$
    \frac{\tbh}{\su_4} \bigg|_{\su_4 = 0} = \bh.
$$
\end{lemma}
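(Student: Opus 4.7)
\medskip

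\noindent\textbf{Proof plan for Lemma \ref{lem:FlagComparison}.} The plan is to unpack both $\bh$ and $\tbh$ as explicit products over flags using the formulas
$$
    \bh = \prod_{\substack{(\tau,\sigma) \in F(\Sigma) \\ k \in G_\tau}} \bw(\tau,\sigma), \qquad
    \tbh = \prod_{\substack{(\ttau, \iota(\sigma)) \in F(\tSi) \\ k \in G_{\ttau}}} \tbw(\ttau, \iota(\sigma)),
$$
and then match the two sides factor by factor. The comparison is purely combinatorial and relies on two facts already recorded in Section \ref{sect:ComparisonOuter} and Section \ref{sect:ClosedOuter}.

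First, as noted in Section \ref{sect:4ConeOuter}, the facets of $\iota(\sigma)$ in $\tSi$ are precisely the $3$-cones $\iota(\tau)$ for $\tau$ a facet of $\sigma$ in $\Sigma$, together with one additional facet, namely $\sigma$ itself. So I will partition the flags $(\ttau, \iota(\sigma)) \in F(\tSi)$ into two groups: those of the form $(\iota(\tau), \iota(\sigma))$ with $(\tau, \sigma) \in F(\Sigma)$, and the single extra flag $(\sigma, \iota(\sigma))$. For the extra flag, the identification $G_\sigma = G_{\iota(\sigma)}$ forces $k \in G_\sigma$, so this flag always contributes, and its contribution is $\tbw(\sigma,\iota(\sigma)) = \su_4$. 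This yields the identity
$$
    \tbh = \su_4 \cdot \prod_{\substack{(\tau,\sigma) \in F(\Sigma) \\ k \in G_{\iota(\tau)}}} \tbw(\iota(\tau), \iota(\sigma)).
$$

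Next, for the flags of the form $(\iota(\tau), \iota(\sigma))$, I will use the comparison of stablizers $G_{\iota(\tau)} = G_\tau$ (from Section \ref{sect:ComparisonOuter}) so that the condition $k \in G_{\iota(\tau)}$ is the same as $k \in G_\tau$. This matches the indexing sets of the two products exactly. Finally, to compare the individual weight factors, I invoke \eqref{eqn:TanWtU4RestrictOuter}, which gives $\tbw(\iota(\tau), \iota(\sigma))|_{\su_4 = 0} = \bw(\tau,\sigma)$. Dividing $\tbh$ by $\su_4$ and restricting to $\su_4 = 0$ then yields $\bh$ on the nose.

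The lemma is essentially a bookkeeping statement, and I do not expect any real obstacle: the only subtle point is correctly identifying the extra flag $(\sigma, \iota(\sigma))$ and confirming that $k \in G_\sigma$ lies in its generic stablizer automatically, so that the factor $\su_4$ is always present in $\tbh$ and cancels cleanly.
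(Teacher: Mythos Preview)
Your proposal is correct and follows essentially the same approach as the paper's proof: both express $\bh$ and $\tbh$ as products over flags, isolate the extra flag $(\sigma,\iota(\sigma))$ contributing the factor $\su_4$, match the remaining flags via $G_{\iota(\tau)} = G_\tau$, and conclude using \eqref{eqn:TanWtU4RestrictOuter}.
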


\begin{proof}
We have
$$
    \bh = \prod_{(\tau, \sigma) \in F(\Sigma), k \in G_\tau} \bw(\tau, \sigma), \quad \tbh = \prod_{(\ttau, \iota(\sigma)) \in F(\tSi), k \in G_{\ttau}} \tbw(\ttau, \iota(\sigma)).
$$
First, $\tbh$ contains $\tbw(\sigma, \iota(\sigma)) = \su_4$ as a factor. For the other facets of $\iota(\sigma)$, since $G_{\iota(\tau)} = G_{\tau}$ for any $\tau \in \Sigma(2)$, we have $k \in G_{\iota(\tau)}$ if and only if $k \in G_{\tau}$. Thus $\bw(\iota(\tau), \iota(\sigma))$ is a factor of $\tbh$ if and only if $\bw(\tau, \sigma)$ is a factor of $\bh$. The lemma then follows from \eqref{eqn:TanWtU4RestrictOuter}.
\end{proof}

\begin{lemma}[Stable vertices] \label{lem:VertexComparison}
Let $\sigma \in \Sigma(3)$, $n \in \bZ_{\ge 3}$, $\vk = (k_1, \dots, k_n) \in G_{\sigma}^n = G_{\iota(\sigma)}^n$ such that $k_1 \cdots k_n = 1$, and
$$
    u: (\cC, \fr_1, \dots, \fr_n) \to \fp_{\sigma} \subset \cX \subset \tcX
$$
be a morphism that represents a point in $\Mbar_{0, \vk}(\cB G_\sigma)$. Let
$$
    \bh:= \frac{e_{T'}(H^1(\cC,u^*T\cX)^m)}{e_{T'}(H^0(\cC,u^*T\cX)^m)}, \quad \tbh:= \frac{e_{\tT'}(H^1(\cC,u^*T\tcX)^m)}{e_{\tT'}(H^0(\cC,u^*T\tcX)^m)}.
$$
Then
$$
    \left(\su_4 \tbh \right) \big|_{\su_4 = 0} = \bh.
$$
\end{lemma}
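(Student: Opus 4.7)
The plan is to mirror the argument of Lemma \ref{lem:EdgeComparison}, replacing the bundle-level splitting over $\fl_\tau$ with a vector-space-level splitting at the point $\fp_{\iota(\sigma)}$. Concretely, I would decompose
$$
T\tcX\big|_{\fp_{\iota(\sigma)}} \;\cong\; T\cX\big|_{\fp_\sigma} \oplus L,
$$
as $\tT'$-equivariant vector spaces, where $L = T_{\fp_{\iota(\sigma)}}\fl_\sigma$ is the tangent line along the new invariant line $\fl_\sigma$ introduced by the flag $(\sigma,\iota(\sigma))\in F(\tSi)$, carrying $\tT'$-weight $\tbw(\sigma,\iota(\sigma)) = \su_4$. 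Pulling back along $u$, this splits $u^*T\tcX$ as $u^*T\cX|_{\fp_\sigma} \oplus u^*L$, and the obstruction-over-tangent factor $\tbh$ factors accordingly.

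First I would isolate the $L$-contribution. Since the coarse moduli $C$ has arithmetic genus zero, $H^0(\cC,\cO_\cC)=\bC$ and $H^1(\cC,\cO_\cC)=0$. The point to check, parallel to the identity $\fr(\sigma,\iota(\sigma))=1$ recorded in Section \ref{sect:ComparisonOuter}, is that $G_\sigma$ acts trivially on $L$: this follows from the identification $\Box(\iota(\sigma))=\Box(\sigma)$, which says that every element of $G_{\iota(\sigma)}$ has a representative in $\sum_{i\in I_\sigma'}\bR\,\tb_i$ with zero $\tb_{R+2}$-component and hence acts trivially on the tangent line along $\tb_{R+2}$. Therefore $u^*L$ is the trivial line bundle on $\cC$ with constant $\tT'$-character $\su_4$, contributing a factor of $1/\su_4$ to $\tbh$ (and no Hurwitz--Hodge corrections, since $\bE_1 = 0$).

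Second I would compare the remaining factor, coming from $u^*T\cX|_{\fp_\sigma}$, with $\bh$. The formula of \cite[Lemma 126]{Liu13} writes both $\bh$ and $\su_4\tbh$ as products indexed over the three facets $\tau$ of $\sigma$, each factor being built from Mumford-type Hurwitz--Hodge classes $\Lambda^\vee_{\rho_{(\tau,\sigma)}}(\bw)$, with $\bw=\bw(\tau,\sigma)$ in the first case and $\bw=\tbw(\iota(\tau),\iota(\sigma))$ in the second. The $G_\sigma$-characters $\rho_{(\tau,\sigma)}$ coincide on the two sides because the $\tT'$- and $T'$-actions induce the same action of $G_\sigma \subseteq \tT',T'$ on each tangent direction, so the Hurwitz--Hodge bundles $\bE_{\rho_{(\tau,\sigma)}}$ are literally the same. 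Passing from $\tbh$ to $\bh$ therefore amounts to substituting $\tbw(\iota(\tau),\iota(\sigma)) \mapsto \bw(\tau,\sigma)$, which is precisely the content of \eqref{eqn:TanWtU4RestrictOuter} applied at $\su_4=0$.

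Combining the two steps yields $\su_4\tbh\big|_{\su_4=0}=\bh$. The only real subtlety is the triviality of the $G_\sigma$-action on $L$; once this is verified from $\Box(\iota(\sigma))=\Box(\sigma)$, everything else is formal substitution, matching the strategy already used in Lemmas \ref{lem:EdgeComparison} and \ref{lem:FlagComparison}.
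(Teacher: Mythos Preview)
Your proposal is correct and takes essentially the same approach as the paper. The paper applies \cite[Lemma 126]{Liu13} directly to all four facets of $\iota(\sigma)$ and then observes that the extra flag $(\sigma,\iota(\sigma))$ contributes $\Lambda^\vee_1(\su_4)=1$ to the numerator (trivial character) and $\su_4$ to the denominator (since $\hG\subseteq G_\sigma$ always), whereas you first split off the line $L$ geometrically and compute its contribution as $1/\su_4$ before matching the remaining three facets via \eqref{eqn:TanWtU4RestrictOuter}; these are the same computation phrased in two orders.
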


\begin{proof}
We first recall the explicit computations of $\bh$ and $\tbh$ in \cite[Lemma 126]{Liu13}. Consider the Cartesian diagram
$$
\xymatrix{
    \tcC \ar[d] \ar[r]^{\tu} & \pt \ar[d] \\
    \cC \ar[r]^u & \cB G_\sigma.
}
$$
Let $\hG \subseteq G_\sigma$ denote the subgroup generated by the monodromies of the $G_\sigma$-cover $\tcC \to \cC$. For each flag $(\tau, \sigma) \in F(\Sigma)$ (resp. $(\ttau, \iota(\sigma)) \in F(\tSi)$), recall that $\chi_{(\tau, \sigma)}$ (resp. $\chi_{(\ttau, \iota(\sigma))}$) denotes the $G_\sigma$-representation $T_{\fp_\sigma}\fl_{\tau}$ (resp. $T_{\fp_{\iota(\sigma)}} \fl_{\ttau}$). Then, \cite[Lemma 126]{Liu13} states that
$$
    \bh = \frac{\prod_{(\tau, \sigma) \in F(\Sigma)} \Lambda_{\chi_{(\tau, \sigma)}}^\vee(\bw(\tau, \sigma))}{\prod_{(\tau, \sigma) \in F(\Sigma), \hG \subseteq G_\tau}\bw(\tau, \sigma)}, \quad
    \tbh = \frac{\prod_{(\ttau, \iota(\sigma)) \in F(\tSi)} \Lambda_{\chi_{(\ttau, \iota(\sigma))}}^\vee(\tbw(\ttau, \iota(\sigma)))}{\prod_{(\ttau, \iota(\sigma)) \in F(\tSi), \hG \subseteq G_{\ttau}}\tbw(\ttau, \iota(\sigma))}.
$$
For each $(\tau, \sigma) \in F(\Sigma)$, we have $\chi_{(\tau, \sigma)} = \chi_{(\iota(\tau), \iota(\sigma))}$. Thus by \eqref{eqn:TanWtU4RestrictOuter}, we have
$$
    \Lambda_{\chi_{(\iota(\tau), \iota(\sigma))}}^\vee(\tbw(\iota(\tau), \iota(\sigma))) \big|_{\su_4 = 0} = \Lambda_{\chi_{(\tau, \sigma)}}^\vee(\bw(\tau, \sigma)).
$$
Moreover, $\chi_{(\sigma, \iota(\sigma))}$ is trivial, and thus
$$
    \Lambda_{\chi_{(\sigma, \iota(\sigma))}}^\vee(\tbw(\sigma, \iota(\sigma))) = 1.
$$
Finally, since $\hG \subseteq G_\sigma$, $\tbw(\sigma, \iota(\sigma)) = \su_4$ is a factor of the denominator of $\tbh$. The lemma then follows from \eqref{eqn:TanWtU4RestrictOuter}.
\end{proof}

\begin{lemma}[Vertex integrals] \label{lem:IntegralComparison}
Let $\sigma \in \Sigma(3)$, $n \in \bZ_{\ge 1}$, $\vk = (k_1, \dots, k_n) \in G_{\sigma}^n = G_{\iota(\sigma)}^n$ such that $k_1 \cdots k_n = 1$. Let $0 \le n' \le n$, and $\tau_1, \dots, \tau_{n'} \in \Sigma(2)_c$ such that each $\tau_i$ is a facet of $\sigma$ and $k_i \in G_{\tau_i} = G_{\iota(\tau_i)}$. In addition, for each $i = 1, \dots, n'$, let $r_i$ be the order of $k_i$ in $G_{\sigma}$, and take $d_i \in \bZ_{>0}$. Let
$$
    \bh := \int_{\Mbar_{0, \vk}(\cB G_{\sigma})} \frac{1}{\prod_{i = 1}^{n'} \left(\frac{\fr(\tau_i, \sigma)\bw(\tau_i, \sigma)}{r_id_i} - \frac{\bar{\psi}_i}{r_i}\right)}, \quad
    \tbh := \int_{\Mbar_{0, \vk}(\cB G_{\iota(\sigma)})} \frac{1}{\prod_{i = 1}^{n'} \left(\frac{\fr(\iota(\tau_i), \iota(\sigma))\tbw(\iota(\tau_i), \iota(\sigma))}{r_id_i} - \frac{\bar{\psi}_i}{r_i}\right)}.
$$
Then
$$
    \tbh \big|_{\su_4 = 0} = \bh.
$$
\end{lemma}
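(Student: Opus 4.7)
\smallskip

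\noindent\textbf{Proof proposal for Lemma \ref{lem:IntegralComparison}.} The plan is to observe that the two integrals are literally integrals of the same integrand over the same moduli space, once the weight restriction $\su_4 = 0$ is applied. Thus no substantive calculation is required; the content lies entirely in matching the ingredients, which was already prepared in Section \ref{sect:ComparisonOuter}.

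First, I would note that the moduli spaces in the two integrals are identical as stacks: by the comparison of stabilizers recorded in Section \ref{sect:ComparisonOuter}, we have $G_\sigma = G_{\iota(\sigma)}$, and hence $\Mbar_{0,\vk}(\cB G_\sigma) = \Mbar_{0,\vk}(\cB G_{\iota(\sigma)})$. The descendant classes $\bar{\psi}_i$ on this space are intrinsic and do not depend on whether one views $\sigma$ as a cone of $\Sigma$ or $\iota(\sigma)$ as a cone of $\tSi$.

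Next, I would compare the remaining ingredients appearing in the denominators of the integrands. For each $i = 1,\dots, n'$, the integer $r_i$ (the order of $k_i$) and the input $d_i$ are the same in the two expressions. The ramification indices agree, $\fr(\tau_i,\sigma) = \fr(\iota(\tau_i),\iota(\sigma))$, again by the comparison in Section \ref{sect:ComparisonOuter}. Finally, the key identity \eqref{eqn:TanWtU4RestrictOuter} gives
\begin{equation*}
    \tbw(\iota(\tau_i),\iota(\sigma)) \big|_{\su_4 = 0} = \bw(\tau_i,\sigma),
\end{equation*}
so that each factor in the denominator of the $\tbh$-integrand restricts to the corresponding factor of the $\bh$-integrand under $\su_4 = 0$.

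Since the integrand of $\tbh$ restricts termwise to the integrand of $\bh$, and the domain of integration is the same, we conclude $\tbh|_{\su_4 = 0} = \bh$. There is no genuine obstacle here; the only thing to be careful about is making sure that the weight restriction $\su_4 = 0$ does not introduce poles in the integrand, but each factor $\frac{\fr(\iota(\tau_i),\iota(\sigma))\tbw(\iota(\tau_i),\iota(\sigma))}{r_i d_i} - \frac{\bar{\psi}_i}{r_i}$ restricts to a nonzero class in $H^*_{T'}(\Mbar_{0,\vk}(\cB G_\sigma);\bQ) \otimes \cQ_{T'}$ because $\bw(\tau_i,\sigma) \neq 0$, so the restriction is valid and the integral is a well-defined element of $\cQ_{T'}$.
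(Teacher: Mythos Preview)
Your overall approach matches the paper's: both observe that $G_\sigma = G_{\iota(\sigma)}$, $\fr(\tau_i,\sigma) = \fr(\iota(\tau_i),\iota(\sigma))$, and $\tbw(\iota(\tau_i),\iota(\sigma))|_{\su_4=0} = \bw(\tau_i,\sigma)$, so the integrands agree after restriction. The difference is in the justification that $\tbh|_{\su_4=0}$ is actually defined, and here your argument has a gap.

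You argue that each factor $\frac{\fr \tbw}{r_i d_i} - \frac{\bar{\psi}_i}{r_i}$ restricts to a nonzero class because $\bw(\tau_i,\sigma) \neq 0$, and conclude the restriction is valid. This reasoning is fine when $n \ge 3$, where $\Mbar_{0,\vk}(\cB G_\sigma)$ is an honest space and the integral is a polynomial in the $\bw_i^{-1}$ (cf.\ Lemma~\ref{lem:DescendantIntegral}). But for the unstable case $n = n' = 2$, the ``integral'' is the convention \eqref{eqn:UnstableIntegral}, which gives $\frac{1}{|G|(\bw_1+\bw_2)}$. The denominator here is a \emph{sum} of the scaled weights, and it could in principle lie in $\bQ\su_4$ even though neither summand does. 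Checking that individual factors of the integrand are nonzero after restriction does not rule this out. The paper's proof isolates precisely this case: it notes (via Lemma~\ref{lem:DescendantIntegral} and the unstable conventions) that the only potential pole at $\su_4=0$ arises when $n=n'=2$ and a certain linear combination of the $\tbw(\iota(\tau_i),\iota(\sigma))$ lies in $\bQ\su_4$, and then argues that this combination cannot hold. You should add this check to close the gap.
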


\begin{proof}
We first show that $\su_4$ is not a factor of the denominator of $\tbh$, i.e. $\tbh \big|_{\su_4 = 0}$ is defined. To see this, note by \eqref{eqn:TanWtU4RestrictOuter} that $\tbw(\iota(\tau_i),\iota(\sigma)) \not \in \bQ \su_4$. In view of Lemma \ref{lem:DescendantIntegral}, the only case where $\su_4$ appears as a factor in the denominator is when $n = n' = 2$ and
$$
    r_1\tbw(\iota(\tau_1),\iota(\sigma)) + r_2\tbw(\iota(\tau_2),\iota(\sigma)) \in \bQ \su_4,
$$
but this equation cannot hold. Then, the lemma is a direct consequence of \eqref{eqn:TanWtU4RestrictOuter}.
\end{proof}

\begin{lemma} \label{lem:GenFNoPower}
Let the quantity $\bh$ be as defined in Lemma \ref{lem:EdgeComparison}, \ref{lem:FlagComparison}, \ref{lem:VertexComparison}, or \ref{lem:IntegralComparison}. Then for generic $f \in \bQ$, $\su_2 - f\su_1$ is not a factor of $\bh$.
\end{lemma}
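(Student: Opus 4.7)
The strategy is to observe that in each of the four cases, $\bh$ is a rational function in $\su_1, \su_2$ (with coefficients in a Chow ring of a moduli space, in the vertex cases) that is entirely independent of the framing $f \in \bZ$, and then to invoke a standard finiteness argument.

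First, I would recall that every tangent $T'$-weight $\bw(\tau, \sigma)$ associated to a flag $(\tau, \sigma) \in F(\Sigma)$ is a linear form $c_1(\tau, \sigma)\su_1 + c_2(\tau, \sigma)\su_2$ whose coefficients $c_1, c_2 \in \bQ$ are determined entirely by the combinatorial data of the flag and the preferred basis $\{v_1, v_2, v_3\}$ of $N$ fixed in Section \ref{sect:AVBrane}; none of the $c_i(\tau, \sigma)$ involves $f$. This is because the framing only enters the subtorus $T_f \subset T'$ and not the $T'$-action on $\cX$ itself.

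Next, I would write down the explicit form of $\bh$ in each of the four cases:
\begin{itemize}
\item In Lemma \ref{lem:EdgeComparison}, \cite[Lemma 130]{Liu13} expresses $\bh$ as an explicit rational function of tangent $T'$-weights $\bw(\tau_e, \sigma_v), \bw(\tau_e, \sigma_{v'})$ and of weights $\bw(\tau', \sigma_v), \bw(\tau', \sigma_{v'})$ along the transverse facets.
\item In Lemma \ref{lem:FlagComparison}, $\bh$ is manifestly a monomial in the weights $\bw(\tau, \sigma)$.
\item In Lemma \ref{lem:VertexComparison}, \cite[Lemma 126]{Liu13} gives $\bh$ as a polynomial in Hodge classes $\lambda_j^\chi$ with coefficients that are rational functions of tangent $T'$-weights; the Hodge classes are $f$-independent (and $\su_i$-independent) cohomology classes on $\Mbar_{0,\vk}(\cB G_\sigma)$.
\item In Lemma \ref{lem:IntegralComparison}, applying Lemma \ref{lem:DescendantIntegral} reduces $\bh$ to a rational function whose numerator and denominator are polynomials in the tangent weights $\bw(\tau_i, \sigma)$.
\end{itemize}
In every case the resulting expression takes values in the Chow ring of the relevant moduli space tensored with $\bQ(\su_1, \su_2)$, and is manifestly independent of $f$.

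Finally, I would conclude as follows. Writing $\bh = \sum_j \alpha_j R_j(\su_1, \su_2)$ with $\alpha_j$ ranging over a $\bQ$-basis of the ambient Chow ring and each $R_j \in \bQ(\su_1, \su_2)$, I need to show that for generic $f$ the linear form $\su_2 - f\su_1$ divides neither the numerator nor the denominator of any $R_j$. For each $R_j = P_j/Q_j$ in lowest terms, $\su_2 - f\su_1$ divides $P_j$ or $Q_j$ only if it is proportional to some linear factor of $P_j Q_j$ of the form $a\su_1 + b\su_2$ with $b \neq 0$, which fixes $f = -a/b$. Since $P_j$ and $Q_j$ have only finitely many irreducible linear factors, the set of excluded $f$ values coming from a single $R_j$ is finite; since there are only finitely many nonzero $R_j$, the total set of excluded $f$ is finite. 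Any $f \in \bZ$ avoiding this finite set satisfies the conclusion.

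The argument is essentially a book-keeping exercise; the only mild subtlety is the stable-vertex case, where one must notice that the Hodge class coefficients are inert with respect to the $(\su_1, \su_2)$-divisibility analysis, so the question reduces to the rational tangent-weight coefficients, handled uniformly with the other three cases.
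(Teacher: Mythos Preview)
Your proposal is correct and follows exactly the same approach as the paper: the paper's own proof is the single sentence ``The lemma follows from that the data defining $\bh$ is independent of $f$,'' and you have simply unpacked that sentence by checking each case and spelling out the finiteness argument. There is nothing to add.
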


\begin{proof}
The lemma follows from that the data defining $\bh$ is independent of $f$.
\end{proof}

\subsection{Fixed points corresponding to cones in \texorpdfstring{$\tSi(4) \setminus \iota(\Sigma(3))$}{Sigma}}\label{sect:TanWtsExtraConesOuter}
In this section, we study the tangent $\tT'$-weights at a fixed point $\fp_{\tsi}$ in $\tcX$ corresponding to a cone $\tsi \in \tSi(4) \setminus \iota(\Sigma(3))$. In addition, we study the contributions of vertices and flags associated to such fixed points to the localization computations of the closed invariants (Proposition \ref{prop:ClosedLocalResultOuter}). See Section \ref{sect:ComparisonOuter} for a description of such cones, their associated flags, and stablizers.

First, the tangent $\tT'$-weights at $\fp_{\tsi}$ are given by
\begin{align*}
    \tbw(\iota(\delta_0(\tsi)), \tsi) &= \frac{n_{i_3(\tsi)} - n_{i_2(\tsi)}}{|G_{\tsi}|}\su_1 + \frac{m_{i_2(\tsi)} - m_{i_3(\tsi)}}{|G_{\tsi}|} \su_2 + \frac{m_{i_2(\tsi)}n_{i_3(\tsi)} - m_{i_3(\tsi)}n_{i_2(\tsi)}}{|G_{\tsi}|} \su_4,\\
    \tbw(\delta_4(\tsi), \tsi) &=  \frac{n_{i_2(\tsi)} - n_{i_3(\tsi)}}{|G_{\tsi}|}\su_1 + \frac{m_{i_3(\tsi)} - m_{i_2(\tsi)}}{|G_{\tsi}|} \su_2 + \frac{|G_{\tsi}| + m_{i_3(\tsi)}n_{i_2(\tsi)} - m_{i_2(\tsi)}n_{i_3(\tsi)}}{|G_{\tsi}|} \su_4,\\
    \tbw(\delta_2(\tsi), \tsi) &=  -\frac{\fb}{|G_{\tsi}|}\su_1 + \frac{\fa}{|G_{\tsi}|} \su_2 + \frac{-\fb m_{i_3(\tsi)} + \fa n_{i_3(\tsi)}}{|G_{\tsi}|} \su_4,\\
    \tbw(\delta_3(\tsi), \tsi) &= \frac{\fb}{|G_{\tsi}|}\su_1 - \frac{\fa}{|G_{\tsi}|} \su_2 + \frac{\fb m_{i_2(\tsi)} - \fa n_{i_2(\tsi)}}{|G_{\tsi}|} \su_4.
\end{align*}
Note that $\tbw(\iota(\delta_0(\tsi)), \tsi) \big|_{\su_4 = 0}$ and $\tbw(\delta_4(\tsi), \tsi) \big|_{\su_4 = 0}$ are nonzero and independent of $f$, and 
$$
    \tbw(\delta_2(\tsi), \tsi) \big|_{\su_4 = 0} = \frac{\fa}{|G_{\tsi}|}(\su_2-f\su_1), \quad \tbw(\delta_3(\tsi), \tsi) \big|_{\su_4 = 0} = -\frac{\fa}{|G_{\tsi}|}(\su_2-f\su_1).
$$

We now consider maps from an irreducible twisted curve to a $\tT'$-fixed point in $\tcX$ corresponding to a cone in $\tSi(4) \setminus \iota(\Sigma(3))$. In particular, we study the powers of $\su_4$ and $\su_2 - f\su_1$ in the contributions of such maps to the localization computations of the closed invariants (Proposition \ref{prop:ClosedLocalResultOuter}).

\begin{lemma}[Flags] \label{lem:ExtraFlagOuter}
Let $\tsi \in \tSi(4) \setminus \iota(\Sigma(3))$ and $k \in G_{\tsi}$. Let
$$
    \tbh := e_{\tT'}\left((T_{\fp_{\tsi}}\tcX)^{k}\right).
$$
Then $\su_4$ is not a factor of $\tbh$. Moreover, for generic $f \in \bQ$, if $k = 1$, then the power of $\su_2 - f\su_1$ in $\tbh \big|_{\su_4 = 0}$ is $2$; otherwise, $\su_2 - f\su_1$ is not a factor of $\tbh \big|_{\su_4 = 0}$.
\end{lemma}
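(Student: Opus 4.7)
The plan is to expand $\tbh$ as a product over facets of $\tsi$ whose stabilizer contains $k$, then read off the $\su_4$-behavior and the $(\su_2-f\su_1)$-behavior directly from the explicit tangent weights computed just before the lemma statement. Concretely, as in the proof of Lemma~\ref{lem:FlagComparison}, we have
\[
\tbh \;=\; \prod_{\substack{(\ttau,\tsi)\in F(\tSi)\\ k\in G_{\ttau}}} \tbw(\ttau,\tsi),
\]
where $\ttau$ ranges over the four facets $\iota(\delta_0(\tsi))$, $\delta_2(\tsi)$, $\delta_3(\tsi)$, $\delta_4(\tsi)$ of $\tsi$.

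The next step is to split into cases using the stabilizer data recorded in Section~\ref{sect:TanWtsExtraConesOuter}, namely that $G_{\delta_2(\tsi)} = G_{\delta_3(\tsi)} = \{1\}$ and $G_{\iota(\delta_0(\tsi))} = G_{\delta_4(\tsi)}$. This gives three cases:
\begin{enumerate}
    \item $k=1$: all four weights appear in $\tbh$;
    \item $1\ne k\in G_{\iota(\delta_0(\tsi))}$: only $\tbw(\iota(\delta_0(\tsi)),\tsi)$ and $\tbw(\delta_4(\tsi),\tsi)$ appear;
    \item $k\notin G_{\iota(\delta_0(\tsi))}$: $\tbh=1$ is an empty product.
\end{enumerate}

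For the claim that $\su_4$ is not a factor of $\tbh$, I would restrict each relevant weight to $\su_4=0$ using the explicit formulas preceding the lemma. The weights $\tbw(\iota(\delta_0(\tsi)),\tsi)\big|_{\su_4=0}$ and $\tbw(\delta_4(\tsi),\tsi)\big|_{\su_4=0}$ are nonzero because $b_{i_2(\tsi)}\ne b_{i_3(\tsi)}$ forces $(m_{i_3(\tsi)}-m_{i_2(\tsi)}, n_{i_3(\tsi)}-n_{i_2(\tsi)})\ne(0,0)$; the weights $\tbw(\delta_2(\tsi),\tsi)\big|_{\su_4=0}=\tfrac{1}{|G_{\tsi}|}(\su_2-f\su_1)$ and $\tbw(\delta_3(\tsi),\tsi)\big|_{\su_4=0}=-\tfrac{1}{|G_{\tsi}|}(\su_2-f\su_1)$ are nonzero for generic $f$. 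So in every case $\tbh\big|_{\su_4=0}\ne 0$.

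For the power of $\su_2-f\su_1$ in $\tbh\big|_{\su_4=0}$: in case (1), exactly the two factors $\tbw(\delta_2(\tsi),\tsi)\big|_{\su_4=0}$ and $\tbw(\delta_3(\tsi),\tsi)\big|_{\su_4=0}$ each contribute a single linear factor of $\su_2-f\su_1$, while the other two restricted weights are independent of $f$ and so, for generic $f$ (avoiding the finite set of slopes $m_{i_3(\tsi)}-m_{i_2(\tsi)}=0$ being irrelevant and the analogous generic avoidance for $\delta_4$), do not contain $\su_2-f\su_1$ as a factor; this yields power $2$. In cases (2) and (3) the only surviving weights are independent of $f$, so for generic $f$ they contain no factor of $\su_2-f\su_1$, giving power $0$. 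The only real subtlety is bookkeeping the genericity condition on $f$ uniformly across cases, but since in each case only finitely many slopes (coming from $\delta_0$ and $\delta_4$ weights for the cones $\tsi\in\tSi(4)\setminus\iota(\Sigma(3))$) need to be avoided, this poses no genuine obstacle.
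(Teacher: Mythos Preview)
Your proof is correct and follows essentially the same approach as the paper: expand $\tbh$ as the product of tangent weights over facets $\ttau$ with $k\in G_{\ttau}$, then read off the $\su_4$- and $(\su_2-f\su_1)$-behavior from the explicit weight formulas and the stabilizer data $G_{\delta_2(\tsi)}=G_{\delta_3(\tsi)}=\{1\}$. Your three-case split is slightly more granular than the paper's two-case split (the paper simply distinguishes $k=1$ versus $k\neq 1$), but the argument is otherwise identical.
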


\begin{proof}
We have
$$
    \tbh = \prod_{(\ttau, \tsi) \in F(\tSi), k \in G_{\ttau}} \tbw(\ttau, \tsi).
$$
Since none of $\tbw(\ttau, \tsi)$ is a rational multiple of $\su_4$, $\su_4$ is not a factor of $\tbh$. Now for generic $f \in \bQ$, neither of $\tbw(\iota(\delta_0(\tsi)), \tsi) \big|_{\su_4 = 0}$, $\tbw(\delta_4(\tsi), \tsi) \big|_{\su_4 = 0}$ is a rational multiple of $\su_2 - f\su_1$. If $k = 1$, then $\left(\tbw(\delta_2(\tsi), \tsi)\tbw(\delta_3(\tsi), \tsi)\right) \big|_{\su_4 = 0}$, which is a rational multiple of $(\su_2 - f\su_1)^2$, is a factor of $\tbh \big|_{\su_4 = 0}$. Otherwise, $k \neq 1$ is not contained in the trivial group $G_{\delta_2(\tsi)} = G_{\delta_3(\tsi)} = \{1\}$, which implies that $\su_2 - f\su_1$ is not a factor of $\tbh \big|_{\su_4=0}$.
\end{proof}

\begin{lemma}[Stable vertices]\label{lem:ExtraVertexOuter}
Let $\tsi \in \tSi(4) \setminus \iota(\Sigma(3))$, $n \in \bZ_{\ge 3}$, $\vk = (k_1, \dots, k_n) \in G_{\tsi}^n$ such that $k_1 \cdots k_n = 1$, and
$$
    u: (\cC, \fr_1, \dots, \fr_n) \to \fp_{\tsi} \subset \tcX
$$
be a morphism that represents a point in $\Mbar_{0, \vk}(\cB G_{\tsi})$. Let
$$
    \tbh := \frac{e_{\tT'}(H^1(\cC,u^*T\tcX)^m)}{e_{\tT'}(H^0(\cC,u^*T\tcX)^m)}.
$$
Then $\su_4$ is not a factor of the denominator of $\tbh$, i.e. $\tbh \big|_{\su_4=0}$ is defined. Moreover, for generic $f \in \bQ$, if $k_1 = \cdots = k_n = 1$, then the power of $\su_2 - f\su_1$ in $\tbh \big|_{\su_4 = 0}$ is at least $-2$; otherwise, the power of $\su_2 - f\su_1$ in $\tbh \big|_{\su_4 = 0}$ is at least $0$, i.e. $\su_2 - f\su_1$ is not a factor of the denominator of $\tbh \big|_{\su_4 = 0}$.
\end{lemma}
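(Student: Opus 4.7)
The plan is to apply the explicit formula for the vertex contribution from \cite[Lemma 126]{Liu13}, exactly as used in the proof of Lemma \ref{lem:VertexComparison}. That formula expresses $\tbh$ as
$$
\tbh = \frac{\displaystyle \prod_{(\ttau, \tsi) \in F(\tSi)} \Lambda^\vee_{\chi_{(\ttau, \tsi)}}(\tbw(\ttau, \tsi))}{\displaystyle \prod_{\substack{(\ttau, \tsi) \in F(\tSi) \\ \hG \subseteq G_{\ttau}}} \tbw(\ttau, \tsi)},
$$
where $\hG \subseteq G_{\tsi}$ is the subgroup generated by the monodromies of the $G_{\tsi}$-cover $\tcC \to \cC$. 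The four facets of $\tsi$ are $\iota(\delta_0(\tsi)), \delta_2(\tsi), \delta_3(\tsi), \delta_4(\tsi)$, and their tangent weights, written out explicitly in Section \ref{sect:TanWtsExtraConesOuter}, will be the main input.

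For the claim that $\su_4$ is not a factor of the denominator of $\tbh$, I would simply inspect the four tangent weights: each has a nonzero $(\su_1, \su_2)$-component at $\su_4 = 0$, so none vanishes under the restriction $\su_4 = 0$. Since every $\Lambda^\vee_\chi(\tbw)$ is polynomial in $\tbw$ with Hurwitz--Hodge class coefficients (which are independent of $\su_4$), no $\su_4$-denominator is introduced in the specialization.

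For the bound on the power of $\su_2 - f\su_1$, I would use that $\tbw(\iota(\delta_0),\tsi)$ and $\tbw(\delta_4,\tsi)$ are, at generic $f$ and at $\su_4=0$, not rational multiples of $\su_2-f\su_1$, while $\tbw(\delta_2,\tsi)|_{\su_4=0} = -\tbw(\delta_3,\tsi)|_{\su_4=0} = (\su_2 - f\su_1)/|G_{\tsi}|$. Since $G_{\delta_2(\tsi)} = G_{\delta_3(\tsi)} = \{1\}$, the weights $\tbw(\delta_2,\tsi), \tbw(\delta_3,\tsi)$ appear in the denominator of $\tbh$ if and only if $\hG = \{1\}$. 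If some $k_i \neq 1$, then $\hG \supseteq \langle k_i\rangle \neq \{1\}$, so neither weight is in the denominator, and the two numerator factors $\Lambda^\vee_{\chi_{(\delta_j,\tsi)}}(\tbw(\delta_j,\tsi))|_{\su_4=0}$ for $j=2,3$ are polynomials of non-negative degree in $(\su_2-f\su_1)$; hence the power is $\geq 0$. If instead $k_1 = \cdots = k_n = 1$, it is still possible that $\hG \neq \{1\}$ (from node monodromies), and in that case the same argument yields a power $\geq 0 \geq -2$; otherwise $\hG = \{1\}$, all four facets contribute to the denominator, and $\tbw(\delta_2,\tsi)\tbw(\delta_3,\tsi)|_{\su_4=0}$ contributes a power of $-2$.

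The main obstacle will be the remaining numerator contribution in the subcase $\hG = \{1\}$: because $\tbw(\delta_2,\tsi)|_{\su_4=0} = -\tbw(\delta_3,\tsi)|_{\su_4=0}$, the characters $\chi_{(\delta_2,\tsi)}$ and $\chi_{(\delta_3,\tsi)}$ of $G_{\tsi}$ are mutually dual, so Mumford's relation (Lemma \ref{lem:Mumford}) collapses $\Lambda^\vee_{\chi_{(\delta_2,\tsi)}}(\tbw(\delta_2,\tsi))\Lambda^\vee_{\chi_{(\delta_3,\tsi)}}(\tbw(\delta_3,\tsi))|_{\su_4=0}$ into a signed monomial in $(\su_2-f\su_1)$ of degree $r_2 + r_3 \geq 0$ (where $r_j = \rank \bE_{\chi_{(\delta_j,\tsi)}}$). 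Combined with the denominator contribution, the total $\su_2-f\su_1$-power is $\geq -2$, as claimed. The bookkeeping in the degenerate (rank $-1$) regime requires some care and can be verified directly on an irreducible rational twisted curve, where $H^0(\cC, u^*T\tcX) = T_{\fp_{\tsi}}\tcX$ and $H^1 = 0$ give $\tbh = \prod_{(\ttau,\tsi)} 1/\tbw(\ttau,\tsi)$, which realizes the bound $-2$.
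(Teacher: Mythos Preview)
Your proof is correct and follows essentially the same approach as the paper: apply \cite[Lemma 126]{Liu13}, observe that none of the tangent weights $\tbw(\ttau,\tsi)$ is a rational multiple of $\su_4$, and then case-split on whether $\hG$ is trivial to decide whether $\tbw(\delta_2(\tsi),\tsi)$ and $\tbw(\delta_3(\tsi),\tsi)$ appear in the denominator. Your additional analysis of the numerator via Mumford's relation, and the subcase ``all $k_i=1$ but $\hG\neq\{1\}$ from node monodromies'' (which cannot occur in genus zero), go beyond what the paper records or needs for the stated lower bounds, but they do no harm.
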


\begin{proof}
Similar to the proof of Lemma \ref{lem:VertexComparison}, we use \cite[Lemma 126]{Liu13} to compute $\tbh$ as
$$
    \tbh = \frac{\prod_{(\ttau, \tsi) \in F(\tSi)} \Lambda_{\chi_{(\ttau, \tsi)}}^\vee(\tbw(\ttau, \tsi))}{\prod_{(\ttau, \tsi) \in F(\tSi), \hG \subseteq G_{\ttau}}\tbw(\ttau, \tsi)}
$$
where $\hG \subseteq G_{\tsi}$ denotes the subgroup generated by the monodromies of the $G_{\tsi}$-cover $\tcC \to \cC$ pulled back from $\pt \to \cB G_{\tsi}$ under $u$. We focus on the denominator of $\tbh$. Observe first that none of $\tbw(\ttau, \tsi)$ is a rational multiple of $\su_4$, which implies that $\su_4$ is not a factor of the denominator of $\tbh$. Now for generic $f \in \bQ$, neither of $\tbw(\iota(\delta_0(\tsi)), \tsi) \big|_{\su_4 = 0}$, $\tbw(\delta_4(\tsi), \tsi) \big|_{\su_4 = 0}$ is a rational multiple of $\su_2 - f\su_1$. If $k_1 = \cdots = k_n = 1$, the group $\hG$ is trivial, and thus $\left(\tbw(\delta_2(\tsi), \tsi)\tbw(\delta_3(\tsi), \tsi)\right) \big|_{\su_4 = 0}$, which is a rational multiple of $(\su_2 - f\su_1)^2$, is a factor of the denominator of $\tbh$ after the restriction $\su_4 = 0$. Otherwise, $\hG$ is non-trivial and is not contained in the trivial group $G_{\delta_2(\tsi)} = G_{\delta_3(\tsi)} = \{1\}$, which implies that $\su_2 - f\su_1$ is not a factor of the denominator of $\tbh$ after the restriction $\su_4 = 0$.
\end{proof}

\begin{lemma}[Vertices labeled by $\tsi_0$ with all non-trivial twistings]\label{lem:VertexTsi0Outer}
Let $n \in \bZ_{\ge 3}$, $\vk = (k_1, \dots, k_n) \in G_{\tsi_0}^n$ such that $k_1 \cdots k_n = 1$ and $k_i \neq 1$ for all $i$, and
$$
    u: (\cC, \fr_1, \dots, \fr_n) \to \fp_{\tsi_0} \subset \tcX
$$
be a morphism that represents a point in $\Mbar_{0, \vk}(\cB G_{\tsi_0})$. Let
$$
    \tbh := \frac{e_{\tT'}(H^1(\cC,u^*T\tcX)^m)}{e_{\tT'}(H^0(\cC,u^*T\tcX)^m)}.
$$
Then for generic $f \in \bQ$, the power of $\su_2 - f\su_1$ in $\tbh \big|_{\su_4 = 0}$ is $n-2 \ge 1$.
\end{lemma}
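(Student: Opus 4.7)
My plan is to apply the explicit formula of \cite[Lemma 126]{Liu13} for $\tbh$, which expresses it as a ratio: the numerator is a product $\prod \Lambda_{\chi_{(\ttau,\tsi_0)}}^{\vee}(\tbw(\ttau,\tsi_0))$ over the four facets $\ttau$ of $\tsi_0$, and the denominator is $\prod \tbw(\ttau,\tsi_0)$ taken over those facets for which the subgroup $\hG \subseteq G_{\tsi_0}=\mu_{\fm}$ generated by $k_1,\ldots,k_n$ satisfies $\hG \subseteq G_{\ttau}$. I will analyze the numerator and denominator separately at $\su_4=0$ and extract the power of $\su_2-f\su_1$.

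For the denominator, $\hG$ is nontrivial because every $k_i$ is nontrivial in the cyclic group $\mu_\fm$, so $\hG \not\subseteq G_{\delta_2(\tsi_0)}=G_{\delta_3(\tsi_0)}=\{1\}$; only the flags $(\iota(\tau_0),\tsi_0)$ and $(\delta_4(\tsi_0),\tsi_0)$, whose stabilizers coincide with the full $\mu_\fm$, contribute. At $\su_4=0$ these two weights are $-\su_1$ and $\su_1$, so the denominator equals $-\su_1^2$ and contains no factor of $\su_2-f\su_1$.

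For the numerator, the essential input is the four characters of $G_{\tsi_0}=\mu_\fm$. Since $\fr(\iota(\tau_0),\tsi_0)=\fr(\delta_4(\tsi_0),\tsi_0)=1$, the characters $\chi_{(\iota(\tau_0),\tsi_0)}$ and $\chi_{(\delta_4(\tsi_0),\tsi_0)}$ are trivial, so by the discussion following \eqref{eqn:HHBundleRank} the corresponding Hurwitz-Hodge bundles vanish and $\Lambda^\vee=1$. On the other hand, $\fr(\delta_2(\tsi_0),\tsi_0)=\fr(\delta_3(\tsi_0),\tsi_0)=\fm$, so $\chi_{(\delta_2(\tsi_0),\tsi_0)}$ and $\chi_{(\delta_3(\tsi_0),\tsi_0)}$ are faithful characters of $\mu_\fm$. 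Using the $\Box$-representatives of the elements of $\mu_\fm$ of the form $\frac{k}{\fm}\tb_2+\frac{\fm-k}{\fm}\tb_3$ for $k=0,\ldots,\fm-1$, a direct computation shows that these two characters are mutually dual.

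Finally, at $\su_4=0$ the two nontrivial weights $\frac{1}{\fm}(\su_2-f\su_1)$ and $-\frac{1}{\fm}(\su_2-f\su_1)$ are negatives of each other, so Mumford's relation (Lemma \ref{lem:Mumford}) collapses $\Lambda_{\chi_{(\delta_2(\tsi_0),\tsi_0)}}^\vee \cdot \Lambda_{\chi_{(\delta_3(\tsi_0),\tsi_0)}}^\vee\big|_{\su_4=0}$ to a monomial in $(\su_2-f\su_1)$ of degree $r_2+r_3$, where $r_i$ is the rank of the Hurwitz-Hodge bundle for $\chi_{(\delta_i(\tsi_0),\tsi_0)}$. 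Using \eqref{eqn:HHBundleRank} and the observation that the duality together with the nontriviality of every $k_j$ forces the two age contributions $c_j^{(2)}$ and $c_j^{(3)}$ to sum to $1$ for every $j$, I obtain $r_2+r_3=-2+n=n-2$. Combining with the denominator analysis, $\tbh|_{\su_4=0}$ is a nonzero rational multiple of $(\su_2-f\su_1)^{n-2}/\su_1^2$, giving the desired power. The only real subtlety is verifying the duality of the two faithful characters, which the $\Box$-computation above handles cleanly; everything else is a bookkeeping application of the cited formulae.
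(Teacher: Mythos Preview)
Your proof is correct and follows essentially the same route as the paper's: apply \cite[Lemma 126]{Liu13}, show the denominator reduces to $(-\su_1)(\su_1+\su_4)$ because $\hG$ is nontrivial while $G_{\delta_2(\tsi_0)}=G_{\delta_3(\tsi_0)}=\{1\}$, note the two trivial characters contribute $\Lambda^\vee=1$, and then use Mumford's relation (Lemma \ref{lem:Mumford}) on the remaining dual pair of characters at $\su_4=0$ to obtain the monomial $(\su_2-f\su_1)^{r_2+r_3}$ with $r_2+r_3=n-2$ via \eqref{eqn:HHBundleRank}. Your explicit verification of the duality via the $\Box(\tau_0)$-representatives $\tfrac{k}{\fm}\tb_2+\tfrac{\fm-k}{\fm}\tb_3$ and your spelled-out reason for $c_j^{(2)}+c_j^{(3)}=1$ are details the paper leaves implicit, but the argument is the same.
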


\begin{proof}
Similar to the proof of Lemma \ref{lem:ExtraVertexOuter}, we use \cite[Lemma 126]{Liu13} and that $k_i \neq 1$ for all $i$ to compute $\tbh$ as 
$$
    \tbh = \begin{cases}
        \displaystyle{ \frac{\prod_{(\ttau, \tsi_0) \in F(\tSi)} \Lambda_{\chi_{(\ttau, \tsi_0)}}^\vee(\tbw(\ttau, \tsi_0))}{\tbw(\iota(\tau_0), \tsi_0)\tbw(\delta_4(\tsi_0), \tsi_0)} = \frac{\prod_{(\ttau, \tsi_0) \in F(\tSi)} \Lambda_{\chi_{(\ttau, \tsi_0)}}^\vee(\tbw(\ttau, \tsi_0))}{(-\frac{1}{\fa}\su_1)(\frac{1}{\fa}\su_1 + \su_4)} } & \text{if } \age(k_i) = 1 \text{ for all } i,\\
        \prod_{(\ttau, \tsi_0) \in F(\tSi)} \Lambda_{\chi_{(\ttau, \tsi_0)}}^\vee(\tbw(\ttau, \tsi_0)) & \text{otherwise.}\\
    \end{cases}
$$
We focus on the term $\prod_{(\ttau, \tsi_0) \in F(\tSi)} \Lambda_{\chi_{(\ttau, \tsi_0)}}^\vee(\tbw(\ttau, \tsi_0))$. Observe first that $\fr(\iota(\tau_0), \tsi_0) = \fr(\delta_4(\tsi_0), \tsi_0) = 1$, which implies that $\chi_{(\iota(\tau_0), \tsi_0)}$ and $\chi_{(\delta_4(\tsi_0), \tsi_0)}$ are trivial. Then,
$$
    \Lambda_{\chi_{(\iota(\tau_0), \tsi_0)}}^\vee(\tbw(\iota(\tau_0), \tsi_0)) = \Lambda_{\chi_{(\delta_4(\tsi_0), \tsi_0)}}^\vee(\tbw(\delta_4(\tsi_0), \tsi_0)) = 1.
$$
Moreover, the product of the isomorphisms
$$
    \chi_{(\delta_2(\tsi_0), \tsi_0)}, \chi_{(\delta_3(\tsi_0), \tsi_0)}: G_{\tsi_0} \to \mu_{\fm} \subset \bC^*
$$
is trivial. Lemma \ref{lem:Mumford} then implies that
\begin{align*}
    & \left( \Lambda_{\chi_{(\delta_2(\tsi_0), \tsi_0)}}^\vee(\tbw(\delta_2(\tsi_0), \tsi_0)) \Lambda_{\chi_{(\delta_3(\tsi_0), \tsi_0)}}^\vee(\tbw(\delta_3(\tsi_0), \tsi_0)) \right) \big|_{\su_4 = 0} \\
        &= \Lambda_{\chi_{(\delta_2(\tsi_0), \tsi_0)}}^\vee(\tfrac{\su_2 - f\su_1}{\fm}) \Lambda_{\chi_{(\delta_3(\tsi_0), \tsi_0)}}^\vee(-\tfrac{\su_2 - f\su_1}{\fm})\\
        &= \pm \left(\frac{\su_2 - f\su_1}{\fm}\right)^{\rank(\bE_{\chi_{(\delta_2(\tsi_0), \tsi_0)}}) + \rank(\bE_{\chi_{(\delta_3(\tsi_0), \tsi_0)}})}.
\end{align*}
Finally, since $k_i \neq 1$ for each $i = 1, \dots, n$, we have by \eqref{eqn:HHBundleRank} that
$$
    \rank(\bE_{\chi_{(\delta_2(\tsi_0), \tsi_0)}}) + \rank(\bE_{\chi_{(\delta_3(\tsi_0), \tsi_0)}}) = n-2.
$$
\end{proof}

\begin{lemma}[Vertex integrals] \label{lem:ExtraIntegralOuter}
Let $\tsi \in \tSi(4) \setminus \iota(\Sigma(3))$, $n \in \bZ_{\ge 1}$, $\vk = (k_1, \dots, k_n) \in G_{\tsi}^n$ such that $k_1 \cdots k_n = 1$. Let $0 \le n' \le n$, and $\ttau_1, \dots, \ttau_{n'} \in \tSi(3)_c$ such that each $\ttau_i$ is a facet of $\tsi$ and $k_i \in \pi_{(\ttau_i, \tsi)}(G_{\ttau_i})$. In addition, for each $i = 1, \dots, n'$, let $r_i$ be the order of $k_i$ in $G_{\tsi}$, and take $d_i \in \bZ_{>0}$. Let
$$
    \tbh := \int_{\Mbar_{0, \vk}(\cB G_{\tsi})} \frac{1}{\prod_{i = 1}^{n'} \left(\frac{\fr(\ttau_i, \tsi)\tbw(\ttau_i, \tsi)}{r_id_i} - \frac{\bar{\psi}_i}{r_i}\right)}.
$$
If $n = n' = 2$ and $\{\ttau_1, \ttau_2\} = \{\delta_2(\tsi), \delta_3(\tsi)\}$, then the power of $\su_4$ in $\tbh$ is $-1$; otherwise, $\su_4$ is not a factor of $\tbh$. Moreover, if $\ttau_i \in \iota(\Sigma(2))$ for all $i$ and $f \in \bQ$ is generic, then $\su_2 - f\su_1$ is not a factor of $\tbh \big|_{\su_4 = 0}$.
\end{lemma}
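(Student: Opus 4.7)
The plan is to evaluate $\tbh$ via the explicit Hurwitz--Hodge descendant integration formula of Lemma~\ref{lem:DescendantIntegral} (together with the unstable conventions \eqref{eqn:UnstableIntegral}), and then to read off the orders of vanishing/poles from the explicit expressions for $\tbw(\ttau,\tsi)$ listed in Section~\ref{sect:TanWtsExtraConesOuter}.

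First I would reduce to the cases that matter. For $n=1$ the unstable convention yields a polynomial expression in the $\bw_i$; for $n=2$ with $n'\le 1$ we get $\tbh = 1/|G_\tsi|$; for $n=2$ with $n'=2$ we get $\tbh = 1/(|G_\tsi|(\bw_1+\bw_2))$; and for $n\ge 3$, Lemma~\ref{lem:DescendantIntegral} gives
\[
\tbh \;=\; \frac{1}{|G_\tsi|\prod_{i=1}^{n'}\bw_i}\Bigl(\sum_{i=1}^{n'}\frac{1}{\bw_i}\Bigr)^{n-3} \;=\; \frac{\bigl(\sum_{i=1}^{n'}\prod_{j\ne i}\bw_j\bigr)^{n-3}}{|G_\tsi|\bigl(\prod_{i=1}^{n'}\bw_i\bigr)^{n-2}}.
\]
Thus the only potential denominator factors are $\prod_i\bw_i$ and, in the unstable $n=n'=2$ case, the sum $\bw_1+\bw_2$.

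Second, for the $\su_4$-analysis I would inspect the four facets $\ttau\in\{\iota(\delta_0(\tsi)),\delta_2(\tsi),\delta_3(\tsi),\delta_4(\tsi)\}$ (noting that the extension of the triangulation of $\tP_0$ by simplices containing both $R+1$ and $R+2$ forces $\delta_4(\tsi)\notin\tSi(3)_c$, so it cannot appear as an edge label). None of $\tbw(\iota(\delta_0(\tsi)),\tsi)$, $\tbw(\delta_2(\tsi),\tsi)$, $\tbw(\delta_3(\tsi),\tsi)$ is a rational multiple of $\su_4$, so $\prod_i\bw_i$ has no $\su_4$-factor; in particular $\tbh$ is regular along $\su_4=0$ in the $n\ge 3$ case. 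For $n=n'=2$, the key computation—direct from the explicit formulas together with the relation $|G_\tsi| = f(m_{i_3(\tsi)}-m_{i_2(\tsi)})+(n_{i_2(\tsi)}-n_{i_3(\tsi)})$—is the exact identity
\[
\tbw(\delta_2(\tsi),\tsi) + \tbw(\delta_3(\tsi),\tsi) \;=\; -\su_4,
\]
which shows that $\bw_1+\bw_2$ vanishes to order at most one along $\su_4=0$ when $\{\ttau_1,\ttau_2\}=\{\delta_2(\tsi),\delta_3(\tsi)\}$, giving a pole of order at most one. For every other admissible pair (namely $\{\iota(\delta_0),\iota(\delta_0)\}$, $\{\iota(\delta_0),\delta_2\}$, $\{\iota(\delta_0),\delta_3\}$, $\{\delta_2,\delta_2\}$, $\{\delta_3,\delta_3\}$), a short computation shows $(\bw_1+\bw_2)|_{\su_4=0}$ is non-zero: the only non-trivial check, for $\{\iota(\delta_0),\delta_2\}$ (and symmetrically $\{\iota(\delta_0),\delta_3\}$), reduces to verifying that $\tbw(\iota(\delta_0(\tsi)),\tsi)|_{\su_4=0}$ and $\tbw(\delta_2(\tsi),\tsi)|_{\su_4=0}$ are linearly independent, which follows precisely because proportionality would force $|G_\tsi|=0$.

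Third, under the hypothesis $\ttau_i\in\iota(\Sigma(2))$ for all $i$, the only facet of $\tsi$ lying in $\iota(\Sigma(2))$ is $\iota(\delta_0(\tsi))$, so every $\ttau_i=\iota(\delta_0(\tsi))$ and every $\bw_i$ is a positive rational multiple of $\tbw(\iota(\delta_0(\tsi)),\tsi)$. Its restriction to $\su_4=0$ equals
\[
\bigl((n_{i_3(\tsi)}-n_{i_2(\tsi)})\su_1 + (m_{i_2(\tsi)}-m_{i_3(\tsi)})\su_2\bigr)/|G_\tsi|,
\]
which is proportional to $\su_2-f\su_1$ only for the single rational value $f=(n_{i_2(\tsi)}-n_{i_3(\tsi)})/(m_{i_3(\tsi)}-m_{i_2(\tsi)})$, excluded by the genericity of $f\in\bZ$. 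Consequently neither $\prod_i\bw_i$ nor $\bw_1+\bw_2$ (a sum of parallel vectors) acquires a $\su_2-f\su_1$ factor at $\su_4=0$, and $\tbh|_{\su_4=0}$ is regular along $\su_2-f\su_1=0$. The main obstacle is the pairwise case analysis in the second step: one must systematically rule out parallelism of restricted tangent weights for every admissible pair, which depends delicately on the non-vanishing of $|G_\tsi|$ and on the exclusion of $\delta_4(\tsi)$ from $\tSi(3)_c$.
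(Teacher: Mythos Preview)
Your proposal is correct and follows essentially the same approach as the paper: evaluate the integral via Lemma~\ref{lem:DescendantIntegral} and the unstable conventions \eqref{eqn:UnstableIntegral}, then read off the orders of $\su_4$ and $\su_2-f\su_1$ from the explicit tangent weights of Section~\ref{sect:TanWtsExtraConesOuter}. The paper's proof is much terser, stating only the key observation that the sole way for $\su_4$ to enter the denominator is the unstable $n=n'=2$ case with the two weights summing into $\bQ\su_4$, and noting this forces $\{\ttau_1,\ttau_2\}=\{\delta_2(\tsi),\delta_3(\tsi)\}$.

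For the final claim the paper takes a slightly different, cleaner route: when every $\ttau_i\in\iota(\Sigma(2))$ one has $\ttau_i=\iota(\delta_0(\tsi))$, and then $\fr(\ttau_i,\tsi)\tbw(\ttau_i,\tsi)$ is independent of $f$ (the $|G_{\tsi}|$'s cancel), so the integral is an $f$-independent rational function of $\su_1,\su_2$ times $|G_{\tsi}|^{-1}$; hence $\su_2-f\su_1$ can divide it for at most finitely many $f$. Your explicit proportionality check achieves the same conclusion, but note that your formula for the excluded value of $f$ tacitly assumes $m_{i_2(\tsi)}\neq m_{i_3(\tsi)}$; in the degenerate case $m_{i_2(\tsi)}=m_{i_3(\tsi)}$ (e.g.\ $\tsi=\tsi_0$) the restricted weight is a multiple of $\su_1$ and no value of $f$ works, so the conclusion still holds.
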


\begin{proof}
Note by \eqref{eqn:TanWtU4RestrictOuter} that $\tbw(\iota(\tau_i),\iota(\sigma)) \not \in \bQ \su_4$. In view of Lemma \ref{lem:DescendantIntegral}, the only case where $\su_4$ appears as a factor in the denominator of $\tbh$ is when $n = n' = 2$ and
$$
    r_1\tbw(\iota(\tau_1),\iota(\sigma)) + r_2\tbw(\iota(\tau_2),\iota(\sigma)) \in \bQ \su_4,
$$
which holds only if $\{\ttau_1, \ttau_2\} = \{\delta_2(\tsi), \delta_3(\tsi)\}$.

Now suppose $\ttau_i \in \iota(\Sigma(2))$ for all $i$. Then the data defining $\tbh$ is independent of $f$, which implies the final claim of the lemma.
\end{proof}

\subsection{Proper \texorpdfstring{$\tT'$}{T}-invariant lines corresponding to cones in \texorpdfstring{$\tSi(3)_c \setminus \iota(\Sigma(2)_c)$}{Tau}}\label{sect:EdgesExtraConesOuter}
In this section, we consider maps from an irreducible twisted curve to a proper $\tT'$-invariant line in $\tcX$ corresponding to a cone $\ttau \in \tSi(3)_c \setminus \iota(\Sigma(2)_c)$ and their contributions to the localization computations of the closed invariants (Proposition \ref{prop:ClosedLocalResultOuter}). For the distinguished cone $\ttau = \iota(\tau_0)$, we explicitly compute the contribution and compare it to the disk factor (Section \ref{sect:DiskFactorOuter}). For a general $\ttau$, we study the power of $\su_4$ in the contribution.

\begin{lemma}[Edges labeled by $\iota(\tau_0)$] \label{lem:EdgeIotaTau0Outer}
Let
$$
    u: \cC = \cC_{r_1, r_2} \to \fl_{\iota(\tau_0)}\subset \tcX
$$
be a morphism as in \eqref{eqn:TwistedCover} in Section \ref{sect:TwistedCovers} with degree $(d,\lambda) \in H_{\iota(\tau_0)} \cong \bZ \times G_{\tau_0}$. Let
$$ 
    \tbh:= \frac{e_{\tT'}(H^1(\cC,u^*T\tcX)^m)}{e_{\tT'}(H^0(\cC,u^*T\tcX)^m)}.
$$
Then $\su_4$ is not a factor of $\tbh$. Moreover, let
$$
    h(d, \lambda)= \pi_{(\tau_0, \sigma_0)}(d, \lambda) \in G_{\sigma_0}, \quad \tk= \pi_{(\iota(\tau_0), \tsi_0)}(d, \lambda) \in G_{\tsi_0}.
$$
If $\tk=1$, we have
$$
   \tbh\big|_{\su_4 = 0} 
        = \frac{(-1)^{\floor{dw_3-\epsilon_3}+ \frac{d}{\fa} + 1}}{\floor{dw_0}!} \left(\frac{\su_1}{d} \right)^{\age(h(d,\lambda)) - 1}\left(\frac{\su_1}{\fa} \right)^{-1} \left( \frac{\su_2-f\su_1}{\fm} \right)^{-1} \cdot \prod_{a = 1}^{\floor{dw_0}+\age(h(d,\lambda))-1} \left(\frac{d\bw_2}{\su_1} + a - \epsilon_2 \right).
$$
If $\age(\tk)=1$, we have
$$
  \tbh \big|_{\su_4 = 0} 
        = \frac{(-1)^{\floor{dw_3-\epsilon_3}+ \frac{d}{\fa} + 1}}{\floor{dw_0}!} \left(\frac{\su_1}{d} \right)^{\age(h(d,\lambda)) - 1} \left(\frac{\su_1}{\fa} \right)^{-1} \cdot \prod_{a = 1}^{\floor{dw_0}+\age(h(d,\lambda))-1} \left(\frac{d\bw_2}{\su_1} + a - \epsilon_2 \right).
$$
If $\age(\tk)=2$, we have
$$
 \tbh \big|_{\su_4 = 0} 
        = \frac{(-1)^{\floor{dw_3-\epsilon_3}+ \floor{\frac{d}{\fa}} + 1}}{\floor{dw_0}!} \left(\frac{\su_1}{d} \right)^{\age(h(d,\lambda)) - 1} \cdot \prod_{a = 1}^{\floor{dw_0}+\age(h(d,\lambda))-1} \left(\frac{d\bw_2}{\su_1} + a - \epsilon_2 \right).
$$
Here, the quantities $\sw_2, w_0, w_2, w_3, \epsilon_2, \epsilon_3$ are defined in Section \ref{sect:DiskLocalOuter}. 
\end{lemma}

\begin{proof}
We use the explicit computation of $\tbh$ in \cite[Lemma 130]{Liu13}. We set
$$
    \bb_1 = \frac{1}{e_{\tT'}(H^0(\cC,u^*T\fl_{\iota(\tau_0)})^m)}.
$$
Moreover, the normal bundle $N_{\fl_{\iota(\tau_0)}/\tcX}$ splits as a direct sum of $\tT'$-equivariant line bundles $L_2, L_3, L_4$ given by the normal bundles of $\fl_{\iota(\tau_0)}$ in the $2$-dimensional $\tT$-invariant closed substacks of $\tcX$ corresponding to the cones spanned by
$$
    \{\trho_{3}, \trho_{R+2}\}, \quad \{\trho_{2}, \trho_{R+2}\}, \quad \{\trho_{2}, \trho_{3}\}
$$
respectively. We set
$$
    \bb_i := \frac{e_{\tT'}(H^1(\cC,u^*L_i)^m)}{e_{\tT'}(H^0(\cC,u^*L_i)^m)}, \quad i = 2,3,4.
$$
Then, we have
$$
    \tbh =  \bb_1 \bb_2 \bb_3 \bb_4,
$$
where the $\bb_i$'s are computed in \cite[Lemma 130]{Liu13} as
$$
    \bb_1 = \frac{(-1)^{\floor{\frac{d}{\fa}}}}{\floor{dw_0}!\floor{\frac{d}{\fa}}!} \left(\frac{\su_1}{d}\right)^{-\floor{dw_0}-\floor{\frac{d}{\fa}}}, \quad 
    \bb_4 = \prod_{a=1}^{\ceil{\frac{d}{\fa}-1}} \left(\su_4 + \frac{a\su_1}{d} \right),
$$
$$
    \bb_2 = \begin{cases}
        \prod_{a=0}^{\floor{dw_2-\epsilon_2}} \left(\tbw_2- \frac{a+\epsilon_2}{d}\su_1 \right)^{-1} & \text{if } w_2 \ge 0\\
        \prod_{a=1}^{\floor{\epsilon_2-dw_2-1}} \left(\tbw_2+ \frac{a-\epsilon_2}{d}\su_1 \right) & \text{if } w_2 < 0, 
    \end{cases}
    \quad 
    \bb_3 = \begin{cases}
        \prod_{a=0}^{\floor{dw_3-\epsilon_3}} \left(\tbw_3- \frac{a+\epsilon_3}{d}\su_1 \right)^{-1} & \text{if } w_3 \ge 0\\
        \prod_{a=1}^{\floor{\epsilon_3-dw_3-1}} \left(\tbw_3+ \frac{a-\epsilon_3}{d}\su_1 \right) & \text{if } w_3 < 0.
    \end{cases}
$$
It follows that $\su_4$ is not a factor of $\tbh$. We can compute that 
$$
    \bb_4 \big|_{\su_4 = 0} = \ceil{\frac{d}{\fa}-1}! \left(\frac{\su_1}{d}\right)^{\ceil{\frac{d}{\fa}-1}}
$$
and
$$
    (\bb_1\bb_4) \big|_{\su_4 = 0} = \begin{cases}
        \displaystyle{ \frac{(-1)^{\frac{d}{\fa}}}{\floor{dw_0}!}\left(\frac{\su_1}{d}\right)^{-\floor{dw_0}} \left(\frac{\su_1}{\fa}\right)^{-1} } & \text{if } \fa \mid d,\\
        \displaystyle{ \frac{(-1)^{\floor{\frac{d}{\fa}}}}{\floor{dw_0}!}\left(\frac{\su_1}{d}\right)^{-\floor{dw_0}} } & \text{if } \fa \nmid d.\\
    \end{cases}
$$
Recall that $\fa \mid d$ if and only if $\age(\tk) \le 1$. Moreover, if $\tk = 1$, then
$$
    \left(\bb_2 \bb_3\right) \bigg|_{\su_4 = 0} = (-1)^{\floor{dw_3-\epsilon_3}+1} \left(\frac{\su_1}{d} \right)^{\floor{dw_0} + \age(h(d,\lambda)) -1} \left( \frac{\su_2-f\su_1}{\fm} \right)^{-1} \cdot \prod_{a = 1}^{\floor{dw_0}+\age(h(d,\lambda))-1} \left(\frac{d\bw_2}{\su_1} + a - \epsilon_2 \right).
$$
On the other hand, if $\tk \neq 1$, then
$$
    \left(\bb_2 \bb_3\right) \big|_{\su_4 = 0} = (-1)^{\floor{dw_3-\epsilon_3}+1} \left(\frac{\su_1}{d} \right)^{\floor{dw_0} + \age(h(d,\lambda)) -1} \cdot \prod_{a = 1}^{\floor{dw_0}+\age(h(d,\lambda))-1} \left(\frac{d\bw_2}{\su_1} + a - \epsilon_2 \right).
$$
The lemma thus follows.
\end{proof}

We now consider a more general cone $\ttau \in \iota(\Sigma(2) \setminus \Sigma(2)_c)$. Here, $\ttau = \iota(\delta_0(\tsi))$ for some $\tsi \in \tSi(4) \setminus \iota(\Sigma(3))$ and $I_{\ttau}' = \{i_2(\tsi), i_3(\tsi), R+2\}$. We have
$$
    G_{\ttau} \cong \mu_{\gcd(|m_{i_2(\tsi)} - m_{i_3(\tsi)}|, |n_{i_2(\tsi)} - n_{i_3(\tsi)}|)} \subseteq G_{\tsi} \cong \mu_{|G_{\tsi}|}.
$$
Given $\gamma \in G_{\ttau} \subseteq G_{\tsi}$, we have $\chi_{(\ttau, \tsi)}(\gamma) = \chi_{(\delta_4(\tsi), \tsi)}(\gamma)=1$, and since $\chi_{(\delta_2(\tsi), \tsi)}, \chi_{(\delta_3(\tsi), \tsi)}: G_{\tsi} \to \mu_{|G_{\tsi}|}$ are isomorphisms, $\chi_{(\delta_2(\tsi), \tsi)}(\gamma) = \chi_{(\delta_3(\tsi), \tsi)}(\gamma)=1$ if and only if $\gamma = 1$.

\begin{lemma}[Edges with label in $\iota(\Sigma(2) \setminus \Sigma(2)_c)$] \label{lem:ExtraEdge1Outer}
Let $\ttau \in \tSi(3)_c \cap \iota(\Sigma(2) \setminus \Sigma(2)_c)$, and
$$
    u: \cC = \cC_{r_1, r_2} \to \fl_{\ttau}\subset \tcX
$$
be a morphism as in \eqref{eqn:TwistedCover} in Section \ref{sect:TwistedCovers} with degree $(d,\lambda) \in H_{\ttau} \cong \bZ \times G_{\ttau}$. Let
$$ 
    \tbh:= \frac{e_{\tT'}(H^1(\cC,u^*T\tcX)^m)}{e_{\tT'}(H^0(\cC,u^*T\tcX)^m)}.
$$
Then $\su_4$ is not a factor of $\tbh$.
\end{lemma}

\begin{proof}
Similar to the proof of Lemma \ref{lem:EdgeIotaTau0Outer}, we use the explicit computation of $\tbh$ in \cite[Lemma 130]{Liu13}. We set
$$
    \bb_1 = \frac{1}{e_{\tT'}(H^0(\cC,u^*T\fl_{\ttau})^m)}.
$$
Moreover, let $\tsi \in \tSi(4) \setminus \iota(\Sigma(3))$ such that $\ttau = \iota(\delta_0(\tsi))$. The normal bundle $N_{\fl_{\ttau}/\tcX}$ splits as a direct sum of $\tT'$-equivariant line bundles $L_2, L_3, L_4$, which are the normal bundles of $\fl_{\ttau}$ in the $2$-dimensional $\tT$-invariant closed substacks of $\tcX$ corresponding to the cones spanned by
$$
    \{\trho_{i_3(\tsi)}, \trho_{R+2}\}, \quad \{\trho_{i_2(\tsi)}, \trho_{R+2}\}, \quad \{\trho_{i_2(\tsi)}, \trho_{i_3(\tsi)}\}
$$
respectively. We set
$$
    \bb_i := \frac{e_{\tT'}(H^1(\cC,u^*L_i)^m)}{e_{\tT'}(H^0(\cC,u^*L_i)^m)}, \quad i = 2,3,4.
$$
Then, we have
$$
    \tbh =  \bb_1 \bb_2 \bb_3 \bb_4.
$$
By the computations of the $\bb_i$'s in \cite[Lemma 130]{Liu13}, $\su_4$ is not a factor of any of them, which implies the lemma. 
\end{proof}

Lastly, we consider a cone $\ttau \in \tSi(3)_c \setminus \iota(\Sigma(2))$. Here, $\ttau$ is a common facet of two distinct $4$-cones in $\tSi(4) \setminus \iota(\Sigma(3))$, and $G_{\ttau} = \{1\}$, which implies that $H_{\ttau} \cong \bZ$.

\begin{lemma}[Edges with label in $\tSi(3)_c \setminus \iota(\Sigma(2))$] \label{lem:ExtraEdge2Outer}
Let $\ttau \in \tSi(3)_c \setminus \iota(\Sigma(2))$, and
$$
    u: \cC = \cC_{r_1, r_2} \to \fl_{\ttau}\subset \tcX
$$
be a morphism as in \eqref{eqn:TwistedCover} in Section \ref{sect:TwistedCovers} with degree $d \in H_{\ttau} \cong \bZ$. Let
$$ 
    \tbh:= \frac{e_{\tT'}(H^1(\cC,u^*T\tcX)^m)}{e_{\tT'}(H^0(\cC,u^*T\tcX)^m)}.
$$
Then the power of $\su_4$ in $\tbh$ is $1$.
\end{lemma}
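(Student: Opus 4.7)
The plan is to mimic the template of Lemmas \ref{lem:EdgeIotaTau0Outer} and \ref{lem:ExtraEdge1Outer}: decompose $T\tcX|_{\fl_\ttau}\cong T\fl_\ttau \oplus L_2\oplus L_3 \oplus L_4$ into the tangent line and three normal line bundles coming from the three $2$-dimensional $\tT$-invariant substacks of $\tcX$ containing $\fl_\ttau$, apply \cite[Lemma 130]{Liu13} to each summand, and then carefully track the total power of $\su_4$ in the resulting product.

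First I would identify the shape of $\ttau$ inside the fan $\tSi$. Any $3$-cone of $\tSi$ is either of the form $\iota(\tau)$ with $\tau\in \Sigma(2)$, the cone $\sigma$ itself for some $\sigma\in \Sigma(3)$, or one of $\delta_2(\tsi),\delta_3(\tsi),\delta_4(\tsi)$ for $\tsi\in \tSi(4)\setminus \iota(\Sigma(3))$. By inspection of the triangulation of $\tP$, a cone of the form $\sigma$ or $\delta_4(\tsi)$ is never a common facet of two $4$-cones, so $\fl_\sigma$ and $\fl_{\delta_4(\tsi)}$ are non-compact. Hence if $\ttau\in \tSi(3)_c\setminus \iota(\Sigma(2))$, then necessarily $\ttau=\delta_2(\tsi)=\delta_3(\tsi')$ (or with $\delta_2,\delta_3$ interchanged), where $\tsi,\tsi'\in \tSi(4)\setminus \iota(\Sigma(3))$ are the two $4$-cones meeting along $\ttau$ and their bottom facets $\delta_0(\tsi),\delta_0(\tsi')\in \Sigma(2)\setminus \Sigma(2)_c$ share a common ray. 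In particular, $G_\ttau=\{1\}$, so $r_1\mid |G_\tsi|$ and $r_2\mid |G_{\tsi'}|$ are controlled by the flag stabilizers at the endpoints.

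Setting $\bb_1:=1/e_{\tT'}(H^0(\cC,u^*T\fl_\ttau)^m)$ and $\bb_i:=e_{\tT'}(H^1(\cC,u^*L_i)^m)/e_{\tT'}(H^0(\cC,u^*L_i)^m)$ for $i=2,3,4$, we have $\tbh=\bb_1\bb_2\bb_3\bb_4$. I would then invoke \cite[Lemma 130]{Liu13} to write each $\bb_i$ as an explicit product whose factors are of the form $w + (a/d)\,\tbw(\ttau,\tsi)$ (or $w+(a/d)\,\tbw(\ttau,\tsi')$), where $w$ runs over the two endpoint weights of $L_i$ and $a$ over an appropriate range of integers. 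The explicit formulas in Section \ref{sect:TanWtsExtraConesOuter} for the four tangent $\tT'$-weights at $\fp_\tsi$ and $\fp_{\tsi'}$ make it possible to isolate the $\su_4$-coefficient of each such factor.

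The main obstacle is the bookkeeping of $\su_4$-powers across the four summands. The key expected mechanism is the following: the tangent weight $\tbw(\ttau,\tsi)$ has an $\su_4$-component that is generically non-vanishing, and among the three normal weights at $\fp_\tsi$ exactly one, namely $\tbw(\delta_4(\tsi),\tsi)$, has $\su_4$-coefficient $\bigl(|G_\tsi|+m_{i_3(\tsi)}n_{i_2(\tsi)}-m_{i_2(\tsi)}n_{i_3(\tsi)}\bigr)/|G_\tsi|$ which differs from the $\su_4$-coefficient of the tangent weight by a non-zero rational multiple of $|G_\tsi|$; this creates a single uncancelled factor of $\su_4$ in $\bb_4$, while in $\bb_1,\bb_2,\bb_3$ the analogous coefficients match in such a way that $\su_4$ enters only through linear combinations that remain non-zero but not divisible by $\su_4$. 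A parallel statement holds at $\fp_{\tsi'}$. Summing the powers then gives exactly $1$. As a consistency cross-check, this precisely accounts for the factor $+1$ contributed by each edge in $E_1$ in the overall $\su_4$-power count \eqref{eqn:U4PowerOuter} used in the proof of Lemma \ref{lem:NumericalOuterU4Vanish}, so an independent verification is available from that vanishing argument.
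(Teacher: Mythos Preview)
Your overall strategy matches the paper's: decompose $T\tcX|_{\fl_\ttau}$ into the tangent line and three normal line bundles, set $\tbh=\bb_1\bb_2\bb_3\bb_4$, and appeal to \cite[Lemma 130]{Liu13}. The paper writes $I_{\ttau}'=\{i,R+1,R+2\}$, takes $\bb_2$ to be the tangent piece, and takes $L_1,L_3,L_4$ to be the normal bundles in the substacks corresponding to the $2$-cones $\{\trho_i,\trho_{R+2}\}$, $\{\trho_{R+1},\trho_{R+2}\}$, $\{\trho_i,\trho_{R+1}\}$; it then asserts that $\su_4$ is not a factor of $\bb_2,\bb_1,\bb_4$ but is a factor of $\bb_3$ with power exactly~$1$.

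Your proposal misidentifies which normal summand contributes the $\su_4$ factor, and the mechanism you describe is not the right one. The issue is not a comparison of $\su_4$-\emph{coefficients}; what produces a factor divisible by $\su_4$ is a linear combination of the normal weight and the tangent weight whose $(\su_1,\su_2)$-part vanishes. Concretely, with $\ttau=\delta_2(\tsi)$ so that the tangent weight at $\fp_\tsi$ is $\tbw(\delta_2(\tsi),\tsi)$, the formulas in Section~\ref{sect:TanWtsExtraConesOuter} give
\[
\tbw(\delta_2(\tsi),\tsi)+\tbw(\delta_3(\tsi),\tsi)=-\su_4,
\]
so the $(\su_1,\su_2)$-parts of the $\delta_3$-normal weight and the tangent weight are exact negatives. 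Hence in the product expression from \cite[Lemma 130]{Liu13} for the $\delta_3$-direction (the $L_3$ corresponding to the $2$-cone $\{\trho_{R+1},\trho_{R+2}\}$), exactly one factor is a nonzero multiple of $\su_4$. By contrast, for the $\delta_4$-direction, a combination $\tbw(\delta_4(\tsi),\tsi)+c\,\tbw(\delta_2(\tsi),\tsi)$ with vanishing $(\su_1,\su_2)$-part would force $|G_\tsi|=f(m_{i_3}-m_{i_2})+(n_{i_2}-n_{i_3})=0$, which is impossible; so no factor of $\bb_4$ is divisible by $\su_4$. The same analysis rules out $\bb_1$ and $\bb_2$. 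Thus the single $\su_4$ comes from the $\delta_3$-normal piece, not the $\delta_4$-piece. Finally, the ``consistency cross-check'' with \eqref{eqn:U4PowerOuter} is circular here, since Lemma~\ref{lem:NumericalOuterU4Vanish} itself invokes the present lemma.
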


\begin{proof}
We again use the explicit computation of $\tbh$ in \cite[Lemma 130]{Liu13}. We set
$$
    \bb_2 = \frac{1}{e_{\tT'}(H^0(\cC,u^*T\fl_{\ttau})^m)}.
$$
Moreover, let $I_{\ttau}' = \{i, R+1, R+2\}$. The normal bundle $N_{\fl_{\ttau}/\tcX}$ splits as a direct sum of $\tT'$-equivariant line bundles $L_1, L_3, L_4$, which are the normal bundles of $\fl_{\ttau}$ in the $2$-dimensional $\tT$-invariant closed substacks of $\tcX$ corresponding to the cones spanned by
$$
    \{\trho_{i}, \trho_{R+2}\}, \quad \{\trho_{R+1}, \trho_{R+2}\}, \quad \{\trho_{i}, \trho_{R+1}\}
$$
respectively. We set
$$
    \bb_i := \frac{e_{\tT'}(H^1(\cC,u^*L_i)^m)}{e_{\tT'}(H^0(\cC,u^*L_i)^m)}, \quad i = 1,3,4.
$$
Then, we have
$$
    \tbh =  \bb_2 \bb_1 \bb_3 \bb_4.
$$
By the computations of the $\bb_i$'s in \cite[Lemma 130]{Liu13}, $\su_4$ is not a factor of any of $\bb_2, \bb_1, \bb_4$, but is a factor of $\bb_3$ with power $1$. The lemma thus follows.
\end{proof}


\end{document}